\documentclass[reqno,11pt]{amsart}
\usepackage{amssymb}
\usepackage{amsmath}
\usepackage{amsthm}
\usepackage[left=1.5cm, right=1.5cm, top=1.5cm, bottom=1.5cm]{geometry}

\newtheorem{Theorem}{Theorem } [section]
\newtheorem{lemma}[Theorem]{Lemma}

\newtheorem{remark}{Remark}
\newtheorem{Definition}{Definition}

\numberwithin{equation}{section}

\newcommand{\vertiii}[1]{{\vert\kern-0.25ex\vert\kern-0.25ex\vert #1 
    \vert\kern-0.25ex\vert\kern-0.25ex\vert}}

\DeclareMathOperator{\supp}{supp}

\DeclareMathOperator{\re}{Re}

\setlength{\parindent} {0in}

\begin{document}

\title{Local well-posedness and regularity properties for an initial-boundary value problem associated to the fifth order Korteweg-de Vries equation}
\author{Eddye Bustamante, Jos\'e Jim\'enez Urrea and Jorge Mej\'{\i}a}
\subjclass[2000]{35Q53}

\keywords{Aquí van las palabras clave}
\address{Eddye Bustamante M., Jos\'e Jim\'enez Urrea, Jorge Mej\'{\i}a L. \newline
Departamento de Matem\'aticas\\Universidad Nacional de Colombia\newline
A. A. 3840 Medell\'{\i}n, Colombia}
\email{eabusta0@unal.edu.co, jmjimene@unal.edu.co, jemejia@unal.edu.co}

\begin{abstract} In this work we prove that the initial-boundary value problem (IBVP) for the fifth order Korteweg-de Vries equation
\begin{align*}
\left. \begin{array}{rlr}
u_t+\partial_x^5 u+u\partial_x u&\hspace{-2mm}=0,&\quad x\in\mathbb R^+,\; t\in\mathbb R^+,\\
u(x,0)&\hspace{-2mm}=g(x),&\\
u(0,t)=h_1(t),\, \partial_x u(0,t)&\hspace{-2mm}=h_2(t),\,\partial_x^2 u(0,t)=h_3(t),
\end{array} \right\}
\end{align*}
is locally well posed, when the data $g$, $h_1$, $h_2$, $h_3$ are taken in such a way that $g\in H^s(\mathbb R_x^+)$, and $h_{j+1}\in H^{\frac{s+2-j}5}(\mathbb R_t^+)$, $j=0,1,2$, $s\in [0,\frac{11}4)\setminus \{\frac12,\frac32,\frac52\}$, and satisfy the following compatibility conditions:
\begin{enumerate}
\item[(i)] $g(0)=h_1(0)$ if $\frac12<s<\frac32$;
\item[(ii)] $g(0)=h_1(0)$, $g'(0)=h_2(0)$ if $\frac32<s<\frac52$; and
\item[(ii)] $g(0)=h_1(0)$, $g'(0)=h_2(0)$, $g''(0)=h_3(0)$ if $\frac52<s<\frac{11}4$. \end{enumerate}
Besides, we prove that the nonlinear part of the solution is smoother than the initial datum $g$.
\end{abstract}

\maketitle
\section{Introduction}

In this article we consider the initial-boundary value problem (IBVP) corresponding to the fifth order Korteweg-de Vries (KdV) equation 
\begin{align}
\left. \begin{array}{rlr}
u_t+\partial_x^5 u+u\partial_x u&\hspace{-2mm}=0,&\quad x\in\mathbb R^+,\; t\in\mathbb R^+,\\
u(x,0)&\hspace{-2mm}=g(x),&\\
u(0,t)=h_1(t),\, \partial_x u(0,t)&\hspace{-2mm}=h_2(t),\, \partial_x^2 u(0,t)=h_3(t),
\end{array} \right\}\label{maineq}
\end{align}
where the initial datum $g$ and the boundary data $h_{j+1}$, $j=0,1,2$ are taken in the Sobolev spaces $H^s(\mathbb R^+_x)$, $H^{\frac{s+2-j}5}(\mathbb R^+_t)$, $j=0,1,2$; respectively, $s\in[0,\frac{11}4)\setminus\{\frac12,\frac32,\frac52\}$, and satisfy the following compatibility conditions:
\begin{enumerate}
\item[(i)] $g(0)=h_1(0)$ if $\frac12<s<\frac32$;
\item[(ii)] $g(0)=h_1(0)$, $g'(0)=h_2(0)$ if $\frac32<s<\frac52$; and
\item[(ii)] $g(0)=h_1(0)$, $g'(0)=h_2(0)$, $g''(0)=h_3(0)$ if $\frac52<s<\frac{11}4$. 
\end{enumerate}
Here $\mathbb R^+:=(0,+\infty)$.\\

This kind of problems has been studied in recent years in the case of the third order KdV equation, among others, by Colliander and Kenig in \cite{CK2002}, Faminskii in \cite{F1999a}, \cite{F1999b}, and \cite{F2005}, Bona, Sun, and Zhang in \cite{BSZ2002}, and Holmer in \cite{H2006}.\\

For other evolution equations, as the Schrödinger equation, this mixed problem has been considered by Holmer in \cite{H2005}, Erdoğan and Tzirakis in \cite{ET2016}, and Bona, Sun, and Zhang in \cite{BSZ2018}.\\

In \cite{CK2002}, a new method to solve initial-boundary problems is introduced. The idea of this method is to turn the IBVP into an initial value problem (IVP) by adding an adequate force term of the type $\delta_0(x)f(t)$ to the equation, where $\delta_0$ denotes the Dirac mass at $x=0$, and the function $f(t)$, $t\in\mathbb R$ is related to the boundary data in $t>0$, through the Riemann-Liouville fractional integral. On the other hand, the initial datum in this new IVP, defined for $x\in\mathbb R$, is an adequate extension of the initial datum of the IBVP defined for $x\in\mathbb R^+$.\\

This reformulation of the problem allows to use the known theory for the IVPs in the context of mixed initial-boundary value problems. The solution of the IVP is searched in the framework of the modified Bourgain spaces of tempered distributions $u$ in $\mathbb R^2$, such that
$$\|(\langle \xi \rangle^s \langle \tau - \xi^3 \rangle^b+\chi(\xi) \langle \tau \rangle^\alpha)\widehat u(\xi,\tau)\|_{L^2_{\xi\tau}}<\infty,$$
where $\,\widehat{\text{}}\,$ is the Fourier transform in $\mathbb R^2$, $(\xi,\tau)$ is the variable in the frequency space corresponding to the space-time variable $(x,t)$, $\chi$ is the characteristic function of the interval $[-1,1]$, $0\leq s\leq 1$, $0<b<\frac12$, and $\frac12<\alpha<\frac23$. Notation $\langle \cdot \rangle$ is an abbreviation for $(1+|\cdot|)$.\\

Colliander and Kenig prove local well posedness of the IBVP associated to the KdV equation when the initial datum is in $H^s(\mathbb R^+_x)$, the boundary datum is in $H^{\frac{s+1}3}(\mathbb R^+_t)$, with $0\leq s\leq 1$, $s\neq\frac12$, and these data satisfy the compatibility condition when $s>\frac12$.\\

The IBVP for the third order KdV equation, with additional transport term $a\partial_xu$, $a\in\mathbb R$, has been studied in detail by Faminskii in several articles. Particularly, in \cite{F2005}, this problem is addressed with the help of special solutions of boundary potential type $J(x,t,\mu)$ for the inhomogeneous linear problem

\begin{align}
\left. \begin{array}{rlr}
u_t+\partial_x^3 u+a\partial_x u&\hspace{-2mm}=0,&\quad x>0,\; t>0,\\
u(x,0)&\hspace{-2mm}=0,&\\
u(0,t)&\hspace{-2mm}=\mu(t).&
\end{array} \right\}\label{V2-0.2}
\end{align}

and it is established global well posedness of the IBVP when the initial datum belongs to $H^s(\mathbb R^+_x)$, and the boundary datum belongs to $H^{\frac{s+1}3+\epsilon}(\mathbb R^+_t)$, with $s\geq 0$, $s\neq 3k+\frac12$, $k\geq 0$ is an integer number, where $\epsilon>0$ is arbitrary small in the case $s<1$, and $\epsilon=0$ in the case $s\geq 1$. Besides the initial and boundary data satisfy the compatibility condition when $s>\frac12$. In \cite{F2005}, solutions are also searched in functional spaces of the Bourgain type that are slight modifications of the spaces used by Colliander and Kenig in \cite{CK2002}.\\

In \cite{BSZ2002}, Bona, Sun, and Zhang, built a boundary potential $J(x,t,\mu)$ for the problem \eqref{V2-0.2} with $a=1$, by means of the Laplace transform and they managed to prove global well posedness for the IBVP associated to the equation
$$\partial_t u+\partial_x^3 u+\partial_x u+u\partial_x u=0,$$
when the initial datum belongs to $H^s(\mathbb R_x^+)$, the boundary datum belongs to $H^{\frac{s+1}3}(\mathbb R^+_t)$, with $s\geq 3$, and besides those data satisfy the compatibility condition.\\

In \cite{ET2016}, Erdoğan, and Tzirakis consider an IBVP for the one dimensional Schrödinger equation with cubic nonlinearity. As in \cite{CK2002}, they use the known technics for the Cauchy problem, when the spatial variable runs along $\mathbb R$. This Cauchy problem is studied using Bourgain functional spaces. An important role in \cite{ET2016} is played by the linear non homogeneous IBVP 
\begin{align}
\left. \begin{array}{rlr}
iu_t+\partial_x^2 u&\hspace{-2mm}=0,&\quad x>0,\; t>0,\\
u(x,0)&\hspace{-2mm}=0,&\\
u(0,t)&\hspace{-2mm}=h(t),&
\end{array} \right\}\label{V2-0.3}
\end{align}

for which its solution is found by means of the Laplace transform, following the ideas of Bona et al. in \cite{BSZ2018}. Next, this solution is extended to $\mathbb R^2$ in a convenient way, given as a result a function $\tilde W_0^t(0,h)(\cdot_x,\cdot_t)$ defined on $\mathbb R^2$. Then, they write the solution of the IBVP associated to the nonlinear equation 

$$i\partial_tu + \partial_x^2u + \lambda|u|^2u = 0$$

with initial datum $g(x)$, $x>0$; and boundary datum $h(t)$, $t>0$, as a superposition of the Duhamel's formula for a nonlinear Cauchy problem with initial datum an adequate extension $g_l$ of $g$, and the extension $\tilde W_0^t(0,h-p)$ of the solution of a linear problem of type \eqref{V2-0.3}, with boundary data $h-p$, being $p(t)$ the result of evaluating the Duhamel's formula at $x=0$. This way Erdoğan and Tzirakis combine the Laplace transform method used in \cite{BSZ2018} with the Bourgain spaces method used in \cite{CK2002}.\\

In \cite{FK2011}, Faminskii and Kuvshinov study an IBVP associated to the Kawahara equation
$$u_t-\partial_x^5 u + b \partial_x^3 u + a\partial_x u+ g(u) \partial_xu = f(x,t),$$
in the half line $(0,+\infty)$, and in the open interval $(0,1)$. In the case of the half line the initial datum belongs to the Sobolev space $H^k(\mathbb R^+_x)$ with $k\geq 2$ integer, while in the case of the interval $(0,1)$ the initial datum belongs to $H^k(0,1)$ with $k\geq 0$ integer. The regularity of the boundary data depends on the index $k$, in a way that is natural when considering the associated linear problem. In both cases Faminskii and Kuvshinov manage to prove well posedness of the IBVP.\\

Cavalcante and Kwak in \cite{CK2020} consider two IBVP for the Kawahara equation with $a=b=0$, $f=0$ and $g(u):=u$, in the half lines $(0,+\infty)$ and $(-\infty,0)$. The method used by them is the same as Colliander and Kenig in \cite{CK2002} which consists of converting the IBVP into an IVP, adding an appropriate forcing term to the differential equation. Cavalcante and Kwak prove local well-posedness for both IBVP, when the initial data are taken in $H^s(\mathbb R^+)$ and $H^s(\mathbb R^-)$ ($\mathbb R^-:=(-\infty,0)$), respectively, with $s\in(-\frac74,\frac52)\setminus \{\frac12,\frac32\}$ and the boundary data have a regularity, which depends on the index $s$.\\

In our work, using the approach of \cite{ET2016}, we prove that the IBVP \eqref{maineq} is locally well posed, when the initial datum is taken in $H^s(\mathbb R^+_x)$ and the boundary data $h_{j+1}$, $j=0,1,2$ are taken in $H^{\frac{s+2-j}5}(\mathbb R_t^+)$, satisfy the compatibility conditions, and $s\in [0,\frac{11}4)\setminus\{\frac12,\frac32,\frac52\}$. Besides, using a gain of regularity of the bilinear form $\partial_x (uv)$, we succeed to establish, as in \cite{ET2016}, that the nonlinear part of the solution is smoother than the initial datum.\\

It is important to point out that the IBVP considered in this paper is equivalent to the IBVP for the Kawahara equation in the half line $(-\infty,0)$, studied by Cavalcante and Kwak, but our approach is different. Furthermore, although we do not consider Sobolev spaces of negative indices, we managed to extend the local well-posedness to indices $s$ in the interval $[\frac52,\frac{11}4)$. We work in the context of the modified Bourgain spaces $X^{s,b,\alpha}\equiv X^{s,b,\alpha}(\mathbb R^2)$ of tempered distributions $u$ in $\mathbb R^2_{xt}$ such that
$$\| (\langle \xi \rangle^s \langle \tau + \xi^5 \rangle^b+\chi(\xi) \langle \tau\rangle^\alpha) \widehat u(\xi,\tau) \|_{L^2_{\xi\tau}}<\infty,$$
where $s\in [0,\tfrac{11}4)\setminus\{\frac12,\frac32,\frac52\}$, $b=\frac12^-$ (i.e., there exists $\epsilon>0$ such that $\frac12-\epsilon\leq b<\frac12$), $\alpha=\frac12^+$, and $\chi$ is the characteristic function of the interval $[-1,1]$. Specifically, if $u(t) \equiv u(\cdot_x,t)$, we prove that there exists
$$u\in X^{s,b,\alpha}(\mathbb R^2)\cap C(\mathbb R_t;H^s(\mathbb R_x))\text{ with } \partial_x^j u\in C(\mathbb R_x;H^{\frac{s+2-j}5}(\mathbb R_t)),\, j=0,1,2,$$
such that, for some $T>0$, and $t\in[0,T]$,
\begin{align}
u(t)=W_{\mathbb R}(t)g_l + \int_0^t W_{\mathbb R}(t-t') ( -\tfrac12 \partial_xu(t')^2 ) dt' + W_0^t(0,h_1-p_1,h_2-p_2,h_3-p_3)(t),\label{V2-0.4}
\end{align}

where $g_l \in H^s(\mathbb R_x)$ is an extension of $g\in H^s(\mathbb R_x^+)$, $[W_{\mathbb R}(\cdot_t)g_l](\cdot_x)$ is the solution of the linear Cauchy problem

\begin{align}
\left. \begin{array}{rlr}
u_t+\partial_x^5 u&\hspace{-2mm}=0,&\quad (x,t)\in\mathbb R^2,\\
u(x,0)&\hspace{-2mm}=g_l(x),&\\
\end{array} \right\}\label{V2-0.5}
\end{align}

given by
$$[W_{\mathbb R}(t)g_l](x)=\mathcal F^{-1}_x[e^{-it(\cdot_\xi)^5}\widehat g_l(\cdot_\xi)](x),$$
being $\, \widehat{\text{}}\,$, $\mathcal F_x^{-1}$ the direct and inverse Fourier transforms, respectively, with respect to the spatial variable,
$$W_0^t(0,h_1-p_1,h_2-p_2,h_3-p_3)(t)\equiv W_0^t(0,h_1-p_1,h_2-p_2,h_3-p_3)(\cdot_x,t),\quad t\in\mathbb R$$
is an adequate extension to $\mathbb R^2$ of the solution of the linear non homogenous problem
\begin{align}
\left. \begin{array}{rlr}
u_t+\partial_x^5 u&\hspace{-2mm}=0,&\quad x>0,\; t>0,\\
u(x,0)&\hspace{-2mm}=0,&\\
u(0,t)&\hspace{-2mm}=h_1(t)-p_1(t),&\\
\partial_x u(0,t)&\hspace{-2mm}=h_2(t)-p_2(t),&\\
\partial_x^2 u(0,t)&\hspace{-2mm}=h_3(t)-p_3(t),
\end{array} \right\}\label{V2-0.6}
\end{align}

with
\begin{align*}
p_{j+1}(t):=\partial_x^j \left[(W_{\mathbb R}(t)g_l)(0)+\int_0^t[W_{\mathbb R}(t-t')(-\frac12\partial_x u(t')^2)](0) dt'\right],\; j=0,1,2.
\end{align*}

The restriction of $u$ to $[0,+\infty)\times[0,T]$ will be a local solution in time of the IBVP \eqref{maineq}.\\

Regarding the relationship between the regularity of the boundary data and the regularity of the initial data, we must point out that it is natural if we take into account that in the linear case we have:

$$\| D_t^{\frac{s+2-j}5}[W_{\mathbb R}(\cdot_t)g_l](0) \|_{L^2(\mathbb R_t)}\leq C\|g_l\|_{H^s(\mathbb R_x)}, \, j=0,1,2.$$

On the other hand, the number of boundary conditions is natural, after observing that in the linear case, using integration by parts and smooth solutions with adequate decay at infinity, the uniqueness of the solution of this IBVP require these three boundary conditions.\\

This work is organized as follows: in Section \ref{V2-S1} we define the functional context that we will use, this is, the Sobolev spaces in $(0,+\infty)$ to which the initial datum $g$, and the boundary data $h$ belong, and the Bourgain spaces $X^{s,b,\alpha}$ where solutions $u$ are searched. On the other hand, we precise our concept of local solution in time for the IBVP \eqref{maineq}. Section \ref{ILBVP} is dedicated to the study of the inhomogeneous linear problem \eqref{V2-0.6}. In this study we use the Laplace transform, and basic results of complex variable functions. Besides, we define an adequate extension to $\mathbb R^2$, $W_0^t(0,h_1-p_1,h_2-p_2,h_3-p_3)(\cdot_x,\cdot_t)$, of the solution to the problem \eqref{V2-0.6}. In Section \ref{LNLE} we deal with the linear and non linear estimates of the terms of the Duhamel's formula, associated to the Cauchy problem for the fifth order KdV equation, in the norms of the spaces $X^{s,b}$, $X^{s,b,\alpha}$, the space of continuous functions of the time variable $t$, $C(\mathbb R_t;H^s(\mathbb R_x))$, and the spaces of continuous functions of the spatial variable $x$, $C(\mathbb R_x;H^{\frac{s+2-j}5}(\mathbb R_t))$, $j=0,1,2$. In Section \ref{V2-S4} we prove the existence of a solution $u$ in $X^{s,b,\alpha}\cap C(\mathbb R_t;H^s(\mathbb R_x))$ with $\partial_x^j u\in C(\mathbb R_x;H^{\frac{s+2-j}5}(\mathbb R_t))$, $j=0,1,2$, of an integral equation that coincides with equation \eqref{V2-0.4} for values of $t$ in the interval $[0,T]$, which restriction $u|_{[0,+\infty)\times[0,T]}$ is a local solution in time of the IBVP \eqref{maineq}. Besides, we prove that this local solution is a generalized solution of the IBVP \eqref{maineq} in the sense used by Faminskii in \cite{F2005}. In Section \ref{Uniq}, following the method used by Faminskii in \cite{F2005}, we prove the uniqueness of generalized solutions of the IBVP \eqref{maineq} in $X^{0,b,\alpha}$. In this manner, we can affirm that the local solution found, depends neither on the extension $g_l$ of $g$ used, nor on the particular function $\rho$ used to define the extension $W_0^t(0,h_1-p_1,h_2-p_2,h_3-p_3)$ (see Section \ref{ILBVP}). Finally, in Section \ref{Regular}, we prove that the nonlinear part of the solution of the IBVP \eqref{maineq} is smoother than the initial datum.

\begin{remark}\label{V3-R1} Throughout this article the letter $C$ will denote diverse positive constants which may change from line to line and depend on parameters which are clearly established in each case.
\end{remark}

\textbf{Acknowledgement}. The authors thank A.V. Faminskii for his generosity in pointing out the need to have three boundary conditions in order for the problem to be well posed.

\section{Functional spaces and notion of solution of the IBVP}\label{V2-S1}

For $s\in\mathbb R$, $H^s:=H^s(\mathbb R)$ will denote the Sobolev space of order $s$ and $L^2$ type, with norm
$$\|u\|^2_{H^s}:=\int_{\mathbb R}(1+|\xi|)^{2s}|\widehat u(\xi)|^2 d\xi,$$
where $\widehat u$ is the Fourier transform of $u$.\\

For $s\geq 0$, and $\mathbb R^+:=(0,+\infty)$ we define the Sobolev space $H^s(\mathbb R^+)$, as the set of restrictions to the interval $(0,+\infty)$, of the elements of $H^s(\mathbb R)$; i.e.,
$$H^s(\mathbb R^+):=\{f:\text{there exists }F\in H^s(\mathbb R)\text{ such that }f=\left.F\right|_{\mathbb R^+}\}$$
(see \cite{CK2002}, Section 2).\\

The $H^s(\mathbb R^+)$ norm is defined by
$$\|f\|_{H^s(\mathbb R^+)}:=\inf\{\|F\|_{H^s(\mathbb R)}:F\in H^s(\mathbb R)\text{ and }F(x)=f(x)\text{ a.e. }x\in\mathbb R^+\}.$$

If $f\in H^s(\mathbb R^+)$ for some $s>\frac12$, let us take an extension $F\in H^s(\mathbb R)$ of $f$. By the Sobolev immersion theorem, $F$ is continuous on $\mathbb R$ and therefore $F(0)$ is well defined. This value is independent of the extension $F$ of $f$, and we will denote it by $f(0)$.\\

Now, let us define the spaces $H_0^s(\mathbb R^+)$, and $C^\infty_0(\mathbb R^+)$ by
\begin{align*}
H_0^s(\mathbb R^+)&:=\{f\in H^s(\mathbb R):\supp f\subset [0,+\infty)\},\\
C_0^\infty(\mathbb R^+)&:=\{f\in C_0^\infty(\mathbb R):\supp f\subset [0,+\infty)\}.
\end{align*}

The space $C^\infty_0(\mathbb R^+)$ is dense in the space $H^s_0(\mathbb R^+)$ (see \cite{CK2002}).\\

In fact, $f\in H_0^s(\mathbb R^+)$ if and only if $f\in H^s(\mathbb R)$, and $f=\chi_{(0,+\infty)}f$, where $\chi_{(0,+\infty)}$ is the characteristic function of the interval $(0,+\infty)$.\\

From now on, given $f\in H^s_0(\mathbb R^+)$, we will identify $f$ with its restriction to $\mathbb R^+$, $\left.f\right|_{\mathbb R^+}$.\\

The following lemma concerning extensions of elements of $H^s(\mathbb R^+)$ can be seen in \cite{CK2002}.

\begin{lemma}\label{V2-L1.1} \begin{enumerate}
\item[(i)] Let $0\leq s<\frac12$. There exists $C>0$ such that for every $h\in H^s(\mathbb R^+)$
\begin{align}
\|\chi_{(0,+\infty)}h\|_{H^s(\mathbb R)}\leq C\|h\|_{H^s(\mathbb R^+)}.\label{V2-1.1}
\end{align}
Here, $\chi_{(0,+\infty)}h$ is defined a.e. by $(\chi_{(0,+\infty)}h)(x)=0$ a.e. if $x<0$, and $(\chi_{(0,+\infty)}h)(x)=h(x)$ a.e. if $x>0$.\\

In particular for $0\leq s<\frac12$, and $h\in H^s(\mathbb R^+)$ we have that $\chi_{(0,+\infty)}h\in H^s(\mathbb R)$.
\item[(ii)] Let $\frac12<s<\frac32$. There exists $C>0$ such that for every $h\in H^s(\mathbb R^+)$, with $h(0)=0$,
\begin{align}
\|\chi_{(0,+\infty)}h\|_{H^s(\mathbb R)}\leq C\|h\|_{H^s(\mathbb R^+)}.\label{V2-1.2}
\end{align}

i.e., if $h\in H^s(\mathbb R^+)$, and $h(0)=0$, then $\chi_{(0,+\infty)}h\in H^s(\mathbb R)$, and \eqref{V2-1.2} holds.
\end{enumerate}
\end{lemma}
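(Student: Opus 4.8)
The plan is to reduce both parts to known facts about Sobolev spaces on the line via the Fourier transform, and to control the commutator introduced by multiplying by $\chi_{(0,+\infty)}$. For part (i), when $0\leq s<\tfrac12$, the key is that multiplication by the characteristic function $\chi_{(0,+\infty)}$ is a bounded operator on $H^s(\mathbb R)$ for this range of $s$. I would first take an arbitrary extension $H\in H^s(\mathbb R)$ of $h$ with $\|H\|_{H^s(\mathbb R)}\leq 2\|h\|_{H^s(\mathbb R^+)}$ (possible by definition of the infimum norm), and then write $\chi_{(0,+\infty)}h=\chi_{(0,+\infty)}H$ a.e.\ on $\mathbb R$. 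The estimate $\|\chi_{(0,+\infty)}H\|_{H^s(\mathbb R)}\leq C\|H\|_{H^s(\mathbb R)}$ is the heart of the matter: it follows from the fact that $\chi_{(0,+\infty)}=\tfrac12(1+\operatorname{sgn})$, so it suffices to bound the Hilbert-transform-type operator given by multiplication by $\operatorname{sgn}$ on the physical side, which on the Fourier side is (up to constants) convolution with the principal value of $1/\xi$; the boundedness of this operator on $H^s$ for $|s|<\tfrac12$ is classical (it is, up to constants, the Hilbert transform composed with Fourier multipliers, and $|s|<\tfrac12$ is exactly the range where $\chi_{(0,\infty)}\cdot$ preserves $H^s$). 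Taking the infimum over extensions $H$ then yields \eqref{V2-1.1}, and in particular $\chi_{(0,+\infty)}h\in H^s(\mathbb R)$.

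For part (ii), when $\tfrac12<s<\tfrac32$ and $h(0)=0$, the obstruction is precisely that $\chi_{(0,+\infty)}$ is no longer bounded on $H^s(\mathbb R)$ — cutting off a function that does not vanish at the origin creates a jump, which costs half a derivative. The hypothesis $h(0)=0$ is what removes this obstruction. I would argue as follows: decompose $s=1+\sigma$ with $0<\sigma<\tfrac12$, take an extension $H\in H^s(\mathbb R)$ of $h$ with $H(0)=h(0)=0$ and $\|H\|_{H^s}\leq 2\|h\|_{H^s(\mathbb R^+)}$ (the value at $0$ is well defined by the Sobolev embedding since $s>\tfrac12$, and one can always arrange an almost-minimal extension; the condition $h(0)=0$ forces any continuous extension to vanish at $0$). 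Then $\chi_{(0,+\infty)}H$ is the function that agrees with $H$ on $\mathbb R^+$ and vanishes on $\mathbb R^-$; since $H(0)=0$ this is continuous, and its distributional derivative satisfies $\partial_x(\chi_{(0,+\infty)}H)=\chi_{(0,+\infty)}\partial_x H$ (no boundary delta appears, precisely because $H(0)=0$). Now $\partial_x H\in H^{\sigma}(\mathbb R)$ with $0<\sigma<\tfrac12$, so part (i) applies to give $\chi_{(0,+\infty)}\partial_x H\in H^{\sigma}(\mathbb R)$ with $\|\chi_{(0,+\infty)}\partial_x H\|_{H^\sigma}\leq C\|\partial_x H\|_{H^\sigma}\leq C\|H\|_{H^s}$. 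Combining this with the $L^2$-bound $\|\chi_{(0,+\infty)}H\|_{L^2}\leq\|H\|_{L^2}$ gives $\|\chi_{(0,+\infty)}H\|_{H^{1+\sigma}}\leq C\|H\|_{H^s}$, i.e.\ $\|\chi_{(0,+\infty)}h\|_{H^s(\mathbb R)}\leq C\|H\|_{H^s(\mathbb R)}$. Taking the infimum over such extensions $H$ yields \eqref{V2-1.2}.

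The main obstacle — and the only genuinely delicate point — is the boundedness of multiplication by $\chi_{(0,+\infty)}$ on $H^s(\mathbb R)$ for $0\leq s<\tfrac12$ used in part (i); everything in part (ii) is then a clean reduction to part (i) via one integration by parts, with the hypothesis $h(0)=0$ serving exactly to kill the boundary term in that integration by parts. Since the statement attributes this lemma to \cite{CK2002}, one could alternatively simply cite that reference for the estimate on $\chi_{(0,+\infty)}\cdot$ and present only the reduction argument for (ii); but the self-contained route above, via the $\operatorname{sgn}$ decomposition and the classical mapping properties of the Hilbert transform on $L^2$-based Sobolev scales, is equally short.
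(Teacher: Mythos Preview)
The paper itself gives no proof of this lemma; it simply cites \cite{CK2002}. So there is no ``paper's approach'' to compare against, and your self-contained argument is a reasonable thing to supply.

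Your treatment of part (i) is fine: writing $\chi_{(0,+\infty)}=\tfrac12(1+\sgn)$ and using that multiplication by $\sgn(x)$ corresponds on the Fourier side to convolution with (a constant times) $\mathrm{p.v.}(1/\xi)$, hence to the Hilbert transform acting on $\widehat f$, reduces the question to boundedness of the Hilbert transform on $L^2(\langle\xi\rangle^{2s}\,d\xi)$. Since $\langle\xi\rangle^{2s}\in A_2$ precisely for $|s|<\tfrac12$, this gives the result. (Equivalently, and perhaps more directly, one can use the Gagliardo seminorm and the Hardy inequality $\int_0^\infty |H(x)|^2 x^{-2s}\,dx\leq C\|H\|_{H^s}^2$, valid for $0<s<\tfrac12$.)

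There is, however, a genuine gap in your argument for part (ii). You write $s=1+\sigma$ with $0<\sigma<\tfrac12$, but the hypothesis is $\tfrac12<s<\tfrac32$, so in fact $\sigma=s-1\in(-\tfrac12,\tfrac12)$. Your reduction to part (i) therefore only covers the sub-range $1<s<\tfrac32$. For $\tfrac12<s\leq 1$ you have $\sigma\leq 0$, and two things need to be added. First, part (i) must be extended to the full range $-\tfrac12<s<\tfrac12$; this is immediate by duality, since multiplication by the real bounded function $\chi_{(0,+\infty)}$ is self-adjoint on $L^2$ and $(H^\sigma)^*=H^{-\sigma}$. Second, when $\sigma<0$ the expression $\chi_{(0,+\infty)}\partial_x H$ is a product of a rough cutoff with a genuine distribution, so it must be \emph{defined} via this duality extension, and the Leibniz identity $\partial_x(\chi_{(0,+\infty)}H)=\chi_{(0,+\infty)}\partial_x H$ (no boundary delta, thanks to $H(0)=0$) must be justified by approximating $H$ in $H^s$ by smooth compactly supported functions vanishing at the origin and passing to the limit. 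With these two additions your argument goes through for the whole range $\tfrac12<s<\tfrac32$.
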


For $s,b,\alpha\in \mathbb R$ we will define the Bourgain spaces $X^{s,b}\equiv X^{s,b}(\mathbb R^2)$, and $Y^{s,-b,\alpha}\equiv Y^{s,-b,\alpha}(\mathbb R^2)$, associated to the fifth order KdV equation as the spaces of tempered distributions $u\in S'(\mathbb R^2)$, such that
\begin{align*}
\|u\|^2_{X^{s,b}}:=\int_{\mathbb R^2}\langle\xi\rangle^{2s}\langle\tau+\xi^5\rangle^{2b}|\widehat u(\xi,\tau)|^2d\xi d\tau<\infty,
\end{align*}
and
\begin{align*}
\|u\|_{Y^{s,-b,\alpha}}:= &\|\langle \xi\rangle^s \langle \tau+\xi^5\rangle^{-b} \widehat u(\xi,\tau)\|_{L^2_{\xi\tau}} + \|\chi(\xi)\langle \tau \rangle^{\alpha-1} \widehat u(\xi,\tau)\|_{L^2_{\xi\tau}}\\
&+\left( \int_{-\infty}^{+\infty} \langle \xi \rangle^{2s} \left(\int_{-\infty}^{+\infty} \frac{|\widehat u(\xi,\tau)|}{\langle \tau + \xi^5 \rangle} d\tau \right)^2 d\xi \right)^{\frac12}<+\infty,
\end{align*}

respectively, where $\widehat u$ is the Fourier transform of $u$, being $u$ a distribution of the spatial variable $x$, and the time variable $t$, $x\to \xi$, $t\to \tau$, and $\chi$ is the characteristic function of the interval $[-1,1]$. Here the symbol $\langle\cdot\rangle$ is an abbreviation of $(1+|\cdot|)$, i.e., $\langle\cdot\rangle:=1+|\cdot|.$\\

It can be seen that, if $b>\frac12$, then the space $X^{s,b}$ is continuously embedded into the space $C_b(\mathbb R_t,H^s(\mathbb R))$ of continuous and bounded functions from the variable $t$ into the space $H^s(\mathbb R_x)$.\\

To construct solutions $u=u(x,t)\in\mathbb R$ of the IBVP \eqref{maineq} we first consider the linear non homogeneous problem
\begin{align}
\left. \begin{array}{rlr}
u_t+\partial_x^5 u&\hspace{-2mm}=0,&\quad x\in\mathbb R^+,\; t\in\mathbb R^+,\\
u(x,0)&\hspace{-2mm}=0,&\\
u(0,t)=h_1(t),\, \partial_x u(0,t)=h_2(t),\, \partial_x^2u(0,t)&\hspace{-2mm}=h_3(t),
\end{array} \right\}\label{maineqlin}
\end{align}

where $h_{j+1}\in H^{\frac{s+2-j}5}(\mathbb R^+_t)$, $s\geq0$, $j=0,1,2$, with the compatibility conditions $h_1(0)=0$ if $\frac12<s<\frac32$, $h_1(0)=h_2(0)=0$ if $\frac32<s<\frac52$, and $h_1(0)=h_2(0)=h_3(0)$ for $s>\frac52$.\\

We will denote by $W_0^t(0,h_1,h_2,h_3)(\cdot_x,\cdot_t)$ a certain extension to $\mathbb R^2$ of the solution of problem \eqref{maineqlin} (See section \ref{ILBVP}).\\

For $\tilde g\in H^s(\mathbb R)$, we will denote by $[W_{\mathbb R}(t)\tilde g](x)$ the solution $u=u(x,t)\in\mathbb R$ of the linear Cauchy problem
\begin{align}
\left. \begin{array}{rlr}
u_t+\partial_x^5 u&\hspace{-2mm}=0,&\quad x\in\mathbb R,\; t\in\mathbb R,\\
u(x,0)&\hspace{-2mm}=\tilde g(x),&
\end{array} \right\}\label{V2-1.5}
\end{align}

i.e.,
\begin{align}
[W_{\mathbb R}(t)\tilde g](x)=\mathcal F_x^{-1}\left[e^{-it (\cdot_\xi)^5}\widehat{\tilde g}(\cdot_\xi) \right](x),\label{V2-1.6}
\end{align}

where $\,\widehat{\text{}}\,$ and $\mathcal F_x^{-1}$ denote the direct and the inverse Fourier transform, respectively.\\

Let us recall that $\{W_{\mathbb R}(t)\}_{t\in\mathbb R}$ is a group of isometries in $H^s(\mathbb R)$.\\

For $t\in\mathbb R$, $T>0$, let us consider the integral equation
\begin{align}
u(t)=\eta(t)W_{\mathbb R}(t)g_l+\eta(t)\int_0^t W_{\mathbb R}(t-t')F_T(u(t'))dt'+\eta(t)W_0^t(0,h_1-p_1,h_2-p_2,h_3-p_3)(t),\label{V2-1.7}
\end{align}

where
\begin{align}
\eta(\cdot_t)\in C_0^\infty(\mathbb R_t)\text{ is such that }\supp\eta\subset[-1,1]\text{ and }\eta\equiv1\text{ in }[-\tfrac12,\tfrac12],\label{V2-1.8}
\end{align}
$g_l\in H^s(\mathbb R_x)$ is an extension of $g$ in $H^s(\mathbb R_x^+)$ such that
\begin{align}
\|g_l\|_{H^s(\mathbb R_x)}&\leq C\|g\|_{H^s(\mathbb R^+_x)},\label{V2-1.9}\\
F_T(u(t'))&=\eta(\tfrac{t'}{2T})\left(-\frac12\partial_x(u(t'))^2\right),\label{V2-1.10}\\
p_{j+1}(t)&=\eta(t) \partial_x^j \left[W_{\mathbb R}(t)g_l \right](0)+ \eta(t) \partial_x^j \int_0^t [W_{\mathbb R}(t-t') F_T(u(t'))](0) dt',\quad j=0,1,2. \label{V2-1.11}
\end{align}

Here, for fixed $t$,
\begin{align*}
u(t)&\equiv u(\cdot_x,t),\\
W_{\mathbb R}(t)g_l&\equiv [W_{\mathbb R}(t)g_l](\cdot_x),\\
W_0^t(0,h_1-p_1,h_2-p_2,h_3-p_3)(t)&\equiv W_0^t(0,h_1-p_1,h_2-p_2,h_3-p_3)(\cdot_x,t),
\end{align*}
are functions of the spatial variable $x$.\\

Our main goal will be to prove that there exists $T$, $0<T\leq\frac12$, such that the integral equation \eqref{V2-1.7} has a unique solution $u$ in a certain Banach space, when $g$ and $h_{j+1}$, $j=0,1,2$, are taken in adequate Sobolev spaces of the interval $(0,+\infty)$, $g$ in the spatial variable $x$, and $h_{j+1}$, $j=0,1,2$ in the temporal variable $t$.\\

The restriction of $u$ to $\mathbb R^+\times[0,T]$ is called local solution in time of the IBVP \eqref{maineq}.\\

In Section \ref{V2-S4} it will be clear in which sense this restriction is solution of the IBVP \eqref{maineq}.

\section{An inhomogeneous linear initial-boundary value problem}\label{ILBVP}

In this section we deduce an explicit formula for the solution of the IBVP \eqref{maineqlin}, where $h_j\in C_0^\infty(\mathbb R^+_t)$, $j=1,2,3$. To do this, we formally apply to the equation \eqref{maineqlin} Laplace transform in the time variable $t$.\\

We will denote by $\tilde u(x,\cdot_\lambda)$ the Laplace transform of $u(x,\cdot_t)$, which is defined by
$$\tilde u(x,\lambda)\equiv \mathcal L\{u(x,\cdot_t)\}(\lambda)=\int_0^{+\infty} u(x,t)e^{-\lambda t}dt,$$
where $\lambda\in\mathbb C$ with $\text{Re }\lambda\geq 0$ (here $\text{Re }\lambda$ means real part of $\lambda$).\\

Applying Laplace transform to the equation in \eqref{maineqlin} we obtain
$$\int_0^{+\infty} \partial_t u(x,t) e^{-\lambda t}dt+\partial_x^5 \tilde u(x,\lambda)=0.$$

Using integration by parts in the integral of the previous equation it follows that
\begin{align}
e^{-\lambda t} u(x,t) \Big|_{t=0}^{t=+\infty}+\int_0^{+\infty}\lambda e^{-\lambda t} u(x,t)dt+\partial_x^5 \tilde u(x,\lambda)=0.\label{V2-2}
\end{align}

It is clear that
$$e^{-\lambda t}u(x,t)\Big|_{t=0}=u(x,0)=0.$$

Additionally, let us assume that
$$\lim_{t\to+\infty}e^{-\lambda t}u(x,t)=0.$$

Then, in this case, equation \eqref{V2-2} becomes
\begin{align}
\lambda \tilde u(x,\lambda)+\partial_x^5 \tilde u(x,\lambda)=0,\label{V2-3}
\end{align}

where $\tilde u$ is such that
\begin{align}
\tilde u(0,\lambda)=\tilde h_1(\lambda),\quad \partial_x \tilde u(0,\lambda)=\tilde h_2(\lambda),\quad \partial_x^2 \tilde u(0,\lambda)=\tilde h_3(\lambda).\label{V2-4}
\end{align}

Let us denote by $r_j=r_j(\lambda)$ ($j=1,2,3,4,5$) the roots of the characteristic equation $\lambda+r^5=0$. Then the general solution of equation \eqref{V2-3} is given by $\tilde u(x,\lambda)=\sum_{j=1}^5 c_j e^{r_j x}$, where $c_j$ is an arbitrary constant.\\

We are interested in solutions of \eqref{V2-3} that satisfy \eqref{V2-4}, such that $\re(r_j)\leq 0$; i.e., we are looking for solutions $\tilde u$ such that


\begin{align}
\tilde u(x,\lambda)=\sum_{\{j:\re r_j\leq 0\}}c_je^{r_j x},\label{V2-6}
\end{align}

and
\begin{align}
\sum_{\{j:\re r_j\leq 0\}} c_j=\tilde h_1(\lambda),\quad \sum_{\{j:\re r_j\leq 0\}} c_j r_j =\tilde h_2(\lambda),\quad \sum_{\{j:\re r_j\leq 0\}} c_j r_j^2 = \tilde h_3(\lambda).\label{V3-6}
\end{align}

Let us apply inverse Laplace transform in \eqref{V2-6}, to obtain formally the solution of the problem \eqref{maineqlin},
\begin{align}
u(x,t)=\frac1{2\pi i}\int_{-\infty i+\gamma}^{+\infty i+\gamma} e^{\lambda t} \sum_{\{j:\re r_j\leq 0\}} c_j e^{r_j(\lambda)x} d\lambda,\quad x>0,\quad t>0,\label{V2-8}
\end{align}
where $\lambda=i\beta+\gamma$, with fixed $\gamma>0$, and $\beta\in\mathbb R$.\\

Since $\lambda+r^5=0$, it is true that $(i\beta+\gamma)+r^5=0$. Thus, taking limit when $\gamma\to 0^+$, we have that $i\beta+r^5=0$, with $\re r\leq 0$. This way
\begin{align}
u(x,t)=\frac1{2\pi}\int_{-\infty}^{+\infty} e^{i\beta t} \sum_{\{j:\re r_j\leq 0\}} c_j(i\beta) e^{r_j(i\beta)x} d\beta,\quad x>0,\quad t>0.\label{V2-9}
\end{align}
For $\beta<0$, the roots of $i\beta+r^5=0$ with $\re r\leq 0$, are
\begin{align}
r_1=|\beta|^{\frac15}e^{i\frac\pi2}, \quad r_2=|\beta|^{\frac15}e^{i\frac{9\pi}{10}}, \quad r_3=|\beta|^{\frac15}e^{i\frac{13\pi}{10}},\label{V3-8.1}
\end{align}

while, the coefficients $c_j(i\beta)$ ($j=1,2,3$), are such that
\begin{align}
\sum_{j=1}^3 c_j(i\beta)=\tilde h_1(i\beta),\quad \sum_{j=1}^3 r_j c_j(i\beta)=\tilde h_2(i\beta), \quad \sum_{j=1}^3 r_j^2 c_j(i\beta)=\tilde h_3(i\beta).\label{V3-9}
\end{align}

For $\beta>0$, the roots of $i\beta+r^5=0$ with $\re r\leq 0$, are
\begin{align}
r_1=\beta^{\frac15}e^{i\frac{7\pi}{10}}, \quad r_2=\beta^{\frac15}e^{i\frac{11\pi}{10}}, \quad r_3=\beta^{\frac15}e^{i\frac{3\pi}{2}},\label{V3-8.2}
\end{align}

and the coefficients $c_j(i\beta)$ ($j=1,2,3$), are the corresponding solutions of the linear system \eqref{V3-9}. This system has unique solution since

\begin{align*}
\left|
\begin{array}{ccc}
1 & 1 & 1\\
r_1 & r_2 & r_3\\
r_1^2 & r_2^2 & r_3^2
\end{array}
\right|=(r_3-r_2)(r_3-r_1)(r_2-r_1)\neq 0.
\end{align*}

From \eqref{V2-9}, we have that
\begin{align}
\notag u(x,t)=&\frac1{2\pi}\int_{-\infty}^{0}e^{i\beta t} \left( c_1(i\beta) e^{|\beta|^{\frac15}e^{i\frac{\pi}{2}}x}+c_2(i\beta) e^{|\beta|^{\frac15}e^{i\frac{9\pi}{10}}x}+c_3(i\beta) e^{|\beta|^{\frac15}e^{i\frac{13\pi}{10}}x}\right)d\beta\\
&+\frac1{2\pi}\int_0^{+\infty}e^{i\beta t} \left( c_1(i\beta) e^{\beta^{\frac15}e^{i\frac{7\pi}{10}}x}+c_2(i\beta) e^{\beta^{\frac15}e^{i\frac{11\pi}{10}}x}+c_3(i\beta) e^{\beta^{\frac15}e^{i\frac{3\pi}{2}}x}\right)d\beta.\label{V2-10}
\end{align}

Since
$$\tilde h_j(i\beta)=\int_{0}^{+\infty} e^{-i\beta t}h_j(t)dt=\int_{-\infty}^{+\infty} e^{-i\beta t}h_j(t)dt=\sqrt{2\pi}\,\widehat h_j(\beta),$$
and $\widehat h_j\in S(\mathbb R)$ (Schwartz space) (j=1,2,3), taking into account that $\re(r_j(i\beta))\leq 0$, then the function $u$ defined by \eqref{V2-10} for $x\geq 0$, $t\geq 0$, is well defined, is infinitely differentiable in $(0,+\infty)\times (0,+\infty)$, its derivatives can be obtained by means of differentiation under the integral sign, and in this manner it can be easily proved that $u$, given by \eqref{V2-10}, satisfies the differential equation $u_t(x,t)+\partial_x^5u(x,t)=0$ for $x\geq 0$, $t\geq 0$.\\

Let us see that $u$ satisfies the boundary conditions. Taking into account \eqref{V3-9}, it follows that

\begin{align*}
u(0,t)&=\frac1{{2\pi}}\int_{-\infty}^{0} e^{i\beta t}\left( c_1(i\beta)+c_2(i\beta)+c_3(i\beta)\right)d\beta+\frac1{{2\pi}}\int_{0}^{+\infty} e^{i\beta t}\left( c_1(i\beta)+c_2(i\beta)+c_3(i\beta)\right)d\beta\\
&=\frac1{{2\pi}}\int_{-\infty}^{+\infty} e^{i\beta t} \tilde h_1(i\beta) d\beta.
\end{align*}

But we have that
$$\tilde h_1(i\beta)=\int_0^{+\infty} e^{-i\beta t} h_1(t) dt=\int_{-\infty}^{+\infty} e^{-i\beta t} h_1(t) dt=\sqrt{2\pi}\, \widehat h_1(\beta).$$

Therefore
$$u(0,t)=\frac1{\sqrt{2\pi}}\int_{-\infty}^{+\infty} e^{i\beta t} \widehat h_1(\beta) d\beta= h_1(t).$$

Using differentiation under the integral sign and taking into account \eqref{V3-9}, it follows that $\partial_x u(0,t)=h_2(t)$ and $\partial_x^2 u(0,t)=h_3(t)$.\\

It remains to prove that $u$ given by \eqref{V2-10} is such that $u(x,0)=0$. We have that
\begin{align*}
u(x,0)=&\frac1{2\pi} \int_{-\infty}^0 \left( c_1(i\beta) e^{|\beta|^{\frac15}e^{i\frac{\pi}{2}}x}+c_2(i\beta) e^{|\beta|^{\frac15}e^{i\frac{9\pi}{10}}x}+c_3(i\beta) e^{|\beta|^{\frac15}e^{i\frac{13\pi}{10}}x} \right) d\beta\\
&+\frac1{2\pi} \int_0^{+\infty} \left( c_1(i\beta) e^{\beta^{\frac15}e^{i\frac{7\pi}{10}}x}+c_2(i\beta) e^{\beta^{\frac15}e^{i\frac{11\pi}{10}}x}+c_3(i\beta) e^{\beta^{\frac15}e^{i\frac{3\pi}{2}}x} \right) d\beta.
\end{align*}

Therefore, to see that $u(x,0)=0$, it is enough to prove that
\begin{align}
\int_{-\infty}^0 c_1(i\beta) e^{|\beta|^{\frac15}e^{i\frac{\pi}{2}}x} d\beta + \int_0^\infty c_1(i\beta) e^{\beta^{\frac15}e^{i\frac{7\pi}{10}}x} d\beta &=0,\label{V3-11}\\
\int_{-\infty}^0 c_2(i\beta) e^{|\beta|^{\frac15}e^{i\frac{9\pi}{10}}x} d\beta + \int_0^\infty c_2(i\beta) e^{\beta^{\frac15}e^{i\frac{11\pi}{10}}x} d\beta &=0,\label{V3-12}\\
\int_{-\infty}^0 c_3(i\beta) e^{|\beta|^{\frac15}e^{i\frac{13\pi}{10}}x} d\beta + \int_0^\infty c_3(i\beta) e^{\beta^{\frac15}e^{i\frac{3\pi}{2}}x} d\beta &=0.\label{V3-13}
\end{align}

We will prove equality \eqref{V3-11}, being the proof of \eqref{V3-12} and \eqref{V3-13} similar.\\

Making a change of variables, it can be seen that
\begin{align}
\int_{-\infty}^0 c_1(i\beta) e^{|\beta|^{\frac15}e^{i\frac{\pi}{2}}x} d\beta + \int_0^{+\infty} c_1(i\beta) e^{\beta^{\frac15}e^{i\frac{7\pi}{10}}x} d\beta = 5\left( \int_0^{+\infty} c_1(-i\beta^5) \beta^4 e^{\beta e^{i\frac{\pi}{2}}x} d\beta + \int_0^{+\infty} c_1(i\beta^5) \beta^4 e^{\beta e^{i\frac{7\pi}{10}}x}d\beta\right).\label{V3-14}
\end{align}

We will show that the expression in the parenthesis vanishes. From \eqref{V3-9}, \eqref{V3-8.1}, and \eqref{V3-8.2}, using Cramer's rule, it follows that

\begin{align}
\notag &\left( \int_0^{+\infty} c_1(-i\beta^5) \beta^4 e^{\beta e^{i\frac{\pi}{2}}x} d\beta + \int_0^{+\infty} c_1(i\beta^5) \beta^4 e^{\beta e^{i\frac{7\pi}{10}}x}d\beta\right)=\\
\notag &\int_0^{+\infty} \left(\frac{\beta^4 \tilde h_1(-i\beta^5) e^{i\frac{6\pi}{5}}}{(e^{i\frac{2\pi}{5}}-1)(e^{i\frac{4\pi}{5}}-1)}-\frac{\beta^3 \tilde h_2(-i\beta^5) e^{-i\frac{\pi}{10}}}{(e^{i\frac{2\pi}{5}}-1)^2}+\frac{\beta^2 \tilde h_3(-i\beta^5) e^{-i\pi}}{(e^{i\frac{2\pi}{5}}-1)(e^{i\frac{4\pi}{5}}-1)} \right) e^{\beta e^{i\frac{\pi}{2}}x} d\beta\\
&+\int_0^{+\infty} \left( \frac{\beta^4 \tilde h_1(i\beta^5) e^{i\frac{6\pi}{5}}}{(e^{i\frac{2\pi}{5}}-1)(e^{i\frac{4\pi}{5}}-1)}-\frac{\beta^3 \tilde h_2(i\beta^5) e^{-i\frac{3\pi}{10}}}{(e^{i\frac{2\pi}{5}}-1)^2}+\frac{\beta^2 \tilde h_3(i\beta^5) e^{-i\frac{7\pi}{5}}}{(e^{i\frac{2\pi}{5}}-1)(e^{i\frac{4\pi}{5}}-1)} \right)e^{\beta e^{i\frac{7\pi}{10}}x} d\beta=0.\label{V3-15}
\end{align}

In this manner it is enough to prove that:
\begin{align}
\int_0^{+\infty} \beta^4 \tilde h_1(-i\beta^5) e^{\beta e^{i\frac{\pi}{2}}x} d\beta + \int_0^{+\infty} \beta^4 \tilde h_1(i\beta^5) e^{\beta e^{i\frac{7\pi}{10}}x} d\beta =&0, \label{V3-16}\\
\int_0^{+\infty} \beta^3 \tilde h_2(-i\beta^5) e^{-i\frac{\pi}{10}}e^{\beta e^{i\frac{\pi}{2}}x} d\beta + \int_0^{+\infty} \beta^3 \tilde h_2(i\beta^5) e^{-i\frac{3\pi}{10}}e^{\beta e^{i\frac{7\pi}{10}}x} d\beta =&0,\text{ and} \label{V3-17}\\
\int_0^{+\infty} \beta^2 \tilde h_3(-i\beta^5) e^{-i\pi} e^{\beta e^{i\frac{\pi}{2}}x} d\beta + \int_0^{+\infty} \beta^2 \tilde h_3(i\beta^5) e^{-i\frac{7\pi}{5}}e^{\beta e^{i\frac{7\pi}{10}}x} d\beta =&0. \label{V3-18}
\end{align}

Since $h_j\in C_0^{\infty}(\mathbb R^+)$ ($j=1,2,3$), and $h'_1\in C_0^\infty(\mathbb R^+)$, there exist $\epsilon>0$, and $K>0$, such that
\begin{align*}
|h_j(t)|\leq K e^{-\epsilon t}\quad \forall t\geq 0,\quad \forall j=1,2,3,\quad \text{and}\quad |h'_1(t)|\leq Ke^{-\epsilon t}\quad \forall t\geq 0.
\end{align*}
In consequence (see \cite{B1973}, Theorem 5.1, p. 206), the Laplace transforms of $h_j$ ($j=1,2,3$), and $h'_1$, which we will denote by $\tilde h_j$, and $\widetilde {h'_1}$, respectively, are analytic functions in the half-plane $\{z\in\mathbb C:\text{Re }z>-\epsilon\}$, and
\begin{align}
|\tilde h_j(z)|\leq\frac K{\text{Re }z+\epsilon}\quad (j=1,2,3)\quad \text{and}\quad |\widetilde {h'_1}(z)|\leq\frac K{\text{Re }z+\epsilon}.\label{V2-12}
\end{align}

Besides, by properties of the Laplace transform,
\begin{align}
\widetilde {h'_1}(z)=z\tilde h_1(z).\label{V3-19}
\end{align}

To prove \eqref{V3-16}, it is enough to prove that
\begin{align}
\int_{0}^{+\infty} \beta^4 \tilde h_1(-i\beta^5) e^{\beta e^{i\frac{\pi}{2}}x} d\beta=-\int_{0}^{+\infty} \beta^4 \tilde h_1(i\beta^5) e^{\beta e^{i\frac{7\pi}{10}} x} d\beta.\label{V3-20}
\end{align}

Without loss of generality we can assume that $x=1$. Let us consider the analytic function $f_1$, defined by
$$f_1(z)=z^4 \tilde h_1(iz^5)e^{e^{i\frac{7\pi}{10}}z},$$
and, for $R>0$, the closed path
\begin{align}
\Gamma_R=I_R+C_R+II_R,\label{V3-21}
\end{align}

where
\begin{align*}
I_R:&\quad z(x)=x,\quad 0\leq x\leq R,\\
C_R:&\quad z(\theta)=Re^{-i\theta},\quad 0\leq \theta\leq \frac\pi5,\\
II_R:&\quad z(t)=(R-t)e^{-i\frac\pi5},\quad 0\leq t\leq R.
\end{align*}
Since $\displaystyle{\int_{\Gamma_R}f_1(z)dz=0}$, we have that
\begin{align}
\int_{I_R}f_1(z)dz=-\int_{C_R}f_1(z)dz-\int_{II_R}f_1(z)dz.\label{V2-13}
\end{align}

Let us observe that
\begin{align}
\int_{I_R}f_1(z)dz&=\int_0^R f_1(\beta)d\beta=\int_0^R \beta^4 \tilde h_1(i\beta^5) e^{e^{i\frac{7\pi}{10}}\beta}d\beta\label{V2-14},\\
\notag-\int_{II_R}f_1(z)dz&=\int_0^R f_1(\beta e^{-i\frac\pi5})d(\beta e^{-i\frac\pi5})=\int_0^R \beta^4 e^{-i\frac{4\pi}{5}} \tilde h_1(i\beta^5 e^{-i\pi}) e^{e^{i\frac{7\pi}{10}}e^{-i\frac{\pi}{5}}\beta}e^{-i\frac{\pi}{5}}d\beta\\
&=-\int_0^R \beta^4 \tilde h (-i\beta^5) e^{e^{i\frac{\pi}{2}}\beta}d\beta.\label{V2-15}
\end{align}

Therefore, if we manage to prove that
$$\lim_{R\to+\infty}\left| \int_{C_R}f_1(z)dz\right|=0,$$
taking into account \eqref{V2-13}, \eqref{V2-14}, and \eqref{V2-15}, we would conclude that \eqref{V3-20} with $x=1$ is true.\\

Let us estimate the integral of $f_1$ on $C_R$. Taking into account \eqref{V3-19}, and \eqref{V2-12}, we have\\
\begin{align}
\notag\left|\int_{C_R} f_1(z)dz \right|&=\left|\int_0^{\pi/5} R^4 e^{-4i\theta}\tilde h_1(iR^5 e^{-5i\theta}) e^{e^{i\frac{7\pi}{10}} e^{-i\theta}R} (-iRe^{-i\theta}) d\theta \right|=\left|\int_0^{\pi/5} iR^5 e^{-5i\theta}\tilde h_1(iR^5 e^{-5i\theta}) e^{e^{i\frac{7\pi}{10}} e^{-i\theta}R}  d\theta \right|\\
\notag&=\left|\int_0^{\pi/5} \tilde{h'_1}(iR^5 e^{-5i\theta}) e^{e^{i\frac{7\pi}{10}}e^{-i\theta}R} d\theta\right|\leq \int_0^{\pi/5} e^{R\cos(\frac{7\pi}{10}-\theta)}|\tilde{h'_1}(iR^5 e^{-5i\theta})|d\theta\\
\notag&\leq \int_0^{\pi/5} e^{R\cos(\frac{7\pi}{10}-\theta)}\frac {K}{\re(iR^5 e^{-5i\theta})+\epsilon} d\theta = \int_0^{\pi/5} e^{R\cos(\frac{7\pi}{10}-\theta)}\frac {K}{R^5 \sin(5\theta)+\epsilon} d\theta\\
\notag&=\int_{\pi/2}^{7\pi/10} e^{R\cos u} \frac K{R^5 \sin(\frac{7\pi}2-5u)+\epsilon} du=\int_0^{\pi/5} e^{-R \sin t} \frac K{R^5 \sin(\pi-5t)+\epsilon} dt\\
&=\int_0^{\pi/5} e^{-R\sin t} \frac K{R^5 \sin(5t)+\epsilon} dt \to 0,\label{V3-25}
\end{align}

when $R\to +\infty$, which proves \eqref{V3-16} for $x=1$. To prove \eqref{V3-17}, and \eqref{V3-18}, we proceed in a similar way, as we did for proving \eqref{V3-16}. Again, it is enough to consider $x=1$, and the analytic functions
$$f_2(z):=z^3 \tilde h_2(iz^5) e^{e^{i\frac{7\pi}{10}}z},\quad f_3(z):=z^2 \tilde h_2(iz^5) e^{e^{i\frac{7\pi}{10}}z},$$
respectively. In this manner we have proved that \eqref{V3-11} is true. Since \eqref{V3-12}, and \eqref{V3-13} are also valid, we conclude that $u(x,0)=0$; i.e., $u(x,t)$ given by \eqref{V2-10} is a classical solution of the IBVP \eqref{maineqlin}.\\

From \eqref{V2-10}, we can write
\begin{align}
u(x,t)=u_1(x,t)+u_2(x,t)+u_3(x,t),\label{V3-26}
\end{align}

where
\begin{align}
u_1(x,t)\equiv u_{11}(x,t)+u_{12}(x,t),\label{V3-27}
\end{align}

with
\begin{align}
u_{11}(x,t)=&\frac1{\sqrt{2\pi}} \int_{-\infty}^0 e^{i\beta t} \widehat h_1(\beta) \frac{e^{i\frac{6\pi}{5}}e^{|\beta|^{\frac15}e^{i\frac\pi2} x}}{(e^{i\frac{2\pi}5}-1)(e^{i\frac{4\pi}5}-1)} d\beta + \frac1{\sqrt{2\pi}} \int_0^{+\infty} e^{i\beta t} \widehat h_1(\beta) \frac{e^{\beta^{\frac15}e^{i\frac{3\pi}2 }x}}{(e^{i\frac{2\pi}5}-1)(e^{i\frac{4\pi}5}-1)} d\beta,\label{V3-28}\\
\notag u_{12}(x,t)=&\frac1{\sqrt{2\pi}} \int_{-\infty}^0 e^{i\beta t} \widehat h_1(\beta) \left\{ -\frac{e^{i\frac{2\pi}{5}}e^{|\beta|^{\frac15}e^{i\frac{9\pi}{10}} x}}{(e^{i\frac{2\pi}5}-1)^2} + \frac{e^{|\beta|^{\frac15}e^{i\frac{13\pi}{10}} x}}{(e^{i\frac{2\pi}5}-1)(e^{i\frac{4\pi}5}-1)}\right\} d\beta\\
\notag &+ \frac1{\sqrt{2\pi}} \int_0^{+\infty} e^{i\beta t} \widehat h_1(\beta) \left\{ \frac{e^{i\frac{6\pi}{5}}e^{\beta^{\frac15}e^{i\frac{7\pi}{10}} x}}{(e^{i\frac{2\pi}5}-1)(e^{i\frac{4\pi}5}-1)} - \frac{e^{i\frac{2\pi}5}e^{\beta^{\frac15}e^{i\frac{11\pi}{10}} x}}{(e^{i\frac{2\pi}5}-1)^2}\right\} d\beta\\
 \equiv&\frac1{\sqrt{2\pi}} \int_{-\infty}^{+\infty} e^{i\beta t} \widehat h_1(\beta) d_{12}(\beta,x) d\beta,\label{V3-29}
\end{align}

\begin{align}
u_2(x,t)\equiv u_{21}(x,t) + u_{22}(x,t),\label{V3-30}
\end{align}

with
\begin{align}
 u_{21}(x,t) =& \frac1{\sqrt{2\pi}} \int_{-\infty}^0 e^{i\beta t} \frac{\widehat h_2(\beta)}{|\beta|^{\frac15}} \left(\frac{-e^{|\beta|^{\frac15}e^{i\frac\pi2}x}}{(e^{i\frac{2\pi}5}-1)^2 e^{i\frac\pi{10}}} \right) d\beta + \frac1{\sqrt{2\pi}} \int_0^{+\infty} e^{i\beta t} \frac{\widehat h_2(\beta)}{\beta^{\frac15}} \left(\frac{-e^{\beta^{\frac15}e^{i\frac{3\pi}2}x}}{(e^{i\frac{2\pi}5}-1)^2 e^{i\frac{11\pi}{10}}} \right) d\beta,\label{V3-31}\\
\notag u_{22}(x,t)=&\frac1{\sqrt{2\pi}} \int_{-\infty}^0 e^{i\beta t} \frac{\widehat h_2(\beta)}{|\beta|^{\frac15}} \left\{ \frac{(e^{i\frac{4\pi}5}+1)e^{|\beta|^{\frac15}e^{i\frac{9\pi}{10}} x}}{(e^{i\frac{2\pi}5}-1)^2 e^{i\frac{9\pi}{10}}} - \frac{e^{|\beta|^{\frac15}e^{i\frac{13\pi}{10}} x}}{(e^{i\frac{2\pi}5}-1)^2 e^{i\frac{9\pi}{10}}}\right\} d\beta\\
\notag &+ \frac1{\sqrt{2\pi}} \int_0^{+\infty} e^{i\beta t} \frac{\widehat h_2(\beta)}{\beta^{\frac15}} \left\{ -\frac{e^{\beta^{\frac15}e^{i\frac{7\pi}{10}}x}}{(e^{i\frac{2\pi}5}-1)^2 e^{i\frac{3\pi}{10}}} + \frac{(e^{i\frac{4\pi}5}+1) e^{\beta^{\frac15} e^{i\frac{11\pi}{10} }x}}{(e^{i\frac{2\pi}5}-1)^2 e^{i\frac{11\pi}{10}}}\right\} d\beta\\
 \equiv&\frac1{\sqrt{2\pi}} \int_{-\infty}^{+\infty} e^{i\beta t} \frac{\widehat h_2(\beta)}{|\beta|^{\frac15}} d_{22}(\beta,x) d\beta,\label{V3-32}
\end{align}

\begin{align*}
u_3(x,t)\equiv u_{31}(x,t) + u_{32}(x,t),
\end{align*}

with
\begin{align*}
 u_{31}(x,t) =& \frac1{\sqrt{2\pi}} \int_{-\infty}^0 e^{i\beta t} \frac{\widehat h_3(\beta)}{|\beta|^{\frac25}} \left(\frac{e^{|\beta|^{\frac15}e^{i\frac\pi2}x}}{(e^{i\frac{2\pi}5}-1)(e^{i\frac{4\pi}5}-1)}e^{i\pi} \right) d\beta + \frac1{\sqrt{2\pi}} \int_0^{+\infty} e^{i\beta t} \frac{\widehat h_3(\beta)}{\beta^{\frac25}} \left(\frac{e^{\beta^{\frac15}e^{i\frac{3\pi}2}x}}{(e^{i\frac{2\pi}5}-1)(e^{i\frac{4\pi}5}-1) e^{i\frac{9\pi}{5}}} \right) d\beta,\\
\notag u_{32}(x,t)=&\frac1{\sqrt{2\pi}} \int_{-\infty}^0 e^{i\beta t} \frac{\widehat h_3(\beta)}{|\beta|^{\frac25}} \left\{ -\frac{e^{|\beta|^{\frac15}e^{i\frac{9\pi}{10} }x}}{(e^{i\frac{2\pi}5}-1)^2 e^{i\frac{7\pi}{5}}} + \frac{e^{|\beta|^{\frac15}e^{i\frac{13\pi}{10}} x}}{(e^{i\frac{2\pi}5}-1)(e^{i\frac{4\pi}5}-1) e^{i\frac{7\pi}{5}}}\right\} d\beta\\
\notag &+ \frac1{\sqrt{2\pi}} \int_0^{+\infty} e^{i\beta t} \frac{\widehat h_3(\beta)}{\beta^{\frac25}} \left\{ \frac{e^{\beta^{\frac15}e^{i\frac{7\pi}{10}}x}}{(e^{i\frac{2\pi}5}-1)(e^{i\frac{4\pi}5}-1) e^{i\frac{7\pi}{5}}} - \frac{e^{\beta^{\frac15} e^{i\frac{11\pi}{10} }x}}{(e^{i\frac{2\pi}5}-1)^2 e^{i\frac{9\pi}{5}}}\right\} d\beta\\
 \equiv&\frac1{\sqrt{2\pi}} \int_{-\infty}^{+\infty} e^{i\beta t} \frac{\widehat h_3(\beta)}{|\beta|^{\frac25}} d_{32}(\beta,x) d\beta.
\end{align*}

Let us observe that the expressions for $u_{11}(x,t)$, $u_{21}(x,t)$, and $u_{31}(x,t)$ make sense for $(x,t)\in\mathbb R^2$, while the expressions for $u_{12}(x,t)$, $u_{22}(x,t)$, and $u_{32}(x,t)$ just make sense for $x$ in a bounded from below set, and $t\in\mathbb R$.\\

Let us consider a function $\rho\in C^\infty(\mathbb R)$ such that $0\leq \rho\leq 1$, $\supp \rho\subset[-2,+\infty)$ and $\rho\equiv 1$ in $[0,+\infty)$. Let us extend $u_{12}$, $u_{22}$, and $u_{33}$ to $\mathbb R^2$ by

\begin{align}
U_{12}(x,t)&=\frac1{\sqrt{2\pi}} \int_{-\infty}^{+\infty} e^{i\beta t} \widehat h_1(\beta) d_{12}(\beta,x) \rho(|\beta|^{\frac15}x) d\beta,\label{V3-33}\\
U_{22}(x,t)&=\frac1{\sqrt{2\pi}} \int_{-\infty}^{+\infty} e^{i\beta t} \frac{\widehat h_2(\beta)}{|\beta|^{\frac15}} d_{22}(\beta,x) \rho(|\beta|^{\frac15}x) d\beta,\label{V3-34}\\
U_{32}(x,t)&=\frac1{\sqrt{2\pi}} \int_{-\infty}^{+\infty} e^{i\beta t} \frac{\widehat h_3(\beta)}{|\beta|^{\frac25}} d_{32}(\beta,x) \rho(|\beta|^{\frac15}x) d\beta,\label{V3-35}
\end{align}
respectively.\\

Now we consider the extension of $u$ to $\mathbb R^2$, that we will denote by $W_0^t(0,h_1,h_2,h_3)$, defined by
\begin{align}
W_0^t(0,h_1,h_2,h_3)(x,t):=u_{11}(x,t)+U_{12}(x,t)+u_{21}(x,t)+U_{22}(x,t)+u_{31}(x,t)+U_{32}(x,t),\label{N3.38}
\end{align}

for $(x,t)\in\mathbb R^2$.\\

Next we prove two lemmas. The first of them, related with Kato type regularizing properties for the group $\{W_{\mathbb R}(t)\}_{t\in\mathbb R}$ associated to the linear part of the fifth order KdV equation, tells us what is the regularity that the boundary data $h_{j+1}$, $j=0,1,2$, must have when the initial data belongs to $H^s(\mathbb R^+)$. In part (i) of the second lemma, we prove that the extension $W_0^t(0,h_1,h_2,h_3)$ of the solution of the IBVP \eqref{maineqlin} is a continuous function from the variable $t$ with values in $H^s(\mathbb R_x)$, $s\geq 0$, i.e., $W_0^t(0,h_1,h_2,h_3)\in C(\mathbb R_t; H^s(\mathbb R_x))$. In part (ii) of the second lemma we see that the functions $\partial_x^j W_0^t(0,h_1,h_2,h_3)$, $j=0,1,2$, are elements of $C(\mathbb R_x;H^{\frac{s+2-j}{5}}(0,T))$ for $0< T\leq \frac12$, respectively. This allows us to give sense later to the boundary conditions, when $h_{j+1}\in H^{\frac{s+2-j}{5}}(\mathbb R_t^+)$, $j=0,1,2$, are such that $\chi_{(0,+\infty)}h_{j+1}\in H_0^{\frac{s+2-j}{5}}(\mathbb R_t^+)$, $j=0,1,2$. In the proof of Lemma \ref{V3-L2}, part (ii), the Kato type regularizing property of the group given in Lemma \ref{V3-L1} is used.

\begin{lemma}\label{V3-L1} Let $s\geq 0$, $g\in H^s(\mathbb R_x)$, and $\eta(\cdot_t)\in C_0^\infty(\mathbb R)$ such that $\supp \eta\subset[-1,1]$ and $\eta\equiv 1$ in $[-\frac12,\frac12]$. Then, for $j=0,1,2$,
$$\eta(\cdot_t)\partial_x^j [W_{\mathbb R}(\cdot_t)g](\cdot_x)\in C(\mathbb R_x;H^{\frac{s+2-j}5}(\mathbb R_t)),$$
and
\begin{align}
\|\eta(\cdot_t)\partial_x^j[W_{\mathbb R}(\cdot_t)g](\cdot_x)\|_{C(\mathbb R_x;H^{\frac{s+2-j}5}(\mathbb R_t))}\leq C_{s,j}\|g\|_{H^s(\mathbb R_x)},\label{V3-37}
\end{align}
with $C_{s,j}>0$ independent of $g$.
\end{lemma}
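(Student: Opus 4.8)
The plan is to work on the Fourier side in the spatial variable, compute $\partial_x^j[W_{\mathbb R}(t)g](x)$ explicitly, and then estimate the $H^{\frac{s+2-j}5}(\mathbb R_t)$-norm of the resulting function of $t$ uniformly in $x$. Recall from \eqref{V2-1.6} that $[W_{\mathbb R}(t)g](x)=\frac1{\sqrt{2\pi}}\int_{\mathbb R}e^{ix\xi}e^{-it\xi^5}\widehat g(\xi)\,d\xi$, so $\partial_x^j[W_{\mathbb R}(t)g](x)=\frac1{\sqrt{2\pi}}\int_{\mathbb R}(i\xi)^je^{ix\xi}e^{-it\xi^5}\widehat g(\xi)\,d\xi$. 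First I would insert the cutoff $\eta(t)$ and take the temporal Fourier transform: writing $\eta(t)e^{-it\xi^5}=(\mathcal F_t^{-1}\widehat\eta(\cdot-\xi^5))(t)$ up to constants, one gets, after interchanging integrals,
\begin{align*}
\mathcal F_t\!\left(\eta(t)\partial_x^j[W_{\mathbb R}(t)g](x)\right)(\tau)=\frac{c}{\sqrt{2\pi}}\int_{\mathbb R}(i\xi)^je^{ix\xi}\widehat\eta(\tau+\xi^5)\widehat g(\xi)\,d\xi.
\end{align*}
Then the $H^{\frac{s+2-j}5}(\mathbb R_t)$-norm squared is $\int_{\mathbb R}\langle\tau\rangle^{\frac{2(s+2-j)}5}\Big|\int_{\mathbb R}(i\xi)^je^{ix\xi}\widehat\eta(\tau+\xi^5)\widehat g(\xi)\,d\xi\Big|^2 d\tau$.

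Next I would split the $\xi$-integral into the low-frequency region $|\xi|\le 1$ and the high-frequency region $|\xi|>1$. On $|\xi|\le1$ everything is harmless: $|\xi|^j\le1$, $\langle\tau\rangle^{\frac{s+2-j}5}\le C\langle\tau+\xi^5\rangle^{\frac{s+2-j}5}\langle\xi^5\rangle^{\frac{s+2-j}5}\le C\langle\tau+\xi^5\rangle^{\frac{s+2-j}5}$, and since $\widehat\eta\in\mathcal S(\mathbb R)$ the weight $\langle\tau+\xi^5\rangle^{\frac{s+2-j}5}|\widehat\eta(\tau+\xi^5)|$ is bounded in $L^2_\tau$ uniformly; a Cauchy–Schwarz in $\xi$ over $[-1,1]$ then bounds this piece by $C\|\widehat g\|_{L^2}\le C\|g\|_{H^s}$. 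For the high-frequency piece I would perform the change of variables $\mu=\xi^5$ on each half-line $\xi>1$ and $\xi<-1$ (so $\xi=\mathrm{sgn}(\mu)|\mu|^{1/5}$, $d\xi=\frac15|\mu|^{-4/5}d\mu$), turning the inner integral into $\int(i\,\mathrm{sgn}(\mu)|\mu|^{1/5})^j e^{ix\,\mathrm{sgn}(\mu)|\mu|^{1/5}}\widehat\eta(\tau+\mu)\widehat g(\mathrm{sgn}(\mu)|\mu|^{1/5})\frac15|\mu|^{-4/5}d\mu$. Writing $G(\mu):=\widehat g(\mathrm{sgn}(\mu)|\mu|^{1/5})|\mu|^{-2/5}$ so that $\|G\|_{L^2(|\mu|>1)}^2=5\int_{|\xi|>1}|\widehat g(\xi)|^2 d\xi\le C\|g\|_{H^s}^2$, the inner integral becomes a convolution in $\mu$ of $\widehat\eta$ with $|\mu|^{(j-2)/5}e^{ix\,\mathrm{sgn}(\mu)|\mu|^{1/5}}G(\mu)$ times constants; since $j\le2$ the factor $|\mu|^{(j-2)/5}$ is bounded for $|\mu|>1$, so by Young's inequality (or Minkowski) the $L^2_\tau$-norm of the convolution is controlled by $\|\widehat\eta\|_{L^1}\|G\|_{L^2}$. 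The key point now is handling the weight $\langle\tau\rangle^{\frac{s+2-j}5}$: one uses $\langle\tau\rangle\le C\langle\tau+\mu\rangle\langle\mu\rangle$ inside the convolution and absorbs $\langle\tau+\mu\rangle^{\frac{s+2-j}5}$ into the rapidly decaying $\widehat\eta$, while $\langle\mu\rangle^{\frac{s+2-j}5}\approx\langle\xi\rangle^{s+2-j}$ combines with $|\mu|^{(j-2)/5}\approx\langle\xi\rangle^{j-2}$ and $|\xi|^j$ to give exactly $\langle\xi\rangle^{s}$, so that the $x$-independent bound $C\|g\|_{H^s(\mathbb R_x)}$ emerges. (This identity — that the powers balance to $\langle\xi\rangle^s$ — is precisely the algebraic reason the exponent $\frac{s+2-j}5$ is the natural one.)

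Finally, for the continuity statement $\eta(\cdot_t)\partial_x^j[W_{\mathbb R}(\cdot_t)g](\cdot_x)\in C(\mathbb R_x;H^{\frac{s+2-j}5}(\mathbb R_t))$, I would note that $x\mapsto(i\xi)^je^{ix\xi}\widehat\eta(\tau+\xi^5)\widehat g(\xi)$ varies continuously in the relevant weighted $L^2_{\xi\tau}$ norm, by dominated convergence together with the uniform bound just established; continuity in $x$ of the $H^{\frac{s+2-j}5}_t$-valued map then follows from the same estimate applied to differences $g$ replaced by $e^{ix'\xi}-e^{ix\xi}$ acting on $\widehat g$, controlled via $|e^{ix'\xi}-e^{ix\xi}|\le\min(2,|x-x'||\xi|)$ and a splitting into $|\xi|$ large/small. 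The main obstacle is the bookkeeping in the high-frequency region: making the change of variables $\mu=\xi^5$ rigorous on both sign components, and carefully distributing the weight $\langle\tau\rangle^{\frac{s+2-j}5}$ via $\langle\tau\rangle\lesssim\langle\tau+\mu\rangle\langle\mu\rangle$ so that the $\langle\mu\rangle$-part recombines with the Jacobian and the $|\xi|^j$ factor to reproduce $\langle\xi\rangle^{2s}|\widehat g(\xi)|^2$ and nothing worse; everything else is a routine Young/Cauchy–Schwarz argument.
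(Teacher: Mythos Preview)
Your proposal is correct and follows essentially the same route as the paper's proof: compute the temporal Fourier transform to obtain the integral $\int(i\xi)^je^{ix\xi}\widehat\eta(\tau+\xi^5)\widehat g(\xi)\,d\xi$, split into low frequencies $|\xi|\le 1$ (handled by Cauchy--Schwarz and the boundedness of $\langle\tau\rangle/\langle\tau+\xi^5\rangle$ there) and high frequencies $|\xi|>1$ (handled by the change of variables $\mu=\xi^5$, Peetre's inequality $\langle\tau\rangle\lesssim\langle\tau+\mu\rangle\langle\mu\rangle$, and Young's inequality for the resulting convolution with $\widehat\eta$), and then deduce continuity in $x$ from the uniform bound via dominated convergence. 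The only cosmetic differences are that the paper splits the high-frequency region into $\xi<-1$ and $\xi>1$ separately and packages the convolution step by explicitly naming the functions $k_j(\tau)=\langle\tau\rangle^{\frac{s+2-j}5}|\widehat\eta(\tau)|$ and $l(\tau)=|\tau|^{\frac{s-2}5}|\widehat g((-\tau)^{1/5})|\chi_{\{\tau\ge1\}}$, whereas you keep both signs together via $\mathrm{sgn}(\mu)$ and describe the convolution more informally.
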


\begin{proof} It is easy to see that, for fixed $x\in\mathbb R$,
\begin{align*}
\mathcal F_t\left( \eta(\cdot_t)\partial_x^j[W_{\mathbb R}(\cdot_t)g](x)\right)(\tau)=&C\int_{|\xi|<1}e^{ix\xi} (i\xi)^j \widehat g(\xi)\widehat\eta(\tau+\xi^5)d\xi+C\int_{-\infty}^{-1}e^{ix\xi} (i\xi)^j \widehat g(\xi)\widehat\eta(\tau+\xi^5)d\xi\\
&+C\int_{1}^{+\infty}e^{ix\xi} (i\xi)^j \widehat g(\xi)\widehat\eta(\tau+\xi^5)d\xi \equiv I(x,\tau)+II(x,\tau)+III(x,\tau).
\end{align*}

Then
\begin{align}
\|\eta(\cdot_t) \partial_x^j[W_{\mathbb R}(\cdot_t)g](x)\|_{H^{\frac{s+2-j}{5}}}\leq \|\langle \cdot_\tau \rangle^{\frac{s+2-j}{5}} I(x,\cdot_\tau) \|_{L^2_\tau}+ \|\langle \cdot_\tau \rangle^{\frac{s+2-j}{5}} II(x,\cdot_\tau) \|_{L^2_\tau}+ \|\langle \cdot_\tau \rangle^{\frac{s+2-j}{5}} III(x,\cdot_\tau) \|_{L^2_\tau}.\label{V3-40}
\end{align}

Making the change of variables $\tau'=\tau+\xi^5$, taking into account that, for $|\xi|<1$, $\langle \tau-\xi^5\rangle^{\frac{2s+4-2j}5}\leq 2^{\frac{2s+4-2j}5}\langle\tau\rangle^{\frac{2s+4-2j}5}$, by using the Cauchy Schwarz inequality, it is possible to establish that
\begin{align}
\|\langle \cdot_\tau\rangle^{\frac{s+2-j}5}I(x,\cdot_\tau)\|_{L^2_\tau}\leq C_{s,j} \int_{|\xi|<1}|\widehat g(\xi)|d\xi\leq C_{s,j}\left(\int_{|\xi|<1}d\xi \right)^{\frac12} \left(\int_{|\xi|<1}|\widehat g(\xi)|^2d\xi \right)^{\frac12}\leq C_{s,j}\|g\|_{H^s(\mathbb R_x)}.\label{V3-41}
\end{align}

To estimate $\|\langle \cdot_\tau \rangle^{\frac{s+2-j}{5}} II(x,\cdot_\tau) \|_{L^2_\tau}$ we make the change of variables $\gamma=-\xi^5$, and we define
$$
k_j(\tau):=\langle\tau\rangle^{\frac{s+2-j}5}|\widehat\eta(\tau)|,\quad
l(\tau):=\left\{
\begin{array}{lr}
|\tau|^{\frac{s-2}5} |\widehat g(\sqrt[5]{-\tau})| &\text{for }\tau\geq 1,\\
0&\text{for }\tau<1,
\end{array}
\right.
$$

in order to conclude that
\begin{align}
\|\langle \cdot_\tau \rangle^{\frac{s+2-j}{5}} II(x,\cdot_\tau) \|_{L^2_\tau}&\leq \|(k_j\ast l)\|_{L^2_\tau}\leq \|k_j\|_{L^1_\tau}\|l\|_{L^2_\tau}\leq C_{s,j}\left(\int_1^{+\infty}|\tau|^{\frac{2(s-2)}5}|\widehat g(\sqrt[5]{-\tau})|^2 d\tau\right)^{\frac12}\leq C_{s,j}\|g\|_{H^s(\mathbb R_x)}.\label{V3-44}
\end{align}

In a similar way,
\begin{align}
\|\langle \cdot_\tau \rangle^{\frac{s+2-j}{5}} III(x,\cdot_\tau) \|_{L^2_\tau}\leq C_{s,j}\|g\|_{H^s(\mathbb R_x)}.\label{V3-44b}
\end{align}
From \eqref{V3-40}, \eqref{V3-41}, \eqref{V3-44}, and \eqref{V3-44b}; we conclude that
\begin{align}
\|\eta(\cdot_t)\partial_x^j [W_{\mathbb R}(\cdot_t)g](x)\|_{H^{\frac{s+2-j}5}(\mathbb R_t)}\leq C_{s,j}\|g\|_{H^s(\mathbb R_x)}.\label{V3-45}
\end{align}

Using the uniform estimate \eqref{V3-45} (taking into account that $C_{s,j}$ is independent of $x$), and the dominated convergence theorem, it follows that
$$\eta(\cdot_t)\partial_x^j[W_{\mathbb R}(\cdot_t)g](\cdot_x)\in C(\mathbb R_x;H^{\frac{s+2-j}5}(\mathbb R_t)).$$

Besides, from \eqref{V3-45} follows \eqref{V3-37}. Lemma \ref{V3-L1} is proved.
\end{proof}

\begin{lemma}\label{V3-L2} Let $s\geq0$, and $h_{j+1}\in C_0^{\infty}(\mathbb R^+)$, $j=0,1,2$. Then
\begin{enumerate}
\item[(i)] $W_0^t(0,h_1,h_2,h_3)\in C(\mathbb R_t,H^s(\mathbb R_x))$, and there exists $C>0$, independent of $h_{j+1}$, $j=0,1,2$, and of $t$, such that, for every $t\in\mathbb R$,
\begin{align}
\|W_0^t(0,h_1,h_2,h_3)(\cdot_x,t)\|_{H^s(\mathbb R_x)}\leq C\left( \sum_{j=0}^2 \|h_{j+1}\|_{H^{\frac{s+2-j}5}(\mathbb R_t)}\right).\label{V3-46}
\end{align}
\item[(ii)] If $\eta(\cdot_t)\in C_0^\infty(\mathbb R)$ is as in Lemma \ref{V3-L1}, then $\eta(\cdot_t) \partial_x^j W_0^t(0,h_1,h_2,h_3)(\cdot_x,\cdot_t) \in C(\mathbb R_x,H^{\frac{s+2-j}5}(\mathbb R_t))$, $j=0,1,2$, and there exists $C>0$, independent of $h_{j+1}$, $j=0,1,2$, and of $x$, such that, for every $x\in\mathbb R$,
\begin{align}
\|\eta(\cdot_t) \partial_x^j W_0^t(0,h_1,h_2,h_3)(x,\cdot_t)\|_{H^{\frac{s+2-j}5}(\mathbb R_t)}\leq C \left( \sum_{j=0}^2 \|h_{j+1}\|_{H^{\frac{s+2-j}5}(\mathbb R_t)}\right).\label{V3-47}
\end{align}
\end{enumerate}
\end{lemma}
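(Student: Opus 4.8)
The plan is to estimate separately the six summands $u_{11},U_{12},u_{21},U_{22},u_{31},U_{32}$ that make up $W_0^t(0,h_1,h_2,h_3)$ in \eqref{N3.38}, and then add. Here $h_k$ denotes the $k$-th boundary function (so $h_k\in H^{\frac{s+3-k}5}(\mathbb R_t)$), and the amplitude carried by both $u_{k1}$ and $U_{k2}$ is $|\beta|^{-\frac{k-1}5}\widehat h_k(\beta)$. The three \emph{oscillatory} pieces $u_{k1}$ are the easy ones: since $e^{|\beta|^{1/5}e^{i\pi/2}x}=e^{i|\beta|^{1/5}x}$ and $e^{\beta^{1/5}e^{i3\pi/2}x}=e^{-i\beta^{1/5}x}$ are unimodular for real $x$, the substitution $\beta=\mp\xi^5$ ($\xi>0$) turns $u_{k1}(\cdot_x,t)$ into an inverse Fourier transform in $x$ of a function supported in a half-line and equal, up to a constant, to $\xi^{\,5-k}e^{\mp i\xi^5 t}\widehat h_k(\mp\xi^5)$ (the $\xi^{4}$ from $|d\beta|=5\xi^4 d\xi$ minus $\xi^{k-1}$ from the amplitude). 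By Plancherel in $x$ and the same change of variables,
\begin{align*}
\|u_{k1}(\cdot_x,t)\|_{H^s(\mathbb R_x)}^2\le C\int_{\mathbb R}\langle|\beta|^{1/5}\rangle^{2s}\,|\beta|^{\frac{6-2k}5}\,|\widehat h_k(\beta)|^2\,d\beta\le C\int_{\mathbb R}\langle\beta\rangle^{\frac{2(s+3-k)}5}|\widehat h_k(\beta)|^2\,d\beta=C\|h_k\|_{H^{\frac{s+3-k}5}(\mathbb R_t)}^2,
\end{align*}
the middle step being the elementary inequality $\langle|\beta|^{1/5}\rangle^{2s}|\beta|^{\frac{6-2k}5}\lesssim\langle\beta\rangle^{\frac{2(s+3-k)}5}$ (compare powers as $|\beta|\to\infty$; the left side is $\lesssim1$ near $\beta=0$ because $k\le3$), and the bound is uniform in $t$ since the propagator is unimodular. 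Running the same computation with $\mathcal F_t$ in place of $\mathcal F_x$, after applying $\partial_x^j$ (which only multiplies the integrand by $(\mp i)^j|\beta|^{j/5}$ times a unimodular exponential and, on account of $\eta$, a convolution against $\widehat\eta$), gives the $u_{k1}$-contribution to \eqref{V3-47}.

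For the three \emph{evanescent} pieces $U_{k2}$, note that by construction, for $\pm\beta>0$ one has $d_{k2}(\beta,x)\rho(|\beta|^{1/5}x)=\Phi_{k2}^{\pm}(|\beta|^{1/5}x)$, where $\Phi_{k2}^{\pm}(y)=\rho(y)\sum_\theta c_\theta\,e^{e^{i\theta}y}$ is a finite combination of exponentials with $\re(e^{i\theta})<0$; hence $\Phi_{k2}^{\pm}$ is smooth, vanishes for $y<-2$, and decays together with all its derivatives exponentially as $y\to+\infty$, so $\Phi_{k2}^{\pm}\in\mathcal S(\mathbb R)$ and $(\Phi_{k2}^{\pm})^{(j)}$ is bounded. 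Thus $U_{k2}(x,t)=\tfrac{C}{\sqrt{2\pi}}\int_{\mathbb R}e^{i\beta t}\widehat h_k(\beta)|\beta|^{-\frac{k-1}5}\Phi_{k2}^{\pm}(|\beta|^{1/5}x)\,d\beta$, so $\partial_x^j$ only produces a factor $|\beta|^{j/5}(\Phi_{k2}^{\pm})^{(j)}(|\beta|^{1/5}x)$, and $\mathcal F_x[U_{k2}(\cdot_x,t)](\xi)=\tfrac{C}{\sqrt{2\pi}}\int e^{i\beta t}\widehat h_k(\beta)|\beta|^{-k/5}\widehat{\Phi_{k2}^{\pm}}(\xi/|\beta|^{1/5})\,d\beta$. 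For \eqref{V3-46} I would split the $\beta$-integral into $|\beta|\le1$ and $|\beta|>1$. On $|\beta|>1$, after substituting $\beta=\pm\mu^5$ the weighted $L^2_\xi$-norm of $\langle\xi\rangle^s\mathcal F_x[U_{k2}(\cdot,t)]$ is governed by an integral operator whose kernel is built from $\widehat{\Phi_{k2}^{\pm}}(\xi/\mu)$ and has exactly the homogeneity seen in the $u_{k1}$ case; its $L^2$-boundedness, with target $\|h_k\|_{H^{(s+3-k)/5}}$, follows from Schur's test, using the rapid decay of $\widehat{\Phi_{k2}^{\pm}}$ and $\langle\mu\zeta\rangle^s\lesssim\langle\mu\rangle^s\langle\zeta\rangle^s$ (a homogeneous weight in the Schur test when $s$ is small). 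On $|\beta|\le1$, Minkowski's integral inequality together with $\|\Phi_{k2}^{\pm}(|\beta|^{1/5}\cdot)\|_{H^s_x}\le C|\beta|^{-1/10}$ reduces matters to a $\beta$-integral near $0$; there, and in the corresponding low-frequency part of \eqref{V3-47} (the cases $j<k-1$), one uses the structural identities $\sum_l c^{(k)}_l\,e^{im\theta_l}=\delta_{m,k-1}$ for $m=0,1,2$ — equivalently $\partial_x^m D_k(\beta,0)=\delta_{m,k-1}$, so that the constant term of $\Phi_{k2}^{\pm}$ combines with the oscillatory terms to vanish and $\tilde B_k(y):=|\beta|^{(k-1)/5}D_k(\beta,x)$ satisfies $\tilde B_k(y)=\tfrac{y^{k-1}}{(k-1)!}+O(y^3)$ near $y=0$ — to gain the power of $|\beta|$ missing near $\beta=0$; the cut-off $\eta$ is absorbed by Young's inequality since $\widehat\eta\in\mathcal S$. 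Part (ii) then follows the same scheme with $\mathcal F_t$ (the $t$-dependence being only $e^{i\beta t}$) and the boundedness of $(\Phi_{k2}^{\pm})^{(j)}$, reducing to $\langle\beta\rangle^{\frac{2(s+2-j)}5}|\beta|^{\frac{2(j-k+1)}5}\lesssim\langle\beta\rangle^{\frac{2(s+3-k)}5}$ at high frequency plus the low-frequency cancellation step.

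Finally, in both parts the uniform bounds just established, together with the dominated convergence theorem, yield the asserted continuity of $t\mapsto W_0^t(0,h_1,h_2,h_3)(\cdot_x,t)$ into $H^s(\mathbb R_x)$ and of $x\mapsto\eta\,\partial_x^j W_0^t(0,h_1,h_2,h_3)(x,\cdot_t)$ into $H^{(s+2-j)/5}(\mathbb R_t)$, exactly as at the end of the proof of Lemma \ref{V3-L1}. The main work — and the only genuinely delicate part — is the analysis of the $U_{k2}$ pieces: extracting true $L^2$-type smoothing at high frequency through the Schur-test argument keyed to the rapid decay of $\widehat{\Phi_{k2}^{\pm}}$, and controlling the negative powers of $|\beta|$ coming from the amplitudes $|\beta|^{-(k-1)/5}$ near $\beta=0$, which is precisely where the cancellations forced by having three boundary conditions enter. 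Everything else reduces to a change of variables, Plancherel, the elementary weight comparisons above, and the continuity-by-dominated-convergence argument already used in Lemma \ref{V3-L1}.
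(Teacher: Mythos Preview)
Your decomposition into the six pieces $u_{k1},U_{k2}$ and your treatment of the oscillatory terms $u_{k1}$ via the substitution $\beta=\mp\xi^5$ match the paper's (see \eqref{V3-48}--\eqref{V3-51}). The difference lies in the evanescent pieces $U_{k2}$, and there your low-frequency step for part~(i) has a gap.

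The paper performs the substitution $\beta=-\mu^5$ \emph{first}, turning $|\beta|^{-(k-1)/5}\widehat h_k(\beta)\,d\beta$ into $\mu^{5-k}\widehat h_k(-\mu^5)\,d\mu$ with a nonnegative power of $\mu$, and then writes each evanescent piece as $T\bigl(W_{\mathbb R}(t)\psi_k\bigr)$ for the operator $(T\varphi)(x)=\int g(\mu x)e^{i\mu x\sin\theta}\widehat\varphi(\mu)\,d\mu$ with $g\in\mathcal S$ and $\widehat\psi_k(\mu)=\mu^{5-k}\widehat h_k(-\mu^5)\chi_{(0,\infty)}(\mu)$. Boundedness of $T$ on $H^s$ is shown by a direct change-of-variables at $s=0$, differentiation at $s\in\mathbb N$, and interpolation; part~(ii) then reduces to Lemma~\ref{V3-L1} applied to $\psi_k$. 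No low-frequency difficulty arises because the substitution has already removed the negative powers, and no cancellation between oscillatory and evanescent parts is ever invoked.

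By contrast, your Minkowski bound for $|\beta|\le1$ in part~(i) leads to $\int_{|\beta|\le1}|\widehat h_k(\beta)|\,|\beta|^{-(k-1)/5-1/10}\,d\beta$, which for $k=3$ is $\int_{|\beta|\le1}|\widehat h_3(\beta)|\,|\beta|^{-1/2}\,d\beta$ and cannot be controlled by $\|h_3\|_{H^{s/5}}$ (Cauchy--Schwarz produces the divergent $\int_0^1|\beta|^{-1}d\beta$). Your proposed fix --- recombining with $u_{31}$ and invoking $\tilde B_3(y)=\tfrac12y^2+O(y^3)$ --- does not work here: the recombined amplitude $\tilde B_3(|\beta|^{1/5}\cdot_x)$ contains the unimodular oscillatory exponentials and is not in $L^2_x$, so vanishing at the single point $y=0$ gives no improvement in the $H^s_x$ norm. (The cancellation \emph{is} meaningful for part~(ii), where $x$ is fixed and $y\to0$ as $|\beta|\to0$; but then your claim that the $u_{k1}$ are ``the easy ones'' for part~(ii) is also wrong when $j<k-1$, since the same negative power $|\beta|^{(j-k+1)/5}$ appears there and your stated weight comparison fails near $\beta=0$.) The clean remedy is the paper's: substitute $\beta=\mu^5$ at the outset, after which all low-frequency powers are nonnegative and both parts follow uniformly from boundedness of $T$ and Lemma~\ref{V3-L1}, with no recourse to structural identities.
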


\begin{proof} To illustrate the proof method, we will only provide estimates for the terms
\begin{align*}
(W_1 h_1)(x,t):=&u_{11}(x,t),\\
(W_2 h_2)(x,t):=& \frac1{\sqrt{2\pi}} \int_{-\infty}^0 e^{i\beta t} \frac{\widehat h_2(\beta)}{|\beta|^{\frac15}} \frac{(e^{i\frac{4\pi}{5}}+1) e^{|\beta|^{\frac15}e^{i\frac{9\pi}{10}}x}}{(e^{i\frac{2\pi}5}-1)^2 e^{i\frac{9\pi}{10}}} \rho(|\beta|^{\frac15} x) d\beta\\
=& C\int_{-\infty}^0 e^{i\beta t} \frac{\widehat h_2(\beta)}{|\beta|^{\frac15}} e^{-|\beta|^{\frac15}x \cos(\frac\pi{10})}e^{i|\beta|^{\frac15}x \sin(\frac\pi{10})} \rho(|\beta|^{\frac15}x) d\beta,
\end{align*}
as the estimates for the other terms comprising $W_0^t(0,h_1,h_2,h_3)(x,t)$ are similar.\\

\begin{enumerate}
\item[(i)] For $t\in\mathbb R$ let us estimate $\|(W_1h_1)(\cdot_x,t)\|_{H^s(\mathbb R_x)}$. Defining functions $\psi_1$ and $\psi_2$ through their Fourier transforms, respectively by
$$\widehat \psi_1(\beta):=\beta^4 \widehat h_1(-\beta^5) \chi_{[0,+\infty)}(\beta),\quad  \text{and} \quad \widehat \psi_2(\beta):= \beta^4 \widehat h_1(\beta^5) \chi_{[0,+\infty)}(\beta),$$

from the definition of $(W_1h_1)(x,t)$, we see that
\begin{align}
(W_1h_1)(x,t)=u_{11}(x,t)=C_{11} [W_{\mathbb R}(t) \psi_1](x) + C_{22} [W_{\mathbb R}(-t) \psi_2](-x).\label{V3-48}
\end{align}

This way
\begin{align}
\notag \|(W_1h_1)(\cdot_x,t)\|_{H^s(\mathbb R_x)}\leq & C \left\{ \left( \int_{-\infty}^{+\infty} \langle \beta \rangle^{2s} \beta^8 |\widehat h_1(-\beta^5)|^2 d\beta \right)^{\frac12} +  \left( \int_{-\infty}^{+\infty} \langle \beta \rangle^{2s} \beta^8 |\widehat h_1(\beta^5)|^2 d\beta \right)^{\frac12} \right\}\\
\leq & C \left( \int_{-\infty}^{+\infty} \langle \beta^{\frac15} \rangle^{2s} \beta^{\frac85} |\widehat h_1(\beta)|^2 \beta^{-\frac45} d\beta \right)^{\frac12}\leq C \|h_1\|_{H^{\frac{s+2}{5}}(\mathbb R_t)}. \label{V3-51}
\end{align}

Since the map $t\mapsto W_{\mathbb R}(t) g$ for $g\in H^s(\mathbb R_x)$ fixed, is continuous from $\mathbb R_t$ into $H^s(\mathbb R_x)$, then, from \eqref{V3-48}, we conclude that $W_1h_1\in C(\mathbb R_t; H^s(\mathbb R_x))$.\\

Let us estimate now $\| (W_2h_2)(\cdot_x,t) \|_{H^s(\mathbb R_x)}$. Let $g$ be the Schwartz space function defined by $g(x):=e^{-x\cos(\frac\pi{10})}\rho(x)$. Taking into account the change of variables $\beta'=-\beta^5$, and defining a function $\psi$ through its Fourier transform by
\begin{align*}
\widehat \psi(\beta):= \beta^3 \widehat h_2(-\beta^5) \chi_{(0,+\infty)}(\beta),
\end{align*}

we can write
\begin{align*}
(W_2h_2)(x,t) =& C\int_{-\infty}^{+\infty} e^{-i\beta^5 t} \widehat h_2(-\beta^5) g(\beta x) e^{i\beta x \sin(\frac\pi{10})} \beta^3 \chi_{(0,+\infty)}(\beta) d\beta\\
=&C \int_{-\infty}^{+\infty} g(\beta x) e^{i\beta x \sin(\frac\pi{10})} \mathcal F(W_{\mathbb R}(t)\psi)(\beta) d\beta.
\end{align*}

Let us observe that
\begin{align*}
\|\psi\|_{H^s(\mathbb R)} = \left( \int_0^{+\infty} \langle \beta \rangle^{2s} \beta^6 |\widehat h_2(-\beta^5)|^2 d\beta \right)^{\frac12} = C \left( \int_{-\infty}^0 \langle \beta^{\frac15} \rangle^{2s} \beta^{\frac65} |\widehat h_2(\beta)|^2 \frac1{\beta^{\frac45}} d\beta \right)^{\frac12}\leq  C \|h_2\|_{H^{\frac{s+1}5}(\mathbb R_t)}.
\end{align*}

Taking into account that $\|W_{\mathbb R}(t) \psi\|_{H^s}=\|\psi\|_{H^s}$, we would conclude that
\begin{align}
\| (W_2h_2)(\cdot_x,t) \|_{H^s(\mathbb R_x)}\leq C \|h_2\|_{H^{\frac{s+1}{5}}(\mathbb R_t)},\label{V3-54}
\end{align}

with $C$ independent of $t$, if we prove that the linear operator $T$ defined by
$$(T\varphi)(x) := \int_{\mathbb R} g(\beta x) e^{i\beta x \sin(\frac\pi{10})} \widehat \varphi(\beta) d\beta,$$

is bounded from $H^s(\mathbb R_x)$ into $H^s(\mathbb R_x)$, for $s\geq 0$.\\

For $s=0$, making the change of variables $\beta'=\beta x$, we have that
\begin{align*}
\|T\varphi\|_{L^2_x} \leq & \int_{\mathbb R} |g(\beta')| \left( \int_\mathbb R   x^{-2} |\widehat\varphi(x^{-1}\beta')|^2 dx  \right)^{1/2}d\beta'=\int_{\mathbb R}|g(\beta')| \left( \int_\mathbb R   \frac1{|\beta'|}|\widehat\varphi(y)|^2 dy  \right)^{1/2}d\beta'\leq C\|\varphi\|_{L^2_x}.
\end{align*}

For $s\in\mathbb N$, noticing that
\begin{align*}
\frac{d^s}{dx^s}(T\varphi)(x)=\int_{\mathbb R} \theta^{(s)}(\beta x) \beta^s \widehat \varphi(\beta) d\beta,
\end{align*}

where $\theta(x) := g(x)e^{ix\sin(\frac\pi{10})}$, and therefore, for $x\neq 0$, $\theta^{(s)}(x\cdot_{\beta})(\cdot_\beta)^s$ is a Schwartz function of the variable $\beta$, it can be seen that $T:H^s(\mathbb R_x)\to H^s(\mathbb R_x)$ is a continuous operator.\\

For $s>0$ the proof that $T: H^s(\mathbb R_x)\to H^s(\mathbb R_x)$ is continuous follows by interpolation.\\

This way we can affirm that the map $t\mapsto (W_2h_2)(\cdot_x,t)$ is continuos from $\mathbb R_t$ into $H^s(\mathbb R_x)$, since it is the composition of the continuous map $t\mapsto W_{\mathbb R}(t)\varphi$ from $\mathbb R_t$ into $H^s(\mathbb R_x)$ and the continuous map $T:H^s(\mathbb R_x)\to H^s(\mathbb R_x)$.\\

We have proved that $W_2h_2\in C(\mathbb R_t; H^s(\mathbb R_x))$, and that \eqref{V3-54} is valid, with $C$ independent of $t$.\\

\item[(ii)] For $x\in\mathbb R$ fixed, let us estimate $\| \eta(\cdot_t) \partial_x^j(W_1h_1)(x,\cdot_t)\|_{H^{\frac{s+2-j}5}(\mathbb R_t)}$, $j=0,1,2$.\\

Using \eqref{V3-48} it is clear that
$$\eta(\cdot_t) \partial_x^j(W_1h_1)(x,\cdot_t) = C_{11} \eta(\cdot_t) \partial_x^j[W_{\mathbb R}(\cdot_t) \psi_1](x) + C_{22} \eta(\cdot_t) \partial_x^j[W_{\mathbb R}({\scriptstyle -}\cdot_t)\psi_2](-x),$$

with $\psi_1$, $\psi_2$ in $H^s(\mathbb R_x)$ as in part (i). Hence, by Lemma \ref{V3-L1}, and \eqref{V3-51}, $\eta(\cdot_t) \partial_x^j(W_1h_1)(\cdot_x,\cdot_t)\in C(\mathbb R_x; H^{\frac{s+2-j}{5}}(\mathbb R_t))$, $j=0,1,2$, and
\begin{align}
\| \eta(\cdot_t) \partial_x^j(W_1h_1)(x,\cdot_t)\|_{H^{\frac{s+2-j}{5}}(\mathbb R_t)} \leq & C_{s,j} \left( \|\psi_1\|_{H^s(\mathbb R_x)} + \|\psi_2\|_{H^s(\mathbb R_x)} \right) \leq C_{s,j} \|h_1\|_{H^{\frac{s+2}5}(\mathbb R_t)}.\label{V3-59}
\end{align}

For $x\in\mathbb R$ fixed, let us estimate $\|\eta(\cdot_t) \partial_x^j (W_2h_2)(x,\cdot_t)\|_{H^{\frac{s+2-j}{5}}(\mathbb R_t)}$, $j=0,1,2$.\\

Using the expressions for $W_2h_2(x,t)$ and $\psi$ given in part (i), and defining $\theta(x) :=  g(x) e^{i \sin(\frac\pi{10}) x}$, we have that
\begin{align*}
\partial_x^j W_2h_2(x,t) = C \partial_x^j \int_{-\infty}^{+\infty} \theta(\beta x) e^{-i \beta^5 t}\widehat\psi(\beta)d\beta = C\int_{-\infty}^{+\infty} \theta^{(j)} (\beta x) \beta^j e^{-i\beta^5 t} \widehat \psi(\beta) d\beta,
\end{align*}

Then, for $x\in\mathbb R$ fixed,

\begin{align}
\notag\mathcal F_t[\eta(\cdot_t) \partial_x^j (W_2h_2)(x,\cdot_t)](\tau) = &\frac1{\sqrt{2\pi}}\int_{-\infty}^{+\infty} e^{-i\tau t} \eta(t) \partial_x^j (W_2h_2)(x,t)dt= C\int_{\mathbb R}\theta^{(j)}(\beta x) \beta^j \widehat\psi(\beta) \widehat\eta(\tau+\beta^5)d\beta\\
\notag= & C\int_{|\xi|<1} \theta^{(j)}(x\xi) \xi^j \widehat\psi(\xi)\widehat\eta(\tau+\xi^5)d\xi+C\int_{|\xi|\geq 1}\theta^{(j)}(x\xi) \xi^j \widehat\psi(\xi)\widehat\eta(\tau+\xi^5)d\xi\\
\equiv & I(x,\tau)+II(x,\tau).\label{V3-60}
\end{align}

This way, for $x\in\mathbb R$,
\begin{align}
\left\|\eta(\cdot_t) \partial_x^j (W_2h_2)(x,\cdot_t) \right\|_{H^{\frac{s+2-j}5}(\mathbb R_t)}\leq \left\| \langle \cdot_\tau\rangle^{\frac{s+2-j}5}I(x,\cdot_\tau) \right\|_{L^2_\tau}+\left\| \langle \cdot_\tau\rangle^{\frac{s+2-j}5}II(x,\cdot_\tau) \right\|_{L^2_\tau}.\label{V3-61}
\end{align}

Proceeding as in the proof of Lemma \ref{V3-L1}, it can be seen that 
\begin{align}
\left\| \langle \cdot_\tau\rangle^{\frac{s+2-j}5}I(x,\cdot_\tau) \right\|_{L^2_\tau}\leq C_{sj} \|h_2\|_{H^{\frac15}(\mathbb R_t)}\leq C_{sj} \|h_2\|_{H^{\frac{s+1}5(\mathbb R_t)}}.\label{V3-62},
\end{align}

and
\begin{align}
\|\langle \cdot_\tau\rangle^{\frac{s+2-j}5}II(x,\cdot_\tau)\|_{L^2_\tau}\leq C_{s,j} \|h_2\|_{H^{\frac{s+1}5}(\mathbb R_t)}.\label{V3-64}
\end{align}

Using \eqref{V3-61} and the uniform estimates \eqref{V3-62} and \eqref{V3-64} (taking into account that $C_{s,j}$ is independent of $x$) and the dominated convergence theorem, we conclude that $\eta(\cdot_t)\partial_x^j (W_2h_2)(\cdot_x,\cdot_t)\in C(\mathbb R_x; H^{\frac{s+2-j}5}(\mathbb R_t))$.\\

Part (ii) is proved.

\end{enumerate}

\end{proof}

\begin{remark}\label{V3-R2} Let $s\geq 0$ and $h_{j+1}\in H^{\frac{s+2-j}5}(\mathbb R^+_t)$, $j=0,1,2$, such that $\chi_{(0,+\infty)}h_{j+1}\in H_0^{\frac{s+2-j}5}(\mathbb R_t)$. Since $C_0^\infty(\mathbb R_t^+)$ is dense in $H_0^{\frac{s+2-j}5}(\mathbb R_t^+)$, there exists a sequence $\{h_{j+1,n}\}_{n\in\mathbb N}$ in $C_0^\infty (\mathbb R_t^+)$ such that $h_{j+1,n}\to \chi_{(0,+\infty)}h_{j+1}$ in $H^{\frac{s+2-j}5}(\mathbb R_t)$.\\

From \eqref{V3-46} and \eqref{V3-47} in Lemma \ref{V3-L2}, we conclude that
$$\{W_0^t(0,h_{1,n},h_{2,n}, h_{3,n})\}_{n\in\mathbb N}\quad \text{and} \quad \{\eta(\cdot_t, \partial_x^j W_0^t(0,h_{1,n}, h_{2,n}, h_{3,n})\}_{n\in\mathbb N},\quad j=0,1,2,$$
are Cauchy sequences in $C(\mathbb R_t;H^s(\mathbb R_x))$ and $C(\mathbb R_x;H^{\frac{s+2-j}5}(\mathbb R_t))$, respectively.\\

Let us define $W_0^t(0,h_1,h_2,h_3)$ in $C(\mathbb R_t;H^s(\mathbb R_x))$ by
$$W_0^t(0,h_1,h_2,h_3):=\lim_{n\to+\infty}W_0^t(0,h_{1,n}, h_{2,n}, h_{3,n}) \text{ in }C(\mathbb R_t;H^s(\mathbb R_x)),$$
and $\eta(\cdot_t) \partial_x^j W_0^t(0,h_1,h_2,h_3)$ in $C(\mathbb R_x;H^{\frac{s+2-j}5}(\mathbb R_t))$, $j=0,1,2$, by
$$\eta(\cdot_t) \partial_x^j W_0^t(0,h_1,h_2,h_3):=\lim_{n\to+\infty}\eta(\cdot_t) \partial_x^j W_0^t(0,h_{1,n},h_{2,n},h_{3,n}) \text{ in }C(\mathbb R_x;H^{\frac{s+2-j}5}(\mathbb R_t)).$$

It is clear that if $h_{j+1}\in H^{\frac{s+2-j}5}(\mathbb R_t^+)$, $j=0,1,2$, are such that $\chi_{(0,+\infty)}h_{j+1}\in H_0^{\frac{s+2-j}5}(\mathbb R^+_t)$, then, from the former convergences, and \eqref{V3-46} and \eqref{V3-47} in Lemma \ref{V3-L2}, it follows that
\begin{align}
\|W_0^t(0,h_1,h_2,h_3)(\cdot_x,t)\|_{H^s(\mathbb R_x)} \leq & C \sum_{j=0}^2 \|\chi_{(0,+\infty)} h_{j+1}\|_{H^{\frac{s+2-j}5}(\mathbb R_t)},\text{ and}\label{V3-66}\\
\|\eta(\cdot_t) \partial_x^j W_0^t(0,h_1,h_2,h_3)(x,\cdot t)\|_{H^{\frac{s+2-j}5}(\mathbb R_t)} \leq & C \sum_{j=0}^2 \|\chi_{(0,+\infty)} h_{j+1}\|_{H^{\frac{s+2-j}5}(\mathbb R_t)}.\label{V3-67}
\end{align}
\end{remark}

We finish this section by estimating the norm of $\eta(\cdot_t)W_0^t(0,h_1,h_2,h_3)$ in the modified Bourgain space $X^{s,b,\alpha}$.

\begin{lemma}\label{V3-L4} Let $s\geq0$, $0<b\leq\frac12$, $\frac12<\alpha<\frac35$, and $\eta(\cdot_t)\in C_0^\infty(\mathbb R)$ as in Lemma \ref{V3-L1}. For $j=0,1,2$, let us assume that $h_{j+1}\in H^{\frac{s+2-j}5}(\mathbb R_t^+)$ are such that $\chi_{(0,+\infty)}h_{j+1}\in H_0^{\frac{s+2-j}5}(\mathbb R_t^+)$. Then there exists $C>0$, not depending on $h_{j+1}$, $j=0,1,2$, such that
\begin{align}
\|\eta(\cdot_t)W_0^t(0,h_1,h_2,h_3)(\cdot_x,\cdot_t)\|_{X^{s,b,\alpha}}\leq C \left( \sum_{j=0}^2 \|\chi_{(0,+\infty)}h_{j+1}\|_{H^{\frac{s+2-j}5}(\mathbb R_t)}\right).\label{V3-70}
\end{align}
\end{lemma}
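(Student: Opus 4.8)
The plan is to reduce first to smooth boundary data and then to estimate, term by term, the six constituents $u_{11},U_{12},u_{21},U_{22},u_{31},U_{32}$ of $W_0^t(0,h_1,h_2,h_3)$. For the reduction: given $h_{j+1}$ as in the statement, choose $h_{j+1,n}\in C_0^\infty(\mathbb R_t^+)$ with $h_{j+1,n}\to\chi_{(0,+\infty)}h_{j+1}$ in $H^{\frac{s+2-j}5}(\mathbb R_t)$ as in Remark \ref{V3-R2}; once \eqref{V3-70} is known for data in $C_0^\infty(\mathbb R_t^+)$, applying it to differences shows that $\{\eta(\cdot_t)W_0^t(0,h_{1,n},h_{2,n},h_{3,n})\}_n$ is Cauchy in $X^{s,b,\alpha}$, and since both $X^{s,b,\alpha}$ and $C(\mathbb R_t;H^s(\mathbb R_x))$ embed continuously in $S'(\mathbb R^2)$, its $X^{s,b,\alpha}$--limit coincides with the distribution $\eta(\cdot_t)W_0^t(0,h_1,h_2,h_3)$ of Remark \ref{V3-R2}; \eqref{V3-70} in general then follows by letting $n\to\infty$. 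So from now on $h_{j+1}\in C_0^\infty(\mathbb R_t^+)$, whence $\widehat h_{j+1}\in\mathcal S(\mathbb R)$. As in the proof of Lemma \ref{V3-L2} it suffices to bound one representative ``group--type'' term ($u_{11}$) and one ``boundary--layer--type'' term (the operator $W_2h_2$ of that proof), the others being analogous with obvious changes in the powers of $|\beta|$. For $u_{11}$ one uses $\mathcal F_{x,t}\big(\eta(\cdot_t)[W_{\mathbb R}(\cdot_t)\psi]\big)(\xi,\tau)=C\,\widehat\psi(\xi)\widehat\eta(\tau+\xi^5)$, which gives $\|\eta(\cdot_t)[W_{\mathbb R}(\cdot_t)\psi]\|_{X^{s,b,\alpha}}\le C\|\psi\|_{H^s(\mathbb R_x)}$ for $s\ge0$, $0<b\le\tfrac12$, $\alpha>\tfrac12$ (the $X^{s,b}$--part of the norm equals $C\|\psi\|_{H^s}\|\langle\cdot\rangle^b\widehat\eta\|_{L^2}<\infty$, while on $|\xi|\le1$ one has $\langle\tau\rangle\le2\langle\tau+\xi^5\rangle$, so the $\chi(\xi)\langle\tau\rangle^\alpha$--part is $\le C\|\psi\|_{L^2}\le C\|\psi\|_{H^s}$); since $(x,t)\mapsto(-x,-t)$ preserves $\|\cdot\|_{X^{s,b,\alpha}}$ and $\|\cdot\|_{H^s}$, and by \eqref{V3-48} $u_{11}$ is a finite combination of functions $[W_{\mathbb R}(\pm t)\psi](\pm x)$ with $\psi\in\{\psi_1,\psi_2\}$, the bound for $u_{11}$ follows from \eqref{V3-48} and \eqref{V3-51}, and likewise for $u_{21}$ and $u_{31}$.

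For $W_2h_2$ we write its $x$--profile as $\phi(|\beta|^{\frac15}x)$, where $\phi(y):=e^{e^{i9\pi/10}y}\rho(y)$ is a Schwartz function (supported in $[-2,+\infty)$, with exponential decay as $y\to+\infty$ since $\re e^{i9\pi/10}<0$). From $\mathcal F_x\big(\phi(|\beta|^{\frac15}\cdot)\big)(\xi)=C|\beta|^{-\frac15}\widehat\phi(|\beta|^{-\frac15}\xi)$, $\mathcal F_t(e^{i\beta\cdot_t}\eta)(\tau)=\widehat\eta(\tau-\beta)$, and the change of variables $\beta=-\nu$, one finds
\begin{align}
\mathcal F_{x,t}\big(\eta(\cdot_t)W_2h_2\big)(\xi,\tau)=C\int_0^{+\infty}\nu^{-\frac25}\widehat h_2(-\nu)\,\widehat\phi(\nu^{-\frac15}\xi)\,\widehat\eta(\tau+\nu)\,d\nu,\label{V3-L4p}
\end{align}
which we split into the ranges $\nu>1$ and $0<\nu\le1$.

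On $\nu>1$ we bound $\langle\xi\rangle^s\langle\tau+\xi^5\rangle^b$ by $\langle\xi\rangle^s\langle\tau+\nu\rangle^b\langle\nu-\xi^5\rangle^b$ (and $\chi(\xi)\langle\tau\rangle^\alpha$ by $\chi(\xi)\langle\tau+\nu\rangle^\alpha\langle\nu\rangle^\alpha$), put these weights inside \eqref{V3-L4p}, and apply Cauchy--Schwarz in $\nu$ with the splitting $|\widehat\eta(\tau+\nu)|=|\widehat\eta(\tau+\nu)|^{1/2}\cdot|\widehat\eta(\tau+\nu)|^{1/2}$, the second factor serving as the Cauchy--Schwarz remainder (it integrates in $\nu$ to $\|\widehat\eta\|_{L^1}$ uniformly in $\tau$). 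It then remains to estimate the $L^2_{\xi\tau}$--norm of $\big(\int_1^{+\infty}\nu^{-\frac45}|\widehat h_2(-\nu)|^2\langle\xi\rangle^{2s}|\widehat\phi(\nu^{-\frac15}\xi)|^2\langle\nu-\xi^5\rangle^{2b}\langle\tau+\nu\rangle^{2b}|\widehat\eta(\tau+\nu)|\,d\nu\big)^{1/2}$: by Fubini, integrating first in $\tau$ (a constant, since $\langle\cdot\rangle^{2b}\widehat\eta\in L^1$) and then in $\xi$ after the rescaling $\xi=\nu^{\frac15}\zeta$ (which costs $C\nu^{(2s+1)/5+2b}$, as $\widehat\phi\in\mathcal S$), one is reduced to $C\int_1^{+\infty}\nu^{(2s-3)/5+2b}|\widehat h_2(-\nu)|^2\,d\nu$, which is $\le C\|h_2\|_{H^{(s+1)/5}(\mathbb R_t)}^2$ exactly because $b\le\tfrac12$; the correction term is handled identically and yields the constraint $\alpha\le\tfrac{s+3}5$, admissible since $\alpha<\tfrac35$ and $s\ge0$. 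On $0<\nu\le1$ one instead uses Minkowski's integral inequality in $\nu$ directly; the rescaling of the $\xi$--integral now costs only $\nu^{1/5}$, so that for the $h_1$-- and $h_2$--type pieces one ends up with convergent integrals $\int_0^1\nu^{-\gamma}|\widehat h_{j+1}(-\nu)|\,d\nu$ with $2\gamma<1$, dominated via Cauchy--Schwarz by $\|h_{j+1}\|_{L^2}\le\|h_{j+1}\|_{H^{(s+2-j)/5}}$.

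I expect two points to be the crux. First, on $\nu>1$ one genuinely needs the Cauchy--Schwarz described above, keeping half of $|\widehat\eta|$ as a remainder: a plain application of Minkowski's inequality there loses an extra factor $\nu^{1/2}$ and would force $b<0$, so it is precisely the careful matching of the weights $\langle\tau+\xi^5\rangle^b$ and $\langle\tau\rangle^\alpha$ against $\widehat\eta$ that produces the sharp thresholds $b\le\tfrac12$ and $\alpha<\tfrac35$ appearing in the hypotheses. Second, the low--frequency range $0<\nu\le1$ for the $h_3$--type pieces of $U_{32}$ is the endpoint case ($\gamma=\tfrac12$, so the naive argument diverges logarithmically): there one must exploit the compatibility/support hypothesis $\chi_{(0,+\infty)}h_3\in H_0^{s/5}(\mathbb R_t^+)$ through a Hardy--type inequality to absorb the negative power of the temporal frequency. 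With these two estimates in hand, the remaining terms follow by routine variants of the computations above.
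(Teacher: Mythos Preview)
Your reduction to smooth data and the treatment of the group--type term $u_{11}$ and the boundary--layer term $W_2h_2$ follow the paper's proof closely; the paper likewise proves \eqref{V3-70} only for $W_1h_1$ and $W_2h_2$ and declares the remaining four terms ``similar''. Your Cauchy--Schwarz--in--$\nu$ with the $|\widehat\eta|^{1/2}$ split is a clean variant of the paper's strategy (which instead feeds $|\widehat\theta(\xi/\beta)|\le C|\beta|^5/(|\beta|^5+|\xi|^5)$ together with $|\widehat\eta|\le C\langle\cdot\rangle^{-\tilde\alpha}$ into the $L^2_{\xi\tau}$ norm), and both deliver the thresholds $b\le\tfrac12$ and $\alpha<\tfrac35$.

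You are right that the $h_3$--pieces of $U_{32}$ are a genuine endpoint at low temporal frequency that the paper's ``similar'' does not cover verbatim: after Minkowski one is left with $\int_0^1\nu^{-1/2}|\widehat h_3(-\nu)|\,d\nu$, and Cauchy--Schwarz fails logarithmically. However, your proposed remedy is not correct. The one--sided support hypothesis $\chi_{(0,+\infty)}h_3\in H_0^{s/5}(\mathbb R_t^+)$ gives \emph{no} control on $\widehat h_3$ near $\tau=0$ --- an arbitrary $L^2$ function supported in $[0,+\infty)$ can have $|\widehat h_3|$ as large as one likes near the origin within the $L^2$ constraint --- so no Hardy--type inequality in the temporal frequency can close the estimate at $s=0$. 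The correct fix does not use the support condition at all: replace Minkowski by a $TT^*$/Schur argument in the $\xi$--variable. Writing $K(\xi,\nu):=\widehat\phi(\xi\nu^{-1/5})$, one has
\[
\Big|\int_{\mathbb R} K(\xi,\nu)\overline{K(\xi,\nu')}\,d\xi\Big|\le C\,\min(\nu,\nu')^{1/5},
\]
so the kernel of $S^*S$ for $(Sg)(\xi)=\int_0^1K(\xi,\nu)\nu^{-3/5}g(\nu)\,d\nu$ is dominated by $C\min(\nu,\nu')^{1/5}(\nu\nu')^{-3/5}$; the weighted Schur test with $w(\nu)=\nu^a$, $-\tfrac35<a<-\tfrac25$, then shows that $S:L^2(0,1)\to L^2(\mathbb R_\xi)$ is bounded. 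Applying this for each fixed $\tau$ (the weight $\langle\tau\rangle^\alpha\sim\langle\tau+\nu\rangle^\alpha$ is harmless since $|\nu|\le1$, and one then integrates $\langle\tau+\nu\rangle^{2\alpha}|\widehat\eta(\tau+\nu)|^2$ in $\tau$) gives directly
\[
\big\|\langle\tau\rangle^\alpha\cdot(\text{low--}\nu\text{ piece of }U_{32})\big\|_{L^2_{\xi\tau}}\le C\|h_3\|_{L^2(\mathbb R_t)}\le C\|h_3\|_{H^{s/5}(\mathbb R_t)},
\]
which is exactly what is needed; the $X^{s,b}$ part is handled the same way. With this correction to the $h_3$ low--frequency step, your outline is complete.
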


\begin{proof} As in the proof of Lemma \ref{V3-L2}, we will prove estimative \eqref{V3-70} for $W_2h_2$, and $W_1h_1$ instead of $W_0^t(0,h_1,h_2,h_3)$, where $W_jh_j$ are defined within the proof of Lemma \ref{V3-L2}. Since $C_0^\infty(\mathbb R_t^+)$ is dense in $H^{\frac{s+2-j}5}_0(\mathbb R_t^+)$, we will assume that $h_1,h_2,h_3\in C^\infty_0(\mathbb R_t^+)$.

\begin{enumerate}
\item[(i)] Let us prove that
\begin{align}
\|\eta(\cdot_t)(W_2h_2)(\cdot_x,\cdot_t)\|_{X^{s,b}}\leq C\|\chi_{(0,+\infty)}h_2\|_{H^{\frac{s+1}5}(\mathbb R_t)},\label{V3-71}
\end{align}

According to the definition of $W_2h_2$, we have that
$$(W_2h_2)(x,t):=C\int_{-\infty}^{+\infty} e^{-i\beta^5 t} \theta(x\beta) \widehat\psi(\beta) d\beta,$$

where
\begin{align*}
\theta(\beta)=&g(\beta)e^{i\beta \sin(\frac\pi{10})}=e^{-\beta \cos(\frac\pi{10})}\rho(\beta)e^{i\beta\sin(\frac\pi{10})}, \text{ and}\\
\widehat\psi(\beta)=&\beta^3 \widehat h_2(-\beta^5)\chi_{(0,+\infty)}(\beta).
\end{align*}

Let us prove estimate \eqref{V3-71} when $s=0$ and $b=\tfrac12$. Let us observe that
\begin{align*}
(\eta(\cdot_t)(W_2h_2)(\cdot_x,\cdot_t))^\wedge(\xi,\tau)=C\int_0^{+\infty} \widehat\eta(\tau+\beta^5)\widehat\theta\left( \tfrac\xi\beta\right)\beta^2 \widehat h_2(-\beta^5)d\beta.
\end{align*}
Since $\widehat\theta$ is a Schwartz space function, then
$$\left|\widehat \theta\left(\tfrac\xi\beta\right) \right|\leq \frac C{1+\frac{|\xi|^5}{|\beta|^5}}=C\frac{|\beta|^5}{|\beta|^5+|\xi|^5}.$$

Besides, taking into account that $\widehat \eta \in S(\mathbb R)$ (Schwartz space), given $\tilde \alpha \in \mathbb N$, there exists $C_{\tilde \alpha}$ such that
$$|\widehat \eta (\tau - \beta^5)| \leq C_{\tilde \alpha} \langle \tau - \beta^5 \rangle^{-{\tilde\alpha}}.$$
Therefore, making the change of variables $\beta'=-\beta$, and observing that $\langle \tau +\xi^5 \rangle \leq 2 \langle \tau - \beta'^5 \rangle \langle \beta'^5 + \xi^5\rangle$, we can conclude that
\begin{align*}
\|\eta(\cdot_t) (W_2h_2)(\cdot_x, \cdot_t)\|_{X^{0,\frac12}} \leq C_{\tilde\alpha} \left\| \int_{-\infty}^0 \langle \cdot_\tau - \beta^5 \rangle^{\frac12} \langle \beta^5 + (\cdot_\xi)^5 \rangle^{\frac12} \langle \cdot_\tau - \beta^5 \rangle^{-{\tilde\alpha}} \frac{|\beta|^7}{|\beta|^5 + |\cdot_\xi|^5} |\widehat h_2(\beta^5)| d\beta  \right\|_{L^2_{\xi\tau}}
\end{align*}

We will consider the sets
\begin{align*}
A_1:=\{\beta\in (-\infty,0):|\beta|^5+|\xi|^5\geq 1\},\quad A_2:=\{\beta\in (-\infty,0):|\beta|^5+|\xi|^5< 1\}.
\end{align*}

If $\beta\in A_1$, we have that
$$\langle \beta^5 + \xi^5 \rangle  \leq 2(|\beta|^5 + |\xi|^5),$$
therefore
\begin{align*}
&\left\| \int_{A_1} \langle \cdot_\tau - \beta^5 \rangle^{\frac12} \langle \beta^5 + (\cdot_\xi)^5 \rangle^{\frac12} \langle \cdot_\tau - \beta^5 \rangle^{-{\tilde\alpha}} \frac{|\beta|^7}{|\beta|^5 + |\cdot_\xi|^5} |\widehat h_2(\beta^5)| d\beta \right\|_{L^2_{\xi\tau}}\\
&\leq C \left\| \int_{A_1} \langle \cdot_\tau - \beta^5 \rangle^{\frac12-{\tilde\alpha}}  \frac{|\beta|^7}{(|\beta|^5 + |\cdot_\xi|^5)^{\frac12}} |\widehat h_2(\beta^5)| d\beta \right\|_{L^2_{\xi\tau}}\\
&\leq C \left\| \int_{A_1} \langle \cdot_\tau - \beta^5 \rangle^{\frac12-{\tilde\alpha}} |\beta|^7 \left\| \frac{1}{(|\beta|^5 + |\cdot_\xi|^5)^{\frac12}}\right\|_{L^2_\xi} |\widehat h_2(\beta^5)| d\beta \right\|_{L^2_{\tau}}
\end{align*}

Let us observe that
\begin{align*}
\left\| \frac1{(|\beta|^5+|\cdot_\xi|^5)^{\frac12}}\right\|_{L^2_\xi}&\leq \left\{2\left( \int_0^{|\beta|}\frac1{|\beta|^5}d\xi+\int_{|\beta|}^{+\infty}\frac1{\xi^5}d\xi\right) \right\}^{\frac12}\leq C\frac1{|\beta|^2}.
\end{align*}

Hence, for $\tilde\alpha\geq 2$,
\begin{align*}
 &\left\|\int_{A_1}\langle\cdot_\tau-\beta^5\rangle^{\frac12-{\tilde \alpha}} |\beta|^7\left\|  \frac1{(|\beta|^5+|\xi|^5)^{\frac12}}\right\|_{L^2_\xi}|\widehat h_2(\beta^5)|d\beta \right\|_{L^2_\tau}\\
&\leq C\left\| \int_{-\infty}^0 \langle \cdot_\tau-\beta^5\rangle^{\frac12-{\tilde \alpha}}|\beta|^7 \frac1{\beta^2}  |\widehat h_2(\beta^5)|d\beta\right\|_{L^2_\tau}\leq C\left\| \int_{-\infty}^0 \langle \cdot_\tau-\gamma\rangle^{\frac12-{\tilde \alpha}} |\gamma| |\widehat h_2(\gamma)| \frac1{\gamma^{\frac45}}d\gamma\right\|_{L^2_\tau}\\
 &\leq C\left\| \int_{-\infty}^0 \langle \cdot_\tau-\gamma\rangle^{\frac12-{\tilde \alpha}}|\gamma|^{\frac15} |\widehat h_2(\gamma)| d\gamma\right\|_{L^2_\tau}\leq C\|\langle\cdot_\tau\rangle^{\frac12-{\tilde \alpha}}\|_{L^1_\tau}\| |\cdot_\tau|^{\frac15}| \widehat h_2 (\cdot_\tau)|\|_{L^2_\tau}\leq C_{\tilde \alpha} \| h_2\|_{H^{\frac15}(\mathbb R_t)}.
\end{align*}

If $\beta\in A_2$, then
$$\langle \beta^5 + \xi^5 \rangle  < 2,$$
hence
\begin{align*}
&\left\|\int_{A_2} \langle \cdot_\tau - \beta^5 \rangle^{\frac12} \langle \beta^5 + (\cdot_\xi)^5 \rangle^{\frac12} \langle \cdot_\tau - \beta^5 \rangle^{-{\tilde \alpha}} \frac{|\beta|^7}{|\beta|^5+|\cdot_\xi|^5} |\widehat h_2(\beta^5)| d\beta\right\|_{L^2_{\xi\tau}}\\
&\leq C\left\|  \int_{-1}^0 \langle\cdot_\tau-\beta^5\rangle^{\frac12-{\tilde \alpha}} |\beta|^7 \left\| \frac1{|\beta|^5+|\cdot_\xi|^5}  \right\|_{L^2_{|\xi|\leq 1}} |\widehat h_2(\beta^5)|d\beta \right\|_{L^2_{\tau}}.
\end{align*}

Since
\begin{align*}
\left\| \frac1{|\beta|^5+|\cdot_\xi|^5}\right\|_{L^2_{|\xi|\leq 1}}\leq C\frac1{|\beta|^{\frac92}},
\end{align*}

then
\begin{align*}
&\left\| \int_{A_2} \langle \cdot_\tau - \beta^5\rangle^{\frac12} \langle \beta^5 + (\cdot_\xi)^5 \rangle^{\frac12} \langle \cdot_\tau - \beta^5 \rangle^{-{\tilde \alpha}} \frac{|\beta|^7}{|\beta|^5 + |\cdot_\xi|^5} |\widehat h_2(\beta^5)| d\beta \right\|_{L^2_{\xi\tau}}\leq C\left\| \int_{-1}^0 \langle\cdot_\tau-\beta^5\rangle^{\frac12-{\tilde \alpha}} |\beta|^{\frac52} |\widehat h_2(\beta^5)| d\beta\right\|_{L^2_\tau}\\
&\leq C \left\| \int_{-1}^0 \langle\cdot_\tau-\gamma\rangle^{\frac12-{\tilde \alpha}}|\gamma|^{\frac 1{2}} | \widehat h_2(\gamma)| \frac1{\gamma^{\frac45}}d\gamma\right\|_{L^2_\tau}\leq C\int_{-1}^0 \left\| \langle\cdot_\tau-\gamma\rangle^{\frac12-{\tilde \alpha}}\right\|_{L^2_\tau}   |\gamma|^{-\frac3{10}} | \widehat h_2(\gamma)|  d\gamma.\\
\end{align*}

Taking into account that, for ${\tilde \alpha}>1$,
\begin{align*}
\|\langle\cdot_\tau-\gamma\rangle^{\frac12-{\tilde \alpha}}\|_{L^2_\tau}= C_{\tilde \alpha}<\infty,
\end{align*}

we obtain
\begin{align*}
&\left\| \int_{A_2} \langle \cdot_\tau-\beta^5\rangle^{\frac12} \langle \beta^5 + (\cdot_\xi)^5 \rangle^{\frac12} \langle \cdot_\tau - \beta^5 \rangle^{-{\tilde \alpha}} \frac{|\beta|^7}{|\beta|^5+|\cdot_\xi|^5} |\widehat h_2(\beta^5)| d\beta\right\|_{L^2_{\xi\tau}} \leq  C{\tilde \alpha} \int_{-1}^0 |\gamma|^{-\frac3{10}} |\widehat h_2(\gamma)|d\gamma\\
&\leq  C_{\tilde \alpha}\left(\int_{-1}^0|\gamma|^{-\frac35}d\gamma \right)^{\frac12}\left(\int_{-1}^0|\widehat h_2(\gamma)|^2d\gamma \right)^{\frac12}\leq  C_{\tilde\alpha} \|h_2\|_{H^{\frac15}(\mathbb R_t)},
\end{align*}

which proves \eqref{V3-71} for $s=0$, and $b=\frac12$.\\

For $s\in\mathbb N$, since
$$\partial_x^s (\eta(t)(W_2h_2)(x,t))= C \eta(t) \int_{-\infty}^{+\infty} e^{-i\beta^5 t} \theta^{(s)}(x\beta) \beta^5 \widehat \psi(\beta) d\beta,$$

the proof of estimate \eqref{V3-71} is similar to the case $s=0$.\\

When $s>0$ is arbitrary, estimate \eqref{V3-71} is obtained by interpolation.\\

Let us prove now that

\begin{align}
\left( \int_{-\infty}^{+\infty} \int_{-1}^{1} \langle \tau \rangle^{2\alpha} |(\eta(\cdot_t) (W_2h_2)(\cdot_x,\cdot_t))^{\wedge}(\xi,\tau)|^2 d\xi d\tau \right)^{\frac12} \leq C  \| \chi_{(0,+\infty)} h_2\|_{H^{\frac{1}5}(\mathbb R_t)} \label{V4.2-1}
\end{align}

Since $\widehat \theta$, and $\widehat \eta$ are in $S(\mathbb R)$, let us observe that

\begin{align}
\notag&\left( \int_{-\infty}^{+\infty} \int_{-1}^1 \langle \tau \rangle^{2\alpha} |[\eta(\cdot_t)(W_2h_2)(\cdot_x,\cdot_t)]^{\wedge}(\xi,\tau)|^2 d\xi d\tau \right)^{\frac12}\\
\notag &=C \left(\int_{-\infty}^{+\infty} \int_{-1}^1 \langle \tau \rangle^{2\alpha} \left|\int_0^{+\infty} \widehat \eta(\tau + \beta^5) \widehat \theta(\tfrac\xi\beta) \beta^2 \widehat h_2(-\beta^5) d\beta \right|^2 d\xi d\tau \right)^{\frac12}\\
&\leq C \left(\int_{-\infty}^{+\infty} \int_{-1}^1 \left(\int_{-\infty}^0 \frac{\langle \tau \rangle^{\alpha}}{\langle \tau - \beta^5 \rangle^2} \frac{|\beta|^7}{|\beta|^5 + |\xi^5|} |\widehat h_2(\beta^5)|d\beta \right)^2 d\xi d\tau \right)^{\frac12}
\label{V4.2-2}
\end{align}

Let $B_1:=(-1,0)$, and $B_2:=(-\infty,-1]$. Let us estimate
\begin{align*}
\left( \int_{-\infty}^{+\infty} \int_{-1}^1 \left( \int_{B_j} \frac{\langle \tau \rangle^\alpha}{\langle \tau - \beta^5\rangle^2} \frac{|\beta|^7}{|\beta|^5+|\xi|^5} |\widehat h_2 (\beta^5)| d\beta \right)^2 d\xi d\tau \right)^{\frac12},\quad j=1,2.
\end{align*}

For the set $B_1$ we have that
\begin{align*}
&\left( \int_{-\infty}^{+\infty} \int_{-1}^1 \left( \int_{B_1} \frac{\langle \tau \rangle^\alpha}{\langle \tau - \beta^5\rangle^2} \frac{|\beta|^7}{|\beta|^5+|\xi|^5} |\widehat h_2 (\beta^5)| d\beta \right)^2 d\xi d\tau \right)^{\frac12}\\
&\leq C \left\| \int_{B_1} \frac{\langle \tau \rangle^\alpha}{\langle \tau - \beta^5 \rangle^2} |\beta|^7 \left\| \frac1{|\beta|^5+|\xi|^5} \right\|_{L^2_{|\xi|\leq 1}} |\widehat h_2(\beta^5)| d\beta \right\|_{L^2_\tau}.
\end{align*}
For $|\beta|<1$, it follows that
$$\left\| \frac1{|\beta|^5 + |\xi|^5} \right\|_{L^2_{|\xi|\leq 1}} \leq \frac C{|\beta|^{\frac52}},$$

therefore
\begin{align}
\notag&\left( \int_{-\infty}^{+\infty} \int_{-1}^1 \left( \int_{B_1} \frac{\langle \tau \rangle^\alpha}{\langle \tau - \beta^5\rangle^2} \frac{|\beta|^7}{|\beta|^5+|\xi|^5} |\widehat h_2 (\beta^5)| d\beta \right)^2 d\xi d\tau \right)^{\frac12}\leq C \left\| \int_{B_1} \frac{\langle \tau \rangle^\alpha}{\langle \tau - \beta^5 \rangle^2} \frac{|\beta|^7}{|\beta|^{\frac52}} |\widehat h_2(\beta^5)| d\beta \right\|_{L^2_\tau}\\
\notag&\leq C \left[ \int_{|\tau|<2} \left( \int_{B_1} \frac{\langle \tau \rangle^\alpha}{\langle \tau - \beta^5 \rangle^2} |\beta|^{\frac92} |\widehat h_2 (\beta^5)| d\beta \right)^2 d\tau + \int_{|\tau|>2} \left( \int_{B_1} \frac{\langle \tau \rangle^\alpha}{\langle \tau - \beta^5\rangle^2} |\beta|^{\frac92} |\widehat h_2(\beta^5)| d\beta \right)^2 d\tau \right]^{\frac12}\\
\notag&\leq C \left[ C_\alpha \int_{|\tau|<2} \left( \int_{B_1}  |\beta|^{\frac92} |\widehat h_2 (\beta^5)| d\beta \right)^2 d\tau + C \int_{|\tau|>2} \left( \int_{B_1} \langle \tau - \beta^5 \rangle^{\alpha-2} |\beta|^{\frac92} |\widehat h_2(\beta^5)| d\beta \right)^2 d\tau \right]^{\frac12}\\
\notag&\leq C \left[ C_\alpha \int_{|\tau|<2} \left( \int_{B_1}  |\gamma|^{\frac9{10}} \frac{|\widehat h_2 (\gamma)|}{|\gamma|^{\frac45}} d\gamma \right)^2 d\tau + \xi \int_{|\tau|>2} \left( \int_{B_1} \langle \tau - \gamma \rangle^{\alpha-2} |\gamma|^{\frac9{10}} \frac{|\widehat h_2(\gamma^5)|}{|\gamma|^{\frac45}} d\gamma \right)^2 d\gamma \right]^{\frac12}\\
\notag&\leq C\left[4C_\alpha \left( \int_{B_1} |\gamma|^{\frac1{10}} |\widehat h_2(\gamma)| d\gamma \right)^2 + C \int_{|\tau|>2} \left( \int_{B_1} \langle \tau-\gamma\rangle^{\alpha-2} |\gamma|^{\frac1{10}} |\widehat h_2(\gamma)| d\gamma \right)^2 d\tau \right]^{\frac12}\\
&\leq C\left[\tilde C_\alpha \|h_2\|^2_{H^{\frac1{10}}(\mathbb R_t)} + C \| \langle \cdot \rangle ^{\alpha-2} \ast_{\tau} (|\cdot|^{\frac1{10}} |\widehat h_2(\cdot)|)\|^2_{L^2_\tau} \right]^{\frac12}\leq C_\alpha \|h_2\|_{H^{\frac1{10}}(\mathbb R_t)}.\label{V4.2-3}
\end{align}

For the set $B_2$ we have
\begin{align}
& \left[ \int_{-\infty}^{+\infty} \int_{-1}^{1} \left( \int_{B_2} \frac{\langle \tau \rangle^\alpha}{\langle \tau-\beta^5 \rangle^2} \frac{|\beta|^7}{|\beta|^5+|\xi|^5} |\widehat h_2(\beta^5)| d\beta \right)^2 d\xi d\tau \right]^{\frac12}\\
\leq & \left[ \int_{-\infty}^{+\infty} \int_{-1}^{1} \left( \int_{-\infty}^{-1} \frac{\langle \tau \rangle^\alpha}{\langle \tau-\beta^5 \rangle^2} |\beta|^2 |\widehat h_2(\beta^5)| d\beta \right)^2 d\xi d\tau \right]^{\frac12}= C \left[ \int_{-\infty}^{+\infty}\left( \int_{-\infty}^{-1} \frac{\langle \tau \rangle^\alpha}{\langle \tau-\beta^5 \rangle^2} |\beta|^2 |\widehat h_2(\beta^5)| d\beta \right)^2 d\tau \right]^{\frac12}\\
\notag \leq &C \left[ \int_{|\tau|<2} \left( \int_{-\infty}^{-1} \frac{\langle \tau \rangle^\alpha}{\langle \tau-\gamma \rangle^2} \langle \gamma \rangle^{-\frac25} |\widehat h_2(\gamma)| d\gamma \right)^2 d\tau +  \int_{|\tau|>2} \left(\int_{-\frac{|\tau|}2}^{-1} \frac{\langle \tau \rangle^\alpha}{\langle \tau-\gamma\rangle^2} \langle \gamma\rangle^{-\frac25} |\widehat h_2(\gamma)| d\gamma \right. \right.\\
\notag & \left. \left. +\int_{-\frac{3|\tau|}2}^{-\frac{|\tau|}2} \frac{\langle \tau \rangle^\alpha}{\langle \tau-\gamma\rangle^2} \langle \gamma \rangle^{-\frac25} |\widehat h_2(\gamma)| d\gamma + \int_{-\infty}^{-\frac{3|\tau|}2} \frac{\langle \tau \rangle^\alpha}{\langle \tau - \gamma \rangle^2} \langle \gamma \rangle^{-\frac25} |\widehat h_2(\gamma) d\gamma |\right)^2 d\tau \right]^{\frac12}\\
\notag \leq &C \left[ \int_{|\tau|<2} \left( \int_{-\infty}^{-1} \frac{\langle \tau \rangle^{2\alpha}}{\langle \tau-\gamma \rangle^4} d\gamma \right) \left( \int_{-\infty}^{-1} \langle \gamma \rangle^{-\frac45} |\widehat h_2(\gamma)|^2 d\gamma \right)^2 d\tau + C \int_{|\tau|>2} \left(\int_{-\frac{|\tau|}2}^{-1} \frac{\langle \tau -\gamma \rangle^\alpha}{\langle \tau-\gamma\rangle^2} \langle \gamma\rangle^{-\frac25} |\widehat h_2(\gamma)| d\gamma \right. \right.\\
\notag & \left. \left. +\int_{-\frac{3|\tau|}2}^{-\frac{|\tau|}2} \frac{\langle \gamma \rangle^\alpha}{\langle \tau-\gamma\rangle^2} \langle \gamma \rangle^{-\frac25} |\widehat h_2(\gamma)| d\gamma + \int_{-\infty}^{-\frac{3|\tau|}2} \frac{\langle \tau - \gamma \rangle^\alpha}{\langle \tau - \gamma \rangle^2} \langle \gamma \rangle^{-\frac25} |\widehat h_2(\gamma) |  d\gamma \right)^2 d\tau \right]^{\frac12}\\
\notag\leq & C \left[ \int_{|\tau|<2} \left( \int_{-\infty}^{-1} \frac1{\langle \tau - \gamma \rangle^4} d\gamma  \right)\|h_2\|^2_{H^{-\frac25}(\mathbb R_t)} d\tau \right.\\
\notag&\left. + \int_{|\tau|>2} \left\{\left(  \frac1{\langle\cdot \rangle^{2-\alpha}} \ast \left( \langle \cdot \rangle^{-\frac25} |\widehat h_2(\cdot)|\right)\right)(\tau) + \left( \frac1{\langle \cdot \rangle^2} \ast \left( \langle \cdot\rangle^{\alpha-\frac25} |\widehat h_2(\cdot)|\right)\right) (\tau) \right\}^2  d\tau \right]^{\frac12}\\
\notag \leq & C \left[ \|h_2\|^2_{H^{-\frac25}(\mathbb R_t)} + \left\| \frac1{\langle \cdot \rangle^{2-\alpha}} \right\|^2_{L^1_\tau} \| \langle \cdot \rangle^{-\frac25} |\widehat h_2|\|^2_{L^2_\tau} + \left\| \frac1{\langle \cdot \rangle ^2 } \right\|^2_{L^1_\tau} \|\langle \cdot \rangle^{\alpha-\frac25} |\widehat h_2|\|^2\right]^{\frac12}\\
\leq & C \|\langle \cdot \rangle^{\alpha-\frac25} |\widehat h_2(\cdot)|\|_{L^2} = C\|h_2\|_{H^{\alpha-\frac25}(\mathbb R)}\leq C \| h_2 \|_{H^{\frac15}(\mathbb R_t)}, \label{V4.2-4}
\end{align}

if $\alpha-\frac25\leq \frac15$; i.e., if $\alpha \leq \frac35$.\\

From \eqref{V4.2-2}, \eqref{V4.2-3}, and \eqref{V4.2-4}, we conclude that

\begin{align}
\left( \int_{-\infty}^{+\infty} \int_{-1}^1 \langle \tau \rangle^{2\alpha} | [\eta(\cdot_t) (W_2h_2)(\cdot_x,\cdot_t) ]^\wedge (\xi,\tau) |^2 d\xi d\tau \right)^{\frac12} \leq C \|h_2\|_{H^{\frac15}(\mathbb R_t)}. \label{V4.2-5}
\end{align}

\item[(ii)] Let us prove that
\begin{align}
\|\eta(\cdot_t) (W_1h_1)(\cdot_x,\cdot_t) \|_{X^{s,b}} \leq C\|h_1\|_{H^{\frac{s+2}5}(\mathbb R_t)}.\label{V3-72}
\end{align}

As in (i), it is enough to prove \eqref{V3-72} for $s=0$, and $b=\frac12$. Let us recall the expression for $W_1h_1$ given by \eqref{V3-48}:
\begin{align*}
(W_1h_1)(x,t) = \tilde C_1 \int_0^{+\infty} e^{-i\beta^5t+i\beta x} \beta^4 \widehat h_1(-\beta^5) d\beta + \tilde C_2 \int_{-\infty}^0 e^{-i\beta^5 t + i\beta x} \beta^4 \widehat h_1(-\beta^5) d\beta.
\end{align*}

Then
\begin{align*}
(\eta(\cdot_t)(W_1h_1)(\cdot_x,\cdot_t))^\wedge(\xi,\tau)=&C_1\int_{\mathbb R}\int_{\mathbb R} e^{-i\xi x}e^{-i\tau t}\eta(t)\left( \int_0^\infty e^{-i\beta^5 t + i\beta x} \beta^4 \widehat h_1(-\beta^5) d\beta \right)dx dt\\
&+C_2\int_{\mathbb R}\int_{\mathbb R} e^{-i\xi x}e^{-i\tau t}\eta(t)\left( \int_{-\infty}^0 e^{-i\beta^5 t + i\beta x} \beta^4 \widehat h_1(-\beta^5) d\beta \right)dx dt\\
=& C_1 \int_{\mathbb R_x} e^{-i\xi x} \mathcal F_{\beta}^{-1} [(\cdot_\beta)^4 \widehat h_1({\scriptstyle -}(\cdot_\beta)^5) \chi_{(0,+\infty)}(\cdot_\beta) \widehat \eta(\tau + (\cdot_\beta)^5)](x) dx\\
& + C_2 \int_{\mathbb R_x} e^{-i\xi x} \mathcal F_{\beta}^{-1} [(\cdot_\beta)^4 \widehat h_1({\scriptstyle -}(\cdot_\beta)^5) \chi_{(-\infty,0)}(\cdot_\beta) \widehat \eta(\tau + (\cdot_\beta)^5)](x) dx\\
=& C_1 \xi^4 \widehat h_1(-\xi^5) \chi_{(0,+\infty)}(\xi) \widehat \eta(\tau + \xi^5) + C_2 \xi^4 \widehat h_1(-\xi^5) \chi_{(-\infty,0)}(\xi) \widehat \eta(\tau + \xi^5).
\end{align*}

Therefore
\begin{align*}
\|\eta(\cdot_t)(W_1h_1)(\cdot_x,\cdot_t)\|_{X^{0,\frac12}} \leq & C_1 \left( \int_{\mathbb R_\tau} \int_{\mathbb R_\xi} \langle \tau + \xi^5\rangle \xi^8 |\widehat h_1(-\xi^5)|^2 \chi_{(0,+\infty)}(\xi) |\widehat \eta(\tau + \xi^5)|^2 d\xi d\tau \right)^{\frac12}\\
& + C_2 \left( \int_{\mathbb R_\tau} \int_{\mathbb R_\xi} \langle \tau + \xi^5\rangle \xi^8 |\widehat h_1(-\xi^5)|^2 \chi_{(-\infty,0)}(\xi) |\widehat \eta(\tau + \xi^5)|^2 d\xi d\tau \right)^{\frac12}\\
\leq & \max\{C_1,C_2\} C_{\tilde\alpha} \left( \int_{\mathbb R_\tau} \int_{\mathbb R_\gamma} \langle \tau - \gamma \rangle |\gamma|^{\frac45} |\widehat h_1(\gamma)|^2 \langle \tau - \gamma\rangle^{-2\tilde\alpha} d\gamma d\tau \right)^{\frac12}\\
\leq & C_{\tilde\alpha} \|\langle \cdot_\tau \rangle^{\frac12-\tilde\alpha}\|_{L^1_\tau} \| |\cdot_\tau|^{\frac25} |\widehat h_1(\cdot_\tau)| \|_{L^2_\tau} \leq C_{\tilde\alpha} \|h_1\|_{H^{\frac25}(\mathbb R_t)},
\end{align*}

for $\tilde\alpha\geq 2$, which proves estimate \eqref{V3-72} for $s=0$, and $b=\frac12$.\\

Let us demonstrate now that
\begin{align}
\left( \int_{-\infty}^{+\infty} \int_{-1}^1 \langle \tau \rangle^{2\alpha} |[\eta(\cdot_t) (W_1h_1)(\cdot_x,\cdot_t)]^\wedge(\xi,\tau)|^2 d\xi d\tau \right)^{\frac12} \leq C \|  h_1 \|_{H^{\frac{2}5}(\mathbb R_t)}. \label{V4.2-6}
\end{align}

Let us remember that
\begin{align*}
[\eta(\cdot_t) (W_1h_1)(\cdot_x,\cdot_t)]^{\wedge} (\xi,\tau) & = C_1 \xi^4 \widehat h_1 (-\xi^5) \chi_{(0,+\infty)}(\xi) \widehat \eta(\tau + \xi^5) + C_2 \xi^4 \widehat h_1(-\xi^5) \chi_{(-\infty,0)}(\xi) \widehat \eta(\tau + \xi^5).
\end{align*}

Hence
\begin{align*}
& \left( \int_{-\infty}^{+\infty} \int_{-1}^1 \langle \tau \rangle^{2\alpha} | [\eta(\cdot_t)(W_1h_1)(\cdot_x,\cdot_t)]^\wedge (\xi,\tau) |^2 d\xi d\tau \right)^{\frac12}\\
\leq & C_1 \left( \int_{-\infty}^{+\infty} \int_0^1 \langle \tau \rangle^{2\alpha} \xi^8 |\widehat h_1(-\xi^5)|^2 |\widehat \eta(\tau+\xi^5)|^2 d\xi d\tau \right)^{\frac12} + C_2 \left( \int_{-\infty}^{+\infty} \int_{-1}^0 \langle \tau \rangle^{2\alpha} \xi^8 |\widehat h_1(-\xi^5)|^2 |\widehat \eta(\tau+\xi^5)|^2 d\xi d\tau \right)^{\frac12}\\
= & C_1 \left( \int_{-\infty}^{+\infty} \int_{-1}^0 \langle \tau \rangle^{2\alpha} |\gamma|^{\frac45} |\widehat h_1(\gamma)|^2 |\widehat \eta(\tau-\gamma)|^2 d\gamma d\tau \right)^{\frac12} + C_2 \left( \int_{-\infty}^{+\infty} \int_{0}^1 \langle \tau \rangle^{\alpha} |\gamma|^{\frac45} |\widehat h_1(\gamma)|^2 |\widehat \eta(\tau-\gamma)|^2 d\gamma d\tau \right)^{\frac12}.
\end{align*}

It is enough to estimate the first term on the right hand side of the former inequality, being the estimation of the second one similar.

\begin{align}
\notag&\left( \int_{-\infty}^{+\infty} \int_{-1}^0 \langle \tau \rangle^{2\alpha} |\gamma|^{\frac45} |\widehat h_1(\gamma)|^2 |\widehat \eta(\tau-\gamma)|^2 d\gamma d\tau \right)^{\frac12}\\
\notag&\leq C \left(\int_{|\tau|<2} \int_{-1}^0 \langle \tau \rangle^{2\alpha} |\gamma|^{\frac45} |\widehat h_1(\gamma)|^2 \langle \tau - \gamma\rangle^{-4} d\gamma d\tau\right)^{\frac12} + C \left(\int_{|\tau|>2} \int_{-1}^0 \langle \tau \rangle^{2\alpha} |\gamma|^{\frac45} |\widehat h_1(\gamma)|^2 \langle \tau - \gamma\rangle^{-4} d\gamma d\tau\right)^{\frac12}\\
\notag&\leq C \left(\int_{|\tau|<2} \int_{-1}^0 |\gamma|^{\frac45} |\widehat h_1(\gamma)|^2  d\gamma d\tau\right)^{\frac12} + C \left(\int_{|\tau|>2} \int_{-1}^0 \langle \tau-\gamma \rangle^{2\alpha-4} |\gamma|^{\frac45} |\widehat h_1(\gamma)|^2  d\gamma d\tau\right)^{\frac12}\\
\notag &\leq C \|h_1\|_{H^{\frac25}(\mathbb R_t)} + C \left(\int_{|\tau|>2} (\langle \cdot \rangle^{2\alpha-4}\ast (|\cdot|^{\frac45}|\widehat h_1(\cdot)|^2))(\tau) d\tau\right)^{\frac12}\\
&\leq C \|h_1\|_{H^{\frac25}(\mathbb R_t)} + C \|\langle \cdot \rangle^{2\alpha-4}\|_{L^1_\tau}^{\frac12} \| |\cdot|^{\frac45} |\widehat h_1(\cdot)|^2 \|_{L^1_\tau}^{\frac12}\leq C \|h_1\|_{H^{\frac25}(\mathbb R_t)},\label{V4.2-7}
\end{align}

which proves \eqref{V4.2-6}.
\end{enumerate}

Estimate \eqref{V3-70} follows from estimates \eqref{V3-71}, \eqref{V4.2-1}, \eqref{V4.2-5}, \eqref{V3-72}, and \eqref{V4.2-6}.\\

Lemma \ref{V3-L4} is proved.

\end{proof}

\section{Linear and non linear estimates}\label{LNLE}

In this section we establish estimates for the norms in the spaces $X^{s,b,\alpha}$, and $C(\mathbb R_x;H^{\frac{s+2}5}(\mathbb R_t))$ of the first two terms on the right hand side of integral equation \eqref{V2-1.7}.\\

\begin{lemma}\label{V2-L3.1} Let $s\geq 0$, $0<b<\frac12$, $\alpha\in(\frac12,1]$ and $\eta(\cdot_t)\in C_0^\infty(\mathbb R_t)$ as in Lemma \ref{V3-L1}. Then there exists $C>0$ such that, for every function $g\in H^s(\mathbb R_x)$,
\begin{align}
\|\eta(\cdot_t)[W_{\mathbb R}(\cdot_t)g](\cdot_x)\|_{X^{s,b,\alpha}(\mathbb R^2)}\leq C\|g\|_{H^s(\mathbb R_x)}.\label{V4.2-4.1}
\end{align}
\end{lemma}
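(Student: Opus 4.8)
The plan is to estimate separately the two pieces of the $X^{s,b,\alpha}$-norm: the Bourgain part $\|\langle\xi\rangle^s\langle\tau+\xi^5\rangle^b\,\widehat{u}(\xi,\tau)\|_{L^2_{\xi\tau}}$ and the low-frequency part $\|\chi(\xi)\langle\tau\rangle^\alpha\,\widehat{u}(\xi,\tau)\|_{L^2_{\xi\tau}}$, where $u=\eta(\cdot_t)[W_{\mathbb R}(\cdot_t)g](\cdot_x)$. The starting point in both cases is the explicit formula for the space-time Fourier transform. Since $[W_{\mathbb R}(t)g](x)=\mathcal F_\xi^{-1}[e^{-it\xi^5}\widehat g(\xi)](x)$, taking the Fourier transform in $t$ of $\eta(t)[W_{\mathbb R}(t)g](x)$ and then in $x$ gives
\begin{align*}
\widehat{u}(\xi,\tau)=C\,\widehat g(\xi)\,\widehat\eta(\tau+\xi^5),
\end{align*}
because $\mathcal F_t(\eta(t)e^{-it\xi^5})(\tau)=C\widehat\eta(\tau+\xi^5)$. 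This is the identity that drives everything.

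For the Bourgain part, substituting the formula above we must bound
\begin{align*}
\int_{\mathbb R^2}\langle\xi\rangle^{2s}\langle\tau+\xi^5\rangle^{2b}|\widehat g(\xi)|^2|\widehat\eta(\tau+\xi^5)|^2\,d\xi\,d\tau.
\end{align*}
Here I would change variables $\mu=\tau+\xi^5$ for fixed $\xi$ (Jacobian $1$), decoupling the integral into $\bigl(\int_{\mathbb R}\langle\mu\rangle^{2b}|\widehat\eta(\mu)|^2\,d\mu\bigr)\cdot\bigl(\int_{\mathbb R}\langle\xi\rangle^{2s}|\widehat g(\xi)|^2\,d\xi\bigr)$. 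The first factor is a finite constant since $\widehat\eta\in\mathcal S(\mathbb R)$ (indeed $b<\tfrac12<\infty$ makes this trivial), and the second is exactly $\|g\|_{H^s}^2$. This yields $\|u\|_{X^{s,b}}\le C\|g\|_{H^s}$.

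For the low-frequency part I must bound $\int_{-1}^{1}\int_{\mathbb R}\langle\tau\rangle^{2\alpha}|\widehat g(\xi)|^2|\widehat\eta(\tau+\xi^5)|^2\,d\tau\,d\xi$. On the region $|\xi|\le 1$ one has $|\xi^5|\le 1$, so $\langle\tau\rangle\le C\langle\tau+\xi^5\rangle$, hence $\langle\tau\rangle^{2\alpha}|\widehat\eta(\tau+\xi^5)|^2\le C\langle\tau+\xi^5\rangle^{2\alpha}|\widehat\eta(\tau+\xi^5)|^2$, and again $\widehat\eta\in\mathcal S$ makes $\int_{\mathbb R}\langle\mu\rangle^{2\alpha}|\widehat\eta(\mu)|^2\,d\mu<\infty$ after the same change of variables $\mu=\tau+\xi^5$. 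What remains is $\int_{-1}^1|\widehat g(\xi)|^2\,d\xi\le\|g\|_{L^2}^2\le\|g\|_{H^s}^2$ since $s\ge0$. Combining the two parts gives \eqref{V4.2-4.1}. I do not expect a genuine obstacle here; the only point requiring any care is the harmless shift between $\langle\tau\rangle$ and $\langle\tau+\xi^5\rangle$ on the low-frequency band, which is controlled precisely because $\chi$ restricts to $|\xi|\le 1$. This is essentially the same mechanism already used for the terms $I(x,\tau)$ in the proof of Lemma \ref{V3-L1}.
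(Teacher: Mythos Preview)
Your proof is correct and follows essentially the same approach as the paper: both compute $\widehat u(\xi,\tau)=C\,\widehat g(\xi)\,\widehat\eta(\tau+\xi^5)$, then handle the $X^{s,b}$ piece by the change of variables $\mu=\tau+\xi^5$ and the low-frequency piece by using $\langle\tau\rangle\le C\langle\tau+\xi^5\rangle$ on $|\xi|\le1$ together with the rapid decay of $\widehat\eta$. The only cosmetic difference is that the paper changes variables first and then bounds $|\tau'-\xi^5|^{2\alpha}\le C\langle\tau'\rangle^{2\alpha}$, whereas you bound $\langle\tau\rangle^{2\alpha}$ before shifting; these are the same estimate in a different order.
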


\begin{proof}
Estimative \eqref{V4.2-4.1} follows from the following estimatives:

\begin{align}
\| \eta(\cdot_t) [W_{\mathbb R}(\cdot_t) g](\cdot_x)\|_{X^{s,b}(\mathbb R^2)} \leq C \|g\|_{H^s(\mathbb R_x)},\label{V4.2-4.2}
\end{align}
and

\begin{align}
\left( \int_{-\infty}^{+\infty} \int_{-1}^{1} |\tau|^{2\alpha} |[\eta(\cdot_t) [W_{\mathbb R}(\cdot_t)] g]^\wedge(\xi,\tau)|^2 d\xi d\tau \right)^{\frac12} \leq C \|g\|_{H^s(\mathbb R_x)}.\label{V4.2-4.3}
\end{align}

We begin by proving \eqref{V4.2-4.2}. Let us observe that
\begin{align*}
\left(\eta(\cdot_t)[W_{\mathbb R}(\cdot_t)g](\cdot_x)\right)^\wedge(\xi,\tau)=C\int_{\mathbb R_t} e^{-i(\tau+\xi^5)t}\eta(t)\widehat g(\xi) dt=C\widehat g(\xi)\widehat \eta(\tau+\xi^5).
\end{align*}

This way, taking into account that $\widehat \eta\in S(\mathbb R)$,
\begin{align*}
\|\eta(\cdot_t)[W_{\mathbb R}(\cdot_t)g](\cdot_x)\|_{X^{s,b}(\mathbb R^2)}&=C\left( \iint_{\mathbb R^2}\langle\xi\rangle^{2s}\langle\tau+\xi^5\rangle^{2b}|\widehat g(\xi)|^2 |\widehat\eta(\tau+\xi^5)|^2d\tau d\xi\right)^{\frac12}\\
&=C\left( \int_{\mathbb R_{\tau}} \langle\tau\rangle^{2b}|\widehat\eta(\tau)|^2\right)^{\frac12}\|g\|_{H^s(\mathbb R_x)}=C\|g\|_{H^s(\mathbb R_x)}.
\end{align*}

Now, we continue by proving \eqref{V4.2-4.3}:
\begin{align*}
&\left( \int_{-\infty}^{+\infty} \int_{-1}^{1} |\tau|^{2\alpha} |[ \eta(\cdot_t)[W_{\mathbb R}(\cdot_t)g](\cdot_x) ]^\wedge(\xi,\tau)|^2 d\xi d\tau \right)^{\frac12}= C \left( \int_{-\infty}^{+\infty} \int_{-1}^{1} |\tau|^{2\alpha} |\widehat \eta(\tau + \xi^5)|^2 |\widehat g(\xi)|^2 d\xi d\tau \right)^{\frac12}\\
& = C \left(\int_{-1}^1 |\widehat g(\xi)|^2 \left( \int_{-\infty}^{+\infty} |\tau' - \xi^5|^{2\alpha} |\widehat \eta (\tau')|^2 d\tau' \right) d\xi \right)^{\frac12}\leq C\left( \int_{-\infty}^{+\infty} \langle \tau'\rangle^{2\alpha} |\widehat \eta(\tau')|^2 d\tau' \right)^{\frac12} \|g\|_{L^2(\mathbb R_x)}\leq C \|g\|_{H^s(\mathbb R_x)}.
\end{align*}
Lemma \ref{V2-L3.1} is proved.

\end{proof}

\begin{remark}\label{V2-O3.1} For $s,b\in\mathbb R$, let us denote by $H^{s,b}$ the anisotropic Sobolev space, defined by
$$H^{s,b}:=\{f(\cdot_x,\cdot_t)\in S'(\mathbb R^2):\|f\|_{H^{s,b}}:=\|\langle\xi\rangle^s\langle\tau\rangle^b\widehat f(\xi,\tau)\|_{L^2_{\xi \tau}}<\infty\}.$$

It can be seen that
\begin{align}
\|f(\cdot_x,\cdot_t)\|_{X^{s,b}}=\|W_{\mathbb R}({\scriptstyle-}\cdot_t)f(\cdot_x,\cdot_t)\|_{H^{s,b}}.\label{V2-3.9}
\end{align}

\end{remark}

\begin{remark}\label{V2-O3.2} In \cite{GTV1997}, Ginibre, Tsutsumi, and Velo proved that if $\eta(\cdot_t)\in C_0^\infty(\mathbb R_t)$ is as in Lemma \ref{V3-L1}, and operator $L$ is defined by
$$(Lf)(t):=\eta\left(\tfrac tT\right)\int_0^tf(t')dt',$$
for fixed $T\in(0,1]$, then, for $-\frac12<b'\leq 0\leq b\leq b'+1$, it is true that
\begin{align}
\|Lf\|_{H^b(\mathbb R_t)}\leq C T^{1-b+b'}\|f\|_{H^{b'}(\mathbb R_t)},\label{V2-3.10}
\end{align}
where $C>0$ is independent of $f$.
\end{remark}

\begin{lemma}\label{V2-L3.3}
\begin{itemize}
\item[(i)] For $s\geq 0$, $-\frac12<b'\leq 0\leq b\leq b'+1$, $0<T\leq 1$, and $\eta\in C_0^\infty(\mathbb R_t)$ as in Lemma \ref{V3-L1}, there exists $C>0$, such that for every $f\in X^{s,b'}$,
\begin{align}
\left\|\eta\left(\tfrac{\cdot_t}T \right)\int_0^{\cdot_t} [W_{\mathbb R}(\cdot_t{\scriptstyle -}t')f(t')](\cdot_x)dt'\right\|_{X^{s,b}}\leq C T^{1-b+b'}\|f\|_{X^{s,b'}}\label{V4.2-4.4}
\end{align}
\item[(ii)] For $s\geq 0$, $b\in(0,\frac12)$, and $\alpha\in(\frac12,1-b)$, let $b^*\in(b,\frac12)$ such that $\alpha<1-b^*$. Then there exists $C>0$ such that for every $f\in Y^{s,-b^*,\alpha}$,
\begin{align}
\left\|\eta(\cdot_t)\int_0^{\cdot_t} [W_{\mathbb R}(\cdot_t{\scriptstyle -}t')f(t')](\cdot_x)dt'\right\|_{X^{s,b,\alpha}}\leq C \|f\|_{X^{s,-b^*}}\leq C \|f\|_{Y^{s,-b^*,\alpha}}.\label{V4.2-4.5}
\end{align}
\end{itemize}

\end{lemma}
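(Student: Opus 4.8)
Both parts reduce, via the unitary identity \eqref{V2-3.9} and the one-dimensional estimate \eqref{V2-3.10}, to elementary computations; the only genuinely new work is the low-frequency term in part (ii). For part (i), since $\{W_{\mathbb R}(t)\}$ is a group acting only on $x$, the group law gives $W_{\mathbb R}({\scriptstyle-}t)\big[\eta(\tfrac tT)\int_0^t W_{\mathbb R}(t{\scriptstyle-}t')f(t')\,dt'\big]=\eta(\tfrac tT)\int_0^t W_{\mathbb R}({\scriptstyle-}t')f(t')\,dt'$, so by \eqref{V2-3.9} the left side of \eqref{V4.2-4.4} is the $H^{s,b}$-norm of the right side. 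Taking $\mathcal F_x$, for each fixed $\xi$ the resulting function of $t$ is exactly $L$ (with the given $T$) applied to $t'\mapsto e^{it'\xi^5}\mathcal F_x f(\xi,t')$; applying \eqref{V2-3.10} fiberwise in $\xi$, multiplying by $\langle\xi\rangle^{2s}$, integrating in $\xi$, and changing variables $\tau\mapsto\tau+\xi^5$ (which turns $\langle\tau\rangle^{b'}$ into $\langle\tau+\xi^5\rangle^{b'}$) produces $CT^{2(1-b+b')}\|f\|_{X^{s,b'}}^2$. The hypotheses $-\tfrac12<b'\le0\le b\le b'+1$ are exactly those required by Remark \ref{V2-O3.2}.

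For part (ii), denote the Duhamel term by $u$ and split $\|u\|_{X^{s,b,\alpha}}\le\|u\|_{X^{s,b}}+\|\chi(\xi)\langle\tau\rangle^\alpha\widehat u(\xi,\tau)\|_{L^2_{\xi\tau}}$. The first summand is bounded by part (i) with $T=1$, $b'=-b^*$; the admissibility $-\tfrac12<-b^*\le0\le b\le1-b^*$ holds since $0<b<b^*<\tfrac12$. For the second summand I would use the standard Fourier representation of the Duhamel term: inserting Fourier inversion in $t'$ gives
$$\mathcal F_x u(\xi,t)=c\,\eta(t)\int_{\mathbb R}\frac{\widehat f(\xi,\tau')}{\tau'+\xi^5}\big(e^{it\tau'}-e^{-it\xi^5}\big)\,d\tau',$$
and then one decomposes $\widehat f(\xi,\cdot)=\widehat f\,\mathbf 1_{\{|\tau'+\xi^5|\ge1\}}+\widehat f\,\mathbf 1_{\{|\tau'+\xi^5|<1\}}=:\widehat f_{\mathrm{far}}+\widehat f_{\mathrm{near}}$. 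On the far part $\tfrac1{|\tau'+\xi^5|}\le\tfrac2{\langle\tau'+\xi^5\rangle}$, and taking $\mathcal F_t$ makes $\widehat{u_{\mathrm{far}}}(\xi,\tau)$ split cleanly into $\int\tfrac{\widehat f_{\mathrm{far}}(\xi,\tau')}{\tau'+\xi^5}\widehat\eta(\tau-\tau')\,d\tau'$ and $\widehat\eta(\tau+\xi^5)\int\tfrac{\widehat f_{\mathrm{far}}(\xi,\tau')}{\tau'+\xi^5}\,d\tau'$; on the near part the singularity disappears after the Taylor expansion $\tfrac{e^{it(\tau'+\xi^5)}-1}{\tau'+\xi^5}=i\sum_{j\ge0}\tfrac{i^jt^{j+1}(\tau'+\xi^5)^j}{(j+1)!}$, so that $\widehat{u_{\mathrm{near}}}(\xi,\tau)$ becomes an absolutely convergent series $\sum_{j\ge0}c_j(\xi)\,\widehat\eta^{(j+1)}(\tau+\xi^5)$ with $|c_j(\xi)|\lesssim\|\langle\cdot+\xi^5\rangle^{-b^*}\widehat f(\xi,\cdot)\|_{L^2}$.

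From there everything is routine once one uses $|\xi|\le1$, which forces $|\xi^5|\le1$ and hence $\langle\tau\rangle\lesssim\langle\tau+\xi^5\rangle$, $\langle\tau'\rangle\lesssim\langle\tau'+\xi^5\rangle$ for all $\tau,\tau'$. Writing $\langle\tau\rangle^\alpha\lesssim\langle\tau-\tau'\rangle^\alpha\langle\tau'\rangle^\alpha$, applying Young's inequality in $\tau$ (the weighted bumps $\langle\cdot\rangle^\alpha\widehat\eta$ and $\langle\cdot\rangle^\alpha\widehat\eta^{(j+1)}$ lie, respectively, in $L^1$ and $L^2$, with $L^2$-norms growing only polynomially in $j$ because $\operatorname{supp}\eta$ is compact, so the $j$-series is summable), and then using $\tfrac{\langle\tau'\rangle^\alpha}{\langle\tau'+\xi^5\rangle}\lesssim\langle\tau'+\xi^5\rangle^{\alpha-1}\le\langle\tau'+\xi^5\rangle^{-b^*}$, one bounds each of the pieces above by $C\|f\|_{X^{0,-b^*}}\le C\|f\|_{X^{s,-b^*}}$. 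The step $\alpha-1\le-b^*$ is exactly where $\alpha\le1-b^*$ enters, while $b^*<\tfrac12$ is what makes $\int_{|\sigma|\ge1}\langle\sigma\rangle^{-2+2b^*}\,d\sigma$ finite, needed to estimate $\int\tfrac{\widehat f_{\mathrm{far}}(\xi,\tau')}{\tau'+\xi^5}\,d\tau'$ by Cauchy--Schwarz. Finally $\|f\|_{X^{s,-b^*}}\le\|f\|_{Y^{s,-b^*,\alpha}}$ is immediate because the $X^{s,-b^*}$-norm is the first summand of the $Y^{s,-b^*,\alpha}$-norm.

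The main obstacle is precisely this low-frequency term in part (ii): one must not split $\tfrac1{\tau'+\xi^5}(e^{it\tau'}-e^{-it\xi^5})$ into its two individually singular pieces before isolating the resonant region $|\tau'+\xi^5|<1$, and one must keep careful track of where the two constraints $b^*<\tfrac12$ and $\alpha\le1-b^*$ built into the statement are used; part (i) by contrast is an immediate consequence of Remarks \ref{V2-O3.1} and \ref{V2-O3.2}.
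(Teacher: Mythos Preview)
Your proof of part (i) is exactly the paper's: pull $W_{\mathbb R}(-t)$ inside via the group law, use \eqref{V2-3.9}, and apply the scalar estimate \eqref{V2-3.10} fiberwise in $\xi$.

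For part (ii) your argument is correct but substantially more elaborate than necessary. The paper does not touch the Duhamel kernel at all for the low-frequency term. It simply observes that for $|\xi|\le 1$ one has $\langle\tau\rangle\le C\langle\tau+\xi^5\rangle$ (the very inequality you also invoke), so that
\[
\|\chi(\xi)\langle\tau\rangle^\alpha\widehat G(\xi,\tau)\|_{L^2_{\xi\tau}}\le C\|G\|_{X^{0,\alpha}}\le C\|G\|_{X^{s,\alpha}},
\]
and hence $\|G\|_{X^{s,b,\alpha}}\le C\|G\|_{X^{s,\alpha}}$. Then part (i) applies \emph{again}, this time with $b$ replaced by $\alpha$ and $b'=-b^*$: the admissibility condition $-\tfrac12<-b^*\le 0\le\alpha\le 1-b^*$ is precisely the hypothesis $\alpha<1-b^*$, and one gets $\|G\|_{X^{s,\alpha}}\le C\|f\|_{X^{s,-b^*}}$ in one line. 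Your far/near decomposition, Taylor expansion, and Young-inequality bookkeeping all work, but they reprove by hand a special case of what part (i) already gives once the low-frequency weight $\langle\tau\rangle^\alpha$ has been traded for $\langle\tau+\xi^5\rangle^\alpha$. The lesson is that the modification $\chi(\xi)\langle\tau\rangle^\alpha$ in the $X^{s,b,\alpha}$-norm is harmless precisely because on $|\xi|\le 1$ it is equivalent to the standard Bourgain weight, so no new kernel analysis is required.
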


\begin{proof}
\begin{itemize}
\item[(i)] Using \eqref{V2-3.9} and \eqref{V2-3.10} we obtain
\begin{align}
\notag &\left\|\eta\left(\tfrac{\cdot_t}T \right)\int_0^{\cdot_t} [W_{\mathbb R}(\cdot_t{\scriptstyle-}t')f(t')](\cdot_x)dt'\right\|_{X^{s,b}}\\
\notag&=\left\| W_{\mathbb R}({\scriptstyle-}\cdot_t)\left\{ \eta\left(\tfrac{\cdot_t}T \right)\int_0^{\cdot_t} [W_{\mathbb R}(\cdot_t{\scriptstyle-}t')f(t')](\cdot_x)dt'\right\}\right\|_{H^{s,b}}\\
\notag&=\left(\int_{\mathbb R_\xi}\langle\xi\rangle^{2s}\int_{\mathbb R_\tau}\langle\tau\rangle^{2b} \left|\left\{  \eta\left(\tfrac{\cdot_t}T \right)\left(\int_0^{\cdot_t} [W_{\mathbb R}(-t')f(t')](\cdot_x)dt'\right)^{\wedge_x}\right\}^{\wedge_t}(\xi,\tau)\right|^2d\tau d\xi\right)^{\frac12}\\
\notag&=\left(\int_{\mathbb R_\xi}\langle\xi\rangle^{2s} \left\| L\left(\left( [W_{\mathbb R}({\scriptstyle-}\cdot_t)f(\cdot_t)](\cdot_x)\right)^{\wedge_x}(\xi)\right)\right\|^2_{H^b(\mathbb R_t)} d\xi\right)^{\frac12}\\
&\leq CT^{1-b+b'}\left(\int_{\mathbb R_\xi}\langle\xi\rangle^{2s}\left\|\left([W_{\mathbb R}({\scriptstyle-}\cdot_t)f(\cdot_t)](\cdot_x) \right)^{\wedge_x}(\xi) \right\|^2_{H^{b'}(\mathbb R_t)}d\xi \right)^{\frac12}.\label{V2-3.12}
\end{align}

Taking into account that
\begin{align*}
\left\|\left([W_{\mathbb R}({\scriptstyle -}\cdot_t)f(\cdot_t)](\cdot_x) \right)^{\wedge_x}(\xi) \right\|^2_{H^{b'}(\mathbb R_t)}=\int_{\mathbb R_\tau}\langle\tau\rangle^{2b'}\left|\left(e^{it\xi^5}[f(\cdot_x,t)]^{\wedge_x}(\xi) \right)^{\wedge_t}(\tau)\right|^2d\tau,
\end{align*}
and since

\begin{align*}
\left(e^{it\xi^5}[f(\cdot_x,t)]^{\wedge_x}(\xi) \right)^{\wedge_t}(\tau)&=C\int_{\mathbb R_t}e^{-it(\tau-\xi^5)}\int_{\mathbb R_x}e^{-i\xi x}f(x,t)dxdt,
\end{align*}
we conclude that
\begin{align*}
\left\|\left([W_{\mathbb R}({\scriptstyle -}\cdot_t)f(\cdot_t)](\cdot_x) \right)^{\wedge_x}(\xi) \right\|^2_{H^{b'}(\mathbb R_t)}=C\int_{\mathbb R_\tau}\langle\tau\rangle^{2b'}|\widehat f(\xi,\tau-\xi^5)|^2d\tau=C\int_{\mathbb R_\tau}\langle\tau+\xi^5\rangle^{2b'}|\widehat f(\xi,\tau)|^2d\tau.
\end{align*}

From \eqref{V2-3.12}, we obtain
\begin{align*}
\left\|\eta\left(\tfrac{\cdot_t}T \right)\int_0^{\cdot_t} [W_{\mathbb R}(\cdot_t{\scriptstyle -}t')f(t')](\cdot_x)dt'\right\|_{X^{s,b}}\leq CT^{1-b+b'}\|f\|_{X^{s,b'}}.
\end{align*}
\item[(ii)] Let us define $G(x,t):=\eta(t)\int_0^{t} [W_{\mathbb R}(t-t')f(t')](x)dt'$. Then
\begin{align*}
\|G\|_{X^{s,b,\alpha}}\leq \|G\|_{X^{s,b}} + \left( \int_{-\infty}^{+\infty} \int_{-1}^1 \langle \tau \rangle^{2\alpha} |\widehat G(\xi,\tau)|^2 d\xi d\tau \right)^{\frac12}.
\end{align*}

Let us observe that there exists $C>0$ such that for every $\xi\in[-1,1]$,
$$\langle \tau \rangle \leq C \langle \tau + \xi^5 \rangle.$$

Hence
\begin{align}
\|G\|_{X^{s,b,\alpha}} \leq \|G\|_{X^{s,b}} + C \|G\|_{X^{0,\alpha}} \leq C \|G\|_{X^{s,\alpha}}.\label{V4.2-4.6}
\end{align}

Since $-\frac12<-b^*<0\leq \alpha < -b^*+1$, by \eqref{V4.2-4.4} with $T=1$, it follows that
\begin{align}
\|G\|_{X^{s,\alpha}} \leq C \|f\|_{X^{s,-b^*}}.\label{V4.2-4.6b}
\end{align}

Estimative \eqref{V4.2-4.5} follows from \eqref{V4.2-4.6}, \eqref{V4.2-4.6b}, and the definition of $\|\cdot\|_{Y^{s,-b^*,\alpha}}$.

\end{itemize}

Lemma \ref{V2-L3.3} is proved.
\end{proof}

\begin{lemma}\label{V2-L3.4} Let $s\geq 0$, and $b_1,b_2$ such that $-\frac12<b_1<b_2<\frac12$, $T\in(0,1]$, and $\eta\in C_0^\infty(\mathbb R_t)$ as in Lemma \ref{V3-L1}. Then there exists $C>0$ such that, for every $F\in X^{s,b_2}$,
\begin{align}
\left\|\eta\left(\tfrac{\cdot_t}T\right)F(\cdot_x,\cdot_t) \right\|_{X^{s,b_1}}\leq CT^{b_2-b_1}\|F\|_{X^{s,b_2}}.\label{V2-3.13}
\end{align}
Besides, if $-\frac12<-b^*<-b<0$, and $\frac12<\alpha<1-b^*$, then, for $T\in(0,\frac12]$,
\begin{align}
\left\|\eta\left(\tfrac{\cdot_t}{2T}\right)F(\cdot_x,\cdot_t) \right\|_{Y^{s,-b^*,\alpha}}\leq C\left\| |\eta\left(\tfrac{\cdot_t}{2T}\right) F(\cdot_x,\cdot_t) \right\|_{X^{s,-b^*}} \leq CT^{b^*-b}\|F\|_{X^{s,-b}}.\label{V4.2-4.7}
\end{align}
\end{lemma}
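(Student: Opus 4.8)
The plan is to reduce \eqref{V2-3.13} to a purely temporal localization estimate, and then to deduce \eqref{V4.2-4.7} from \eqref{V2-3.13} together with elementary manipulations of the three weights defining $\|\cdot\|_{Y^{s,-b^*,\alpha}}$. For the first step I would invoke the identity \eqref{V2-3.9} from Remark \ref{V2-O3.1}. Since the scalar factor $\eta(\cdot_t/T)$ commutes with the group $W_{\mathbb R}({\scriptstyle-}\cdot_t)$ (which acts only in $x$), we get $\|\eta(\cdot_t/T)F\|_{X^{s,b_1}}=\|\eta(\cdot_t/T)\,W_{\mathbb R}({\scriptstyle-}\cdot_t)F\|_{H^{s,b_1}}$ and $\|F\|_{X^{s,b_2}}=\|W_{\mathbb R}({\scriptstyle-}\cdot_t)F\|_{H^{s,b_2}}$. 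Taking the Fourier transform in $x$ and applying Plancherel in $\xi$, estimate \eqref{V2-3.13} follows from the one-dimensional bound
\begin{align*}
\|\eta(\cdot_t/T)v\|_{H^{b_1}(\mathbb R_t)}\leq C\,T^{b_2-b_1}\|v\|_{H^{b_2}(\mathbb R_t)},\qquad -\tfrac12<b_1<b_2<\tfrac12,\quad 0<T\leq 1,
\end{align*}
applied with the parameter $\xi$ frozen; this is a classical localization estimate, cf.\ \cite{GTV1997}.

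To prove this time estimate, write $\eta_T:=\eta(\cdot_t/T)$ and note that $\eta_T=\eta_T\,\eta_{2T}$ because $\eta_{2T}\equiv 1$ on $\supp\eta_T\subset[-T,T]$. I would establish three auxiliary bounds, all uniform in $T\in(0,1]$: (a) multiplication by $\eta_T$ is bounded on $H^{b}(\mathbb R_t)$ for $0\leq b<\tfrac12$; (b) $\|\eta_T v\|_{L^2}\leq C\,T^{b}\|v\|_{H^{b}}$ for $0\leq b<\tfrac12$; and (c) its adjoint $\|\eta_T v\|_{H^{-b}}\leq C\,T^{b}\|v\|_{L^2}$ for $0\leq b<\tfrac12$. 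Bound (b) follows from H\"older's inequality and the Sobolev embedding $H^{b}(\mathbb R)\hookrightarrow L^{p}(\mathbb R)$ with $\tfrac1p=\tfrac12-b$, together with $\|\eta_T\|_{L^{1/b}}=C\,T^{b}$; bound (c) is its dual. For (a) I would estimate the Gagliardo seminorm of $\eta_Tv$: after applying the Leibniz-type splitting of the difference quotient, the term involving $\eta_T(t)-\eta_T(t')$ is controlled by splitting the inner integral at $|t-t'|=T$ and using $\|v\|_{L^2(I)}\leq|I|^{b}\|v\|_{H^{b}}$ (again H\"older plus Sobolev) over the dyadic intervals $I$ surrounding $\supp\eta_T$. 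I expect (a) to be the only genuinely delicate point: a naive treatment of the convolution $\widehat{\eta_Tv}=T\widehat\eta(T\,\cdot)\ast\widehat v$ loses a factor $T^{-b}$, so the Gagliardo-seminorm argument (or a citation to the literature) is really needed.

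Given (a)--(c), the time estimate follows by three cases. If $0\leq b_1<b_2<\tfrac12$, interpolation of Sobolev norms gives $\|\eta_Tv\|_{H^{b_1}}\leq\|\eta_Tv\|_{L^2}^{1-b_1/b_2}\|\eta_Tv\|_{H^{b_2}}^{b_1/b_2}\lesssim\bigl(T^{b_2}\|v\|_{H^{b_2}}\bigr)^{1-b_1/b_2}\|v\|_{H^{b_2}}^{b_1/b_2}=T^{b_2-b_1}\|v\|_{H^{b_2}}$ by (b) and (a). If $-\tfrac12<b_1<b_2\leq 0$, dualizing (multiplication by $\eta_T$ is self-adjoint) reduces to the previous case, since both the gap $b_2-b_1$ and the interval $(-\tfrac12,\tfrac12)$ are preserved under $(b_1,b_2)\mapsto(-b_2,-b_1)$. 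If $b_1<0\leq b_2<\tfrac12$, I would write $\eta_Tv=\eta_T(\eta_{2T}v)$ and compose (c) with exponent $-b_1$ and (b) with exponent $b_2$, obtaining $\|\eta_Tv\|_{H^{b_1}}\lesssim T^{-b_1}\|\eta_{2T}v\|_{L^2}\lesssim T^{-b_1}\,T^{b_2}\|v\|_{H^{b_2}}=T^{b_2-b_1}\|v\|_{H^{b_2}}$. This proves \eqref{V2-3.13}.

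Finally, for \eqref{V4.2-4.7} set $G:=\eta(\cdot_t/(2T))F$; since $T\leq\tfrac12$ we have $2T\in(0,1]$, so \eqref{V2-3.13} with $b_1=-b^*$ and $b_2=-b$ (admissible because $-\tfrac12<-b^*<-b<\tfrac12$) already yields $\|G\|_{X^{s,-b^*}}\leq C\,T^{b^*-b}\|F\|_{X^{s,-b}}$, i.e.\ the right-hand inequality. For the left-hand inequality it suffices to bound the two extra terms in $\|G\|_{Y^{s,-b^*,\alpha}}$ by $\|G\|_{X^{s,-b^*}}$. For $\|\chi(\xi)\langle\tau\rangle^{\alpha-1}\widehat G\|_{L^2}$ one uses $\langle\tau\rangle\sim\langle\tau+\xi^5\rangle$ when $|\xi|\leq1$, so this term is $\leq C\|\langle\tau+\xi^5\rangle^{\alpha-1}\widehat G\|_{L^2}\leq C\|G\|_{X^{s,-b^*}}$, since $\alpha-1\leq-b^*$ and $s\geq0$. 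For the remaining term, Cauchy--Schwarz in $\tau$ gives $\int\frac{|\widehat G(\xi,\tau)|}{\langle\tau+\xi^5\rangle}\,d\tau\leq\Bigl(\int\langle\tau+\xi^5\rangle^{-2(1-b^*)}\,d\tau\Bigr)^{1/2}\Bigl(\int\langle\tau+\xi^5\rangle^{-2b^*}|\widehat G(\xi,\tau)|^2\,d\tau\Bigr)^{1/2}$, where the first factor is a finite constant because $2(1-b^*)>1$; multiplying by $\langle\xi\rangle^{2s}$, squaring and integrating in $\xi$ bounds this term by $C\|G\|_{X^{s,-b^*}}$ as well. Adding the three contributions gives $\|G\|_{Y^{s,-b^*,\alpha}}\leq C\|G\|_{X^{s,-b^*}}$, completing the proof.
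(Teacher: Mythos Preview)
Your argument is correct. For Part~2 (inequality \eqref{V4.2-4.7}) your treatment is essentially identical to the paper's: both use $\langle\tau\rangle\sim\langle\tau+\xi^5\rangle$ on $|\xi|\leq1$ together with $\alpha-1<-b^*$ for the low-frequency piece, and Cauchy--Schwarz in $\tau$ with $2(1-b^*)>1$ for the $L^1_\tau$ piece.

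For Part~1 (inequality \eqref{V2-3.13}) the approaches differ in organization. The paper works directly in $X^{s,b}$ and proves only the case $b_1=0$ explicitly: it establishes $\|\eta_T F\|_{X^{s,0}}\leq CT^{b_2}\|\eta_T F\|_{X^{s,b_2}}$ via H\"older and the Sobolev embedding $H^{b_2}\hookrightarrow L^{2/(1-2b_2)}$, and then proves the uniform multiplier bound $\|\eta_T F\|_{X^{s,b_2}}\leq C\|F\|_{X^{s,b_2}}$ using the fractional Leibniz/Kato--Ponce commutator inequality (Theorem~A.12 in \cite{KPV1993b}) together with Hausdorff--Young; the remaining sign combinations of $(b_1,b_2)$ are dismissed with the phrase ``by an interpolation argument.'' You instead reduce everything to the one-dimensional temporal estimate via the conjugation identity \eqref{V2-3.9}, and then handle all three sign cases of $(b_1,b_2)$ explicitly through interpolation, duality, and the composition trick $\eta_T=\eta_T\eta_{2T}$. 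Your multiplier bound (a) is the analogue of the paper's step (ii); you sketch it via the Gagliardo seminorm rather than Kato--Ponce, which is an equally valid route. What your organization buys is a transparent treatment of the full range $-\tfrac12<b_1<b_2<\tfrac12$ that the paper leaves implicit; what the paper's route buys is a slightly more concrete computation of the multiplier bound in the $X^{s,b}$ variables.
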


\begin{proof} Let us prove \eqref{V2-3.13} for $b_1=0$; i.e.,
\begin{align}
\left\|\eta\left(\tfrac{\cdot_t}T\right)F(\cdot_x,\cdot_t) \right\|_{X^{s,0}}\leq CT^{b_2}\|F\|_{X^{s,b_2}}.\label{V2-3.14}
\end{align}

The general case follows from \eqref{V2-3.14}, using an interpolation argument. Estimative \eqref{V2-3.14} is consequence of the following statements:
\begin{enumerate}
\item[(i)] $\left\|\eta\left(\frac{\cdot_t}T \right)F(\cdot_x,\cdot_t) \right\|_{X^{s,0}}\leq CT^{b_2}\left\| \eta\left(\frac{\cdot_t}T\right)F(\cdot_x,\cdot_t)\right\|_{X^{s,b_2}}$,
\item[(ii)] $\left\|\eta\left(\frac{\cdot_t}T \right)F(\cdot_x,\cdot_t) \right\|_{X^{s,b_2}}\leq C\|F\|_{X^{s,b_2}}$;
\end{enumerate}
where $C>0$ is independent of $T$.\\

Let us prove (i):
\begin{align*}
\left\|\eta\left(\tfrac{\cdot_t}T \right)F(\cdot_x,\cdot_t) \right\|^2_{X^{s,0}}&=\int_{\mathbb R}\int_{-\infty}^{+\infty}\langle\xi\rangle^{2s}\left|[\eta\left( \tfrac{\cdot_t}T\right)F]^{\wedge}(\xi,\tau) \right|^2d\tau d\xi\\
&=\int_{\mathbb R}\langle\xi\rangle^{2s}\int_{-\infty}^{+\infty} \left|\eta\left( \tfrac tT\right)[F(\cdot,t)]^{\wedge_x}(\xi) \right|^2dt d\xi\\
&=\int_{\mathbb R_t} \left|\eta\left( \tfrac tT\right)\right|^2\int_{\mathbb R_\xi}\left| e^{it\xi^5} \langle\xi\rangle^{s} [F(\cdot,t)]^{\wedge_x}(\xi) \right|^2d\xi dt\\
&=C\int_{-T}^T \left|\eta\left( \tfrac tT\right)\right|^2\int_{\mathbb R_x}\left|[W_{\mathbb R}(-t)J^s(F(\cdot,t))] (x)\right|^2dx dt
\end{align*}
where
$$[J^s(F(\cdot,t))]^{\wedge_x}(\xi):=\langle\xi\rangle^s[F(\cdot,t)]^{\wedge_x}(\xi).$$

Hence
\begin{align*}
\left\|\eta\left(\tfrac{\cdot_t}T \right)F(\cdot_x,\cdot_t) \right\|^2_{X^{s,0}}=C\int_{-T}^T\int_{\mathbb R_x}\left| [W_{\mathbb R}(-t)J^s(\eta\left(\tfrac tT\right)F(\cdot,t))](x)\right|^2dxdt.
\end{align*}

Since $0<2b_2<1$, $\frac1{2b_2}>1$. Defining $p:=\frac1{2b_2}$, and $q$ as the conjugate exponent of $p$; i.e., $q:=\frac1{1-2b_2}$, by Hölder's inequality in the $t$ variable, we obtain
\begin{align*}
\left\|\eta\left(\tfrac{\cdot_t}T \right)F(\cdot_x,\cdot_t) \right\|^2_{X^{s,0}}&\leq C T^{2b_2}\left\{\int_{-\infty}^{+\infty}\left(\int_{\mathbb R}\left|[W_{\mathbb R}(-t)J^s(\eta\left(\tfrac tT \right)F(\cdot,t))](x) \right|^2dx \right)^{\frac1{1-2b_2}} dt\right\}^{1-2b_2}\\
&\leq C T^{2b_2}\int_{\mathbb R}\left(\int_{-\infty}^{+\infty}\left|[W_{\mathbb R}(-t)J^s(\eta\left(\tfrac tT \right)F(\cdot,t))](x) \right|^{\frac2{1-2b_2}}  dt\right)^{1-2b_2}dx\\
&=CT^{2b_2}\int_{\mathbb R} \left\|[W_{\mathbb R}({\scriptstyle -}\cdot_t)J^s(\eta\left(\tfrac {\cdot_t}T \right)F(\cdot,\cdot_t))](x) \right\|^2_{L^{\frac2{1-2b_2}}(\mathbb R_t)}dt.
\end{align*}

Taking into account that $H^{b_2}(\mathbb R)\hookrightarrow L^{\frac2{1-2b_2}}(\mathbb R)$ for $0\leq b_2<\frac12$, it follows that
\begin{align*}
\left\|\eta\left(\tfrac{\cdot_t}T \right)F(\cdot_x,\cdot_t) \right\|^2_{X^{s,0}}&\leq C T^{2b_2}\int_{\mathbb R}\int_{-\infty}^{+\infty}\langle\tau\rangle^{2b_2} \left|\left([W_{\mathbb R}({\scriptstyle-}\cdot_t)J^s(\eta\left(\tfrac {\cdot_t}T \right)F(\cdot,\cdot_t))](x)\right)^{\wedge_t} (\tau)\right|^2d\tau dx\\
&=CT^{2b_2}\int_{-\infty}^{+\infty}\langle\tau\rangle^{2b_2}\int_{\mathbb R_{\xi}} \left|\left\{\left([W_{\mathbb R}({\scriptstyle-}\cdot_t)J^s(\eta\left(\tfrac {\cdot_t}T \right)F(\cdot,\cdot_t))](\cdot_x)\right)^{\wedge_t}(\tau)\right\}^{\wedge_x}(\xi) \right|^2d\xi d\tau\\
&=CT^{2b_2}\int_{-\infty}^{+\infty}\langle\tau\rangle^{2b_2}\int_{\mathbb R_{\xi}} \left|\left(e^{it\xi^5}\langle\xi\rangle^s [\eta\left(\tfrac {\cdot_t}T \right)F(\cdot_x,\cdot_t)]^{\wedge_x}(\xi)\right)^{\wedge_t}(\tau) \right|^2d\xi d\tau\\
&=CT^{2b_2}\int_{-\infty}^{+\infty}\langle\tau\rangle^{2b_2}\int_{\mathbb R_{\xi}}\langle\xi\rangle^{2s} \left| [\eta\left(\tfrac {\cdot_t}T \right)F(\cdot_x,\cdot_t)]^{\wedge_{xt}}(\xi,\tau-\xi^5) \right|^2d\xi d\tau\\
&=CT^{2b_2}\int_{\mathbb R_\xi}\langle\xi\rangle^{2s}\int_{-\infty}^{+\infty}\langle\tau+\xi^5\rangle^{2b_2} \left| [\eta\left(\tfrac {\cdot_t}T \right)F(\cdot_x,\cdot_t)]^{\wedge_{xt}}(\xi,\tau) \right|^2d\tau d\xi\\
&=CT^{2b_2}\left\|\eta\left(\tfrac{\cdot_t}T \right)F(\cdot_x,\cdot_t) \right\|^2_{X^{s,b_2}},
\end{align*}
and estimative (i) is proved.\\

Now we prove (ii):
\begin{align*}
\left\|\eta\left(\tfrac{\cdot_t}T \right)F(\cdot_x,\cdot_t) \right\|^2_{X^{s,b_2}}=\int_{\mathbb R}\langle\xi\rangle^{2s}\int_{-\infty}^{+\infty}\langle\tau+\xi^5\rangle^{2b_2}\left| [\eta\left(\tfrac{\cdot_t}T \right)F(\cdot_x,\cdot_t)]^{\wedge}(\xi,\tau)\right|^2 d\tau d\xi.
\end{align*}

Let us observe that
\begin{align*}
[\eta\left(\tfrac{\cdot_t}T\right)F(\cdot_x,\cdot_t)]^\wedge(\xi,\tau)&=C\left[\eta\left(\tfrac {\cdot_t}T\right) (F(\cdot_x,t))^{\wedge_x}(\xi) \right]^{\wedge_t}(\tau)=C\left\{\left[\eta\left(\tfrac {\cdot_t}T\right)\right]^{\wedge_t}\ast_\tau \widehat F(\xi,\cdot_\tau)\right\}(\tau).
\end{align*}

But,
$$\left[\eta\left(\tfrac {\cdot_t}T\right)\right]^{\wedge_t}(\tau)=T\widehat \eta(T\tau).$$

Hence
\begin{align}
\left\|\eta\left(\tfrac{\cdot_t}T \right)F(\cdot_x,\cdot_t) \right\|^2_{X^{s,b_2}}=C\int_{\mathbb R_\xi}\langle\xi\rangle^{2s}\int_{-\infty}^{+\infty}\langle\tau+\xi^5\rangle^{2b_2}\left|\left[(T\widehat\eta(T\cdot)\ast_\tau \widehat F(\xi,\cdot)\right](\tau)\right|^2 d\tau d\xi.\label{V2-3.15}
\end{align}

Let us estimate the previous integral in the $\tau$ variable:
\begin{align}
&\notag \int_{-\infty}^{+\infty}\langle\tau+\xi^5\rangle^{2b_2}\left|\left[(T\widehat\eta(T\cdot)\ast_\tau \widehat F(\xi,\cdot)\right](\tau)\right|^2 d\tau\\
 &\leq C\left\| T\widehat \eta(T\cdot)\ast_\tau\widehat F(\xi,\cdot)\right\|^2_{L^2_\tau}+C\int_{-\infty}^{+\infty}|\tau|^{2b_2}\left|\left[(T\widehat \eta(T\cdot))\ast_\tau \widehat F(\xi,\cdot)\right](\tau-\xi^5)\right|^2d\tau\equiv I(\xi)+II(\xi).\label{V2-3.16}
\end{align}

On the one hand
\begin{align}
I(\xi)\leq C\|T\widehat \eta(T\cdot_\tau)\|^2_{L^1_\tau}\|\widehat F(\xi,\cdot_\tau)\|^2_{L^2_\tau}\leq C\|\widehat F(\xi,\cdot_\tau)\|^2_{L^2_\tau},\label{V2-3.17}
\end{align}

with $C$ independent of $T$.\\

On the other hand, to estimate $II(\xi)$, we use Leibniz formula for fractional derivatives (see \cite{KPV1993b}, Theorem A.12):
\begin{align}
\|D^\alpha(fg)-fD^\alpha g\|_{L^p(\mathbb R)}\leq C\|g\|_{L^\infty(\mathbb R)}\|D^\alpha f\|_{L^p(\mathbb R)}\quad \alpha\in(0,1),\quad 1<p<\infty.\label{V2-3.18}
\end{align}

Let us observe that 
\begin{align}
II(\xi)=C\left\|D^{b_2}_t\left(\mathcal F^{-1}_t([T\widehat\eta(T\cdot)\ast_\tau\widehat F(\xi,\cdot)](\tau-\xi^5))\right)\right\|^2_{L^2_t}.\label{V2-3.19}
\end{align}

But
\begin{align*}
\mathcal F^{-1}_t[(T\widehat \eta(T\cdot)\ast_\tau \widehat F(\xi,\cdot))(\tau-\xi^5)](t)&=C\int_{-\infty}^{+\infty}e^{it\tau}e^{it\xi^5}[T\widehat \eta(T\cdot)\ast_\tau \widehat F(\xi,\cdot)](\tau)d\tau\\
&=Ce^{it\xi^5}\mathcal F^{-1}_t[T\widehat \eta(T\cdot)](t)[F(\cdot_x,t)]^{\wedge_x}(\xi)\\
&=Ce^{it\xi^5}\eta\left(\tfrac tT\right) [F(\cdot_x,t)]^{\wedge_x}(\xi).
\end{align*}

Then, from \eqref{V2-3.19}, using \eqref{V2-3.18}, we have that
\begin{align}
\notag II(\xi)=&C\left\|D_t^{b_2}\left(e^{i\cdot_t\xi^5}[F(\cdot_x,\cdot_t)]^{\wedge_x}(\xi)\eta\left(\tfrac {\cdot_t}T\right)\right)\right\|^2_{L^2_t}\\
\notag \leq &C\left\|D_t^{b_2}\left(e^{i\cdot_t\xi^5}[F(\cdot_x,\cdot_t)]^{\wedge_x}(\xi)\eta\left(\tfrac {\cdot_t} T\right)\right)-e^{i\cdot_t\xi^5}[F(\cdot_x,\cdot_t)]^{\wedge_x}(\xi)D_t^{b_2}\left(\eta\left(\tfrac {\cdot_t}T\right)\right)\right\|^2_{L^2_t}\\
\notag&+C \left\|e^{i\cdot_t\xi^5}[F(\cdot_x,\cdot_t)]^{\wedge_x}(\xi)D_t^{b_2}\left(\eta\left(\tfrac {\cdot_t}T \right)\right) \right\|^2_{L^2_t}\\
\notag \leq & C\left\|\eta\left(\tfrac {\cdot_t}T \right) \right\|^2_{L^\infty_t}\left\|D_t^{b_2}\left(e^{i\cdot_t\xi^5}[F(\cdot_x,\cdot_t)]^{\wedge_x}(\xi) \right) \right\|^2_{L^2_t}+C \left\|e^{i\cdot_t\xi^5}[F(\cdot_x,\cdot_t)]^{\wedge_x}(\xi)D_t^{b_2}\left(\eta\left(\tfrac {\cdot_t}T \right)\right) \right\|^2_{L^2_t}\\
\leq & C\left\|D_t^{b_2}\left(e^{i\cdot_t\xi^5}[F(\cdot_x,\cdot_t)]^{\wedge_x}(\xi) \right) \right\|^2_{L^2_t}+C \left\|e^{i\cdot_t\xi^5}[F(\cdot_x,\cdot_t)]^{\wedge_x}(\xi)D_t^{b_2}\left(\eta\left(\tfrac {\cdot_t}T \right)\right) \right\|^2_{L^2_t}.\label{V2-3.20}
\end{align}

From \eqref{V2-3.17} it is clear that
\begin{align}
\int_{\mathbb R_\xi}\langle\xi\rangle^{2s}I(\xi)d\xi\leq C\int_{\mathbb R_\xi}\langle\xi\rangle^{2s}\|\widehat F(\xi,\cdot_\tau)\|^2_{L^2_\tau}d\xi=C\|F\|^2_{X^{s,0}}.\label{V2-3.21}
\end{align}

Besides, from \eqref{V2-3.20}, using Plancherel's identity, we have that
\begin{align*}
\int_{\mathbb R_\xi}\langle\xi\rangle^{2s}II(\xi)d\xi\leq &C\int_{\mathbb R_\xi}\langle\xi\rangle^{2s}\int_{-\infty}^{+\infty}|\tau|^{2b_2}\left|\left(e^{it\xi^5}[F(\cdot_x,t)]^{\wedge_x}(\xi) \right)^{\wedge_t}(\tau)\right|^2 d\tau d\xi\\
&+C\int_{\mathbb R_\xi}\langle\xi\rangle^{2s}\int_{-\infty}^{+\infty}\left| e^{it\xi^5}[F(\cdot_x,t)]^{\wedge_x}(\xi)D_t^{b_2}\left(\eta\left( \tfrac tT\right)\right)\right|^2 dt d\xi\\
\leq &C\int_{\mathbb R_\xi}\langle\xi\rangle^{2s}\int_{-\infty}^{+\infty}|\tau+\xi^5|^{2b_2}|\widehat F(\xi,\tau)|^2d\tau d\xi\\
&+C\int_{\mathbb R_\xi}\langle\xi\rangle^{2s}\int_{-\infty}^{+\infty}\left| e^{it\xi^5}[F(\cdot_x,t)]^{\wedge_x}(\xi)D_t^{b_2}\left(\eta\left(\tfrac tT \right) \right)\right|^2dt d\xi.
\end{align*}

Using Hölder's inequality in the integral in the $t$ variable with conjugate exponents $p$ and $p'$, we have that
\begin{align*}
\int_{\mathbb R_\xi}\langle\xi\rangle^{2s}II(\xi)d\xi\leq C\|F\|^2_{X^{s,b_2}}+C\int_{\mathbb R_\xi}\langle\xi\rangle^{2s}\left\{\int_{-\infty}^{+\infty}\left| e^{it\xi^5}[F(\cdot_x,t)]^{\wedge_x}(\xi)\right|^{2p}dt \right\}^{\frac1p}\left\{ \int_{-\infty}^{+\infty}|D_t^{b_2}\left(\eta\left( \tfrac tT\right) \right)|^{2p'}dt\right\}^{\frac1{p'}}d\xi.
\end{align*}

Let us choose $p$ such that $\frac12-\frac1{2p}=b_2$; i.e., $\frac1{2p}=\frac{1-2b_2}2$, which is equivalent to $2p=\frac2{1-2b_2}$, which implies $H^{b_2}(\mathbb R_t)\hookrightarrow L^{2p}(\mathbb R_t)$. Then

\begin{align*}
\int_{\mathbb R_\xi}\langle\xi\rangle^{2s}II(\xi)d\xi\leq & C\|F\|^2_{X^{s,b_2}}+C\int_{\mathbb R_\xi}\langle\xi\rangle^{2s}\left\| e^{it\xi^5}[F(\cdot_x,t)]^{\wedge_x}(\xi)\right\|^2_{H^{b_2}(\mathbb R_t)}d\xi\left\{ \int_{-\infty}^{+\infty}|D_t^{b_2}\left(\eta\left(\tfrac tT \right) \right)|^{2p'}dt\right\}^{\frac1{p'}}\\
\leq & C\|F\|^2_{X^{s,b_2}}+C\|F\|^2_{X^{s,b_2}}\left\{ \int_{-\infty}^{+\infty}|D_t^{b_2}\left(\eta\left(\tfrac tT \right) \right)|^{2p'}dt\right\}^{\frac1{p'}}.
\end{align*}

Taking into account that the inverse Fourier transform is a bounded operator from $L^{\frac{2p'}{2p'-1}}$ to $L^{2p'}$, it follows that
\begin{align}
\notag\int_{\mathbb R_\xi}\langle\xi\rangle^{2s}II(\xi)d\xi\leq & C\|F\|^2_{X^{s,b_2}}+C\|F\|^2_{X^{s,b_2}}\left\{ \int_{-\infty}^{+\infty}\left| |\tau|^{b_2}T\widehat \eta(T\tau) \right|^{\frac{2p'}{2p'-1}}d\tau\right\}^{\frac{2p'-1}{p'}}\\
\leq & C\|F\|^2_{X^{s,b_2}}.\label{V2-3.22}
\end{align}

From \eqref{V2-3.15}, \eqref{V2-3.21}, and \eqref{V2-3.22} we conclude that
$$\left\| \eta\left(\tfrac {\cdot_t}T \right)F(\cdot_x,\cdot_t)\right\|_{X^{s,b_2}}\leq C\|F\|_{X^{s,b_2}}\quad T\in(0,1],$$

with $C$ independent of $T$.\\

In consequence of (i), and (ii), we conclude that
$$\left\| \eta\left(\tfrac{\cdot_t}T\right) F(\cdot_x,\cdot_t)\right\|_{X^{s,0}}\leq CT^{b_2}\|F\|_{X^{s,b_2}}.$$

Now, we prove estimative \eqref{V4.2-4.7}. From the definition of $\| \eta\left(\tfrac {\cdot_t}{2T} \right)F(\cdot_x,\cdot_t) \|_{Y^{s,-b^*,\alpha}}$, it is enough to prove that

\begin{align}
\left( \int_{-\infty}^{+\infty} \int_{-1}^1 \frac{|[\eta(\tfrac{\cdot_t}{2T}) F]^\wedge(\xi,\tau)|^2}{\langle \tau \rangle^{2(1-\alpha)}} d\xi d\tau \right)^{\frac12} \leq C \|\eta(\tfrac{\cdot_t}{2T} F)\|_{X^{s,-b^*}},\label{V4.2-4.8}
\end{align}

and
\begin{align}
\left(\int_{-\infty}^{+\infty} \langle \xi \rangle^{2s} \left( \int_{-\infty}^{+\infty} \frac{|[\eta(\tfrac{\cdot_t}{2T})F]^\wedge(\xi,\tau))|}{\langle \tau + \xi^5\rangle} d\tau \right)^2 d\xi \right)^{\frac12} \leq C \|\eta(\tfrac{\cdot_t}{2T})F\|_{X^{s,-b^*}}.\label{V4.2-4.9}
\end{align}

Let us prove \eqref{V4.2-4.8}. If $|\xi|\leq 1$, $\langle \tau \rangle\geq \frac 12 \langle \tau + \xi^5\rangle$.\\

Therefore
\begin{align*}
&\left( \int_{-\infty}^{+\infty} \int_{-1}^1 \frac{|\widehat{\eta(\tfrac{\cdot_t}{2T}) F}(\xi,\tau)|^2}{\langle \tau \rangle^{2(1-\alpha)}} d\xi d\tau \right)^{\frac12}\leq C \left(\int_{-\infty}^{+\infty} \int_{-\infty}^{+\infty} \frac{|\widehat{\eta(\tfrac{\cdot_t}{2T}) F}(\xi,\tau)|^2}{\langle \tau + \xi^5\rangle^{2(1-\alpha)}}d\xi d\tau \right)^{\frac12}\\
&= \left( \int_{-\infty}^{+\infty} \int_{-\infty}^{+\infty} \langle \tau + \xi^5 \rangle^{2(\alpha-1)} |\widehat{\eta(\tfrac{\cdot_t}{2T}) F}(\xi,\tau)|^2 d\xi d\tau\right)^{\frac12} \leq C \left( \int_{-\infty}^{+\infty} \int_{-\infty}^{+\infty} \langle \tau + \xi^5 \rangle^{-2b^*} |\widehat{\eta(\tfrac{\cdot_t}{2T}) F}(\xi,\tau)|^2 d\xi d\tau\right)^{\frac12}\\
&\leq C \|\eta(\tfrac{\cdot_t}{2T} F)\|_{X^{s,-b^*}}.
\end{align*}

Now, we prove \eqref{V4.2-4.9}.
\begin{align*}
&\left(\int_{-\infty}^{+\infty} \langle \xi \rangle^{2s} \left( \int_{-\infty}^{+\infty} \frac{ |\widehat{\eta(\tfrac{\cdot_t}{2T})F}(\xi,\tau)|}{\langle \tau + \xi^5 \rangle} d\tau \right)^2 d\xi \right)^{\frac12}\\
  &\leq \left(\int_{-\infty}^{+\infty} \langle \xi \rangle^{2s} \left(\int_{-\infty}^{+\infty} \frac{d\tau}{\langle \tau + \xi^5 \rangle^{2(1-b^*)}} \right) \left( \int_{-\infty}^{+\infty}  \langle \tau + \xi^5 \rangle^{-2b^*}  |\widehat{\eta(\tfrac{\cdot_t}{2T})F}(\xi,\tau)|^2 d\tau \right) d\xi \right)^{\frac12}\\
  &\leq C \left( \int_{-\infty}^{+\infty} \int_{-\infty}^{+\infty} \langle \xi\rangle^{2s} \langle \tau + \xi^5 \rangle^{-2b^*} | \widehat{\eta(\tfrac{\cdot t}{2T})F}(\xi,\tau)|^2 d\xi d\tau \right)^{\frac12} = C \|\eta(\tfrac{\cdot_t}{2T})F\|_{X^{s,-b^*}},
\end{align*}
since $2(1-b^*)>1$. Lemma \ref{V2-L3.4} is proved

\end{proof}

From now on we will use the following calculus inequalities:
\begin{lemma}\label{CI}
\begin{enumerate}
\item[(i)] For $\beta\geq\gamma\geq0$, and $\quad \beta+\gamma>1$ it follows that
\begin{align}
\int_{\mathbb R}\frac1{\langle x-a_1\rangle^\beta\langle x-a_2\rangle^\gamma}dx\leq C\frac{\phi_\beta(a_1-a_2)}{\langle a_1-a_2\rangle^\gamma},\label{V2-3.39}
\end{align}

where
$$\phi_\beta(a)\sim \left\{ \begin{array}{lr}
1&\text{for }\beta>1,\\
\log(1+\langle a\rangle)&\text{for }\beta=1,\\
\langle a\rangle^{1-\beta}&\text{for }\beta<1.
\end{array}\right.
$$
(For a proof of \eqref{V2-3.39}, see \cite{ET2013}).

\item[(ii)] For $\rho\in(\tfrac12,1)$,

\begin{align}
\int_{\mathbb R}\frac1{\langle x\rangle^\rho\sqrt{|x-a|}}\leq C\frac1{\langle a\rangle^{\rho-\frac12}}.\label{V2-3.71}
\end{align}

(For a proof of \eqref{V2-3.17} see \cite{ET2016}).

\end{enumerate}
\end{lemma}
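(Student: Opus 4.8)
The plan is to prove both inequalities by the classical device of decomposing $\mathbb R$ according to which weight is of the size of the separation parameter. Both estimates are translation invariant: the substitution $x\mapsto x+a_2$ turns (i) into the statement $\int_{\mathbb R}\langle x-a\rangle^{-\beta}\langle x\rangle^{-\gamma}\,dx\le C\langle a\rangle^{-\gamma}\phi_\beta(a)$ with $a:=a_1-a_2$, while (ii) is already in this ``one-singularity'' form; so in what follows we work with a single parameter, which by evenness of the weights we may take to be $a\ge0$, the case of $a$ bounded being trivial since then both sides are $\sim1$. All the region estimates will reduce to the three elementary one-weight bounds $\int_{|x|\le R}\langle x\rangle^{-\gamma}\,dx\lesssim R^{1-\gamma}$ if $\gamma<1$, $\lesssim\log(1+R)$ if $\gamma=1$, $\lesssim1$ if $\gamma>1$; to $\int_{|x|\ge R}\langle x\rangle^{-\gamma}\,dx\lesssim R^{1-\gamma}$ for $\gamma>1$; and to $\int_{|y|\le R}|y|^{-1/2}\,dy\lesssim R^{1/2}$.

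For (i) I would split $\mathbb R=R_1\cup R_2\cup R_3$ with $R_1=\{|x|\le a/2\}$, $R_2=\{|x-a|\le a/2\}$ and $R_3$ the complement. On $R_1$ one has $|x-a|\ge a/2$, hence $\langle x-a\rangle\gtrsim\langle a\rangle$; bounding that factor by $\langle a\rangle^{-\beta}$, integrating $\langle x\rangle^{-\gamma}$ over $\{|x|\le a/2\}$ with the three-case bound above, and using $\gamma\le\beta$ in the regime $\gamma>1$, one checks $\int_{R_1}\lesssim\langle a\rangle^{-\gamma}\phi_\beta(a)$. On $R_2$ one has $\langle x\rangle\gtrsim\langle a\rangle$, and $\int_{|x-a|\le a/2}\langle x-a\rangle^{-\beta}\,dx\lesssim\phi_\beta(a)$ by the very same computation, which yields $\langle a\rangle^{-\gamma}\phi_\beta(a)$ at once. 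The one genuinely delicate piece is $R_3$, where neither singularity dominates; there I would split once more: on $\{|x|\ge 2a\}$ one has $\langle x-a\rangle\sim\langle x\rangle$, the integrand is $\sim\langle x\rangle^{-(\beta+\gamma)}$, and the hypothesis $\beta+\gamma>1$ gives $\int_{|x|\ge 2a}\langle x\rangle^{-(\beta+\gamma)}\,dx\lesssim\langle a\rangle^{1-\beta-\gamma}$, whereas on $\{a/2\le|x|\le 2a\}\cap R_3$ both weights are $\sim\langle a\rangle$ and the set has measure $\lesssim\langle a\rangle$, giving again $\lesssim\langle a\rangle^{1-\beta-\gamma}$; in each of the regimes $\beta>1$, $\beta=1$, $\beta<1$ one verifies $\langle a\rangle^{1-\beta}\lesssim\phi_\beta(a)$, so both contributions are $\lesssim\langle a\rangle^{-\gamma}\phi_\beta(a)$. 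Summing over $R_1,R_2,R_3$ proves (i).

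For (ii) the fastest route is to reduce to (i). Split off the near-singularity part: $|x-a|^{-1/2}=|x-a|^{-1/2}\mathbf 1_{\{|x-a|\le1\}}+|x-a|^{-1/2}\mathbf 1_{\{|x-a|>1\}}$. On $\{|x-a|\le1\}$ one has $\langle x\rangle\sim\langle a\rangle$ and $\int_{|y|\le1}|y|^{-1/2}\,dy<\infty$, so that part is $\lesssim\langle a\rangle^{-\rho}\le\langle a\rangle^{1/2-\rho}$. On the complement $|x-a|^{-1/2}\lesssim\langle x-a\rangle^{-1/2}$, so that part is bounded by $\int_{\mathbb R}\langle x-a\rangle^{-1/2}\langle x\rangle^{-\rho}\,dx$, to which (i) applies with $\beta=\rho$, $\gamma=\tfrac12$ (legitimate: $\rho\ge\tfrac12$, $\rho+\tfrac12>1$, and $\rho<1$ forces $\phi_\rho(a)\sim\langle a\rangle^{1-\rho}$), giving $\lesssim\langle a\rangle^{-1/2}\langle a\rangle^{1-\rho}=\langle a\rangle^{1/2-\rho}$, which is (ii). Alternatively one repeats the $R_1,R_2,R_3$ decomposition directly, now using $\int_{|y|\le R}|y|^{-1/2}\,dy\sim R^{1/2}$ on the singular factor and $\int_{|x|\ge R}\langle x\rangle^{-\rho-1/2}\,dx\sim R^{1/2-\rho}$, convergent precisely because $\rho>\tfrac12$, on the exterior region.

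I expect the only nonroutine point to be the balanced region $R_3$ in (i): there the decay of the integral is produced not by either weight being large but solely by the integrability-at-infinity hypothesis $\beta+\gamma>1$, together with the bookkeeping of the logarithmic borderline $\beta=1$, which is harmless since $\phi_1(a)\sim\log(1+\langle a\rangle)$ is built to absorb it. Both inequalities are classical, and the argument sketched here is in substance the one given in \cite{ET2013} and \cite{ET2016}.
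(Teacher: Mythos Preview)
Your argument is correct. The paper does not prove this lemma at all: it simply states the two inequalities and refers to \cite{ET2013} and \cite{ET2016} for proofs, so there is nothing in the paper to compare against beyond those citations, and your region-decomposition argument is the standard one from those references.
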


\begin{lemma}\label{V3-L4.6} Let $\eta\in C_0^\infty(\mathbb R_t)$ as in Lemma \ref{V3-L1}.
\begin{enumerate}
\item[(i)] For $0\leq s\leq \frac12$, $0<b<\frac12$, and $j=0,1,2$, there exists $C>0$ such that
\begin{align}
\left\| \eta(\cdot_t) \partial_x^j \int_0^{\cdot_t}[W_{\mathbb R}(\cdot_t{\scriptstyle-}t')F(t')](\cdot_x)dt'\right\|_{C(\mathbb R_x;H^{\frac{s+2-j}5}(\mathbb R_t))}\leq C\|F\|_{X^{s,-b}}.\label{V3-L4.6.1}
\end{align}
\item[(ii)] For $\frac12\leq s\leq \frac{11}2$, $0<b<\frac12$, and $j=0,1,2$, there exists $C>0$ such that
\begin{align}
\left\| \eta(\cdot_t) \partial_x^j \int_0^{\cdot_t}[W_{\mathbb R}(\cdot_t{\scriptstyle-}t')F(t')](\cdot_x)dt'\right\|_{C(\mathbb R_x;H^{\frac{s+2-j}5}(\mathbb R_t))}\leq C\|F\|_{X^{\frac12,\frac{2s-1-10b}{10}}}+C\|F\|_{X^{s,-b}}\label{V3-L4.6.2}
\end{align}
\end{enumerate}
\end{lemma}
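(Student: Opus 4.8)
The statement is a Kato-type trace estimate for the Duhamel operator of the linear fifth-order KdV equation, and the natural strategy is to reduce it to a frequency-space computation, exactly as in the proof of Lemma \ref{V3-L1}, but now with the extra $\langle\tau+\xi^5\rangle^{-b}$ weight coming from the forcing term $F$. First I would write $\Phi(x,t):=\eta(t)\partial_x^j\int_0^t[W_{\mathbb R}(t-t')F(t')](x)\,dt'$ and compute its spatial and temporal Fourier transform. Using the standard identity
\begin{align*}
\int_0^t[W_{\mathbb R}(t-t')F(t')](x)\,dt'=c\int_{\mathbb R^2}e^{ix\xi}\,\frac{e^{-it\xi^5}-e^{it\tau}}{i(\tau+\xi^5)}\,\widehat F(\xi,\tau)\,d\tau\,d\xi,
\end{align*}
one splits $\Phi$ into two pieces: the "homogeneous-type" piece carrying $e^{-it\xi^5}$, whose $t$-Fourier transform is $\widehat\eta(\tau+\xi^5)(i\xi)^j\widehat F(\xi,\sigma)/(i(\sigma+\xi^5))$ integrated in $\sigma$, and the "boundary-type" piece carrying $e^{it\tau}$, which contributes $\int \widehat\eta(\tau-\sigma)(i\xi)^j\widehat F(\xi,\sigma)/(i(\sigma+\xi^5))\,d\sigma$. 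For the first piece I would split further the frequency region $|\xi|<1$ from $|\xi|\ge1$: on $|\xi|<1$ the factor $\langle\tau\rangle^{(s+2-j)/5}$ is comparable to $\langle\tau+\xi^5\rangle^{(s+2-j)/5}$ times a constant, $\widehat\eta$ provides rapid decay, and Cauchy–Schwarz in $\sigma$ (absorbing $\langle\sigma+\xi^5\rangle^{-1}$ into $\langle\sigma+\xi^5\rangle^{-b}\langle\sigma+\xi^5\rangle^{b-1}$, with $b-1<-1/2$ so the leftover is $L^2_\sigma$) gives a bound by $\|F\|_{X^{s,-b}}$; on $|\xi|\ge1$ I would change variables $\gamma=-\xi^5$ and recognize a convolution $k_j * \ell$ with $k_j\in L^1$ and $\ell\in L^2$ exactly as in \eqref{V3-44}, the Jacobian producing the power of $|\xi|$ that matches $\langle\xi\rangle^s$ against $\langle\tau\rangle^{(s+2-j)/5}$.

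The key point where the two cases $0\le s\le\frac12$ and $\frac12\le s\le\frac{11}2$ diverge is the boundary-type piece, where one must estimate $\langle\tau\rangle^{(s+2-j)/5}$ times
\begin{align*}
\left\|\int_{\mathbb R}\widehat\eta(\tau-\sigma)\,(i\xi)^j\,\frac{\widehat F(\xi,\sigma)}{\langle\sigma+\xi^5\rangle}\,d\sigma\right\|_{L^2_\tau}
\end{align*}
uniformly in $x$, then integrate $\langle\xi\rangle^{2s}$ in $\xi$. Since $\widehat\eta(\tau-\sigma)$ decays rapidly, $\langle\tau\rangle^{(s+2-j)/5}$ can be transferred onto $\langle\sigma\rangle^{(s+2-j)/5}$. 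One is then left to control $\iint\langle\xi\rangle^{2s}|\xi|^{2j}\langle\sigma\rangle^{2(s+2-j)/5}\langle\sigma+\xi^5\rangle^{-2}|\widehat F(\xi,\sigma)|^2$, and the matter is whether $\langle\xi\rangle^{s}|\xi|^{j}\langle\sigma\rangle^{(s+2-j)/5}\langle\sigma+\xi^5\rangle^{-1}$ is dominated by $\langle\xi\rangle^{s}\langle\sigma+\xi^5\rangle^{-b}$, i.e. whether $|\xi|^{j}\langle\sigma\rangle^{(s+2-j)/5}\lesssim\langle\sigma+\xi^5\rangle^{1-b}$. When $|\sigma|\sim|\xi|^5$ (the resonant region) this reads $|\xi|^{j}|\xi|^{s+2-j}=|\xi|^{s+2}\lesssim\langle\sigma+\xi^5\rangle^{1-b}$, which \emph{fails} for large $\xi$; in the non-resonant region $\langle\sigma+\xi^5\rangle\sim\max(|\sigma|,|\xi|^5)$ one does get enough room. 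In the resonant region, for small $s$ ($s\le\frac12$) one instead uses that $|\xi|^{s+2}$ is still controlled after pairing with the full $\langle\xi\rangle^{2s}$ weight and $b$ close to $\frac12$ (this is the content of (i)); for larger $s$ one cannot, and must spend part of the regularity of $F$ measured at level $\frac12$: writing $\langle\xi\rangle^{s}=\langle\xi\rangle^{1/2}\langle\xi\rangle^{s-1/2}$ and absorbing $\langle\xi\rangle^{s-1/2}$ together with the resonant $\langle\sigma+\xi^5\rangle^{\approx}|\xi|^5$ into $\langle\sigma+\xi^5\rangle^{(2s-1-10b)/10}\langle\xi\rangle^{1/2}$, which is precisely the norm $\|F\|_{X^{1/2,(2s-1-10b)/10}}$ appearing in \eqref{V3-L4.6.2}; the exponent $(2s-1-10b)/10$ is exactly what makes the arithmetic balance. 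The non-resonant region always contributes $\|F\|_{X^{s,-b}}$.

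The continuity in $x$, i.e. that $\Phi\in C(\mathbb R_x;H^{(s+2-j)/5}(\mathbb R_t))$, follows as in Lemma \ref{V3-L1}: the bounds above are uniform in $x$, the $x$-dependence enters only through bounded oscillatory factors $e^{ix\xi}$ (and, in the $|\xi|\ge1$ regime, through nothing at all after the change of variables), so dominated convergence gives continuity. I would organize the write-up by: (a) deriving the Fourier formula for $\Phi$ and splitting into homogeneous- and boundary-type pieces; (b) handling the homogeneous piece by the $|\xi|<1$ / $|\xi|\ge1$ dichotomy, reproducing \eqref{V3-41}–\eqref{V3-44}; (c) handling the boundary piece by transferring $\langle\tau\rangle$ onto $\langle\sigma\rangle$ via $\widehat\eta$, then splitting into resonant and non-resonant regions in $(\xi,\sigma)$; (d) in the resonant region, applying the calculus inequality \eqref{V2-3.39} (or a direct estimate) and distinguishing $s\le\frac12$ from $s\ge\frac12$ to produce either $\|F\|_{X^{s,-b}}$ alone or the sum $\|F\|_{X^{1/2,(2s-1-10b)/10}}+\|F\|_{X^{s,-b}}$; (e) invoking uniform-in-$x$ bounds plus dominated convergence for the continuity statement. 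The main obstacle is step (d): getting the bookkeeping of exponents in the resonant region exactly right so that the split $\langle\xi\rangle^{s}=\langle\xi\rangle^{1/2}\langle\xi\rangle^{s-1/2}$ lands precisely on the weight $(2s-1-10b)/10$, and checking the constraint $s\le\frac{11}{2}$ is what guarantees this intermediate exponent stays below $\frac12$ so that the $X^{1/2,\cdot}$ norm is still subcritical for the Bourgain machinery.
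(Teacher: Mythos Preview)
Your plan has a genuine gap at the very first step. The naive splitting of $(e^{-it\xi^5}-e^{it\tau})/(i(\tau+\xi^5))$ into a ``homogeneous-type'' piece and a ``boundary-type'' piece produces two expressions each carrying the raw factor $1/(i(\sigma+\xi^5))$, and these are individually singular on $\{\sigma+\xi^5=0\}$. You then silently replace $|\sigma+\xi^5|^{-1}$ by $\langle\sigma+\xi^5\rangle^{-1}$ when you write ``absorbing $\langle\sigma+\xi^5\rangle^{-1}$ into $\langle\sigma+\xi^5\rangle^{-b}\langle\sigma+\xi^5\rangle^{b-1}$'', but these are not comparable near the zero set, so the Cauchy--Schwarz step as written is illegitimate. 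The paper resolves this exactly as in Colliander--Kenig: one inserts a cutoff $\psi(\tau+\xi^5)$ with $\psi\equiv1$ on $[-1,1]$ and $\supp\psi\subset[-2,2]$, so that on $\supp\psi^c$ one legitimately has $|\tau+\xi^5|\sim\langle\tau+\xi^5\rangle$ and your argument applies, while on $\supp\psi$ one Taylor-expands
\[
\frac{e^{it\tau}-e^{-it\xi^5}}{i(\tau+\xi^5)}=ie^{it\tau}\sum_{k\ge1}\frac{(-it)^k(\tau+\xi^5)^{k-1}}{k!},
\]
which removes the singularity entirely. This yields the three-term decomposition $I+II+III$ of \eqref{V2-3.26}; the estimate of the near-resonant piece $I$ in \eqref{V2-3.28}--\eqref{V2-3.33} is a direct computation with no $1/(\tau+\xi^5)$ denominator left. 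Your treatment of the far-from-resonant contributions is close in spirit to the paper's handling of $II$ and $III$, but you must first dispose of the region $|\tau+\xi^5|\le1$ by this separate argument.

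For part (ii) your route is also different from the paper's, and more delicate than necessary. Rather than a direct resonant/non-resonant frequency split with the factorization $\langle\xi\rangle^{s}=\langle\xi\rangle^{1/2}\langle\xi\rangle^{s-1/2}$, the paper proves \eqref{V3-L4.6.2} only at the endpoint $s=\tfrac{11}2$ (where $(2s-1-10b)/10=1-b$) by differentiating $\eta(t)\partial_x^j\int_0^t W_{\mathbb R}(t-t')F(t')\,dt'$ once in $t$: the time derivative produces a term with symbol $-\xi^5\widehat F$ and a term with symbol $i\langle\tau+\xi^5\rangle\widehat F$, to each of which part (i) at $s=\tfrac12$ applies directly, giving $\|G_1\|_{X^{1/2,-b}}\le\|F\|_{X^{11/2,-b}}$ and $\|G_2\|_{X^{1/2,-b}}=\|F\|_{X^{1/2,1-b}}$ (see \eqref{V2-3.48}--\eqref{V2-3.54}). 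The general case $\tfrac12<s<\tfrac{11}2$ then follows by interpolation between $s=\tfrac12$ and $s=\tfrac{11}2$. Incidentally, your remark that ``$s\le\tfrac{11}{2}$ is what guarantees this intermediate exponent stays below $\tfrac12$'' is incorrect: at $s=\tfrac{11}2$ the exponent is $1-b>\tfrac12$. The real significance of $\tfrac{11}2$ is that one time derivative corresponds to five spatial derivatives above the base level $s=\tfrac12$, which is what makes the endpoint computation close.
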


\begin{proof}
\begin{enumerate}
\item[(i)] This proof is based on an argument by Colliander and Kenig in \cite{CK2002}. Let us observe that
\begin{align*}
\partial_x^j \int_0^{t}[W_{\mathbb R}(t-t')F(t')](x)dt'&=C\int_0^t\int_{-\infty}^{+\infty}e^{ix\xi}e^{-i\xi^5(t-t')} (i\xi)^j[F(t')]^{\wedge_x}(\xi)d\xi dt'\\
&=C\int_{-\infty}^{+\infty}\int_{-\infty}^{+\infty}e^{i\xi x} \xi^j \frac{e^{it\tau}-e^{-it\xi^5}}{i(\tau+\xi^5)}\widehat F(\xi,\tau)d\xi d\tau.
\end{align*}

Let us consider $\psi\in C_0^\infty(\mathbb R)$ such that $\psi\equiv 1$ in $[-1,1]$, $0\leq\psi\leq 1$, and $\supp\psi\subset[-2,2]$. We will denote the function $1-\psi$ by $\psi^c$. Then
\begin{align}
\notag \eta(t) \partial_x^j \int_0^t[W_{\mathbb R}(t-t')F(t')](x)dt'=& C \eta(t)\int_{\mathbb R^2}e^{i\xi x} \xi^j \frac{e^{it\tau}-e^{-it\xi^5}}{i(\tau+\xi^5)}\psi(\tau+\xi^5)\widehat F(\xi,\tau)d\xi d\tau\\
\notag&+ C \eta(t)\int_{\mathbb R^2} \xi^j \frac{e^{it\tau}}{i(\tau+\xi^5)} e^{i\xi x}\psi^c(\tau+\xi^5)\widehat F(\xi,\tau)d\xi d\tau\\
\notag&-C\eta(t)\int_{\mathbb R^2} \xi^j \frac{e^{-it\xi^5}}{i(\tau+\xi^5)}e^{i\xi x}\psi^c(\tau+\xi^5)\widehat F(\xi,\tau)d\xi d\tau\\
\equiv&I(x,t)+II(x,t)+III(x,t).\label{V2-3.26}
\end{align}

Let us estimate $\|I(x,\cdot_t)\|_{H^{\frac{s+2-j}5}(\mathbb R_t)}$. We observe that
$$\frac{e^{it\tau}-e^{-it\xi^5}}{i(\tau+\xi^5)}=ie^{it\tau}\sum_{k=1}^\infty\frac{(-it)^k(\tau+\xi^5)^{k-1}}{k!}.$$

Hence
$$I(x,t)= C \eta(t)\int_{\mathbb R_2}e^{i\xi x}i e^{it\tau} \xi^j \sum_{k=1}^\infty \frac{(-it)^k(\tau+\xi^5)^{k-1}}{k!}\psi(\tau+\xi^5)\widehat F(\xi,\tau)d\xi d\tau,$$
and
$$\|I(x,\cdot_t)\|_{H^{\frac{s+2-j}5}(\mathbb R_t)}\leq C \sum_{k=1}^\infty \frac1{k!}\left\|\eta(\cdot_t)t^k\int_{\mathbb R^2} \xi^j e^{i\cdot_t\tau}(\tau+\xi^5)^{k-1}e^{i\xi x}\psi(\tau+\xi^5)\widehat F(\xi,\tau)d\xi d\tau \right\|_{H^{\frac{s+2-j}5}(\mathbb R_t)}.$$

Using Kato-Ponce commutator inequality (see \cite{KP1988}, and \cite{IMS1995}), we obtain, for $0\leq l\leq 2$,
\begin{align}
\|fg\|_{H^l(\mathbb R)}\leq C\|f\|_{H^2(\mathbb R)}\|g\|_{H^l(\mathbb R)}.\label{V2-3.27}
\end{align}

This way, it follows that
\begin{align*}
\|I(x,\cdot_t)\|_{H^{\frac{s+2-j}5}(\mathbb R_t)}\leq C\sum_{k=1}^\infty \frac{\|\eta(\cdot_t)(\cdot_t)^k\|_{H^2(\mathbb R_t)}}{k!}\left\| \int_{\mathbb R^2} \xi^j e^{i\cdot_t\tau}(\tau+\xi^5)^{k-1}e^{i \xi x}\psi(\tau+\xi^5)\widehat F(\xi,\tau)d\xi d\tau\right\|_{H^{\frac{s+2-j}5}(\mathbb R_t)}.
\end{align*}
Since
$$\|\eta(\cdot_t)(\cdot_t)^k\|_{H^2(\mathbb R_t)}\leq C k(k-1)\quad \text{for }k\geq 2,$$

then
\begin{align}
\notag &\|I(x,\cdot_t)\|_{H^{\frac{s+2-j}5}(\mathbb R_t)}\leq C\|\eta(\cdot_t)\cdot_t\|_{H^2(\mathbb R_t)}\left\|\int_{\mathbb R^2} \xi^j e^{i\cdot_t\tau}e^{i \xi x}\psi(\tau+\xi^5) \widehat F(\xi,\tau)d\xi d\tau\right\|_{H^{\frac{s+2-j}5}(\mathbb R_t)}\\
\notag&+C\sum_{k=2}^\infty \frac{k(k-1)}{k!}\left\| \int_{\mathbb R^2}\xi^j e^{i\cdot_t\tau} (\tau+\xi^5)^{k-1}e^{i\xi x}\psi(\tau+\xi^5)\widehat F(\xi,\tau)d\xi d\tau\right\|_{H^{\frac{s+2-j}5}(\mathbb R_t)}\\
\notag\leq & C \left\|\langle\cdot_\tau\rangle^{\frac{s+2-j}5}\int_{\mathbb R} \xi^j e^{i\xi x} \psi(\cdot_\tau+\xi^5)\widehat F(\xi,\cdot_\tau)d\xi \right\|_{L^2_\tau}\\
\notag&+C\sum_{k=2}^\infty\frac1{(k-2)!}\left\|\langle\cdot_\tau\rangle^{\frac{s+2-j}5} \int_{\{\xi:|\tau+\xi^5|\leq 2\}} \xi^j (\cdot_\tau+\xi^5)^{k-1} e^{i\xi x}\psi(\cdot_\tau+\xi^5)\widehat F(\xi,\cdot_\tau)d\xi\right\|_{L^2_\tau}\\
\notag\leq & C \left\|\langle\cdot_\tau\rangle^{\frac{s+2-j}5}\int_{\mathbb R} |\xi|^j  \psi(\cdot_\tau+\xi^5)|\widehat F(\xi,\cdot_\tau)|d\xi \right\|_{L^2_\tau}\\
\notag&+C\sum_{k=2}^\infty\frac{2^{k-1}}{(k-2)!}\left\|\langle\cdot_\tau\rangle^{\frac{s+2-j}5} \int_{\{\xi:|\tau+\xi^5|<2\}}|\xi^j|\psi(\cdot_\tau+\xi^5)|\widehat F(\xi,\cdot_\tau)|d\xi\right\|_{L^2_\tau}\\
\notag \leq & C \left\| \langle \cdot_\tau \rangle^{\frac{s+2-j}{5}} \int_{\{\xi:|\tau+\xi^5|<2\}} |\xi|^j \psi(\cdot_\tau+\xi^5) |\widehat F(\xi,\cdot_\tau)| d\xi\right\|_{L^2_\tau}\\
\notag\leq & C\left[\int_{\mathbb R_\tau}\langle\tau\rangle^{\frac{2s+4-2j}5}\left(\int_{\{\xi:|\tau+\xi^5|<2\}} |\xi|^j \psi(\tau+\xi^5)|\widehat F(\xi,\tau)|d\xi \right)^2 d\tau\right]^{\frac12}\\
\notag\leq & C\left[\int_{\mathbb R_\tau}\langle\tau\rangle^{\frac{2s+4-2j}5}\left(\int_{\{\xi:|\tau+\xi^5|<2\}}\langle\xi\rangle^{-2s+2j}d\xi\right)\left(\int_{\{\xi:|\tau+\xi^5|<2\}}\langle\xi\rangle^{2s}|\widehat F(\xi,\tau)|^2d\xi \right) d\tau\right]^{\frac12}\\
\leq & C\sup_{\tau}\left[\langle\tau\rangle^{\frac{2s+4-2j}5}\left(\int_{\{\xi:|\tau+\xi^5|<2\}}\langle\xi\rangle^{-2s+2j}d\xi\right)\right]^{\frac12}\left[\int_{\mathbb R_\tau}\int_{\{\xi:|\tau+\xi^5|<2\}}\langle\xi\rangle^{2s}|\widehat F(\xi,\tau)|^2d\xi d\tau \right]^{\frac12}.\label{V2-3.28}
\end{align}

Let us observe that, if $|\tau+\xi^5|<2$, and $0\leq b$, then $1\geq \langle\tau+\xi^5\rangle^{-2b}\geq\frac1{3^{2b}}$, therefore
\begin{align}
\notag \left[\int_{\mathbb R_\tau}\int_{\{\xi:|\tau+\xi^5|<2\}}\langle\xi\rangle^{2s}|\widehat F(\xi,\tau)|^2d\xi d\tau \right]^{\frac12}&\leq C\left[ \int_{\mathbb R_\tau}\int_{\mathbb R_\xi}\langle\tau+\xi^5\rangle^{-2b}\langle\xi\rangle^{2s}|\widehat F(\xi,\tau)|^2 d\xi d\tau\right]^{\frac12}\\
&\leq C \|F\|_{X^{s,-b}}.\label{V2-3.29}
\end{align}

Now, let us see that
$$\sup_\tau\left[\langle\tau\rangle^{\frac{2s+4-2j}5}\int_{\{\xi:|\tau+\xi^5|<2\}}\langle\xi\rangle^{-2s+2j}d\xi \right]^{\frac12}<C<\infty.$$

In fact,
\begin{align*}
\sup_\tau\left[\langle\tau\rangle^{\frac{2s+4-2j}5}\int_{\{\xi:|\tau+\xi^5|<2\}}\langle\xi\rangle^{-2s+2j}d\xi \right]^{\frac12}=\sup_{\tau}\left[\langle\tau\rangle^{\frac{2s+4-2j}5}\int_{(-2-\tau)^{\frac15}}^{(2-\tau)^{\frac15}} \langle\xi\rangle^{-2s+2j}d\xi \right]^{\frac12}.
\end{align*}

For $\tau<-3$,
$$\langle\tau\rangle^{\frac{2s+4-2j}5}\int_{(-2-\tau)^{\frac15}}^{(2-\tau)^{\frac15}} \langle\xi\rangle^{-2s+2j}d\xi \leq\langle\tau\rangle^{\frac{2s+4-2j}5}\frac{(1+|2-\tau|^{\frac15})^{2j}}{(1+|2+\tau|^{\frac15})^{2s}} [(2-\tau)^{\frac15}-(-2-\tau)^{\frac15}].$$

But
\begin{align*}
(2-\tau)^{\frac15}&-(-2-\tau)^{\frac15}\leq \frac4{(2-\tau)^{\frac45}}\leq C\frac1{|\tau|^{\frac45}},
\end{align*}
\begin{align*}
\frac1{(1+|2+\tau|^{\frac15})^{2s}}&\leq \frac C{|\tau|^{\frac{2s}5}},\\
(1+|2-\tau|^{\frac15})^{2j}&\leq C |\tau|^{\frac{2j}5}.
\end{align*}

Hence, for $\tau<-3$,
\begin{align}
\langle\tau\rangle^{\frac{2s+4-2j}5}\int_{(-2-\tau)^{\frac15}}^{(2-\tau)^{\frac15}} \langle\xi\rangle^{-2s+2j}d\xi &\leq C|\tau|^{\frac{2s+4-2j}5}\frac1{|\tau|^{\frac{2s}5}} |\tau|^{\frac{2j}5} \frac1{|\tau|^{\frac45}}= C.\label{V2-3.30}
\end{align}

For $\tau>4$,
\begin{align}
\notag \langle\tau\rangle^{\frac{2s+4-2j}5}\int_{(-2-\tau)^{\frac15}}^{(2-\tau)^{\frac15}} \langle\xi\rangle^{-2s+2j}d\xi&=\langle\tau\rangle^{\frac{2s+4-2j}5}\int_{(\tau-2)^{\frac15}}^{(\tau+2)^\frac15}\langle\xi\rangle^{-2s+2j}d\xi\\
\notag&\leq\langle\tau\rangle^{\frac{2s+4-2j}5}\frac{(1+|\tau+2|^{\frac15})^{2j}}{(1+|\tau-2|^{\frac15})^{2s}}[(\tau+2)^{\frac15}-(\tau-2)^{\frac15}]\\
&\leq C|\tau|^{\frac{2s+4-2j}5}\frac {|\tau|^{\frac{2j}5}}{|\tau|^{\frac{2s}5}}\frac1{|\tau|^{\frac45}}= C.\label{V2-3.31}
\end{align}

For $-3\leq \tau\leq4$,
\begin{align}
\langle\tau\rangle^{\frac{2s+4-2j}5}\int_{(-2-\tau)^{\frac15}}^{(2-\tau)^{\frac15}} \langle\xi\rangle^{-2s+2j}d\xi\leq C \langle\tau\rangle^{\frac{(2s+4-2j)}5}\langle\tau\rangle^{\frac{(-2s+2j)}5}\int_{(-2-\tau)^{\frac15}}^{(2-\tau)^{\frac15}} d\xi=C\langle\tau\rangle^{\frac45} [(2-\tau)^{\frac15}-(-2-\tau)^{\frac15}]\leq C.\label{V2-3.32}
\end{align}

Therefore, from \eqref{V2-3.30} to \eqref{V2-3.32} we conclude that
\begin{align}
\sup_\tau\left[\langle\tau\rangle^{\frac{2s+4-2j}5}\int_{\{\xi:|\tau+\xi^5|<2\}}\langle\xi\rangle^{-2s+2j}d\xi \right]^{\frac12}\leq C.\label{V2-3.33}
\end{align}

From \eqref{V2-3.28}, \eqref{V2-3.29}, and \eqref{V2-3.33}, it follows that
\begin{align}
\|I(x,\cdot_t)\|_{H^{\frac{s+2-j}5}(\mathbb R_t)}\leq C\|F\|_{X^{s,-b}}.\label{V2-3.34}
\end{align}

Let us estimate $\|II(x,\cdot_t)\|_{H^{\frac{s+2-j}5}(\mathbb R_t)}$. Taking into account \eqref{V2-3.27} and the fact that $\frac{s+2-j}5<\frac12\leq 2$ for $0\leq s\leq \frac12$ and $j=0,1,2$, we have that

\begin{align*}
\|II(x,\cdot_t)\|_{H^{\frac{s+2-j}5}(\mathbb R_t)}&\leq C\|\eta\|_{H^2(\mathbb R_t)}\left\|\int_{\mathbb R^2} \xi^j \frac{e^{i\cdot_t\tau}}{i(\tau+\xi^5)}e^{i\xi x}\psi^c(\tau+\xi^5)\widehat F(\xi,\tau)d\xi d\tau \right\|_{H^{\frac{s+2-j}5}(\mathbb R_t)}\\
&\leq C\left\| \int_{\mathbb R_\tau} e^{i\cdot_t\tau}\left(\int_{\mathbb R_\xi} \xi^j \frac{e^{i\xi x}}{i(\tau+\xi^5)}\psi^c(\tau+\xi^5)\widehat F(\xi,\tau)d\xi\right) d\tau \right\|_{H^{\frac{s+2-j}5}(\mathbb R_t)}\\
&\leq C\left\|\langle\cdot_\tau\rangle^{\frac{s+2-j}5} \left(\int_{\mathbb R_\xi} \xi^j \frac{e^{i\xi x}}{i(\cdot_\tau+\xi^5)}\psi^c(\cdot_\tau+\xi^5)\widehat F(\xi,\cdot_\tau)d\xi\right)  \right\|_{L^2_\tau}.
\end{align*}
For $\tau\in\mathbb R$ let us define the set $A_\tau$ by $A_\tau:=\{\xi:|\tau+\xi^5|\geq 1\}$. Then

\begin{align}
\notag \|II(x,\cdot_t)\|_{H^{\frac{s+2-j}5}(\mathbb R_t)}&\leq C\left\|\langle\cdot_\tau\rangle^{\frac{s+2-j}5} \left(\int_{A_\tau}\frac{|\xi|^j}{\langle\cdot_\tau+\xi^5\rangle}|\widehat F(\xi,\cdot_\tau)|d\xi\right)  \right\|_{L^2_\tau}\\
\notag&\leq C\left[\int_{\mathbb R}\langle\tau\rangle^{\frac{2s+4-2j}5}\left( \int_{A_\tau}\frac {|\xi|^{2j}}{\langle\tau+\xi^5\rangle^{2-2b}\langle\xi\rangle^{2s}}d\xi\right)\left(\int_{A_\tau}\frac{\langle\xi\rangle^{2s}}{\langle\tau+\xi^5\rangle^{2b}}|\widehat F(\xi,\tau)|^2 d\xi\right)d\tau \right]^{\frac12}\\
&\leq C\sup_\tau\left( \langle\tau\rangle^{\frac{2s+4-2j}5}\int_{A_\tau}\frac{|\xi|^{2j}}{\langle\tau+\xi^5\rangle^{2-2b}\langle\xi\rangle^{2s}}d\xi\right)^{\frac12}\|F\|_{X^{s,-b}}.\label{V2-3.35}
\end{align}

Let us see that
$$\sup_\tau\left( \langle\tau\rangle^{\frac{2s+4-2j}5}\int_{A_\tau}\frac{|\xi|^{2j}}{\langle\tau+\xi^5\rangle^{2-2b}\langle\xi\rangle^{2s}}d\xi\right)^{\frac12}\leq C<\infty.$$

Let us observe that
\begin{align}
\notag\int_{A_\tau}\frac{|\xi|^{2j}}{\langle\tau+\xi^5\rangle^{2-2b}\langle\xi\rangle^{2s}}d\xi=\int_{A_\tau\cap\{\xi:|\xi|<1\}}\frac{|\xi|^{2j}}{\langle\tau+\xi^5\rangle^{2-2b}\langle\xi\rangle^{2s}}d\xi+\int_{A_\tau\cap\{\xi:|\xi|\geq1\}}\frac{|\xi|^{2j}}{\langle\tau+\xi^5\rangle^{2-2b}\langle\xi\rangle^{2s}}d\xi.\\
\leq \frac C{\langle\tau\rangle^{2-2b}}+C\int_{|x|\geq1}\frac{|x^{\frac15}|^{2j}}{\langle x+\tau\rangle^{2-2b}\langle x^{\frac15}\rangle^{2s}|x|^{\frac45}}dx
\leq \frac C{\langle\tau\rangle^{2-2b}}+C\int_{|x|\geq 1}\frac1{\langle x+\tau\rangle^{2-2b}\langle x\rangle^{\frac{2s+4-2j}5}} dx.
\label{V2-3.36}
\end{align}

Let us use the calculus inequality \eqref{V2-3.39}: defining $\beta:=2-2b$, and $\gamma:=\frac{2s+4-2j}5$, since $0\leq s\leq\frac12$, and $b<\frac12$, we have that $\beta\geq \gamma>0$, and $\beta+\gamma>1$, and in consequence from \eqref{V2-3.36} it follows that
\begin{align}
\sup_\tau\left(\langle\tau\rangle^{\frac{2s+4-2j}5}\int_{A_\tau}\frac{|\xi|^{2j}}{\langle\tau+\xi^5\rangle^{2-2b}\langle\xi\rangle^{2s}}d\xi \right)^{\frac12}\leq C \sup_\tau\left(\frac{\langle\tau\rangle^{\frac{2s+4-2j}5}}{\langle\tau\rangle^{2-2b}}+\frac{\langle\tau\rangle^{\frac{2s+4-2j}5}}{\langle\tau\rangle^{\frac{2s+4-2j}5}} \right)^{\frac12}\leq C.\label{V2-3.41}
\end{align}

From \eqref{V2-3.35}, and \eqref{V2-3.41} it follows that
\begin{align}
\|II(x,\cdot_t)\|_{H^{\frac{s+2-j}5}(\mathbb R_t)}\leq C\|F\|_{X^{s,-b}}.\label{V2-3.42}
\end{align}

Let us estimate $\|III(x,\cdot_t)\|_{H^{\frac{s+2-j}5}(\mathbb R_t)}$.\\
\begin{align}
\notag\|III(x,\cdot_t)\|_{H^{\frac{s+2-j}5}(\mathbb R_t)}\leq& C \left\|\eta(\cdot_t)\int_{\mathbb R}\left(\int_{|\xi|\geq 1} \xi^j \frac{e^{-i \cdot_t\xi^5}}{i(\tau+\xi^5)}e^{i\xi x}\psi^c(\tau+\xi^5)\widehat F(\xi,\tau)d\xi \right)d\tau \right\|_{H^{\frac{s+2-j}5}(\mathbb R_t)}\\
\notag&+ C\left\|\eta(\cdot_t)\int_{\mathbb R}\left(\int_{|\xi|< 1} \xi^j \frac{e^{-i\cdot_t\xi^5}}{i(\tau+\xi^5)}e^{i\xi x}\psi^c(\tau+\xi^5)\widehat F(\xi,\tau)d\xi \right)d\tau \right\|_{H^{\frac{s+2-j}5}(\mathbb R_t)}\\
\equiv&III_1(x)+III_2(x).\label{V2-3.43}
\end{align}

Let us estimate $III_1(x):$
\begin{align*}
III_1(x)&\leq C\|\eta\|_{H^2(\mathbb R_t)}\left\|\int_{\mathbb R}\left(\int_{|\xi|\geq 1} \xi^j \frac{e^{-i\cdot_t\xi^5}}{i(\tau+\xi^5)}e^{i\xi x}\psi^c(\tau+\xi^5)\widehat F(\xi,\tau)d\xi \right)d\tau \right\|_{H^{\frac{s+2-j}5}(\mathbb R_t)}\\
&\leq C\left\|\int_{\mathbb R}\left(\int_{|\gamma|\geq 1} (-\gamma)^{\frac j5} \frac{e^{i\cdot_t\gamma}}{i(\tau-\gamma)}e^{i(-\gamma)^{\frac15}x}\psi^c(\tau-\gamma)\widehat F((-\gamma)^{\frac15},\tau)\frac1{\gamma^{\frac45}}d\gamma \right)d\tau \right\|_{H^{\frac{s+2-j}5}(\mathbb R_t)}\\
&= C\left\|\int_{\mathbb R_\gamma} (-1)^{\frac j5} e^{i\cdot_t\gamma} \frac{e^{i(-\gamma)^{\frac15}x}}{\gamma^{\frac{4-j}5}} \chi_{\{\gamma:|\gamma|\geq 1\}}(\gamma)\left(\int_{\mathbb R_\tau}\frac{\psi^c(\tau-\gamma)}{i(\tau-\gamma)}\widehat F((-\gamma)^{\frac15},\tau)d\tau \right)d\gamma \right\|_{H^{\frac{s+2-j}5}(\mathbb R_t)}\\
&= C\left\|\langle\cdot_\gamma\rangle^{\frac{s+2-j}5} \frac{e^{i({\scriptstyle-}\cdot_\gamma)^{\frac15}x}}{(\cdot_\gamma)^{\frac{4-j}5}} \chi_{\{\gamma:|\gamma|\geq 1\}}(\cdot_\gamma)\left(\int_{\{\tau:|\tau-\gamma|\geq1\}}\frac{\psi^c(\tau{\scriptstyle-}\cdot_\gamma)}{i(\tau{\scriptstyle-}\cdot_\gamma)}\widehat F(({\scriptstyle-}\cdot_\gamma)^{\frac15},\tau)d\tau \right) \right\|_{L^2_\gamma}\\
&\leq C\left\|\frac{\langle \cdot_\gamma\rangle^{\frac{s+2-j}5}}{(\cdot_\gamma)^{\frac{4-j}5}}\int_{\{\tau:|\tau-\gamma|\geq1\}}\frac1{|\tau{\scriptstyle-}\cdot_\gamma|}|\widehat F(({\scriptstyle-}\cdot_\gamma)^{\frac15},\tau)|d\tau  \right\|_{L^2_{|\gamma|\geq1}}\\
&= C\left\|\frac{\langle \cdot_\gamma\rangle^{\frac{s+2-j}5}}{(\cdot_\gamma)^{\frac{4-j}5}}\int_{\{\tau:|\tau+\gamma|\geq1\}}\frac1{|\tau+\cdot_\gamma|}|\widehat F(({\scriptstyle-}\cdot_\gamma)^{\frac15},-\tau)|d\tau  \right\|_{L^2_{|\gamma|\geq1}}.
\end{align*}

Since $|\tau+\gamma|\geq1$, $|\tau+\gamma|\sim\langle\tau+\gamma\rangle$. And, since $|\gamma|\geq1$, $|\gamma|\sim\langle\gamma\rangle$. Therefore
\begin{align}
\notag III_1(x)&\leq C\left\| \langle \cdot_\gamma\rangle^{\frac{s-2}5}\int_{\{\tau:|\tau+\gamma|\geq1\}}\frac1{\langle\tau+\cdot_\gamma\rangle}|\widehat F(({\scriptstyle-}\cdot_\gamma)^{\frac15},-\tau)|d\tau\right\|_{L^2_{|\gamma|\geq 1}}\\
\notag &\leq C\left[ \int_{|\gamma|\geq1}\langle\gamma\rangle^{\frac{2s-4}5}\left(\int_{\mathbb R}\frac1{\langle\tau+\gamma\rangle}|\widehat F((-\gamma)^{\frac15},-\tau)|d\tau \right)^2 d\gamma\right]^{\frac12}\\
\notag &\leq C\left[ \int_{|\gamma|\geq1}\langle\gamma\rangle^{\frac{2s-4}5}\left(\int_{\mathbb R}\frac1{\langle\tau+\gamma\rangle^{2-2b}}d\tau \right) \left( \int_{\mathbb R}\frac{|\widehat F((-\gamma)^{\frac15},-\tau)|^2}{\langle\tau+\gamma\rangle^{2b}}d\tau\right) d\gamma\right]^{\frac12}\\
\notag &\leq C\left[ \int_{|\gamma|\geq1}\langle\gamma\rangle^{\frac{2s-4}5} \left( \int_{\mathbb R}\frac{|\widehat F((-\gamma)^{\frac15},-\tau)|^2}{\langle\tau+\gamma\rangle^{2b}}d\tau\right) d\gamma\right]^{\frac12}\\
\notag &\leq C\left[ \int_{|\tilde\gamma|\geq1}\int_{\mathbb R_{\tau'}} \frac{\langle\tilde\gamma^5\rangle^{\frac{2s-4}5}}{\langle\tau'+\tilde \gamma^5\rangle^{2b}}\tilde \gamma^4|\widehat F(\tilde\gamma,\tau')|^2d\tau' d\tilde\gamma\right]^{\frac12}\\
&\leq C\left[ \int_{|\tilde\gamma|\geq1}\int_{\mathbb R_{\tau'}} \frac{\langle\tilde\gamma\rangle^{2s}}{\langle\tau'+\tilde \gamma^5\rangle^{2b}}|\widehat F(\tilde\gamma,\tau')|^2d\tau' d\tilde\gamma\right]^{\frac12}\leq C\|F\|_{X^{s,-b}}.\label{V2-3.44}
\end{align}

Let us estimate now $III_2(x)$:

\begin{align*}
III_2(x)&=\left\| \eta(\cdot_t)\int_{\mathbb R}\left(\int_{|\xi|<1} \xi^j \frac{e^{-i\cdot_t\xi^5}}{i(\tau+\xi^5)}e^{i\xi x}\psi^c(\tau+\xi^5)\widehat F(\xi,\tau)d\xi\right)d\tau\right\|_{H^{\frac{s+2-j}5}(\mathbb R_t)}\\
&\leq \int_{\mathbb R}\int_{|\xi|<1}\|\eta(\cdot_t)e^{-i\cdot_t\xi^5}e^{i\xi x}\|_{H^{\frac{s+2-j}5}(\mathbb R_t)}\frac{|\xi|^{j}}{|\tau+\xi^5|}\psi^c(\tau+\xi^5)|\widehat F(\xi,\tau)|d\xi d\tau.
\end{align*}

Since, for $|\xi|<1$,
\begin{align*}
\|\eta(\cdot_t)e^{-i\cdot_t\xi^5}e^{i\xi x}\|_{H^{\frac{s+2-j}5}(\mathbb R_t)}\leq \|\eta(\cdot_t)e^{-i\cdot_t\xi^5}\|_{H^1(\mathbb R_t)}\leq C,
\end{align*}

then
\begin{align}
\notag III_2(x)&\leq C\int_{\mathbb R}\int_{\mathbb R}\frac 1{\langle\tau+\xi^5\rangle}\chi_{(-1,1)}(\xi)|\widehat F(\xi,\tau)|d\xi d\tau\\
\notag&\leq C\left(\int_{\mathbb R}\int_{\mathbb R}\frac 1{{\langle\tau+\xi^5\rangle}^{2-2b}}\chi_{(-1,1)}(\xi)d\xi d\tau\right)^{\frac12}\left(\int_{\mathbb R}\int_{\mathbb R} \frac{|\widehat F(\xi,\tau)|^2}{\langle\tau+\xi^5\rangle^{2b}}d\xi d\tau\right)^{\frac12}\\
&\leq C\left(\int_{-1}^1\left(\int_{\mathbb R}\frac 1{{\langle\tau+\xi^5\rangle}^{2-2b}}d\tau\right)d\xi \right)^{\frac12}\|F\|_{X^{0,-b}}\leq C\|F\|_{X^{s,-b}},\label{V2-3.45}
\end{align}

for $b<\frac12$. From \eqref{V2-3.43} to \eqref{V2-3.45} we conclude that
\begin{align}
\|III(x,\cdot_t)\|_{H^{\frac{s+2-j}5}(\mathbb R_t)}\leq C\|F\|_{X^{s,-b}}.\label{V2-3.46}
\end{align}

Taking into account \eqref{V2-3.26}, \eqref{V2-3.34}, \eqref{V2-3.42}, and \eqref{V2-3.46}, it follows that
\begin{align}
\|\eta(\cdot_t) \partial_x^j \int_0^{\cdot_t} [W_{\mathbb R}(\cdot_t-t')F(t')](x)dt'\|_{H^{\frac{s+2-j}5}(\mathbb R_t)}\leq C\|F\|_{X^{s,-b}},\label{V2-3.47}
\end{align}

where $C$ is independent of $x$.\\

Continuity in $x$ follows from the uniform bound \eqref{V2-3.47} and from the dominated convergence theorem. This way, statement (i) of Lemma \ref{V3-L4.6} is proved.

\item[(ii)] Let us assume that $s=\frac{11}2$, which is equivalent to $\frac{s+2-j}5=\frac{15-2j}{10}$, and $\frac{2s-1-10b}{10}=1-b$, and let us prove that, for $x\in\mathbb R$ fixed,
\begin{align}
\left\|\eta(\cdot_t) \partial_x^j \int_0^{\cdot_t}\left\{W_{\mathbb R}(\cdot_t{\scriptstyle-}t')F(t') \right\}(x)dt' \right\|_{H^{\frac{15-2j}{10}}(\mathbb R_t)}\leq C\left(\|F\|_{X^{\frac12,1-b}}+\|F\|_{X^{\frac{11}2,-b}} \right),\label{V2-3.48}
\end{align}
where $C$ is independent of $x$.\\

We will use the following inequality, for a good enough function $f$:
\begin{align}
\|f\|^2_{H^{\frac{15-2j}{10}}(\mathbb R_t)}\leq C\int_{\mathbb R} |\widehat f(\tau)|^2 d\tau + C \int_{\mathbb R} |\tau|^{\frac{5-2j}{5}} |\tau \widehat f(\tau)|^2 d\tau\leq C \|f\|^2_{L^2(\mathbb R_t)}+C \|f'\|^2_{H^{\frac{\frac12+2-j}{5}}(\mathbb R_t)} .\label{V2-3.49}
\end{align}

It is clear that
\begin{align*}
\left\|\eta(\cdot_t) \partial_x^j \int_0^{\cdot_t}\left\{W_{\mathbb R}(\cdot_t{\scriptstyle-}t')F(t') \right\}(x)dt' \right\|_{L^2(\mathbb R_t)}\leq \left\|\eta(\cdot_t) \partial_x^j\int_0^{\cdot_t}\left\{W_{\mathbb R}(\cdot_t{\scriptstyle-}t')F(t') \right\}(x)dt' \right\|_{H^{\frac{\frac12+2-j}{5}}(\mathbb R_t)}.
\end{align*}

Then, from the proof of (i) taking $s=\frac12$,
\begin{align}
\left\|\eta(\cdot_t) \partial_x^j \int_0^{\cdot_t}\left\{W_{\mathbb R}(\cdot_t{\scriptstyle-}t')F(t') \right\}(x)dt' \right\|_{L^2(\mathbb R_t)}\leq C\|F\|_{X^{\frac12,-b}}\leq  C\|F\|_{X^{\frac12,1-b}}.\label{V2-3.50}
\end{align}

Let us observe now that
\begin{align}
\notag &\frac d{dt}\left[\eta(t) \partial_x^j \int_0^t \left\{W_{\mathbb R}(t-t')F(t') \right\}(x) dt' \right]\\
\notag=&\eta'(t) \partial_x^j \int_0^t \left\{W_{\mathbb R}(t-t')F(t') \right\}(x)dt'+C\eta(t) \partial_x^j \int_{\mathbb R_2} e^{i\xi x}\frac{e^{it\tau} - e^{-it\xi^5} } {(\tau+\xi^5)}(-\xi^5)\widehat F(\xi,\tau)d\xi d\tau\\
&+C \eta(t) \partial_x^j \int_{\mathbb R^2} e^{i\xi x} \frac{i\xi^5 e^{it\tau}+i\tau e^{it\tau}}{i(\tau+\xi^5)} \widehat F(\xi,\tau) d\xi d\tau. \label{V2-3.51}
\end{align}

Using (i) with $s=\frac12$, we have that
\begin{align}
\left\|\eta'(\cdot_t) \partial_x^j \int_0^{\cdot_t}\left\{ W_{\mathbb R}(\cdot_t{\scriptstyle-}t')F(t')\right\}(x)dt'\right\|_{H^{\frac{\frac12+2-j}{5}}(\mathbb R_t)}&\leq C\|F\|_{X^{\frac12,-b}},\text{ and}\label{V2-3.52}\\
\left\| \eta(\cdot_t) \partial_x^j \int_{\mathbb R^2} e^{i\xi x}\frac{e^{it\tau}-e^{-it\xi^5}}{\tau+\xi^5} (-\xi^5)\widehat F(\xi,\tau)d\xi d\tau\right\|_{H^{\frac{\frac12+2-j}{5}}(\mathbb R_t)}&\leq C\|G_1\|_{X^{\frac12,-b}},\label{V2-3.53}
\end{align}
where $\widehat G_1(\xi,\tau):=-\xi^5\widehat F(\xi,\tau)$.\\

On the other hand, in the same way as it was proved the estimative for $II(x,\cdot_t)_{H^{\frac{s+2-j}5}(\mathbb R_t)}$ with $s=\frac12$, it can be shown that
\begin{align}
\left\|\eta(\cdot_t) \partial_x^j \int_{\mathbb R^2} e^{i\xi x} \frac{e^{i\cdot_t\tau}}{i\langle\tau+\xi^5\rangle} i\langle\tau+\xi^5\rangle \widehat F(\xi,\tau)d\xi d\tau\right\|_{H^{\frac{\frac12+2-j}{5}}(\mathbb R_t)}\leq C\|G_2\|_{X^{\frac12,-b}},\label{V2-3.54}
\end{align}

where $\widehat G_2:= i\langle \tau + \xi^5 \rangle \widehat F(\xi,\tau)$.\\

Since
\begin{align*}
\|G_1\|_{X^{\frac12,-b}}&=\left(\int_{\mathbb R^2}\langle\xi\rangle\langle\tau+\xi^5\rangle^{-2b}|\xi|^{10}|\widehat F(\xi,\tau)|^2 d\xi d\tau \right)^{\frac12}\leq\|F\|_{X^{\frac{11}2,-b}},
\end{align*}
and
\begin{align*}
\|G_2\|_{X^{\frac12,-b}}&=\left(\int_{\mathbb R^2}\langle\xi\rangle\langle\tau+\xi^5\rangle^{-2b}\langle\tau+\xi^5\rangle^2|\widehat F(\xi,\tau)|^2d\xi d\tau \right)^{\frac12}=\|F\|_{X^{\frac12,1-b}},
\end{align*}

then, from \eqref{V2-3.49} to \eqref{V2-3.54}, it follows \eqref{V2-3.48}, i.e. we have proved \eqref{V3-L4.6.2} for $s=\frac{11}2$. Using interpolation, statement (ii) follows for $\frac12<s<\frac{11}2$. Lemma \ref{V3-L4.6} is proved.
\end{enumerate}
\end{proof}

\begin{lemma}\label{V2-L3.6} (Estimative of the bilinear form). Let $s\geq 0$,  $\frac 25\leq b<\frac12$, $0\leq a\leq 10b-4$, and $\frac 12<\alpha\leq 1$. There exists $C>0$ such that
\begin{align}
 \|\partial_x(vw)\|_{X^{s+a,-b}}\leq C\|v\|_{X^{s,b}}\|w\|_{X^{s,b}},\label{V1-3.58}\quad\text{and}\\
 \|\partial_x(vw)\|_{Y^{s,-b,\alpha}}\leq C\|v\|_{X^{s,b,\alpha}}\|w\|_{X^{s,b,\alpha}}.\label{V2-3.58}
\end{align}
\end{lemma}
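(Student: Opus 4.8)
The plan is to establish the first estimate \eqref{V1-3.58} by a direct computation on the Fourier side and then to deduce \eqref{V2-3.58} from it by absorbing the two extra low–frequency pieces of the $Y^{s,-b,\alpha}$–norm. First I would pass to Fourier variables: by Plancherel's identity and duality (the dual of $X^{s+a,-b}$ being $X^{-(s+a),b}$), \eqref{V1-3.58} is equivalent to the trilinear bound
\begin{align*}
\iint_{\substack{\xi=\xi_1+\xi_2\\ \tau=\tau_1+\tau_2}}\frac{\langle\xi\rangle^{s+a}\,|\xi|\;f_1(\xi_1,\tau_1)\,f_2(\xi_2,\tau_2)\,f_3(\xi,\tau)}{\langle\xi_1\rangle^s\langle\xi_2\rangle^s\,\langle\sigma\rangle^b\langle\sigma_1\rangle^b\langle\sigma_2\rangle^b}\;d\xi_1\,d\tau_1\,d\xi\,d\tau\leq C\|f_1\|_{L^2}\|f_2\|_{L^2}\|f_3\|_{L^2}
\end{align*}
for all nonnegative $f_1,f_2,f_3\in L^2(\mathbb R^2)$, where $\sigma:=\tau+\xi^5$ and $\sigma_j:=\tau_j+\xi_j^5$. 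The only algebraic input is the resonance identity
\begin{align*}
\sigma-\sigma_1-\sigma_2=\xi^5-\xi_1^5-\xi_2^5=5\,\xi\,\xi_1\,\xi_2\,(\xi_1^2+\xi_1\xi_2+\xi_2^2),\qquad \xi_1^2+\xi_1\xi_2+\xi_2^2\geq\tfrac12(\xi_1^2+\xi_2^2),
\end{align*}
which forces $\max\{\langle\sigma\rangle,\langle\sigma_1\rangle,\langle\sigma_2\rangle\}\geq c\,\langle\,\xi\,\xi_1\,\xi_2\,(\xi_1^2+\xi_2^2)\,\rangle$.

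Next I would run a case analysis. By symmetry assume $|\xi_1|\leq|\xi_2|$; then $\langle\xi\rangle\leq C\langle\xi_2\rangle$ and, since $s\geq0$, $\langle\xi\rangle^{s+a}\langle\xi_1\rangle^{-s}\langle\xi_2\rangle^{-s}\leq C\langle\xi_2\rangle^{a}$, so the full multiplier is $\leq C|\xi|\langle\xi_2\rangle^{a}/(\langle\sigma\rangle^b\langle\sigma_1\rangle^b\langle\sigma_2\rangle^b)$. In the region $|\xi|,|\xi_1|,|\xi_2|\geq1$ I would split according to which of the three modulations is largest; calling it $\langle\sigma_{\mathrm{max}}\rangle$, the resonance identity gives $\langle\sigma_{\mathrm{max}}\rangle^{-b}\leq C(|\xi|\,|\xi_1|\,|\xi_2|^3)^{-b}$, and a short arithmetic check shows $|\xi|^{1-b}|\xi_1|^{-b}|\xi_2|^{a-3b}\leq C\langle\xi_2\rangle^{1+a-4b}\leq C$, because $a\leq10b-4\leq4b-1$ whenever $b<\tfrac12$. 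What remains is then an integral of the shape $\iint_{*}f_i\,f_j\,f_k\,\langle\sigma_j\rangle^{-b}\langle\sigma_k\rangle^{-b}$, with $i$ the index carrying the largest modulation: I would place $f_i$ in $L^2$, apply the Cauchy–Schwarz inequality in the remaining frequency–time variable, bound the factor $\int\langle\tau'-a_1\rangle^{-2b}\langle\tau'-a_2\rangle^{-2b}\,d\tau'$ by Lemma \ref{CI}(i) (applicable since $4b>1$ for $b\geq\tfrac25$), and finally verify that the resulting one–dimensional $d\xi'$–integral is uniformly bounded by a stationary–phase count; the exponent thresholds that arise there are all met precisely because $b\geq\tfrac25$.

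The complementary low–frequency region, where $\min\{|\xi|,|\xi_1|,|\xi_2|\}\leq1$ and the resonance identity degenerates, I would handle separately: there every frequency weight except $|\xi|$ is $O(1)$, and when it is $|\xi|$ that is small the derivative factor $|\xi|$ in $\partial_x(vw)$ exactly offsets the loss of dispersive smoothing, so a direct Cauchy–Schwarz estimate combined with Lemma \ref{CI} closes these cases. This is where I expect the real work to lie: the low–frequency and near–resonant interactions are the only places where dispersion gives nothing, and the estimate there rests entirely on the derivative in $\partial_x(vw)$ together with the sharp constraints $b\geq\tfrac25$ and $a\leq10b-4$.

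Finally, \eqref{V2-3.58} would follow from \eqref{V1-3.58} with $a=0$. The first summand of $\|\partial_x(vw)\|_{Y^{s,-b,\alpha}}$ is literally $\|\partial_x(vw)\|_{X^{s,-b}}$; the third summand is $\leq C\|\partial_x(vw)\|_{X^{s,-b}}$ by Cauchy–Schwarz in $\tau$, since $\int\langle\tau+\xi^5\rangle^{-2(1-b)}\,d\tau<\infty$ because $2(1-b)>1$; and for the second summand one uses $\langle\tau\rangle\sim\langle\tau+\xi^5\rangle$ on $\supp\chi$ and repeats the trilinear argument with the weight $\chi(\xi)\langle\tau\rangle^{\alpha-1}$ in place of $\langle\sigma\rangle^{-b}$, the two input factors now being controlled by the low–frequency weights $\chi(\xi_i)\langle\tau_i\rangle^{\alpha}$ built into $\|v\|_{X^{s,b,\alpha}}$ and $\|w\|_{X^{s,b,\alpha}}$. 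Combining these with \eqref{V1-3.58} and $\|v\|_{X^{s,b}}\leq\|v\|_{X^{s,b,\alpha}}$ yields \eqref{V2-3.58}.
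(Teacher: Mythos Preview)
Your strategy is sound and would go through, but it differs from the paper's proof in the main step. For \eqref{V1-3.58} you split according to which of $\sigma,\sigma_1,\sigma_2$ is maximal, cash in the resonance identity to absorb the derivative and the $\langle\xi\rangle^a$–gain, and then apply Cauchy--Schwarz in the direction of the dominant modulation; what you call the ``stationary--phase count'' for the leftover one--dimensional integral is exactly the content of Lemma~\ref{CI}(ii) with $\rho=4b-1\in(\tfrac12,1)$. The paper, by contrast, avoids the modulation case split entirely: it applies a \emph{single} Cauchy--Schwarz (always pairing the output against $L^2$), reducing \eqref{V1-3.58} to the uniform bound \eqref{V2-3.62}, integrates $\tau_1$ by Lemma~\ref{CI}(i), performs the explicit change of variable $\mu=\xi_1^5+\tau+(\xi-\xi_1)^5$ (computing the Jacobian and inverting the quartic in closed form), and then applies Lemma~\ref{CI}(ii) to reach the pointwise bound \eqref{V2-3.72}. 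The final dichotomy $|\xi|^5\lesssim|\tau+\xi^5|$ versus $|\xi|^5\lesssim|\tau+\tfrac1{16}\xi^5|$ is where the threshold $a\le10b-4$ appears sharply. Your route is the one more commonly seen in the $X^{s,b}$ literature and is perhaps more transportable; the paper's is shorter and more self--contained, trading case analysis for an explicit algebraic computation tied to the fifth--order symbol.

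For \eqref{V2-3.58} your reduction of the first and third pieces of the $Y^{s,-b,\alpha}$--norm to \eqref{V1-3.58} matches the paper exactly. For the middle piece you overcomplicate things slightly: the low--frequency weights $\chi(\xi_i)\langle\tau_i\rangle^\alpha$ on the inputs are not needed. Since $\alpha\le1$ one has $\langle\tau\rangle^{\alpha-1}\le1$, so the second summand is controlled by $\bigl(\int_{|\xi|\le1}\!\int|\xi|^2|\widehat{vw}|^2\bigr)^{1/2}$, and the same Cauchy--Schwarz as in part~(i) reduces this to the $|\xi|\le1$ case of \eqref{V2-3.72}, bounded by $C\|v\|_{X^{s,b}}\|w\|_{X^{s,b}}$ alone.
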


\begin{proof} 
\begin{enumerate}
\item[(i)] Let us begin by estimating the norm  $\|\partial_x(vw)\|_{X^{s+a,-b}}$. We observe that
$$[\partial_x(vw)]^\wedge(\xi,\tau)=Ci\xi\int_{\mathbb R_2} \widehat v(\xi_1,\tau_1)\widehat w(\xi-\xi_1,\tau-\tau_1)d\xi_1 d\tau_1.$$

Hence
\begin{align}
\notag \|\partial_x(vw)\|^2_{X^{s+a,-b}}&=C\int_{\mathbb R^2_{\xi\tau}}\langle\xi\rangle^{2(s+a)}\langle\tau+\xi^5\rangle^{-2b}\xi^2\left|\int_{\mathbb R^2_{\xi_1\tau_1}}\widehat v(\xi_1,\tau_1)\widehat w(\xi-\xi_1,\tau-\tau_1)d\xi_1 d\tau_1 \right|^2d\xi d\tau\\
&=C\int_{\mathbb R^2_{\xi\tau}}\left| \int_{\mathbb R^2_{\xi_1\tau_1}} \langle\xi\rangle^{(s+a)}\langle\tau+\xi^5\rangle^{-b}|\xi|\widehat v(\xi_1,\tau_1)\widehat w(\xi-\xi_1,\tau-\tau_1)d\xi_1 d\tau_1 \right|^2d\xi d\tau.\label{V2-3.59}
\end{align}

Let $h\in L^2(\mathbb R^2_{\xi\tau})$ an arbitrary function. If we manage to prove that
\begin{align}
\left|\int_{\mathbb R^2_{\xi\tau}} \left[\int_{\mathbb R^2_{\xi_1\tau_1}}\langle\xi\rangle^{(s+a)}\langle\tau+\xi^5\rangle^{-b}|\xi|\widehat v(\xi_1,\tau_1)\widehat w(\xi-\xi_1,\tau-\tau_1)d\xi_1 d\tau_1 \right]h(\xi,\tau)d\xi d\tau\right|\leq C\|v\|_{X^{s,b}}\|w\|_{X^{s,b}}\|h\|_{L^2(\mathbb R^2)},\label{V2-3.60}
\end{align}

then we would have, by a duality argument, that
\begin{align}
\|\partial_x(vw)\|_{X^{s+a,-b}}\leq C\|v\|_{X^{s,b}}\|w\|_{X^{s,b}}.\label{FTY}
\end{align}

Taking into account that, for $s\geq 0$, there exists $C>0$, such that
$$\frac{\langle\xi\rangle^s}{\langle\xi_1\rangle^s\langle\xi-\xi_1\rangle^s}\leq C,$$
then, to establish \eqref{V2-3.60}, it is enough to prove that
\begin{align}
\notag\left|\int_{\mathbb R^2_{\xi\tau}} \left[\int_{\mathbb R^2_{\xi_1\tau_1}}\langle\xi\rangle^{a}\langle\xi_1\rangle^s\langle\tau+\xi^5\rangle^{-b}|\xi|\langle\xi-\xi_1\rangle^s|\widehat v(\xi_1,\tau_1)||\widehat w(\xi-\xi_1,\tau-\tau_1)|d\xi_1 d\tau_1 \right]h(\xi,\tau)d\xi d\tau\right|\\
\leq C\|v\|_{X^{s,b}}\|w\|_{X^{s,b}}\|h\|_{L^{2}(\mathbb R^2)}.\label{V2-3.61}
\end{align}

Since
\begin{align*}
&\left|\int_{\mathbb R^2_{\xi\tau}} \left[\int_{\mathbb R^2_{\xi_1\tau_1}}\langle\xi\rangle^a\langle\xi_1\rangle^s\langle\tau+\xi^5\rangle^{-b}|\xi|\langle\xi-\xi_1\rangle^s|\widehat v(\xi_1,\tau_1)||\widehat w(\xi-\xi_1,\tau-\tau_1)|d\xi_1 d\tau_1 \right]h(\xi,\tau)d\xi d\tau\right|\\
&\leq \int_{\mathbb R^2_{\xi\tau}} \int_{\mathbb R^2_{\xi_1\tau_1}}\frac{\langle\xi\rangle^a|\xi||h(\xi,\tau)| \langle\xi_1\rangle^s\langle\tau_1+\xi_1^5\rangle^{b}|\widehat v(\xi_1,\tau_1)|\langle\xi-\xi_1\rangle^s\langle\tau-\tau_1+(\xi-\xi_1)^5\rangle^b |\widehat w(\xi-\xi_1,\tau-\tau_1)|}{\langle\tau+\xi^5\rangle^b\langle\tau_1+\xi_1^5\rangle^b\langle\tau-\tau_1+(\xi-\xi_1)^5\rangle^b}d\xi_1d\tau_1 d\xi d\tau\\
&\leq\int_{\mathbb R^2_{\xi\tau}}\left[ \int_{\mathbb R^2_{\xi_1\tau_1}}\frac{\langle\xi\rangle^{2a}|\xi|^2|h(\xi,\tau)|^2}{\langle\tau+\xi^5\rangle^{2b}\langle\tau_1+\xi_1^5\rangle^{2b}\langle\tau-\tau_1+(\xi-\xi_1)^5\rangle^{2b}}d\xi_1 d\tau_1\right]^{\frac12}\\
&\hspace{1.3cm}\left[\int_{\mathbb R^2_{\xi_1\tau_1}}\langle\xi_1\rangle^{2s}\langle\tau_1+\xi_1^5\rangle^{2b}|\widehat v(\xi_1,\tau_1)|^2\langle\xi-\xi_1\rangle^{2s}\langle\tau-\tau_1+(\xi-\xi_1)^5\rangle^{2b}|\widehat w(\xi-\xi_1,\tau-\tau_1)|^2d\xi_1 d\tau_1 \right]^{\frac12}d\xi d\tau\\
&\leq\left[\int_{\mathbb R^2_{\xi\tau}}\int_{\mathbb R^2_{\xi_1\tau_1}}\frac{\langle\xi\rangle^{2a}|\xi|^2|h(\xi,\tau)|^2}{\langle\tau+\xi^5\rangle^{2b}\langle\tau_1+\xi_1^5\rangle^{2b}\langle\tau-\tau_1+(\xi-\xi_1)^5\rangle^{2b}}d\xi_1 d\tau_1 d\xi d\tau\right]^{\frac12}\\
&\hspace{0.5cm}\left[\int_{\mathbb R^2_{\xi\tau}}\int_{\mathbb R^2_{\xi_1\tau_1}}\langle\xi_1\rangle^{2s}\langle\tau_1+\xi_1^5\rangle^{2b}|\widehat v(\xi_1,\tau_1)|^2\langle\xi-\xi_1\rangle^{2s}\langle\tau-\tau_1+(\xi-\xi_1)^5\rangle^{2b}|\widehat w(\xi-\xi_1,\tau-\tau_1)|^2d\xi_1 d\tau_1d\xi d\tau \right]^{\frac12}\\
&=\left[ \int_{\mathbb R^2_{\xi\tau}}|h(\xi,\tau)|^2\left(\int_{\mathbb R^2_{\xi_1\tau_1}}\frac{\langle\xi\rangle^{2a}|\xi|^2}{\langle\tau+\xi^5\rangle^{2b}\langle\tau_1+\xi_1^5\rangle^{2b}\langle\tau-\tau_1+(\xi-\xi_1)^5\rangle^{2b}}d\xi_1 d\tau_1 \right)d\xi d\tau\right]^{\frac12}\|v\|_{X^{s,b}}\|w\|_{X^{s,b}},
\end{align*}

then, to prove \eqref{V2-3.61}, it is enough to prove that
\begin{align}
\sup_{(\xi,\tau)\in\mathbb R^2}\int_{\mathbb R^2_{\xi_1\tau_1}}\frac{\langle\xi\rangle^{2a}|\xi|^2}{\langle\tau+\xi^5\rangle^{2b}\langle\tau_1+\xi_1^{5}\rangle^{2b}\langle\tau-\tau_1+(\xi-\xi_1)^5\rangle^{2b}}d\xi_1d\tau_1\leq C.\label{V2-3.62}
\end{align}

Let us observe that
\begin{align*}
\int_{\mathbb R_{\tau_1}}\frac{\langle\xi\rangle^{2a}|\xi|^2}{\langle\tau+\xi^5\rangle^{2b}\langle\tau_1+\xi_1^{5}\rangle^{2b}\langle\tau-\tau_1+(\xi-\xi_1)^5\rangle^{2b}}d\tau_1=\frac{\langle\xi\rangle^{2a}|\xi|^2}{\langle\tau+\xi^5\rangle^{2b}}\int_{\mathbb R_{\tau_1}}\frac1{\langle\tau_1-(-\xi_1^5)\rangle^{2b}\langle\tau_1-(\tau+(\xi-\xi_1)^5)\rangle^{2b}}d\tau_1.
\end{align*}

Using inequality \eqref{V2-3.39}, with $\beta=\gamma=2b<1$, and $\beta+\gamma=4b>1$ $(b>\frac14)$, we conclude that
\begin{align}
\int_{\mathbb R_{\xi_1}}\int_{\mathbb R_{\tau_1}}\frac{\langle\xi\rangle^{2a}|\xi|^2}{\langle\tau+\xi^5\rangle^{2b}\langle\tau_1+\xi_1^5\rangle^{2b}\langle\tau-\tau_1+(\xi-\xi_1)^5\rangle^{2b}}d\tau_1d\xi_1\leq C \int_{\mathbb R_{\xi_1}}\frac{\langle\xi\rangle^{2a}|\xi|^2}{\langle\tau+\xi^5\rangle^{2b}\langle\xi_1^5+\tau+(\xi-\xi_1)^5\rangle^{4b-1}}d\xi_1.\label{V2-3.63}
\end{align}

Let us make the following change of variables in the integral of the right hand side of \eqref{V2-3.63}:
\begin{align}
\notag \mu&:=\xi_1^5+\tau+(\xi-\xi_1)^5=\tau+\xi^5-\frac52\xi\xi_1(\xi-\xi_1)[\xi^2+\xi_1^2+(\xi-\xi_1)^2]\\
&=\tau+\frac1{16}\xi^5+\frac52\xi^3(\xi_1-\tfrac\xi2)^2+5\xi(\xi_1-\tfrac\xi2)^4,\label{V2-3.64}
\end{align}

\begin{align}
d\mu=-\frac52\xi(\xi-2\xi_1)[\xi^2+(\xi-2\xi_1)^2]d\xi_1.\label{V2-3.65}
\end{align}

Therefore
\begin{align}
\notag\int_{\mathbb R_{\xi_1}}\int_{\mathbb R_{\tau_1}}&\frac{\langle\xi\rangle^{2a}|\xi|^2}{\langle\tau+\xi^5\rangle^{2b}\langle\tau_1+\xi_1^5\rangle^{2b}\langle\tau-\tau_1+(\xi-\xi_1)^5\rangle^{2b}}d\tau_1d\xi_1\\
&\leq C\frac{\langle\xi\rangle^{2a}|\xi|^2}{\langle\tau+\xi^5\rangle^{2b}}\int_{A\subset\mathbb R_{\mu}}\frac{1}{(1+|\mu|)^{4b-1}|\xi||\xi-2\xi_1|[\xi^2+(\xi-2\xi_1)^2]}d\mu,\label{V2-3.66}
\end{align}

where $A=[\tau+\tfrac1{16}\xi^5,+\infty)$ when $\xi>0$.\\

Using the last equality of \eqref{V2-3.64}, it can be seen that
$$\left(\xi_1-\tfrac\xi2 \right)^2=-\frac{\xi^2}4+\sqrt{\frac{\xi^5-4\tau+4\mu}{20\xi}},$$
then
\begin{align*}
(\xi-2\xi_1)^2&=-\xi^2+\sqrt{\frac4{5\xi}(\xi^5-4\tau+4\mu)},\\
\xi_1&=\frac\xi2\pm\sqrt{\frac12\sqrt{\frac1{5\xi}(\xi^5-4\tau+4\mu)}-\frac14\xi^2},\\
2\xi_1-\xi&=\pm\sqrt{-\xi^2+\sqrt{\frac4{5\xi}(\xi^5-4\tau+4\mu)}}.
\end{align*}

Therefore, from \eqref{V2-3.66}, we have that
\begin{align}
\notag &\int_{\mathbb R^2_{\xi_1\tau_1}}\frac{\langle\xi\rangle^{2a}|\xi|^2}{\langle\tau+\xi^5\rangle^{2b}\langle\tau_1+\xi_1^5\rangle^{2b}\langle\tau-\tau_1+(\xi-\xi_1)^5\rangle^{2b}}d\tau_1 d\xi_1\\
\notag&\leq C\frac{\langle\xi\rangle^{2a}|\xi|}{\langle\tau+\xi^5\rangle^{2b}}\int_{A\subset \mathbb R_\mu}\frac1{(1+|\mu|)^{4b-1}\sqrt{-\xi^2+\sqrt{\frac4{5\xi}(\xi^5-4\tau+4\mu)}}\sqrt{\frac4{5\xi}(\xi^5-4\tau+4\mu)}}d\mu\\
&\leq C\frac{\langle\xi\rangle^{2a}|\xi|}{\langle\tau+\xi^5\rangle^{2b}}\int_{A\subset \mathbb R_\mu}\frac{\sqrt{\xi^2+\sqrt{\frac4{5\xi}(\xi^5-4\tau+4\mu)}}}{(1+|\mu|)^{4b-1}\sqrt{\frac{-16\tau+16\mu-\xi^5}{5\xi}}\sqrt{\frac4{5\xi}(\xi^5-4\tau+4\mu)}}d\mu.\label{V2-3.67}
\end{align}

Let us assume, without loss of generality, that $\xi>0$, and let us observe that the function
$$\mu:=\varphi(\xi_1)=\tau+\xi_1^5+(\xi-\xi_1)^5$$
has its absolute minimum in $\xi_1=\frac\xi2$. Then, the integration interval $A$ in \eqref{V2-3.67} is
$$A=[\varphi(\tfrac\xi2),+\infty)=[\tau+\tfrac1{16}\xi^5,+\infty).$$

This way
$$\mu\geq\tau+\frac1{16}\xi^5,\quad \xi^5-4\tau+4\mu\geq \xi^5-4\tau+4\tau+\frac14\xi^5=\frac54\xi^5,$$
\begin{align}
\frac4{5\xi}(\xi^5-4\tau+4\mu)&\geq\frac4{5\xi}\frac54\xi^5=\xi^4,\label{V2-3.68}\\
\sqrt{\frac4{5\xi}(\xi^5-4\tau+4\mu)}&\geq\xi^2.\label{V2-3.69}
\end{align}

Hence, taking into account \eqref{V2-3.69}, \eqref{V2-3.68}, from \eqref{V2-3.67}, we conclude that
\begin{align}
\notag &\int_{\mathbb R^2_{\xi_1\tau_1}}\frac{\langle\xi\rangle^{2a}|\xi|^2}{\langle\tau+\xi^5\rangle^{2b}\langle\tau_1+\xi_1^5\rangle^{2b}\langle\tau-\tau_1+(\xi-\xi_1)^5\rangle^{2b}}d\tau_1 d\xi_1\\
\notag &\leq C\frac{\langle\xi\rangle^{2a}|\xi|}{\langle\tau+\xi^5\rangle^{2b}}\int_{\mathbb R_\mu}\frac1{\langle \mu\rangle^{4b-1}\sqrt{\frac{-16\tau+16\mu-\xi^5}{5\xi}}|\xi|}d\mu\\
&\leq C\frac{\langle\xi\rangle^{2a}|\xi|^{\frac12}}{\langle\tau+\xi^5\rangle^{2b}}\int_{\mathbb R_\mu}\frac1{\langle \mu\rangle^{4b-1}\sqrt{|\mu-\tau-\frac1{16}\xi^5|}}d\mu.\label{V2-3.70}
\end{align}

Using the calculus inequality \eqref{V2-3.71} (Lemma \ref{CI}) with $\rho:=4b-1$ ($4b-1\in(\tfrac12,1)\iff \frac38<b<\frac12$) in \eqref{V2-3.70}, it follows that
\begin{align}
\int_{\mathbb R^2_{\xi_1\tau_1}}\frac{\langle\xi\rangle^{2a}|\xi|^2}{\langle\tau+\xi^5\rangle^{2b}\langle\tau_1+\xi_1^5\rangle^{2b}\langle\tau-\tau_1+(\xi-\xi_1)^5\rangle^{2b}}d\tau_1 d\xi_1\leq C\frac{\langle\xi\rangle^{2a}|\xi|^{\frac12}}{\langle\tau+\xi^5\rangle^{2b}\langle\tau+\tfrac1{16}\xi^5\rangle^{4b-\frac32}}.\label{V2-3.72}
\end{align}

It is clear, from \eqref{V2-3.72}, for $|\xi|\leq 1$, and $\frac38< b<\frac12$, that
\begin{align}
\int_{\mathbb R^2_{\xi_1\tau_1}}\frac{\langle\xi\rangle^{2a}|\xi|^2}{\langle\tau+\xi^5\rangle^{2b}\langle\tau_1+\xi_1^5\rangle^{2b}\langle\tau-\tau_1+(\xi-\xi_1)^5\rangle^{2b}}d\tau_1 d\xi_1\leq C.\label{V2-3.73}
\end{align}
Let us assume then that $|\xi|>1$. We observe that
$$|\xi^5|\leq \frac{16}{15}|\tau+\xi^5|+\frac{16}{15}|\tau+\tfrac1{16}\xi^5|,$$
then
\begin{align*}
\frac12|\xi^5|\leq \frac{16}{15}|\tau+\xi^5|\quad \text{or}\quad \frac12|\xi^5|\leq \frac{16}{15}|\tau+\tfrac1{16}\xi^5|.
\end{align*}

For $\frac12|\xi^5|\leq \frac{16}{15}|\tau+\xi^5|$, we have that
\begin{align}
C\frac{\langle\xi\rangle^{2a}|\xi|^{\frac12}}{\langle\tau+\xi^5\rangle^{2b}\langle\tau+\tfrac1{16}\xi^5\rangle^{4b-\frac32}}\leq C\frac{|\xi|^{2a+\frac12}}{\langle\tau+\xi^5\rangle^{2b}}\leq C\frac{|\xi|^{2a+\frac12}}{\langle\tfrac{15}{32}|\xi^5|\rangle^{2b}}\leq C|\xi|^{2a+\frac12-10b}\leq C,\label{V2-3.74}
\end{align}

since $2a+\frac12-10b\leq20b-8+\frac12-10b=10b-\frac{15}2<0$.\\

For $\frac12|\xi^5|\leq\frac{16}{15}|\tau+\tfrac1{16}\xi^5|$, we have that
\begin{align}
C\frac{\langle\xi\rangle^{2a}|\xi|^{\frac12}}{\langle\tau+\xi^5\rangle^{2b}\langle\tau+\tfrac1{16}\xi^5\rangle^{4b-\frac32}}\leq C\frac{|\xi|^{2a+\frac12}}{\langle\tau+\tfrac1{16}\xi^5\rangle^{4b-\frac32}}\leq C\frac{|\xi|^{2a+\frac12}}{\langle\tfrac{15}{32}|\xi^5|\rangle^{4b-\frac32}}\leq C|\xi|^{2a+\frac12-20b+\frac{15}2}\leq C,\label{V2-3.75}
\end{align}
since $2a+\frac12-20b+\frac{15}2\leq20b-8+\frac12-20b+\frac{15}2=0$.\\

This way, from \eqref{V2-3.72} to \eqref{V2-3.75}, it follows that, for $\frac25<b<\frac12$ and $0\leq a\leq 10b-4$, inequality \eqref{V2-3.62} is true, which proves \eqref{FTY}.

\item[(ii)] In order to estimate $\|\partial_x(vw)\|_{Y^{s,-b,\alpha}}$ we must consider consider three terms. The first of them is $\|\partial_x(vw)\|_{X^{s,-b}}$, which is bounded by $C\|v\|_{X^{s,b}}\|w\|_{X^{s,b}}$ (this follows from \eqref{FTY} with $a=0$). Now, we continue by estimating the second term in the definition of $\|\partial_x(vw)\|_{Y^{s,-b,\alpha}}$.\\

Proceeding as in (i), taking $h\in L^2(\mathbb R^2_{\xi\tau})$ arbitrary, if we manage to proof that
\begin{align}
\left|\int_{\mathbb R_\tau} \int_{-1}^1 \left[\int_{\mathbb R^2_{\xi_1\tau_1}}\langle \tau \rangle^{\alpha-1} |\xi| \widehat v(\xi_1,\tau_1)\widehat w(\xi-\xi_1,\tau-\tau_1)d\xi_1 d\tau_1 \right]h(\xi,\tau)d\xi d\tau\right|\leq C\|v\|_{X^{s,b}}\|w\|_{X^{s,b}}\|h\|_{L^2(\mathbb R^2)},\label{V4-4.6.1}
\end{align}
by a duality argument we would obtain

\begin{align}
\left(\int_{-\infty}^{+\infty} \int_{-1}^1 \langle \tau \rangle^{2(\alpha-1)} |[\partial_x(vw)]^\wedge (\xi,\tau) |^2 d\xi d\tau \right)^{\frac12}\leq C \|v\|_{X^{s,b}} \|w\|_{X^{s,b}}\leq C \|v\|_{X^{s,b,\alpha}} \|w\|_{X^{s,b,\alpha}}.\label{STY}
\end{align}

Since $\alpha-1\leq 0$, we observe, applying Cauchy Schwarz inequality, that
\begin{align*}
&\left|\int_{\mathbb R_{\tau}}\int_{-1}^1 \left[\int_{\mathbb R^2_{\xi_1\tau_1}}\langle \tau \rangle^{\alpha-1} |\xi| |\widehat v(\xi_1,\tau_1)||\widehat w(\xi-\xi_1,\tau-\tau_1)|d\xi_1 d\tau_1 \right]h(\xi,\tau)d\xi d\tau\right|\\
&\leq \int_{\mathbb R_{\tau}}\int_{-1}^1 \int_{\mathbb R^2_{\xi_1\tau_1}} \frac{|\xi||h(\xi,\tau)| \langle\tau_1+\xi_1^5\rangle^{b}|\widehat v(\xi_1,\tau_1)|\langle\tau-\tau_1+(\xi-\xi_1)^5\rangle^b |\widehat w(\xi-\xi_1,\tau-\tau_1)|}{\langle\tau_1+\xi_1^5\rangle^b \langle\tau-\tau_1+(\xi-\xi_1)^5\rangle^b}d\xi_1d\tau_1 d\xi d\tau\\
&\leq \left[ \int_{\mathbb R_{\tau}} \int_{-1}^1 |h(\xi,\tau)|^2\left(\int_{\mathbb R^2_{\xi_1\tau_1}}\frac{|\xi|^2}{\langle\tau_1+\xi_1^5\rangle^{2b}\langle\tau-\tau_1+(\xi-\xi_1)^5\rangle^{2b}}d\xi_1 d\tau_1 \right)d\xi d\tau\right]^{\frac12}\|v\|_{X^{s,b}}\|w\|_{X^{s,b}}.
\end{align*}

From \eqref{V2-3.72} with $a=0$, for $|\xi|\leq1$, and $\frac38<b<\frac12$, we have that
\begin{align*}
\int_{\mathbb R^2_{\xi_1\tau_1}}\frac{|\xi|^2}{\langle\tau_1+\xi_1^5\rangle^{2b}\langle\tau-\tau_1+(\xi-\xi_1)^5\rangle^{2b}}d\xi_1 d\tau_1\leq C.
\end{align*}

Therefore
\begin{align*}
\left|\int_{\mathbb R_{\tau}}\int_{-1}^1 \left[\int_{\mathbb R^2_{\xi_1\tau_1}}\langle \tau \rangle^{\alpha-1} |\xi| |\widehat v(\xi_1,\tau_1)||\widehat w(\xi-\xi_1,\tau-\tau_1)|d\xi_1 d\tau_1 \right]h(\xi,\tau)d\xi d\tau\right| \leq C \|v\|_{X^{s,b}} \|w\|_{X^{s,b}} \|h\|_{L^2(\mathbb R^2)},
\end{align*}

wich implies \eqref{STY}.

 Finally, we estimate the third term in the definition of $\| \partial_x(vw)\|_{Y^{s,-b,\alpha}}$. Let us notice that
\begin{align*}
&\left(\int_{-\infty}^{+\infty} \left( \int_{-\infty}^{+\infty} \langle \xi \rangle^{s} \langle \tau + \xi^5 \rangle^{-1} | [\partial_x(vw)]^\wedge(\xi,\tau) | d\tau \right)^2 d\xi \right)^{\frac12}\\
&\leq \left(\int_{-\infty}^{+\infty}\left( \int_{-\infty}^{+\infty} \langle \xi \rangle^{2s} \langle \tau + \xi^5 \rangle^{-2b} | [\partial_x(vw)]^\wedge(\xi,\tau) |^2 d\tau\right) \left( \int_{-\infty}^{+\infty} \frac1{\langle \tau + \xi^5 \rangle^{2(1-b)}} d\tau \right) d\xi \right)^{\frac12}.
\end{align*}

Taking into account that $2(1-b)>1$, using \eqref{FTY}, we obtain
\begin{align*}
\left(\int_{-\infty}^{+\infty} \left( \int_{-\infty}^{+\infty} \langle \xi \rangle^{s} \langle \tau + \xi^5 \rangle^{-1} | [\partial_x(vw)]^\wedge(\xi,\tau) | d\tau \right)^2 d\xi \right)^{\frac12}&\leq C \|\partial_x(vw)\|_{X^{s,-b}}\leq C\|v\|_{X^{s,b,\alpha}} \|w\|_{X^{s,b,\alpha}}.
\end{align*}
 
\end{enumerate}

Lemma \ref{V2-L3.6} is proved.
\end{proof}

\begin{lemma}\label{V2-L3.7} (Another bilinear estimative). Let $\frac12<s<\frac{11}4$,  $0\leq a<\frac{11}4-s$ and $\max\left\{\frac{s+a}5-\frac1{20},\frac25\right\}<b<\frac12$. There exists $C>0$, such that
\begin{align}
\|\partial_x(vw)\|_{X^{\frac12,\frac{2(s+a)-1-10b}{10}}}\leq C\|v\|_{X^{s,b}}\|w\|_{X^{s,b}}.\label{V2-3.76}
\end{align}
\end{lemma}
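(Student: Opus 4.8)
The plan is to argue by duality, following closely the proof of Lemma~\ref{V2-L3.6}(i). Put
$$\theta:=\frac{2(s+a)-1-10b}{10}=\frac{s+a}{5}-\frac1{10}-b.$$
The first, and crucial, observation is that the hypothesis $b>\frac{s+a}{5}-\frac1{20}$ forces $2(s+a)-1-10b<-\frac12$, hence $\theta<-\frac1{20}<0$; moreover $\frac38<\frac25<b<\frac12$, so all the conditions on $b$ used in the proof of Lemma~\ref{V2-L3.6}(i) are met. (The interval for $b$ is nonempty precisely because $s+a<\frac{11}4$.)

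First I would reduce \eqref{V2-3.76} to a pointwise estimate, exactly as in \eqref{V2-3.59}--\eqref{V2-3.62}. Writing $[\partial_x(vw)]^\wedge(\xi,\tau)=Ci\xi\int\widehat v(\xi_1,\tau_1)\widehat w(\xi-\xi_1,\tau-\tau_1)\,d\xi_1 d\tau_1$, testing against an arbitrary $h\in L^2(\mathbb R^2)$, inserting the weights $\langle\xi_i\rangle^{s}\langle\tau_i+\xi_i^5\rangle^{b}$, and using that, since $s\ge\frac12$,
$$\frac{\langle\xi\rangle^{\frac12}}{\langle\xi_1\rangle^{s}\langle\xi-\xi_1\rangle^{s}}\le C\qquad(\xi,\xi_1\in\mathbb R)$$
(because $\langle\xi\rangle\le\langle\xi_1\rangle\langle\xi-\xi_1\rangle$ and $\langle\xi_i\rangle^{1/2}\le\langle\xi_i\rangle^{s}$), two applications of the Cauchy--Schwarz inequality (first in $(\xi_1,\tau_1)$, then in $(\xi,\tau)$) reduce \eqref{V2-3.76} to
$$\sup_{(\xi,\tau)\in\mathbb R^2}\langle\tau+\xi^5\rangle^{2\theta}\int_{\mathbb R^2_{\xi_1\tau_1}}\frac{\xi^2\,d\tau_1\,d\xi_1}{\langle\tau_1+\xi_1^5\rangle^{2b}\langle\tau-\tau_1+(\xi-\xi_1)^5\rangle^{2b}}\le C.$$
Here, compared with \eqref{V2-3.62}, the spatial weight $\langle\xi\rangle^{\frac12}$ has been absorbed completely and no factor $\langle\xi_1\rangle,\langle\xi-\xi_1\rangle$ survives.

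To verify this supremum bound I would invoke the computation already carried out in the proof of Lemma~\ref{V2-L3.6}(i) between \eqref{V2-3.63} and \eqref{V2-3.72} (integration in $\tau_1$ via \eqref{V2-3.39}, the change of variables $\mu=\xi_1^5+\tau+(\xi-\xi_1)^5$ based on the resonance identity \eqref{V2-3.64}, and \eqref{V2-3.71} with $\rho=4b-1\in(\frac12,1)$): multiplying \eqref{V2-3.72} taken with $a=0$ by $\langle\tau+\xi^5\rangle^{2b}$ gives
$$\int_{\mathbb R^2_{\xi_1\tau_1}}\frac{\xi^2\,d\tau_1\,d\xi_1}{\langle\tau_1+\xi_1^5\rangle^{2b}\langle\tau-\tau_1+(\xi-\xi_1)^5\rangle^{2b}}\le C\,\frac{|\xi|^{\frac12}}{\langle\tau+\tfrac1{16}\xi^5\rangle^{4b-\frac32}}.$$
It then suffices to bound $\langle\tau+\xi^5\rangle^{2\theta}|\xi|^{\frac12}\langle\tau+\tfrac1{16}\xi^5\rangle^{-(4b-\frac32)}$. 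For $|\xi|\le1$ this is immediate since $\theta<0$ and $0<4b-\frac32<\frac12$. For $|\xi|>1$, from $|\xi|^5\le\frac{16}{15}\bigl(|\tau+\xi^5|+|\tau+\tfrac1{16}\xi^5|\bigr)$ at least one of $|\tau+\xi^5|$, $|\tau+\tfrac1{16}\xi^5|$ is $\ge\frac{15}{32}|\xi|^5$; in the first case one gets $\le C|\xi|^{10\theta+\frac12}\le C$ because $\theta<-\frac1{20}$, and in the second case $\le C|\xi|^{\frac12-5(4b-\frac32)}=C|\xi|^{8-20b}\le C$ because $b>\frac25$. This yields \eqref{V2-3.76}.

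The main obstacle is conceptual rather than computational: one must notice that, despite the way it is written, the exponent $\theta$ is strictly below $-\frac1{20}$, and that this gain is exactly what compensates the unavoidable factor $|\xi|^{\frac12}$ produced by the bilinear resonance identity \eqref{V2-3.64}; once this is seen, and once one observes that for $s>\frac12$ the spatial weights can simply be discarded, the lemma follows from the pointwise bound obtained inside the proof of Lemma~\ref{V2-L3.6}, with no genuinely new calculation.
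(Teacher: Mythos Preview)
Your proof is correct and follows essentially the same route as the paper's: both reduce by duality and Cauchy--Schwarz to a supremum bound, apply the calculus inequalities \eqref{V2-3.39} and \eqref{V2-3.71} exactly as in the proof of Lemma~\ref{V2-L3.6}(i) to arrive at the pointwise estimate \eqref{V2-3.78}, and close by the same dichotomy $|\tau+\xi^5|\gtrsim|\xi|^5$ versus $|\tau+\tfrac1{16}\xi^5|\gtrsim|\xi|^5$. The only cosmetic difference is that the paper keeps the factor $\langle\xi\rangle^{1-2s}$ (coming from $\langle\xi_1\rangle^{-2s}\langle\xi-\xi_1\rangle^{-2s}\le\langle\xi\rangle^{-2s}$) in the supremum \eqref{V2-3.77} and uses $1-2s<0$ in the final exponent count, whereas you discard it at the outset via $\langle\xi\rangle^{1/2}\le\langle\xi_1\rangle^{s}\langle\xi-\xi_1\rangle^{s}$ and instead use $10\theta+\tfrac12<0$ and $8-20b<0$ directly; the two computations are equivalent.
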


\begin{proof} To prove \eqref{V2-3.76}, we proceed as in the proof of Lemma \ref{V2-L3.6}, by establishing that
\begin{align}
\sup_{(\xi,\tau)\in\mathbb R^2}\int_{\mathbb R^2_{\xi_1\tau_1}}\int\frac{\langle\xi\rangle^{1-2(s+a)}\langle\xi\rangle^{2a}|\xi|^2}{\langle\tau+\xi^5\rangle^{\frac{10b+1-2(s+a)}5}\langle\tau_1+\xi_1^5\rangle^{2b}\langle\tau-\tau_1+(\xi-\xi_1)^5\rangle^{2b}}d\xi_1 d\tau_1\leq C.\label{V2-3.77}
\end{align}

Let us use inequality \eqref{V2-3.39} with $\beta=\gamma=2b<1$, and $\beta+\gamma=4b>1$ (i.e. $\frac14<b<\frac12$), to obtain
\begin{align*}
&\int_{\mathbb R_{\xi_1}}\left( \int_{\mathbb R_{\tau_1}}\frac1{\langle\tau_1+\xi_1^5\rangle^{2b}\langle\tau-\tau_1+(\xi-\xi_1)^5\rangle^{2b}}d\tau_1\right)\frac{\langle\xi\rangle^{1-2(s+a)}\langle\xi\rangle^{2a}|\xi|^2}{\langle\tau+\xi^5\rangle^{\frac{10b+1-2(s+a)}{5}}}d\xi_1\\
&\leq C\int_{\mathbb R_{\xi_1}}\frac{\langle\xi\rangle^{1-2(s+a)}\langle\xi\rangle^{2a}|\xi|^2}{\langle\tau+\xi^5\rangle^{\frac{10b+1-2(s+a)}5}\langle\xi_1^5+\tau+(\xi-\xi_1)^5\rangle^{4b-1}}d\xi_1.
\end{align*}

As in the proof of Lemma \ref{V2-L3.6}, we have that, if $\frac38<b<\frac12$,
\begin{align}
\notag&\int_{\mathbb R_{\xi_1}}\int_{\mathbb R_{\tau_1}}\frac{\langle\xi\rangle^{1-2(s+a)}\langle\xi\rangle^{2a}|\xi|^2}{\langle\tau+\xi^5\rangle^{\frac{10b+1-2(s+a)}5}\langle\tau_1+\xi_1^5\rangle^{2b}\langle \tau-\tau_1+(\xi-\xi_1)^5\rangle^{2b}}d\tau_1 d\xi_1\\
\notag&\leq C\frac{\langle\xi\rangle^{1-2(s+a)}\langle\xi\rangle^{2a}|\xi|^{\frac12}}{\langle\tau+\xi^5\rangle^{\frac{10b+1-2(s+a)}5}}\int_{\mathbb R_{\mu}}\frac{1}{\langle\mu\rangle^{4b-1}\sqrt{|\mu-\tau-\tfrac1{16}\xi^5|}}d\mu\\
&\leq C\frac{\langle\xi\rangle^{1-2(s+a)}\langle\xi\rangle^{2a}|\xi|^{\frac12}}{\langle\tau+\xi^5\rangle^{\frac{10b+1-2(s+a)}5}\langle\tau+\tfrac1{16}\xi^5\rangle^{4b-\frac32}}.\label{V2-3.78}
\end{align}

Let us observe that $1-2(s+a)+2a=1-2s<0$, $10b+1-2(s+a)>2(s+a)-\frac12+1-2(s+a)=\frac12$, and $4b-\frac32>\frac85-\frac32>0$. Hence, from \eqref{V2-3.78}, if $|\xi|<1$

\begin{align}
\int_{\mathbb R^2_{\xi_1\tau_1}}\frac{\langle\xi\rangle^{1-2(s+a)}\langle\xi\rangle^{2a}|\xi|^2}{\langle\tau+\xi^5\rangle^{\frac{10b+1-2(s+a)}5}\langle\tau_1+\xi_1^5\rangle^{2b}\langle \tau-\tau_1+(\xi-\xi_1)^5\rangle^{2b}}d\xi_1 d\tau_1\leq C.\label{V2-3.79}
\end{align}

Let us assume now that $|\xi|>1$. We have two cases:
\begin{enumerate}
\item[(i)] $\frac12|\xi|^5\leq\frac{16}{15}|\tau+\xi^5|$. In this case we have
\begin{align}
C\frac{\langle\xi\rangle^{1-2(s+a)}\langle\xi\rangle^{2a}|\xi|^{\frac12}}{\langle\tau+\xi^5\rangle^{\frac{10b+1-2(s+a)}5}\langle\tau+\tfrac1{16}\xi^5\rangle^{4b-\frac32}}\leq C|\xi|^{1-2(s+a)+2a+\frac12-10b-1+2(s+a)}\leq C,\label{V2-3.80}
\end{align}

since $2a+\frac12-10b<2a+\frac12-2(s+a)+\frac12=1-2s<0$.\\

\item[(ii)] $\frac12|\xi|^5\leq \frac{16}{15}|\tau+\frac1{16}\xi^5|$. In this case we have
\begin{align}
C\frac{\langle\xi\rangle^{1-2(s+a)}\langle\xi\rangle^{2a}|\xi|^{\frac12}}{\langle\tau+\xi^5\rangle^{\frac{10b+1-2(s+a)}5}\langle\tau+\tfrac1{16}\xi^5\rangle^{4b-\frac32}}\leq C|\xi|^{1-2(s+a)+2a+\frac12-20b+\frac{15}2}\leq C,\label{V2-3.81}
\end{align}

since $1-2(s+a)+2a+\frac12-20b+\frac{15}2=9-2s-20b<9-2s-8=1-2s<0$.\\

From \eqref{V2-3.78} to \eqref{V2-3.81} it follows that inequality \eqref{V2-3.77} is true, which proves Lemma \ref{V2-L3.7}.

\end{enumerate}

\end{proof}

\section{Existence of a local solution in time of the IBVP \eqref{maineq}}\label{V2-S4}

The objective in this section is to prove the existence of a solution of the integral equation \eqref{V2-1.7} and then to show in what sense the restriction of this solution to $\mathbb R^+\times[0,T]$ is a solution of the IBVP \eqref{maineq}.

\begin{Theorem}\label{V3-T5.1} Let $s\in[0,\tfrac{11}4)\setminus \{\frac12,\frac32,\frac52\}$, and let $g\in H^s(\mathbb R_x^+)$, and $h_{j+1}\in H^{\frac{s+2-j}5}(\mathbb R^+_t)$, $j=0,1,2,$ with the additional compatibility conditions:
\begin{enumerate}
\item[(i)] If $\frac12<s<\frac32$, $g(0)=h_1(0)$;
\item[(ii)] If $\frac32<s<\frac52$, $g(0)=h_1(0)$, $g'(0)=h_2(0)$;
\item[(iii)] If $\frac52<s<\frac{11}4$, $g(0)=h_1(0)$, $g'(0)=h_2(0)$, $g''(0)=h_3(0)$.
\end{enumerate}

For $\max\{\tfrac s5-\tfrac1{20},\tfrac25\}<b<\tfrac12$, and $\alpha\in(\frac12,1-b)$ there exist $T\in(0,\tfrac12]$ and an unique $u$, solution of the integral equation \eqref{V2-1.7}, such that
\begin{align}
u\in X^{s,b,\alpha}(\mathbb R^2)\cap C(\mathbb R_t;H^s(\mathbb R_x)),\quad \partial_x^j u\in C(\mathbb R_x;H^{\frac{s+2-j}{5}}(\mathbb R_t)),\, j=0,1,2.\label{V3-5.1}
\end{align}
Besides, for $n\in\mathbb N$, if $u_n$ is the solution of the integral equation associated to the extension $g_{n_l}\in H^s(\mathbb R_x)$ of $g_n\in H^s(\mathbb R^+_x)$, and to the boundary data $h_{n,j+1}\in H^{\frac{s+2-j}5}(\mathbb R_t^+)$, $j=0,1,2$, and $g_n\to g$ in $H^s(\mathbb R^+_x)$, and $h_{n,j+1}\to h_{j+1}$ in $H^{\frac{s+2-j}5}(\mathbb R^+_t)$, $j=0,1,2$, when $n\to +\infty$, then $u_n\to u$ in the spaces considered in \eqref{V3-5.1}.
\end{Theorem}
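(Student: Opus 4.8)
We run a contraction argument for the operator $\Phi$ sending $u$ to the right–hand side of \eqref{V2-1.7}, in the Banach space
$$Z:=\big\{u\in X^{s,b,\alpha}(\mathbb R^2)\cap C(\mathbb R_t;H^s(\mathbb R_x)):\ \partial_x^ju\in C(\mathbb R_x;H^{\frac{s+2-j}5}(\mathbb R_t)),\ j=0,1,2\big\},$$
with $\|u\|_Z$ the sum of these four norms. First I fix a near–optimal extension $g_l$ of $g$ with $\|g_l\|_{H^s(\mathbb R_x)}\le C\|g\|_{H^s(\mathbb R_x^+)}$, and auxiliary exponents $b^*\in(b,1-\alpha)$ and $b'\in\big(\max\{\tfrac s5-\tfrac1{20},\tfrac25\},\,b\big)$; the hypotheses $\tfrac25<b<\tfrac12$ and $\tfrac12<\alpha<1-b$ make both intervals nonempty (note $b^*<1-\alpha<\tfrac12$ and $\alpha<1-b<\tfrac35$), so all index restrictions in Lemmas \ref{V3-L4}, \ref{V2-L3.1}, \ref{V2-L3.3}, \ref{V2-L3.4}, \ref{V2-L3.6} and \ref{V2-L3.7} are met. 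For $u\in Z$ we let $p_{j+1}=p_{j+1}(u)$ be given by \eqref{V2-1.11} and $\Phi(u)$ by the right side of \eqref{V2-1.7}, with $F_T$ as in \eqref{V2-1.10}.

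\emph{Step 1 ($\Phi$ maps $Z$ into $Z$ with a gain $T^\delta$ on the nonlinearity).} The linear term $\eta(\cdot_t)W_{\mathbb R}(\cdot_t)g_l$ lies in $X^{s,b,\alpha}$ by Lemma \ref{V2-L3.1}, in $C(\mathbb R_t;H^s(\mathbb R_x))$ since $\{W_{\mathbb R}(t)\}$ is a strongly continuous group, and its $\partial_x^j$–traces lie in $C(\mathbb R_x;H^{\frac{s+2-j}5}(\mathbb R_t))$ by Lemma \ref{V3-L1}. For the Duhamel term, combining the bilinear estimate \eqref{V1-3.58} of Lemma \ref{V2-L3.6} with $a=0$, the time–localization estimates \eqref{V2-3.13}, \eqref{V4.2-4.7} of Lemma \ref{V2-L3.4}, and (when $s>\tfrac12$) the bilinear estimate of Lemma \ref{V2-L3.7} with $a=0$, one gets, with $\delta:=\min\{b-b',\,b^*-b\}>0$,
$$\|F_T(u)\|_{X^{s,-b}}+\|F_T(u)\|_{X^{s,-b^*}}+\|F_T(u)\|_{X^{1/2,\frac{2s-1-10b}{10}}}\le C\,T^{\delta}\,\|u\|_{X^{s,b,\alpha}}^2\qquad(\text{last term only for }s>\tfrac12).$$
Then Lemma \ref{V2-L3.3}(ii) bounds $\big\|\eta(\cdot_t)\int_0^{\cdot_t}W_{\mathbb R}(\cdot_t-t')F_T(u(t'))dt'\big\|_{X^{s,b,\alpha}}$; Lemma \ref{V2-L3.3}(i), applied with parameter $1$ and output temporal index $\beta\in(\tfrac12,1-b)$, controls its $C(\mathbb R_t;H^s(\mathbb R_x))$–norm through the embedding $X^{s,\beta}\hookrightarrow C(\mathbb R_t;H^s(\mathbb R_x))$; and Lemma \ref{V3-L4.6}(i) for $0\le s<\tfrac12$, resp. \ref{V3-L4.6}(ii) for $\tfrac12<s<\tfrac{11}4$, controls the $C(\mathbb R_x;H^{\frac{s+2-j}5}(\mathbb R_t))$–norms of its $\partial_x^j$–derivatives. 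All are $\le C\,T^{\delta}\|u\|_{X^{s,b,\alpha}}^2$. The same lemmas at $x=0$, together with Lemma \ref{V3-L1}, give $p_{j+1}(u)\in H^{\frac{s+2-j}5}(\mathbb R_t)$ with $\|p_{j+1}(u)\|_{H^{\frac{s+2-j}5}}\le C\|g\|_{H^s(\mathbb R^+_x)}+C\,T^{\delta}\|u\|_{X^{s,b,\alpha}}^2$, hence $h_{j+1}-p_{j+1}(u)\in H^{\frac{s+2-j}5}(\mathbb R^+_t)$.

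\emph{Step 2 (boundary term, compatibility, conclusion).} To control $\eta(\cdot_t)W_0^t(0,h_1-p_1,h_2-p_2,h_3-p_3)$ in $Z$ via Remark \ref{V3-R2} and Lemmas \ref{V3-L2}, \ref{V3-L4}, one must verify $\chi_{(0,+\infty)}(h_{j+1}-p_{j+1}(u))\in H_0^{\frac{s+2-j}5}(\mathbb R_t)$, $j=0,1,2$. When $\tfrac{s+2-j}5<\tfrac12$ (i.e. $s<j+\tfrac12$) this is automatic by Lemma \ref{V2-L1.1}(i); when $\tfrac12<\tfrac{s+2-j}5<\tfrac32$ (i.e. $j+\tfrac12<s<\tfrac{11}4$), the trace of the Duhamel part of $p_{j+1}(u)$ at $t=0$ vanishes, so $(h_{j+1}-p_{j+1}(u))(0)=h_{j+1}(0)-\partial_x^jg(0)=0$ precisely by the compatibility conditions (i)–(iii), and Lemma \ref{V2-L1.1}(ii) applies; the exclusion of $s\in\{\tfrac12,\tfrac32,\tfrac52\}$ (the values making $\tfrac{s+2-j}5=\tfrac12$ for some $j$) is exactly what avoids the borderline cases. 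One thus obtains $\|\Phi(u)\|_Z\le C_0K+C_0T^{\delta}\|u\|_Z^2$ with $K:=\|g\|_{H^s(\mathbb R^+_x)}+\sum_{j=0}^2\|h_{j+1}\|_{H^{\frac{s+2-j}5}(\mathbb R^+_t)}$. For differences, writing $u^2-v^2=(u-v)(u+v)$, using linearity of $W_0^t$ in its data, and noting that $p_{j+1}(u)-p_{j+1}(v)$ has vanishing trace at $t=0$ (so its truncation is in $H_0^{\frac{s+2-j}5}$ with no compatibility needed), the same lemmas give $\|\Phi(u)-\Phi(v)\|_Z\le C_0T^{\delta}(\|u\|_Z+\|v\|_Z)\|u-v\|_Z$. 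Taking $R:=2C_0K$ and $T\in(0,\tfrac12]$ with $C_0T^{\delta}R\le\tfrac14$, $\Phi$ is a contraction of the closed ball $B_R\subset Z$ into itself, hence has a unique fixed point $u$ with the regularity \eqref{V3-5.1}. Since $\eta\equiv1$ on $[-\tfrac12,\tfrac12]$ and $T\le\tfrac12$, \eqref{V2-1.7} reduces to \eqref{V2-0.4} on $[0,T]$, and the properties of $W_{\mathbb R}$, of the Duhamel operator, and of the extension $W_0^t$ built in Section \ref{ILBVP} (which solves the free equation, attains the data and vanishes at $t=0$ for $x>0$) show that $u|_{\mathbb R^+\times[0,T]}$ solves \eqref{maineq}. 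The continuous–dependence statement follows from the difference estimates: choosing $g_{n_l}:=g_l+(g_n-g)_l$ the quantities $K_n$ stay bounded, a common $T$ works for large $n$, and $\|u_n-u\|_Z\le C\big(\|g_n-g\|_{H^s(\mathbb R^+_x)}+\sum_{j}\|h_{n,j+1}-h_{j+1}\|_{H^{\frac{s+2-j}5}(\mathbb R^+_t)}\big)\to0$; uniqueness in the full class \eqref{V3-5.1} (beyond $B_R$) is obtained later in Section \ref{Uniq}.

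\emph{Main obstacle.} The genuinely hard analysis is already packaged in the linear and bilinear lemmas of Sections \ref{ILBVP} and \ref{LNLE}, so the work here is organizational, with two delicate points. First, one must arrange the auxiliary exponents $b^*,b',\beta$ so that every lemma applies simultaneously and each nonlinear term truly carries a positive power of $T$. Second, and more subtly, one must verify that $\chi_{(0,+\infty)}(h_{j+1}-p_{j+1}(u))$ lands in $H_0^{\frac{s+2-j}5}$, which has to be checked case by case according to whether $\tfrac{s+2-j}5$ is below or above $\tfrac12$ for each $j=0,1,2$; this is precisely where the compatibility conditions and the exclusions of $\tfrac12,\tfrac32,\tfrac52$ are used. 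One also has to keep in mind that $p_{j+1}$ depends on the unknown $u$, so this is a genuine fixed–point problem rather than a mere Duhamel representation.
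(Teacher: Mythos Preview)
Your proposal is correct and follows essentially the same approach as the paper: a contraction argument based on Lemmas \ref{V3-L1}, \ref{V3-L2}, \ref{V3-L4}, \ref{V2-L3.1}, \ref{V2-L3.3}, \ref{V2-L3.4}, \ref{V3-L4.6}, \ref{V2-L3.6}, \ref{V2-L3.7}, together with the compatibility analysis via Lemma \ref{V2-L1.1}. The two organizational differences are harmless: (i) the paper runs the contraction only in $X^{s,b,\alpha}$ and checks the $C_tH^s_x$ and $C_xH^{\frac{s+2-j}5}_t$ regularities of the fixed point afterwards, whereas you work directly in the intersection space $Z$; (ii) to extract the positive power of $T$, the paper uses a single auxiliary index $b^*\in(b,\tfrac12)$ with $\alpha<1-b^*$ (applying Lemma \ref{V2-L3.4} from $-b$ up to $-b^*$ and then the bilinear estimates at level $b$), while you additionally introduce $b'\in(\max\{\tfrac s5-\tfrac1{20},\tfrac25\},b)$ so as to gain $T^{b-b'}$ on the terms $\|F_T(u)\|_{X^{s,-b}}$ and $\|F_T(u)\|_{X^{1/2,\frac{2s-1-10b}{10}}}$ needed in Lemma \ref{V3-L4.6}. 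Either bookkeeping works; the paper's is slightly leaner since Lemma \ref{V3-L4.6} may be invoked with the parameter $b^*$ in place of $b$, making the extra index $b'$ unnecessary.
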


\begin{proof} We will prove that there exists $T\in (0,\tfrac12]$ such that the operator $\Gamma_T$ defined for $u\in X^{s,b,\alpha}(\mathbb R^2)$ by
\begin{align}
(\Gamma_T u)(t):=\eta(t)W_{\mathbb R}(t)g_l+\eta(t)\int_0^t W_{\mathbb R}(t-t')F_T(u(t'))dt'+\eta(t) W_0^t(0,h_1-p_1,h_2-p_2,h_3-p_3)(t),\label{V3-5.2}
\end{align}
has a fixed point in $X^{s,b,\alpha}(\mathbb R^2)$.\\

To see that $\Gamma_T$ sends some closed ball from $X^{s,b,\alpha}$ into itself, let us recall that using Lemma \ref{V2-L3.1},
\begin{align}
\|\eta(\cdot_t)[W_{\mathbb R}(\cdot_t)g_l](\cdot_x)\|_{X^{s,b,\alpha}}\leq C\|g_l\|_{H^s(\mathbb R_x)}\leq C\|g\|_{H^s(\mathbb R^+_x)}.\label{V3-5.3}
\end{align}

On the other hand, by Lemma \ref{V2-L3.3} (ii), if we take $b^*\in(b,\frac12)$ such that $\frac12<\alpha<1-b^*$,
\begin{align}
\left\|\eta(\cdot_t)\int_0^{\cdot_t}W_{\mathbb R}(\cdot_t{\scriptstyle-}t')F_T(u(t'))dt'\right\|_{X^{s,b,\alpha}}\leq C\|F_T(u)\|_{Y^{s,-b^*,\alpha}}.\label{V3-5.4}
\end{align}

Therefore, by inequality \eqref{V4.2-4.7} from Lemma \ref{V2-L3.4}, and Lemma \ref{V2-L3.6}, for $0<T\leq\frac12$, and $\frac25\leq b<\frac12$, we have that
\begin{align}
\notag\|F_T(u)\|_{Y^{s,-b^*,\alpha}}&\leq C \left\| \eta(\frac{\cdot_t}{2T}) (-\tfrac12 \partial_x u(\cdot_t)^2) \right\|_{X^{s,-b^*}}\leq CT^{b^*-b}\left\| \partial_x u^2(\cdot_t)\right\|_{X^{s,-b}}\leq C T^{b^*-b}\|u\|^2_{X^{s,b}}\\
&\leq C T^{b^*-b} \|u\|^2_{X^{s,b,\alpha}}.\label{V3-5.5}
\end{align}

which lead us, from \eqref{V3-5.4}, and \eqref{V3-5.5} to
\begin{align}
\left\|\eta(\cdot_t)\int_0^{\cdot_t}[W_{\mathbb R}(\cdot_t{\scriptstyle-}t')F_T(u(t'))](\cdot_x) dt' \right\|_{X^{s,b,\alpha}}\leq CT^{b^*-b}\|u\|^2_{X^{s,b,\alpha}}.\label{V3-5.6}
\end{align}

By Lemma \ref{V3-L1}, $\eta(\cdot_t) \partial_x^j [W_{\mathbb R}(\cdot_t)g_l](0)\in H^{\frac{s+2-j}{5}}(\mathbb R_t)$, $j=0,1,2$, and
\begin{align}
\| \eta(\cdot_t) \partial_x^j[W_{\mathbb R}(\cdot_t) g_l](0)\|_{H^{\frac{s+2-j}{5}}(\mathbb R_t)} \leq C \|g\|_{H^s(\mathbb R_x^+)}.\label{V3-5.7}
\end{align}

By Lemma \ref{V3-L4.6} (i), inequality \eqref{V2-3.13} from Lemma \ref{V2-L3.4}, and inequality \eqref{V1-3.58} from Lemma \ref{V2-L3.6}, for $0\leq s\leq \frac12$, $\frac25\leq b<b^*<\frac12$, $j=0,1,2$, and $0<T\leq \frac12$, we have that
\begin{align}
\notag\| \eta(\cdot_t) \partial_x^j \int_0^{\cdot_t} [W_{\mathbb R}(\cdot_t {\scriptstyle-}t') F_T(u(t'))](0) dt' \|_{H^{\frac{s+2-j}{5}}(\mathbb R_t)}&\leq C\| F_T(u(\cdot))\|_{X^{s,-b^*}}\leq C T^{b^*-b} \| \partial_x u^2(\cdot)\|_{X^{s,b}}^2\\
&\leq C T^{b^*-b} \|u\|^2_{X^{s,b}}. \label{V3-5.8}
\end{align}

By Lemma \ref{V3-L4.6} (ii), inequality \eqref{V2-3.13} from Lemma \ref{V2-L3.4}, Lemma \ref{V2-L3.7}, and inequality \eqref{V1-3.58} from Lemma \ref{V2-L3.6}, for $\frac12\leq s<\frac{11}4$, $\max\{\tfrac s5-\tfrac1{20}, \tfrac25\}<b<b^*<\tfrac12$, $j=0,1,2$, and $0<T\leq \frac12$, we obtain
\begin{align}
\notag \| \eta(\cdot_t) \partial_x^j \int_0^{\cdot_t} [W_{\mathbb R}(\cdot_t {\scriptstyle-}t') F_T(u(t'))](0) dt' \|_{H^{\frac{s+2-j}{5}}(\mathbb R_t)} &\leq C \|F_T(u(\cdot))\|_{X^{\frac12,\frac{2s-1-10b^*}{10}}} + C \|F_T(u(\cdot))\|_{X^{s,-b^*}}\\
\notag&\leq C T^{b^*-b} \|\partial_x u^2(\cdot)\|_{X^{\frac12,\frac{2s-1-10b}{10}}}+CT^{b^*-b} \|\partial_x u^2(\cdot)\|_{X^{s,-b}}\\
&\leq C T^{b^*-b} \|u\|_{X^{s,b}}^2. \label{V3-5.9}
\end{align}

From \eqref{V3-5.7} to \eqref{V3-5.9}, we conclude, for $0\leq s\leq \frac{11}4$, $\max\{\frac s5-\frac1{20},\frac25\}<b<\frac12$, and $j=0,1,2$, that $p_{j+1}\in H^{\frac{s+2-j}{5}}(\mathbb R_t)$.
\begin{enumerate}
\item[(i)] For $0\leq s<\frac12$, since $0\leq \frac{s+2-j}5<\frac12$, $j=0,1,2$, then, by Lemma \ref{V2-L1.1} (i),
\begin{align}
\| h_{j+1} - p_{j+1}\|_{H^{\frac{s+2-j}{5}}(\mathbb R_t^+)} \leq \|\chi_{(0,+\infty)}(h_{j+1}-p_{j+1})\|_{H^{\frac{s+2-j}{5}}(\mathbb R_t)}\leq C \|h_{j+1}-p_{j+1}\|_{H^{\frac{s+2-j}{5}}(\mathbb R_t^+)}.\label{V3-5.10}
\end{align}

\item[(ii)] For $\frac12<s<\frac32$, $0\leq \frac{s+2-j}5<\frac12$, $j=1,2$, then \eqref{V3-5.10} is valid for $j=1,2$. If $j=0$, since $\frac12<\frac{s+2}5$, and
\begin{align}
h_1(0)-p_1(0)=h_1(0)-g_l(0)=h_1(0)-g(0)=0,\label{V3-5.11}
\end{align}
then, by Lemma \ref{V2-L1.1} (ii), \eqref{V3-5.10} is also valid.

\item[(iii)] For $\frac32<s<\frac52$, $\frac s5<\frac12$, then \eqref{V3-5.10} is valid for $j=2$. For $j=0$, since $\tfrac12<\tfrac{s+2}5$, and \eqref{V3-5.11} is true, then \eqref{V3-5.10} is valid for $j=0$. For $j=1$, since
\begin{align}
h_2(0)-p_2(0)=h_2(0)-\partial_x g_l(0)=h_2(0)-g'(0)=0,\label{V3-5.12}
\end{align}
then, by Lemma \ref{V2-L1.1} (ii), \eqref{V3-5.10} is also valid for $j=1$.

\item[(iv)] For $\frac52<s<\frac{11}4$, and $j=0,1,2$, $\frac12<\frac{s+2-j}5$. Besides
\begin{align}
h_3(0)-p_3(0)=h_3(0)-g''(0)=0.\label{V3-5.13}
\end{align}
Then, taking into account that \eqref{V3-5.11}, \eqref{V3-5.12}, and  \eqref{V3-5.13} are fulfilled, by Lemma \ref{V2-L1.1} (ii) we have that \eqref{V3-5.10} is valid for $j=0,1,2$.
\end{enumerate}

In consequence, by Lemma \ref{V3-L4}, and estimates \eqref{V3-5.10}, \eqref{V3-5.7}, \eqref{V3-5.8}, and \eqref{V3-5.9}, it follows that
\begin{align}
\notag \| \eta(\cdot_t) W_0^t(0,h_1-p_1,h_2-p_2,h_3-p_3)(\cdot_x,\cdot_t)\|_{X^{s,b,\alpha}} &\leq C \left( \sum_{j=0}^2 \| \chi_{(0,+\infty)} (h_{j+1}-p_{j+1})\|_{H^{\frac{s+2-j}{5}}(\mathbb R_t)} \right)\\
\notag &\leq C \sum_{j=0}^2 \| h_{j+1}-p_{j+1} \|_{H^{\frac{s+2-j}{5}}(\mathbb R_t^+)}\\
\notag &\leq C \sum_{j=0}^2 \left( \| h_{j+1} \|_{H^{\frac{s+2-j}{5}}(\mathbb R_t^+)} + \| p_{j+1} \|_{H^{\frac{s+2-j}{5}}(\mathbb R_t)}\right)\\
&\leq C \sum_{j=0}^2 \left( \| h_{j+1} \|_{H^{\frac{s+2-j}{5}}(\mathbb R_t^+)} + \|g\|_{H^s(\mathbb R_x^+)} +T^{b^*-b} \|u\|^2_{X^{s,b,\alpha}}\right).\label{V3-5.13.2}
\end{align}

From estimates \eqref{V3-5.3}, \eqref{V3-5.6}, and \eqref{V3-5.13.2}, we obtain
\begin{align}
\| \Gamma_T u \|_{X^{s,b,\alpha}} \leq C \sum_{j=0}^2 \| h_{j+1} \|_{H^{\frac{s+2-j}{5}}(\mathbb R_t^+)} + C \|g\|_{H^s(\mathbb R_x^+)} + C T^{b^*-b} \|u\|_{X^{s,b,\alpha}}^2. \label{V3-5.14}
\end{align}

Let $B_{X^{s,b,\alpha}}(0;R)$ be the closed ball in $X^{s,b}$ with center in 0 and radius
$$R:=2C\left[\left( \sum_{j=0}^2 \|h_{j+1}\|_{H^{\frac{s+2-j}5}(\mathbb R^+_t)} \right)+\|g\|_{H^s(\mathbb R_x^+)}\right]>0.$$

From \eqref{V3-5.14}, it follows that, for $u\in B_{X^{s,b,\alpha}}(0;R)$,
$$\|\Gamma_T u\|_{X^{s,b,\alpha}}\leq  \tfrac R2+CT^{-b+b^*}R^2.$$

For $T>0$ such that
\begin{align}
CT^{b^*-b}R^2\leq \tfrac R2,\label{V3-5.15}
\end{align}

we have that $\Gamma_T$ sends the ball $B_{X^{s,b,\alpha}}(0;R)$ into itself.\\

It remains to prove that $\Gamma_T:B_{X^{s,b,\alpha}}(0;R)\to B_{X^{s,b,\alpha}}(0;R)$ is a contraction. Let us assume that $\frac12<s<\frac{11}4$, with $s\neq\frac32$, and $s\neq\frac52$ (being the case $0\leq s<\frac12$ simpler). Let us recall that $\max\{\frac s5-\frac1{20},\frac25\}<b<b^*<\frac12$, and $\frac12<\alpha<1-b^*$.\\

Let $v$ and $w$ elements of $B_{X^{s,b,\alpha}}(0;R)$. Then, taking into account the definition of $W_0^t$ given in \eqref{N3.38}, using Lemmas \ref{V2-L3.3}, \ref{V2-L3.4}, \ref{V3-L4}, \ref{V3-L4.6}, and \ref{V2-L3.6}, we obtain

\begin{align}
\notag &\|\Gamma_T v-\Gamma_T w\|_{X^{s,b,\alpha}}\\
\notag&=\Big\|\eta(\cdot_t)\int_0^{\cdot_t} [W_{\mathbb R}(\cdot_t{\scriptstyle-}t')(F_T(v(t'))-F_T(w(t')))](\cdot_x) dt'+\eta(\cdot_t)(W_0^t(0,h_1-p_{1v},h_2-p_{2v},h_3-p_{3v})(\cdot_x,\cdot_t)\\
\notag&\text{} \hspace{7cm} -W_0^t(0,h_1-p_{1w},h_2-p_{2w},h_3-p_{3w})(\cdot_x,\cdot_t)) \Big\|_{X^{s,b,\alpha}}\\
\notag&\leq \left\|\eta(\cdot_t)\int_0^{\cdot_t} [W_{\mathbb R}(\cdot_t{\scriptstyle-}t')(F_T(v(t'))-F_T(w(t')))](\cdot_x) dt'\right\|_{X^{s,b,\alpha}}\\
\notag&\text{} \hspace{7cm} + \left\| \eta(\cdot_t)(W_0^t(0,p_{1w}-p_{1v},p_{2w}-p_{2v},p_{3w}-p_{3v})(\cdot_x,\cdot_t) \right\|_{X^{s,b,\alpha}} \\
\notag &\leq CT^{b^*-b}\|\partial_x(v^2-w^2)\|_{X^{s,-b}}+C \sum_{j=0}^2\|\chi_{(0,+\infty)}(p_{j+1,w}-p_{j+1,v})\|_{H^{\frac{s+2-j}5}(\mathbb R_t)}\\
&\leq CT^{b^*-b}\|v+w\|_{X^{s,b}}\|v-w\|_{X^{s,b}}+C \sum_{j=0}^2 \|p_{j+1,w}-p_{j+1,v}\|_{H^{\frac{s+2-j}5}(\mathbb R^+_t)}.\label{V3-5.16}
\end{align}

Taking into account that, for $j=0,1,2$
\begin{align*}
\|p_{j+1,w} - p_{j+1,v} \|_{H^{\frac{s+2-j}{5}}(\mathbb R_t)} = \left\| \eta(\cdot_t) \partial_x^j \int_0^{\cdot_t} [W_{\mathbb R}(\cdot_t{\scriptstyle-}t')(F_T(v(t'))-F_T(w(t')))](0) dt' \right\|_{H^{\frac{s+2-j}{5}}(\mathbb R_t)},
\end{align*}

then, by Lemmas \ref{V3-L4.6} (ii), \ref{V2-L3.4}, \ref{V2-L3.6}, and \ref{V2-L3.7}, it follows that

\begin{align}
\notag \|p_{j+1,w} - p_{j+1,v} \|_{H^{\frac{s+2-j}{5}}(\mathbb R_t)} &\leq C \|F_T w - F_T v\|_{X^{\frac12,\frac{2s-1-10b^*}{10}}} + C \| F_Tw - F_Tv \|_{X^{s,-b^*}}\\
\notag &\leq CT^{b^*-b} \| \partial_x(v^2 - w^2)\|_{X^{\frac12,\frac{2s-1-10b}{10}}} + CT^{b^*-b} \| \partial_x(v^2 - w^2)\|_{X^{s,-b}}\\
&\leq CT^{b^*-b} \| v + w \|_{X^{s,b}} \| v - w \|_{X^{s,b}}. \label{V3-5.17}
\end{align}

From \eqref{V3-5.16}, and \eqref{V3-5.17}, we conclude that
\begin{align}
\notag \| \Gamma_Tv - \Gamma_Tw\|_{X^{s,b,\alpha}} & \leq CT^{b^*-b}  \| v + w \|_{X^{s,b}} \| v - w \|_{X^{s,b}}\\
& \leq 2CR T^{b^*-b} \| v - w \|_{X^{s,b,\alpha}}. \label{V3-5.18}
\end{align}

Therefore, if we choose $T>0$ such that $2CRT^{b^*-b}<1$, it is clear that such $T$ also satisfies \eqref{V3-5.15}. Hence, from \eqref{V3-5.18}, we conclude that $\Gamma_T:B_{X^{s,b,\alpha}}(0;R)\to B_{X^{s,b,\alpha}}(0;R)$ is a contraction. Let us observe that we can also assume that $T\in (0,\tfrac 12]$. In consequence, there exist $T\in (0,\tfrac12]$, and a unique $u\in B_{X^{s,b}}(0;R)$ such that $\Gamma_T u=u$.\\

Now we will prove that $u\in C(\mathbb R_t;H^s(\mathbb R_x))$. Let us notice that the first term of the right hand side of \eqref{V3-5.2} is continuous from $\mathbb R_t$ with values in $H^s(\mathbb R_x)$. Continuity of the third term of the right hand side of \eqref{V3-5.2} as a function from $\mathbb R_t$ with values in $H^s(\mathbb R_x)$ follows from Lemma \ref{V3-L2} (i). To prove that the second term of the right hand side of \eqref{V3-5.2} is continuous from $\mathbb R_t$ with values in $H^s(\mathbb R_x)$, it is enough to see that it belongs to $X^{s,\tilde b}$, for some $\tilde b>\tfrac12$. Let us prove that the second term of the right hand side of \eqref{V3-5.2} belongs to $X^{s,\alpha}$, for $\frac12<\alpha<1-b^*$. In fact, by \eqref{V4.2-4.5} from Lemma \ref{V2-L3.3} (i), Lemma \ref{V2-L3.4} with $0<T\leq\frac12$, and Lemma \ref{V2-L3.6},

\begin{align*}
\left\| \eta(\cdot_t)\int_0^{\cdot_t}W_{\mathbb R}(\cdot_t{\scriptstyle-}t')F_T(u(t'))dt'\right\|_{X^{s,\alpha}}&\leq C\|F_T(u(\cdot_t))\|_{X^{s,-b^*}}\leq CT^{b^*-b}\|\partial_x u^2\|_{X^{s,-b}}\\
&\leq CT^{b^*-b}\|u\|^2_{X^{s,b}}<\infty.
\end{align*}
This way we have proved that $u=\Gamma_T u\in C(\mathbb R_t; H^s(\mathbb R_x))$.\\

Now we prove that $\partial_x^j u\in C(\mathbb R_x;H^{\frac{s+2-j}5}(\mathbb R_t))$, $j=0,1,2$, for the case $\tfrac12<s<\tfrac{11}4$  with $s\neq\frac32$, and $s\neq\frac52$ (the case $0\leq s<\tfrac12$ being easier).\\

By Lemmas \ref{V3-L1}, and \ref{V3-L2} (i),
\begin{align*}
\eta(\cdot_t)\partial_x^j[W_{\mathbb R}(\cdot_t)g_l](\cdot_x)&\in C(\mathbb R_x;H^{\frac{s+2-j}{5}}(\mathbb R_t)),\\
\eta(\cdot_t)\partial_x^j W_0(0,h_1-p_1,h_2-p_2,h_3-p_3)&\in C(\mathbb R_x;H^{\frac{s+2-j}{5}}(\mathbb R_t)),
\end{align*}
and by Lemmas \ref{V3-L4.6} (ii), \ref{V2-L3.4}, \ref{V2-L3.6}, and \ref{V2-L3.7},
\begin{align*}
\left\|\eta(\cdot_t) \partial_x^j \int_0^{\cdot_t}W_{\mathbb R}(\cdot_t{\scriptstyle-}t')F_T(u(t'))(\cdot_x)dt' \right\|_{C(\mathbb R_x;H^{\frac{s+2-j}5}(\mathbb R_t))}&\leq C\left\|\eta(\tfrac{\cdot_t}{2T})\partial_xu^2\right\|_{X^{\frac12,\frac{2s-1-10b^*}{10}}}+C\left\|\eta(\tfrac{\cdot_t}{2T})\partial_xu^2 \right\|_{X^{s,-b^*}}\\
&\leq CT^{b^*-b}\|u\|^2_{X^{s,b}}<\infty.
\end{align*}

In consequence, $\partial_x^j u=\partial_x^j \Gamma_T u\in C(\mathbb R_x;H^{\frac{s+2-j}5}(\mathbb R_t))$, $j=0,1,2$.\\

Continuous dependence on the initial and boundary data follows from the fixed point arguments and from the paragraphs above. Theorem \ref{V3-T5.1} is proved.
\end{proof}

\begin{Definition}\label{V4-D5.1} For $T>0$ and $U\in L^2(\mathbb R^+ \times (0,T))$ we will say that $U$ is a generalized solution of the IBVP \eqref{maineq} in $\mathbb R^+\times(0,T)$ if for $\phi(\cdot_x,\cdot_t)\in L^\infty([0,T]; H^5(\mathbb R^+_x))$ such that
$$\phi_t\in L^\infty([0,T]; L^2(\mathbb R^+_x)),\, \phi|_{t=T}=0,\, \phi|_{x=0}=\phi_x|_{x=0}=0,$$
it follows that
\begin{align}
\notag &\int_0^T \int_0^{+\infty} [U(x,t)(\phi_t(x,t)+\partial_x^5 \phi(x,t))+\frac12 U^2(x,t)\phi_x(x,t)] dx dt\\
&+\int_0^{+\infty} g(x)\phi(x,0) dx + \int_0^T h_1(t) \partial_x^4 \phi(0,t) dt - \int_0^T h_2(t) \partial_x^3 \phi(0,t) dt + \int_0^T h_3(t) \partial_x^2 \phi(0,t) dt=0.\label{V4.2-5.20}
\end{align}
\end{Definition}

\begin{Theorem}\label{V4.2-T5.2} Let $U$ be the restriction of $u$ to $[0,+\infty)\times [0,T]$, $0<T\leq \frac12$, where $u$ is the solution of the integral equation \eqref{V2-1.7}, which is guaranteed by Theorem \ref{V3-T5.1}. Then $U$ is a generalized solution of the IBVP \eqref{maineq}.
\end{Theorem}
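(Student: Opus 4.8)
The plan is to exploit the decomposition of $u$ encoded in the integral equation \eqref{V2-1.7}. Since $0<T\le\tfrac12$ and $\eta\equiv 1$ on $[-\tfrac12,\tfrac12]$, on $[0,T]$ we have $\eta(t)=1$ and, for $t'\in[0,T]$, $F_T(u(t'))=-\tfrac12\partial_x(u(t')^2)$; hence on $[0,T]$ one may write $u=v_1+v_2+v_3$, where $v_1(t)=W_{\mathbb R}(t)g_l$, $v_2(t)=-\tfrac12\int_0^t W_{\mathbb R}(t-t')\partial_x(u(t')^2)\,dt'$, $v_3(t)=W_0^t(0,h_1-p_1,h_2-p_2,h_3-p_3)(t)$, and, crucially, $p_{j+1}(t)=\partial_x^j v_1(0,t)+\partial_x^j v_2(0,t)$ for $t\in[0,T]$ and $j=0,1,2$. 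Writing $V_i$ for the restriction of $v_i$ to $[0,+\infty)\times[0,T]$, I first record the regularity needed: $u\in C([0,T];H^s(\mathbb R_x))\subset C([0,T];L^2(\mathbb R_x))$, so that $U\in L^2(\mathbb R^+\times(0,T))$ and $U^2\in L^1(\mathbb R^+\times(0,T))$; each $v_i\in C([0,T];H^s(\mathbb R_x))$; and the traces $\partial_x^j v_i(0,\cdot)$ lie in $H^{\frac{s+2-j}5}(\mathbb R_t)$, $j=0,1,2$, by Lemmas \ref{V3-L1}, \ref{V3-L2} and \ref{V3-L4.6}. On $[0,T]$ one has $\partial_x^j v_3(0,t)=(h_{j+1}-p_{j+1})(t)$, $j=0,1,2$, because $\chi_{(0,+\infty)}(h_{j+1}-p_{j+1})\in H_0^{\frac{s+2-j}5}(\mathbb R^+_t)$, as shown in the proof of Theorem \ref{V3-T5.1}.

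The heart of the proof is to establish, for each admissible test function $\phi$ as in Definition \ref{V4-D5.1}, a weak identity for every $V_i$, obtained by pairing the equation satisfied by $v_i$ with $\phi$ over $[0,+\infty)\times[0,T]$ and integrating by parts five times in $x$ and once in $t$. Here $v_1$ solves $\partial_t v_1+\partial_x^5 v_1=0$ with $v_1(\cdot,0)=g_l$; $v_3$ solves, by the construction of Section \ref{ILBVP}, $\partial_t v_3+\partial_x^5 v_3=0$ on $\mathbb R^+_x\times\mathbb R_t$ with $v_3(\cdot,0)=0$ on $\mathbb R^+_x$; and $v_2$ solves $\partial_t v_2+\partial_x^5 v_2=-\tfrac12\partial_x(u^2)$ with $v_2(\cdot,0)=0$. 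In each integration by parts the boundary contributions at $x=+\infty$ vanish (the smooth approximants below decay in $x$), the one at $t=T$ vanishes because $\phi|_{t=T}=0$, the one at $t=0$ produces $\int_0^{+\infty} g\,\phi(x,0)\,dx$ for $v_1$ and nothing for $v_2,v_3$, and at $x=0$, using $\phi|_{x=0}=\partial_x\phi|_{x=0}=0$, there remain only the terms $\int_0^T \partial_x^j v_i(0,t)\,\partial_x^{4-j}\phi(0,t)\,dt$, $j=0,1,2$, with signs $+,-,+$; for $v_2$ one additionally uses $\phi|_{x=0}=0$ to rewrite $\int_0^T\!\int_0^{+\infty}(-\tfrac12\partial_x(u^2))\phi\,dx\,dt=\tfrac12\int_0^T\!\int_0^{+\infty}U^2\phi_x\,dx\,dt$. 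These three identities are obtained first for smooth data — $g_l$ in the Schwartz class, the boundary data of $v_3$ in $C_0^\infty(\mathbb R^+_t)$ (for which $W_0^t$ is precisely the classical solution built in Section \ref{ILBVP}), and $\partial_x(u^2)$ replaced by a smooth rapidly decreasing forcing — where all manipulations are classical, and are then passed to the general case by density: for $v_1$ using that $W_{\mathbb R}$ is a group of isometries on $H^s$ and the trace bound of Lemma \ref{V3-L1}; for $v_3$, approximating $h_{j+1}-p_{j+1}$ in $H_0^{\frac{s+2-j}5}(\mathbb R^+_t)$ by functions of $C_0^\infty(\mathbb R^+_t)$ and invoking the convergences furnished by Lemma \ref{V3-L2} and Remark \ref{V3-R2}; and for $v_2$, approximating $u$ in $X^{s,b}$ by Schwartz functions $u_n$, so that $\partial_x(u_n^2)\to\partial_x(u^2)$ in $X^{s,-b}$ and $u_n^2\to u^2$ in $L^1(\mathbb R^+\times(0,T))$ by the bilinear estimate \eqref{V1-3.58} and the embedding $X^{s,b}\hookrightarrow L^2(\mathbb R^2)$, while $v_{2,n}\to v_2$ in $C([0,T];H^s(\mathbb R_x))$ (via Lemma \ref{V2-L3.3}(i) and the embedding $X^{s,\alpha}\hookrightarrow C_b(\mathbb R_t;H^s(\mathbb R_x))$) and $\partial_x^j v_{2,n}(0,\cdot)\to\partial_x^j v_2(0,\cdot)$ in $H^{\frac{s+2-j}5}(\mathbb R_t)$ (via Lemma \ref{V3-L4.6}).

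It then suffices to add the three identities. The volume terms combine into $\int_0^T\!\int_0^{+\infty}[\,U(\phi_t+\partial_x^5\phi)+\tfrac12 U^2\phi_x\,]\,dx\,dt$; the initial term $\int_0^{+\infty} g\,\phi(x,0)\,dx$ appears exactly once; and for each $j=0,1,2$ the three contributions at $x=0$ are $\partial_x^j v_1(0,\cdot)$, $\partial_x^j v_2(0,\cdot)$ and $(h_{j+1}-p_{j+1})$, whose sum is $h_{j+1}$ because $p_{j+1}=\partial_x^j v_1(0,\cdot)+\partial_x^j v_2(0,\cdot)$ on $[0,T]$. This is exactly identity \eqref{V4.2-5.20}, so $U$ is a generalized solution of \eqref{maineq}. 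I expect the density step of the second paragraph to be the main obstacle: one must check that every term of each weak identity is stable under the chosen approximations, which for the nonlinear contribution relies on the bilinear estimate of Lemma \ref{V2-L3.6} (and the embedding $X^{s,b}\hookrightarrow L^2(\mathbb R^2)$) and for the boundary terms on the trace-continuity statements of Lemmas \ref{V3-L1}, \ref{V3-L2}(ii) and \ref{V3-L4.6}. The fact that $W_0^t$ is only available through the limiting procedure of Remark \ref{V3-R2}, so that $v_3$ is never a literal classical solution for the data at hand, is what forces the argument to be carried out piece by piece rather than directly on $u$.
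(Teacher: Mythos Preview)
Your proposal is correct and follows essentially the same approach as the paper: decompose $u$ into the three pieces $v_1,v_2,v_3$, establish for each the corresponding weak identity via integration by parts and density (these are exactly the paper's identities \eqref{V4.2-5.21}, \eqref{V4.2-5.22}, \eqref{V4.2-5.23}), and then add, using that $p_{j+1}=\partial_x^j v_1(0,\cdot)+\partial_x^j v_2(0,\cdot)$ on $[0,T]$ to obtain the cancellation that leaves only the $h_{j+1}$ boundary terms. Your write-up is in fact more explicit than the paper's about the density step, which the paper compresses to the phrase ``using density arguments and integration by parts''.
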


\begin{proof} Since $u\in C(\mathbb R_t;H^s(\mathbb R_x))$ with $s\geq 0$, it is clear that $U\in L^2(\mathbb R^+\times(0,T))$. Let $\phi\in L^\infty([0,T];H^5(\mathbb R^+_x))$ satisfying the conditions in Definition \ref{V4-D5.1}. Using density arguments and integration by parts it can be proved that
\begin{align}
\notag &\int_0^T \int_0^{+\infty} [W_{\mathbb R}(t) g_l](x) (\phi_t(x,t) + \partial_x^5 \phi(x,t)) dx dt\\
\notag  = &-\int_0^{+\infty} g(x) \phi(x,0) dx - \int_0^T [W_{\mathbb R}(t) g_l](0) \partial_x^4 \phi(0,t) dt + \int_0^T \partial_x [W_{\mathbb R}(t) g_l](0)\partial_x^3 \phi(0,t) dt\\
&-\int_0^T \partial_x^2 [W_{\mathbb R}(t) g_l](0) \partial_x^2 \phi(0,t) dt,\label{V4.2-5.21}
\end{align}

\begin{align}
\notag &\int_0^T \int_0^{+\infty} \left( \int_0^t [W_{\mathbb R}(t-t') \left( -\tfrac12 \partial_x(u(t'))^2 \right)](x) dt' \right) \left(\phi_t(x,t) + \partial_x^5 \phi(x,t)  \right) dx dt\\
\notag =&-\int_0^T \int_0^{+\infty} \tfrac12 u^2(x,t) \partial_x\phi(x,t) dx dt - \int_0^T G(u) (0,t) \partial_x^4 \phi(0,t) dt + \int_0^T \partial_x G(u) (0,t) \partial_x^3 \phi(0,t) dt\\
&- \int_0^T \partial_x^2 G(u) (0,t) \partial_x^2 \phi(0,t) dt,\label{V4.2-5.22}
\end{align}

where
\begin{align*}
G(u)(x,t):= \int_0^t [W_{\mathbb R}(t-t')\left(-\tfrac12 \partial_x (u(t'))^2 \right)](x) dt',
\end{align*}

and
\begin{align}
\notag &\int_0^T \int_0^{+\infty} W_0^t(0,h_1-p_1, h_2-p_2,h_3-p_3) (x,t) \left( \phi_t(x,t) + \partial_x^5 \phi(x,t) \right) dx dt\\
&= -\int_0^T (h_1(t)-p_1(t)) \partial_x^4 \phi(0,t) dt + \int_0^T (h_2(t)-p_2(t)) \partial_x^3 \phi(0,t) dt - \int_0^T (h_3(t) - p_3(t)) \partial_x^2 \phi(0,t) dt. \label{V4.2-5.23}
\end{align}

Since $u$ is a solution of the integral equation \eqref{V2-1.7}, from \eqref{V4.2-5.21}, \eqref{V4.2-5.22}, and \eqref{V4.2-5.23}, it follows that
\begin{align*}
&\int_0^T \int_0^{+\infty} U(x,t) (\phi_t(x,t) + \partial_x^5 \phi(x,t)) dx dt\\
= &\int_0^T \int_0^{+\infty} \biggl\{ [W_{\mathbb R}(t) g_l](x) + \int_0^t[W_{\mathbb R}(t-t')\left(-\tfrac12 \partial_x (u(t'))^2\right)](x) dt'\\
& + W_0^t (0,h_1-p_2,h_2-p_2,h_3-p_3)(x,t)\biggl\}(\phi_t(x,t) + \partial_x^5 \phi(x,t)) dx dt\\
=& -\int_0^{+\infty} g(x) \phi(x,0) dx - \int_0^T p_1(t) \partial_x^4 \phi(x,0) dt + \int_0^T p_2(t) \partial_x^3 \phi(x,0) dt - \int_0^T p_3(t) \partial_x^2 \phi(x,0) dt\\
&- \int_0^T \int_0^{+\infty} \tfrac12 U^2(x,t) \partial_x \phi(x,t) dx dt - \int_0^T(h_1(t) - p_1(t))\partial_x^4 \phi(x,0) dt + \int_0^T(h_2(t) - p_2(t))\partial_x^3 \phi(x,0) dt\\
&- \int_0^T(h_3(t) - p_3(t))\partial_x^2 \phi(x,0) dt\\
=& -\int_0^{+\infty} g(x) \phi(x,0) dx - \int_0^T h_1(t) \partial_x^4 \phi(0,t) dt + \int_0^T h_2 \partial_x^3 \phi(0,t) dt - \int_0^T h_3 \partial_x^2 \phi(0,t) dt\\
& - \int_0^T \int_0^{+\infty} \tfrac12 U^2(x,t) \partial_x \phi(x,t) dx dt.
\end{align*}

i.e., $U$ is a generalized solution of the IBVP \eqref{maineq} in $\mathbb R^+\times(0,T)$. Theorem \ref{V4.2-T5.2} is proved.

\end{proof}

\section{Uniqueness}\label{Uniq}

Our objective in this section is to prove that the generalized solution of the IBVP \eqref{maineq} given by Theorem \ref{V4.2-T5.2} is unique. The method we use is based on the one followed by Faminskii in \cite{F2005}.\\

Let us take $\psi\in C_0^\infty(\mathbb R)$ such that $\psi\geq 0$, $\psi\equiv 1$ in $[-1,1]$, $\supp \psi \subset [-2,2]$, and for $\delta>0$, let us define $\psi_\delta\in C_0^\infty(\mathbb R)$ by $\psi_\delta(t)=\psi(\tfrac t\delta)$, for $t\in\mathbb R$.

\begin{lemma} \label{V4.2-L6.1} Let $s\geq 0$, $b\in(0,\tfrac12)$, $\alpha\in(\tfrac12,\tfrac12+b)$, and $T>0$. There exists $C>0$ such that for $f\in X^{s,b,\alpha}$, and $\delta\in[0,T]$,
\begin{align}
\|\psi_\delta(\cdot_t) f\|_{X^{s,b,\alpha}} \leq C \delta^{\frac12-\alpha} \|f\|_{X^{s,b,\alpha}}.\label{V4.2-6.4}
\end{align}

\end{lemma}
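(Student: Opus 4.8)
Throughout write $\delta\in(0,T]$ (the value $\delta=0$ being vacuous). The plan is to use the elementary equivalence $\|u\|_{X^{s,b,\alpha}}^2\le C\big(\|u\|_{X^{s,b}}^2+\|\chi(\xi)\langle\tau\rangle^{\alpha}\widehat u\|_{L^2_{\xi\tau}}^2\big)\le C\|u\|_{X^{s,b,\alpha}}^2$ (immediate from $(a+b)^2\le 2(a^2+b^2)\le 2(a+b)^2$ for $a,b\ge0$) and to estimate the two pieces of $\psi_\delta f$ separately. Since $\tfrac12-\alpha<0$ and $\delta\le T$ we have $\delta^{\frac12-\alpha}\ge T^{\frac12-\alpha}$, so for $\delta$ in the compact range $[\min\{1,T\},T]$ the inequality \eqref{V4.2-6.4} follows at once from the boundedness of multiplication by $\psi_\delta$ on $X^{s,b,\alpha}$ with a constant depending only on $T$. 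Hence the content lies in the range $\delta\in(0,1]$, where moreover $\delta^{\frac12-\alpha}\ge 1$; I restrict to this range below.

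For the first piece I would invoke that multiplication by the fixed cutoff $\psi_\delta\in C_0^\infty(\mathbb R_t)$ is bounded on $X^{s,b}$ uniformly in $\delta$: this is precisely statement (ii) inside the proof of Lemma \ref{V2-L3.4} (Leibniz rule for fractional derivatives, the embedding $H^{b}(\mathbb R_t)\hookrightarrow L^{2/(1-2b)}(\mathbb R_t)$, and the scaling identity showing $\||\cdot|^{b}\,\widehat{\psi_\delta}\|_{L^{1/(1-b)}(\mathbb R_t)}$ is independent of $\delta$), whose argument applies verbatim with $\psi$ in place of $\eta$ since $b\in(0,\tfrac12)$. This gives $\|\psi_\delta f\|_{X^{s,b}}\le C\|f\|_{X^{s,b}}\le C\|f\|_{X^{s,b,\alpha}}\le C\delta^{\frac12-\alpha}\|f\|_{X^{s,b,\alpha}}$.

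For the second piece I fix $\xi$ with $|\xi|\le1$ and set $g(\cdot):=\widehat f(\xi,\cdot)$, so that $(\psi_\delta f)^{\wedge}(\xi,\cdot)=C\,\widehat{\psi_\delta}*_\tau g$ with $\widehat{\psi_\delta}(\tau)=\delta\,\widehat\psi(\delta\tau)$. Using $\langle\tau\rangle^{\alpha}\le C(\langle\tau-\tau'\rangle^{\alpha}+\langle\tau'\rangle^{\alpha})$ I split $\langle\tau\rangle^{\alpha}|(\widehat{\psi_\delta}*g)(\tau)|$ into $C(\langle\cdot\rangle^{\alpha}|\widehat{\psi_\delta}|)*|g|$ and $C|\widehat{\psi_\delta}|*(\langle\cdot\rangle^{\alpha}|g|)$. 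Young's inequality ($L^1*L^2\to L^2$) bounds the second by $\|\widehat{\psi_\delta}\|_{L^1}\|\langle\cdot\rangle^{\alpha}g\|_{L^2}=\|\widehat\psi\|_{L^1}\|\langle\cdot\rangle^{\alpha}g\|_{L^2}$, and Young's inequality ($L^2*L^1\to L^2$) bounds the first by $\|\langle\cdot\rangle^{\alpha}\widehat{\psi_\delta}\|_{L^2}\|g\|_{L^1}$. A change of variables gives $\|\langle\cdot\rangle^{\alpha}\widehat{\psi_\delta}\|_{L^2}\le C\delta^{\frac12-\alpha}$ for $\delta\le1$, while $\|g\|_{L^1}\le\|\langle\cdot\rangle^{-\alpha}\|_{L^2}\|\langle\cdot\rangle^{\alpha}g\|_{L^2}=C\|\langle\cdot\rangle^{\alpha}g\|_{L^2}$, the factor $\|\langle\cdot\rangle^{-\alpha}\|_{L^2}$ being finite exactly because $\alpha>\tfrac12$. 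Collecting terms and using $\delta^{\frac12-\alpha}\ge1$ yields $\|\langle\tau\rangle^{\alpha}(\psi_\delta f)^{\wedge}(\xi,\cdot)\|_{L^2_\tau}\le C\delta^{\frac12-\alpha}\|\langle\cdot\rangle^{\alpha}\widehat f(\xi,\cdot)\|_{L^2_\tau}$; squaring, integrating over $|\xi|\le1$ and taking square roots gives $\|\chi(\xi)\langle\tau\rangle^{\alpha}(\psi_\delta f)^{\wedge}\|_{L^2_{\xi\tau}}\le C\delta^{\frac12-\alpha}\|\chi(\xi)\langle\tau\rangle^{\alpha}\widehat f\|_{L^2_{\xi\tau}}\le C\delta^{\frac12-\alpha}\|f\|_{X^{s,b,\alpha}}$. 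Adding the two pieces proves \eqref{V4.2-6.4}.

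The only delicate point is getting the sharp exponent $\delta^{\frac12-\alpha}$ in the low-frequency term: one must place $\widehat{\psi_\delta}$ in $L^2_\tau$, where the scaling produces exactly $\delta^{\frac12-\alpha}$, rather than in $L^1_\tau$, which would only give the weaker $\delta^{-\alpha}$. This choice forces the companion factor to be measured in $L^1_\tau$, and $g=\widehat f(\xi,\cdot)$ does belong to $L^1_\tau$ precisely because the hypothesis $\alpha>\tfrac12$ makes $\langle\cdot\rangle^{-\alpha}$ square integrable over $\mathbb R_\tau$. The $X^{s,b}$ component of $\psi_\delta f$ carries no decay in $\delta$, but that is harmless since $\delta^{\frac12-\alpha}\ge1$ on the range that matters.
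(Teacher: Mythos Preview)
Your proof is correct, and for the low-frequency piece $\|\chi(\xi)\langle\tau\rangle^{\alpha}(\psi_\delta f)^{\wedge}\|_{L^2_{\xi\tau}}$ it is essentially identical to the paper's argument in \eqref{V4.2-6.17}--\eqref{V4.2-6.18}.  The genuine difference is in the $X^{s,b}$ piece.  The paper works harder there: it proves the sharper inequality \eqref{V4.2-6.5}, $\|\psi_\delta f\|_{X^{s,b}}\le C\delta^{\frac12-\alpha}\|f\|_{X^{s,b}}$, by writing $I_{\tau_0}\le C(I_1+I_2)$ and estimating $I_1$ via Young's inequality with carefully chosen exponents $p,q$ satisfying $\tfrac1p\in(1-\alpha+b,\,b+\tfrac12)$; this interval is nonempty (and inside $(\tfrac12,1)$) precisely because of the hypothesis $\alpha<\tfrac12+b$.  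You instead invoke the uniform bound $\|\psi_\delta f\|_{X^{s,b}}\le C\|f\|_{X^{s,b}}$ (statement (ii) inside the proof of Lemma~\ref{V2-L3.4}, which indeed transfers from $\eta$ to $\psi$), and then absorb the missing power of $\delta$ using $1\le\delta^{\frac12-\alpha}$ for $\delta\le1$.  This is shorter, reuses an estimate already in the paper, and---as a pleasant byproduct---shows that the upper restriction $\alpha<\tfrac12+b$ is not actually needed for \eqref{V4.2-6.4}; only $\alpha>\tfrac12$ is used.  The trade-off is that the paper's route yields the slightly finer intermediate statement \eqref{V4.2-6.5} (control of $\|\psi_\delta f\|_{X^{s,b}}$ by $\|f\|_{X^{s,b}}$ rather than $\|f\|_{X^{s,b,\alpha}}$), which you do not obtain.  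One minor remark: your parenthetical description of the scaling-invariant norm in Lemma~\ref{V2-L3.4}(ii) is not quite the exponent the paper uses, but the conclusion you draw---uniform boundedness in $\delta\in(0,1]$---is exactly what that proof establishes.
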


\begin{proof} We begin by proving that
\begin{align}
\|\psi_\delta(\cdot_t) f\|_{X^{s,b}}\leq C \delta^{\frac12-\alpha} \|f\|_{X^{s,b}}.\label{V4.2-6.5}
\end{align}

Let us define
$$I_{\tau_0}:=\int_{\mathbb R} (1+|\tau-\tau_0|)^{2b}|\widehat{\psi_\delta f}(\xi,\tau)|^2 d\tau,$$
and let us prove that there exists $C>0$, independent of $\tau_0$, such that
\begin{align}
I_{\tau_0}\leq C \delta^{1-2\alpha} \int_{\mathbb R} (1+|\tau-\tau_0|)^{2b} |\widehat f(\xi,\tau)|^2 d\tau,\label{V4.2-6.6}
\end{align}
for every $\tau_0\in\mathbb R$.\\

From the definition of $I_{\tau_0}$, by expressing the Fourier transform of the product of $\psi_\delta$ and $f$ as a convolution, it can be seen that
\begin{align}
\notag I_{\tau_0}&\leq C \int_{\mathbb R}\left(\int_{\mathbb R} (1+|\zeta|)^b |\widehat \psi_\delta(\zeta)| |\widehat f(\xi,\tau-\zeta)| d\zeta \right)^2 d\tau + C \int_{\mathbb R}\left(\int_{\mathbb R} |\widehat \psi_\delta(\zeta)| (1+|\tau-\zeta-\tau_0|)^b |\widehat f(\xi,\tau-\zeta)| d\zeta \right)^2 d\tau\\
&\equiv C I_1 + C I_2.\label{V4.2-6.7}
\end{align}

Let us estimate $I_1$:
\begin{align*}
I_1=\|g\ast h\|_{L^2}^2,
\end{align*}

where $g(\zeta)=(1+|\zeta|)^b |\widehat \psi_\delta(\zeta)|$, and $h(\tau)=\widehat f(\xi,\tau)$.\\

Let us choose $p\in(1,2)$, and $q\in(1,2)$ such that $\frac1p+\frac1q=\frac32$, with $\frac1p\in(1-\alpha+b,b+\frac12)$. Since $\frac1p + \frac1q-1=\frac32-1=\frac12$, using Young's inequality, the definition of $\psi$, an adequate change of variables, and taking into account that $\delta\in(0,T]$, it can be deduced that
\begin{align}
\notag I_1&\leq \left( \int_{\mathbb R} (1 + |\zeta|)^{bq} |\widehat \psi_\delta(\zeta)|^q d\zeta \right)^{\frac2q} \left( \int_{\mathbb R} |\widehat f(\xi,\tau)|^p d\tau \right)^{\frac2p}\\
\notag &= \delta^{2-\frac2q-2b} \left( \int_{\mathbb R} (\delta + |\zeta'|)^{bq} \delta^{q-1} |\widehat \psi(\zeta')|^q d\zeta' \right)^{\frac2q} \left( \int_{\mathbb R} |\widehat f(\xi,\tau)|^p d\tau \right)^{\frac2p}\\
& \leq C_1 \delta^{2-2b-\frac2q} \left( \int_{\mathbb R} |\widehat f(\xi,\tau)|^p d\tau \right)^{\frac2p} \leq C_1 \delta^{1-2\alpha} \left( \int_{\mathbb R} |\widehat f(\xi,\tau)|^p d\tau \right)^{\frac2p} , \label{V4.2-6.10}
\end{align}

where $C_1$ is a constant independent of $\delta\in(0,T]$. Now, we use Hölder's inequality with exponents $\frac2{2-p}$, and $\frac2p$, to bound the integral on the right hand side of \eqref{V4.2-6.10}:

\begin{align}
\left( \int_{\mathbb R} |\widehat f(\xi,\tau)|^p d\tau \right)^{\frac2p} & \leq \left( \int_{\mathbb R} \frac1{(1+ |\tau-\tau_0|)^{\frac{2bp}{2-p}}} d\tau \right)^{\frac{2-p}p} \int_{\mathbb R} (1 + |\tau - \tau_0|)^{2b} |\widehat f(\xi,\tau)|^2 d\tau \leq C \int_{\mathbb R} (1 + |\tau - \tau_0|)^{2b} |\widehat f(\xi,\tau)|^2 d\tau, \label{V4.2-6.11}
\end{align}

since $\frac{2bp}{2-p}>1$. Hence, from \eqref{V4.2-6.10}, and \eqref{V4.2-6.11},
\begin{align}
I_1 \leq C_1 \delta^{1-2\alpha} \int_{\mathbb R} (1 + |\tau - \tau_0|)^{2b} |\widehat f(\xi,\tau)|^2 d\tau.\label{V4.2-6.12}
\end{align}

We proceed by estimating $I_2$. For this purpose we define $k(\zeta):= (1 + |\zeta - \tau_0|)^b |\widehat f(\xi,\tau)|$, and we consider the integral in the variable $\zeta$ of the definition of $I_2$ as the convolution $|\widehat \psi_\delta| \ast k$, to see that
\begin{align*}
I_2 & = \| |\widehat \psi_\delta| \ast k \|_{L^2}^2 \leq \left( \int_{\mathbb R} |\widehat \psi_\delta (\zeta)| d\zeta \right)^2 \int_{\mathbb R} (1 + |\tau - \tau_0|)^{2b} |\widehat f(\xi,\tau)|^2 d\tau.
\end{align*}

Let us observe that
\begin{align}
\notag\left( \int_{\mathbb R} |\widehat \psi_\delta (\zeta)| d\zeta \right)^2 & \leq \left( \int_{\mathbb R} \frac1{(1 + |\zeta|)^{2\alpha}} d\zeta \right) \left( \int_{\mathbb R} (1 + |\zeta|)^{2\alpha} |\widehat \psi_\delta (\zeta)|^2 d\zeta \right)\\
& \leq C \int_{\mathbb R} (1 + |\zeta|)^{2\alpha} |\widehat \psi_\delta (\zeta)|^2 d\zeta,\label{V4.2-6.13}
\end{align}
since $\alpha>\frac12$.

Therefore
\begin{align}
I_2\leq C \left(\int_{\mathbb R} (1 + |\zeta|)^{2\alpha} |\widehat \psi_\delta(\zeta) |^2 d\zeta \right)  \int_{\mathbb R} (1 + |\tau - \tau_0|)^{2b} |\widehat f(\xi,\tau)|^2 d\tau.\label{V4.2-6.14}
\end{align}

But, taking into account the definition of $\psi_\delta$, and an adequate change of variables, we have that
\begin{align}
\int_{\mathbb R} (1 + |\zeta|)^{2\alpha} |\widehat \psi_\delta (\zeta)|^2 d\zeta = \delta^{1-2\alpha} \int_{\mathbb R} (\delta + |\zeta'|)^{2\alpha} |\widehat \psi(\zeta')|^2 d\zeta'\leq C_1 \delta^{1-2\alpha},\label{V4.2-6.15}
\end{align}
where $C_1$ is a constant independent of $\delta\in(0,T]$. From \eqref{V4.2-6.14}, and \eqref{V4.2-6.15} if follows that
\begin{align}
I_2 \leq C \delta^{1-2\alpha} \int_{\mathbb R} (1+|\tau-\tau_0|)^{2b} |\widehat f(\xi,\tau)|^2 d\tau,\label{V4.2-6.16}
\end{align}

and, from \eqref{V4.2-6.7}, \eqref{V4.2-6.12}, and \eqref{V4.2-6.16}, we conclude that
\begin{align*}
I_{\tau_0} \leq C \delta^{1-2\alpha} \int_{\mathbb R} (1+ |\tau - \tau_0|)^{2b} |\widehat f(\xi,\tau)|^2 d\tau.
\end{align*}

In particular, for every $\xi\in\mathbb R$, it follows that
\begin{align*}
\int_{\mathbb R} (1 + |\tau + \xi^5|)^{2b} |\widehat {\psi_\delta f} (\xi,\tau) |^2 d\tau \leq C \delta^{1-2\alpha} \int_{\mathbb R} (1+ |\tau +\xi^5|)^{2b} |\widehat f(\xi,\tau)|^2 d\tau,
\end{align*}

and, by multiplying by $\langle \xi \rangle^{2s}$, and integrating with respect to the variable $\xi$ over $\mathbb R$, we conclude that \eqref{V4.2-6.5} is valid.\\

Now, we continue by proving that
\begin{align}
\left[ \int_{-1}^1 \int_{\mathbb R} (1 + |\tau|)^{2\alpha} |\widehat{\psi_\delta f} (\xi,\tau)|^2 d\tau d\xi \right]^{\frac12} \leq C \delta^{\frac12-\alpha} \left[\int_{-1}^1 \int_{\mathbb R} (1 + |\tau|)^{2\alpha} |\widehat f(\xi,\tau)|^2 d\tau d\xi \right]^{\frac12}.\label{V4.2-6.17}
\end{align}

Taking into account that $\widehat{\psi_\delta f}(\xi,\tau) = (\widehat \psi_\delta \ast_\tau \widehat f(\xi,\cdot) )(\tau)$ for every $(\xi,\tau)\in [-1,1]\times \mathbb R$, we notice that

\begin{align}
\notag  \int_{-1}^1 \int_{\mathbb R} (1 + |\tau|)^{2\alpha} |\widehat{\psi_\delta f} (\xi,\tau)|^2 d\tau d\xi  \leq & C \int_{-1}^1 \int_{\mathbb R} \left( \int_{\mathbb R} (1 + |\zeta|)^\alpha |\widehat \psi_\delta (\zeta)| |\widehat f(\xi,\tau-\zeta)| d\zeta \right) d\tau d\xi\\
\notag& + C \int_{-1}^1 \int_{\mathbb R} \left( (1 + |\tau-\zeta|)^\alpha |\widehat \psi_\delta(\zeta)| |\widehat f(\xi,\tau-\zeta)| d\zeta \right)^2 d\tau d\xi\\
\notag\leq & C \int_{-1}^1 \| (1 + |\cdot|)^\alpha |\widehat \psi_\delta(\cdot)| \ast |\widehat f(\xi,\cdot)| \|^2_{L_\tau^2} d\xi \\
\notag&+C \int_{-1}^1 \| |\widehat \psi_\delta (\cdot)| \ast (1 + |\cdot|)^\alpha |\widehat f(\xi,\cdot)| \|_{L^2_\tau}^2 d\xi\\
\notag\leq & C \int_{-1}^1 \left( \int_{\mathbb R} (1 + |\tau|)^{2\alpha} |\widehat\psi_\delta(\tau)|^2 d\tau \right) \left( \int_{\mathbb R} |\widehat f(\xi,\tau)| d\tau \right)^2 d\xi\\
&+C \int_{-1}^1 \left( \int_{\mathbb R} |\widehat \psi_\delta(\tau)| d\tau \right)^2
 \left( \int_{\mathbb R} (1 + |\tau|)^{2\alpha} |\widehat f(\xi,\tau)|^2 d\tau \right) d\xi. \label{V4.2-6.18}
\end{align}

Therefore, by using \eqref{V4.2-6.15}, and \eqref{V4.2-6.13}, from \eqref{V4.2-6.18} it follows \eqref{V4.2-6.17}.\\

It is clear that the validity of \eqref{V4.2-6.5}, and \eqref{V4.2-6.17} implies \eqref{V4.2-6.4}. Therefore, Lemma \ref{V4.2-L6.1} is proved.
\end{proof}

Using the methodology introduced in \cite{KPV1991} it can be proved the following lemma, concerning some properties of the group $\{W_{\mathbb R}(t)\}_{t\in\mathbb R}$.

\begin{lemma}\label{dic2021-L1}
\begin{enumerate}
\item[(i)] $\| D_x^2 W_{\mathbb R}(t)u_0\|_{L_x^\infty L_t^2} \leq C \|u_0\|_{L_x^2}$.
\item[(ii)] $\|W_{\mathbb R}(t) u_0\|_{L_x^{12}L_t^{12}} \leq C \|u_0\|_{L^2_x}$.
\item[(iii)] $\|D_x^{\frac12} W_{\mathbb R}(t) u_0\|_{L_x^6 L_t^6} \leq C\|u_0\|_{L_x^2}$.
\end{enumerate}
\end{lemma}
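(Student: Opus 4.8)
The plan is to establish the three smoothing/Strichartz estimates for the unitary group $\{W_{\mathbb R}(t)\}_{t\in\mathbb R}$ exactly as in the classical work of Kenig, Ponce and Vega \cite{KPV1991} for the Airy group, the only modification being that the dispersion relation is now $\tau+\xi^5=0$ instead of $\tau-\xi^3=0$. Writing $[W_{\mathbb R}(t)u_0](x)=\mathcal F_x^{-1}[e^{-it\xi^5}\widehat{u_0}(\xi)](x)$, all three statements are consequences of oscillatory integral estimates for the kernel together with $TT^*$ and interpolation arguments.

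First I would prove (i), the sharp Kato smoothing estimate. By Plancherel in $t$, for fixed $x$,
\begin{align*}
\int_{\mathbb R}|D_x^2 W_{\mathbb R}(t)u_0(x)|^2\,dt
= C\int_{\mathbb R}\Big|\,\widehat{D_x^2 W_{\mathbb R}(\cdot)u_0(x)}\,(\tau)\Big|^2 d\tau .
\end{align*}
Performing the change of variables $\tau=-\xi^5$ on each monotonicity interval of $\xi\mapsto-\xi^5$ (here $\xi\mapsto-\xi^5$ is globally monotone, so the change is global), with $d\tau=-5\xi^4 d\xi$, the factor $|\xi|^4$ coming from $D_x^2$ squared is exactly cancelled by the Jacobian $|\xi|^{-4}$, yielding $\int_{\mathbb R}|D_x^2 W_{\mathbb R}(t)u_0(x)|^2\,dt=C\int_{\mathbb R}|\widehat{u_0}(\xi)|^2 d\xi$, uniformly in $x$; taking the supremum over $x$ gives (i). This is the easiest of the three and requires no oscillatory-integral machinery.

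For (ii) and (iii) I would follow the $TT^*$ scheme. Set $(Tf)(x,t):=[W_{\mathbb R}(t)f](x)$. Then (ii) is equivalent to the boundedness of $TT^*$ from $L^{12/11}_xL^{12/11}_t$ to $L^{12}_xL^{12}_t$, and $TT^*$ has convolution kernel $K(x-x',t-t')$ where $K(x,t)=\mathcal F^{-1}_\xi[e^{it\xi^5}](x)$. The key pointwise bound is the van der Corput / stationary phase estimate $|K(x,t)|\le C|t|^{-1/5}$ together with the localized refinements $|\partial_x^k K(x,t)|\le C|t|^{-(1+k)/5}\langle |x|/|t|^{1/5}\rangle^{-N}$ away from the degeneracy locus, which for the fifth-order symbol $\xi^5$ (whose derivative $5\xi^4$ vanishes only at $\xi=0$) give, after the usual dyadic frequency decomposition and interpolation with the trivial $L^2\to L^2$ bound, the global space-time bound in (ii) (the exponent $12$ being the natural one for a fifth-order dispersion in one dimension, analogous to the exponent $8$ for the Airy equation). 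Estimate (iii) with the gain of half a derivative follows by the same argument applied to $D_x^{1/2}W_{\mathbb R}(t)$, combining the local smoothing of (i)-type with the Strichartz estimate via the Christ–Kiselev lemma and interpolation, precisely as in \cite{KPV1991}.

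The main obstacle is bookkeeping rather than conceptual: one must carry out the dyadic decomposition in frequency, estimate the oscillatory kernel on each piece using that the phase $\xi^5$ has nondegenerate behavior except at the origin (where one uses instead the trivial $L^1\to L^\infty$-free Bernstein bound on the low-frequency piece), and then sum the dyadic pieces, checking that the resulting geometric series converges for the stated exponents. Since the dispersion relation $\xi^5$ is genuinely one-dimensional and strictly convex away from $0$ on each half-line, the analysis is entirely parallel to the Airy case and I would simply cite \cite{KPV1991} for the details, as the statement of the lemma already does. No new ideas beyond replacing $\xi^3$ by $\xi^5$ and adjusting the numerology of the exponents are needed.
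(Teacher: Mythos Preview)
Your proposal is correct and matches the paper's approach: the paper gives no proof at all for this lemma, merely stating that it ``can be proved using the methodology introduced in \cite{KPV1991}''. Your sketch of the Plancherel/change-of-variables argument for (i) and the $TT^*$/oscillatory-integral scheme for (ii)--(iii) is precisely that methodology, with the expected replacement of $\xi^3$ by $\xi^5$ and the corresponding adjustment of exponents.
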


Following the ideas of Kenig, Ponce, and Vega in Lemmas 2.4, 2.6, and 2.2 of \cite{KPV1993a}, it can be established the following lemma.

\begin{lemma}\label{V4.2-L6.2} Let us define $F_{\rho}(x,t)$, and $H_{\rho}(x,t)$ through the Fourier transform by
$$\widehat F_{\rho}(\xi,\tau):=\frac{f(\xi,\tau)}{(1 + |\tau + \xi^5|)^\rho},\quad \widehat H_{\rho}(\xi,\tau):=\frac{h(\xi,\tau)}{(1 + |\tau|)^\rho}.$$
Then
\begin{enumerate}
\item[(i)] If $\rho>\frac38$, and $0\leq \theta \leq \frac38$, it follows that
\begin{align}
\|D_x^\theta F_\rho\|_{L_{xt}^4} \leq C \|f\|_{L_{\xi\tau}^2}.\label{V4.2-6.19}
\end{align}
\item[(ii)] If $\rho>\frac\theta2$, with $\theta\in[0,1]$, it follows that
\begin{align}
\| D_x^{2\theta} F_\rho \|_{L_x^{\frac2{1-\theta}}L_t^2} \leq C \|f\|_{L_{\xi\tau}^2}. \label{V4.2-6.20}
\end{align}
\item[(iii)] If $\rho>\frac12$, it follows that
\begin{align}
\| H_\rho \|_{L_x^2 L_t^\infty} \leq C \|h\|_{L^2_{\xi\tau}}.\label{V4.2-6.21}
\end{align}
\end{enumerate}
\end{lemma}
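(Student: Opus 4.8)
The plan is to follow the classical Strichartz-type program for the fifth-order Airy group developed by Kenig, Ponce, and Vega, adapting their arguments for the KdV group $e^{-t\partial_x^3}$ in \cite{KPV1993a} to the present symbol $\tau+\xi^5$. I first reduce each of the three statements to a fixed-time estimate for $\{W_{\mathbb R}(t)\}$ composed with a smoothing. For (i), write $\widehat F_\rho(\xi,\tau)=\widehat{W_{\mathbb R}({\scriptstyle-}\cdot_t)F_\rho}(\xi,\tau-\xi^5)$ interpreted appropriately, so that estimating $\|D_x^\theta F_\rho\|_{L^4_{xt}}$ amounts, after using the definition of $X^{0,\rho}$ and Minkowski, to the space-time bound $\|D_x^\theta W_{\mathbb R}(t)u_0\|_{L^4_{xt}}\le C\|u_0\|_{L^2_x}$ for $0\le\theta\le\tfrac38$ together with summing the dyadic pieces in $\langle\tau+\xi^5\rangle$ whose weights are controlled because $\rho>\tfrac38$. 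The endpoint $\theta=\tfrac38$ in the $L^4$ estimate is precisely the fifth-order analogue of the KdV $L^4$-smoothing of Kenig–Ponce–Vega; it follows by interpolating the trivial $L^2_{xt}$ identity with the $L^6$ estimate $\|D_x^{1/2}W_{\mathbb R}(t)u_0\|_{L^6_{xt}}\le C\|u_0\|_{L^2_x}$ of Lemma \ref{dic2021-L1}(iii), or alternatively by a $TT^*$ computation with the oscillatory kernel $\int e^{i(x\xi+t\xi^5)}|\xi|^{2\theta}\,d\xi$, whose decay of order $|x|^{-(1+2\theta)/4}$ for the relevant range is obtained from van der Corput's lemma applied to the phase $\xi\mapsto x\xi+t\xi^5$ (degenerate of order at most $4$).

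Next, for (ii) I would similarly reduce $\|D_x^{2\theta}F_\rho\|_{L^{2/(1-\theta)}_xL^2_t}$ to the sharp Kato-type local smoothing with derivative gain: the case $\theta=0$ is the maximal $L^2_t$ bound $\|D_x^2W_{\mathbb R}(t)u_0\|_{L^\infty_xL^2_t}\le C\|u_0\|_{L^2_x}$ of Lemma \ref{dic2021-L1}(i) (the two derivatives being the expected gain since the symbol has degree $5$), while $\theta=1$ is the global smoothing $\|\partial_x^{?}W_{\mathbb R}(t)u_0\|_{L^2_xL^2_t}$ which reduces, by Plancherel and the change of variables $\gamma=\xi^5$, to $L^2$-boundedness; one interpolates between these two endpoints in the mixed-norm scale $L^{2/(1-\theta)}_xL^2_t$ using Stein–Weiss/complex interpolation for analytic families of operators, and the condition $\rho>\theta/2$ is exactly what makes the dyadic sum in $\langle\tau+\xi^5\rangle$ converge after the interpolation. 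For (iii), $\|H_\rho\|_{L^2_xL^\infty_t}\le C\|h\|_{L^2_{\xi\tau}}$ is not a group estimate at all: since $\widehat H_\rho(\xi,\tau)=\widehat h(\xi,\tau)/\langle\tau\rangle^\rho$ with $\rho>\tfrac12$, one uses the one-dimensional Sobolev embedding $H^\rho(\mathbb R_t)\hookrightarrow L^\infty(\mathbb R_t)$ in the $t$ variable for each fixed $x$, then Plancherel in $x$, exactly as in Lemma \ref{V3-L1}; no oscillation is needed.

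The key steps, in order, are: (a) state and prove (via van der Corput or interpolation of the already-available $L^6$ and $L^{12}$ estimates of Lemma \ref{dic2021-L1}) the fixed-symbol space-time estimates $\|D_x^\theta W_{\mathbb R}(t)u_0\|_{L^4_{xt}}\le C\|u_0\|_{L^2}$ and the mixed-norm bounds $\|D_x^{2\theta}W_{\mathbb R}(t)u_0\|_{L^{2/(1-\theta)}_xL^2_t}\le C\|u_0\|_{L^2}$; (b) transfer each to the corresponding statement for $F_\rho$ by the standard device of writing $F_\rho$ as a superposition over $\mu=\tau+\xi^5$ of modulated free solutions, applying Minkowski's integral inequality, and using $\rho$ strictly larger than the stated threshold to absorb $\|(1+|\mu|)^{-\rho}\|$ against an $L^2$-in-$\mu$ factor via Cauchy–Schwarz; (c) handle (iii) by pure Sobolev embedding in $t$. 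I expect the main obstacle to be the endpoint bookkeeping in step (a)–(b): getting the $L^4$ estimate exactly at $\theta=\tfrac38$ and the mixed-norm estimate exactly at $\rho>\theta/2$ requires the sharp decay exponent from the oscillatory-integral analysis of the quintic phase (the stationary points can coalesce, so one must track the order of degeneracy carefully and then run an analytic interpolation rather than a crude one), and then verifying that the residual $\mu$-integral converges under the stated strict inequalities on $\rho$. Once those sharp fixed-time estimates are in hand, the passage to $F_\rho$ and $H_\rho$ is routine, and the lemma follows.
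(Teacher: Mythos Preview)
Your overall strategy is exactly what the paper invokes: it does not give a proof but simply refers to Lemmas 2.2, 2.4, and 2.6 of Kenig--Ponce--Vega \cite{KPV1993a}, and your outline (group Strichartz/smoothing estimates for $W_{\mathbb R}$, then the standard transfer to $F_\rho$ via the superposition $F_\rho=\int \langle\lambda\rangle^{-\rho}e^{it\lambda}W_{\mathbb R}(t)g_\lambda\,d\lambda$, together with interpolation against the trivial Plancherel bound $\|F_\rho\|_{L^2_{xt}}\le\|f\|_{L^2}$ to reach the sharp thresholds on $\rho$) is precisely that program. Part (iii) via $H^\rho(\mathbb R_t)\hookrightarrow L^\infty(\mathbb R_t)$ is correct as stated.

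One concrete slip: in part (ii) you have the endpoints reversed. Since the space is $L^{2/(1-\theta)}_xL^2_t$, the case $\theta=0$ is $L^2_xL^2_t$ (the trivial Plancherel bound on $F_\rho$, needing only $\rho\ge0$), while $\theta=1$ is $L^\infty_xL^2_t$ (the Kato smoothing $\|D_x^2W_{\mathbb R}(t)u_0\|_{L^\infty_xL^2_t}\le C\|u_0\|_{L^2}$ of Lemma \ref{dic2021-L1}(i), which after transfer needs $\rho>\tfrac12$). Interpolating these gives $\rho>\theta/2$ as claimed, but with the roles of $\theta=0$ and $\theta=1$ swapped relative to what you wrote. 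Note also that ``$\|W_{\mathbb R}(t)u_0\|_{L^2_{xt}}$'' is not finite as a free-solution estimate; the $L^2_{xt}$ endpoint only makes sense at the $F_\rho$ level, which is presumably what you meant by ``trivial $L^2_{xt}$ identity'' in (i) as well.
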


\begin{lemma}\label{V4.2-L6.3} Let $b\in[\frac{13}{32},\frac12)$, and $T>0$. Then there exists $\alpha_0=\alpha_0(b)\in(\frac12,1)$ such that for every $\alpha\in(\frac12,\alpha_0)$, there exists $\epsilon=\epsilon(b,\alpha)>0$ such that for $u,v\in X^{0,b,\alpha}$, and $\delta\in(0,T]$
\begin{align}
\|\psi_\delta^2(\cdot_t)(uv)_x\|_{Y^{0,-b,\alpha}} \leq C\delta^\epsilon \|u\|_{X^{0,b,\alpha}} \|v\|_{X^{0,b,\alpha}}.\label{V4.2-6.22}
\end{align}
\end{lemma}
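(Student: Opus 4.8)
The plan is to reduce estimate \eqref{V4.2-6.22} to the bilinear estimate \eqref{V2-3.58} from Lemma \ref{V2-L3.6} together with the smoothing-in-$\delta$ estimate \eqref{V4.2-6.4} from Lemma \ref{V4.2-L6.1}, inserting an extra gain of a small power of $\delta$ that comes from trading a small amount of the $\langle\tau+\xi^5\rangle$-weight. First I would recall that, by the definition of $\|\cdot\|_{Y^{0,-b,\alpha}}$, it suffices to control the three pieces
$$\|\langle\tau+\xi^5\rangle^{-b}[\psi_\delta^2(uv)_x]^\wedge\|_{L^2_{\xi\tau}},\quad \|\chi(\xi)\langle\tau\rangle^{\alpha-1}[\psi_\delta^2(uv)_x]^\wedge\|_{L^2_{\xi\tau}},\quad \left(\int_{\mathbb R}\langle\xi\rangle^{0}\Bigl(\int_{\mathbb R}\tfrac{|[\psi_\delta^2(uv)_x]^\wedge(\xi,\tau)|}{\langle\tau+\xi^5\rangle}d\tau\Bigr)^2d\xi\right)^{1/2}.$$
The key observation is that $b\ge\frac{13}{32}$ leaves room: choose $b'$ with $\frac25<b'<b$ (so $\frac{13}{32}>\frac25$ gives a genuine gap, and one also keeps $b'\ge\frac38$ so that Lemma \ref{V2-L3.6} applies with exponent $b'$), and write $\langle\tau+\xi^5\rangle^{-b}=\langle\tau+\xi^5\rangle^{-b'}\langle\tau+\xi^5\rangle^{-(b-b')}$. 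The idea is that the $X^{0,-b'}$-norm of $(uv)_x$ is handled by \eqref{V1-3.58}/\eqref{V2-3.58}, while the extra factor $\langle\tau+\xi^5\rangle^{-(b-b')}$, when combined with the $\psi_\delta^2$ cutoff, produces the factor $\delta^\epsilon$.

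The heart of the argument is therefore the following chain. Since $\psi_\delta^2=\psi_{\delta}\cdot\psi_\delta$ and $\psi_\delta^2$ is again a cutoff of the same type (its Fourier transform is $\delta\,\widehat{\psi^2}(\delta\,\cdot)$ with $\psi^2$ playing the role of $\psi$), I would apply Lemma \ref{V4.2-L6.1} to the function $f=(uv)_x$ in the space $X^{0,b',\alpha}$ — provided the hypotheses $\alpha\in(\frac12,\frac12+b')$ hold, which forces the choice $\alpha_0:=\frac12+b'$; any $\alpha<\alpha_0$ is admissible. This gives
$$\|\psi_\delta^2(uv)_x\|_{X^{0,b',\alpha}}\le C\,\delta^{\frac12-\alpha}\,\|(uv)_x\|_{X^{0,b',\alpha}}.$$
But wait — this is the wrong direction, since we want a \emph{negative} power $-b$ of the weight on the left, not a positive power $b'$. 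So instead the plan is: first gain $\delta^{\epsilon}$ by the elementary inequality $\|\psi_\delta^2 F\|_{X^{0,-b}}\le C\,\delta^{\,b^\ast-b}\,\|F\|_{X^{0,-b^\ast}}$ for $b<b^\ast<\frac12$, which is exactly inequality \eqref{V4.2-4.7} of Lemma \ref{V2-L3.4} (applied with $s=0$; and note $\psi_\delta^2$ has the same structure as the $\eta(\tfrac{\cdot}{2T})$ there). Choosing $b^\ast<\frac12$ close enough that Lemma \ref{V2-L3.6} still applies to the $X^{0,-b^\ast}$- and $Y^{0,-b^\ast,\alpha}$-norms (so $b^\ast\ge\frac25$; combined with $b\ge\frac{13}{32}$ this leaves a nonempty interval), one obtains
$$\|\psi_\delta^2(uv)_x\|_{Y^{0,-b,\alpha}}\le C\,\delta^{\,b^\ast-b}\,\|\psi_\delta^2(uv)_x\|_{Y^{0,-b^\ast,\alpha}}\le C\,\delta^{\,b^\ast-b}\,\|\psi_\delta^2 u\|_{X^{0,b,\alpha}}\|\psi_\delta^2 v\|_{X^{0,b,\alpha}}$$
where the second inequality is \eqref{V2-3.58}. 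Finally, since $\psi_\delta^2\le 1$ pointwise and one can factor $\psi_\delta^2=\psi_{\delta}\psi_{\delta}$, absorbing one copy into each of $u,v$ via Lemma \ref{V4.2-L6.1} in $X^{0,b,\alpha}$ (valid because $\alpha<\frac12+b$, which holds as $b\ge\frac{13}{32}>\alpha-\frac12$ for $\alpha$ near $\frac12$), one can even extract additional nonnegative powers of $\delta$; but for the stated estimate it suffices to bound $\|\psi_\delta^2 u\|_{X^{0,b,\alpha}}\le C\|u\|_{X^{0,b,\alpha}}$ using that $\psi_\delta^2$ multiplication is bounded on $X^{0,b,\alpha}$ uniformly in $\delta\in(0,T]$ (Lemma \ref{V2-L3.4} first part with $b_1=b_2$, or directly Lemma \ref{V4.2-L6.1} with exponent $0$ in $\delta$). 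Setting $\epsilon:=b^\ast-b>0$ then yields \eqref{V4.2-6.22}.

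The main obstacle I anticipate is the bookkeeping of the admissible ranges of the auxiliary exponents $b^\ast$ and $\alpha_0$: one needs simultaneously $\frac25\le b^\ast$ (or $\frac38$, the precise lower threshold used in the proof of Lemma \ref{V2-L3.6}, specifically the condition $4b-1\in(\frac12,1)$ which is $\frac38<b<\frac12$), $b<b^\ast<\frac12$, and $\frac12<\alpha<\alpha_0=\alpha_0(b)<1$ with $\alpha<\frac12+b^\ast$ so that Lemma \ref{V4.2-L6.1} (and the corresponding part of Lemma \ref{V2-L3.4}) can be invoked with the shifted smoothing exponent; the hypothesis $b\in[\frac{13}{32},\frac12)$ is exactly what guarantees a nonempty such window (note $\frac{13}{32}>\frac38$), and one must verify this carefully. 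A secondary technical point is confirming that $\psi_\delta^2$ genuinely falls under the scope of the cutoff lemmas: since $\psi^2\in C_0^\infty(\mathbb R)$ with $\psi^2\equiv 1$ on $[-1,1]$ and $\supp(\psi^2)\subset[-2,2]$, and $\psi_\delta^2(t)=(\psi^2)(t/\delta)$, it is literally of the form used in Lemmas \ref{V4.2-L6.1} and \ref{V2-L3.4}, so no new work is needed there. Everything else is a direct citation of the already-established estimates, so no long computation is required; the only genuinely new ingredient is the elementary observation that one may split the weight $\langle\tau+\xi^5\rangle^{-b}$ to create the gap between $b$ and $b^\ast$.
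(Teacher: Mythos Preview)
Your central step is stated with the indices reversed, and as written it is false. You claim
\[
\|\psi_\delta^2 F\|_{X^{0,-b}}\le C\,\delta^{\,b^\ast-b}\,\|F\|_{X^{0,-b^\ast}}\qquad (b<b^\ast<\tfrac12),
\]
citing \eqref{V4.2-4.7}. But in \eqref{V4.2-4.7} the weaker norm sits on the \emph{left} and the stronger on the \emph{right}: with the paper's convention $0<b<b^\ast<\tfrac12$ one has $\|\eta(\tfrac{\cdot}{2T})F\|_{X^{s,-b^\ast}}\le CT^{\,b^\ast-b}\|F\|_{X^{s,-b}}$, i.e.\ the cutoff lets you \emph{lower} the exponent with a gain. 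Your displayed inequality asserts instead that multiplication by a time cutoff raises the exponent (improves modulation regularity) with a positive power of $\delta$; this fails already for $F$ with $\widehat F$ concentrated near $\{\tau+\xi^5=0\}$. The same reversal infects the chain $\|\psi_\delta^2(uv)_x\|_{Y^{0,-b,\alpha}}\le C\delta^{\,b^\ast-b}\|\psi_\delta^2(uv)_x\|_{Y^{0,-b^\ast,\alpha}}$: since $-b>-b^\ast$, the left side is the larger norm and no such bound exists.

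The strategy is salvageable if you swap the auxiliary exponent to lie \emph{below} $b$. Pick $b'\in[\tfrac25,b)$ (nonempty because $b\ge\tfrac{13}{32}>\tfrac25$) and $\alpha_0:=1-b$; then for $\alpha\in(\tfrac12,\alpha_0)$ one applies \eqref{V4.2-4.7} with the paper's $(b,b^\ast)$ equal to $(b',b)$ to get
\[
\|\psi_\delta^2(uv)_x\|_{Y^{0,-b,\alpha}}\le C\,\delta^{\,b-b'}\,\|(uv)_x\|_{X^{0,-b'}},
\]
and then \eqref{V1-3.58} with $s=a=0$ and exponent $b'$ gives $\|(uv)_x\|_{X^{0,-b'}}\le C\|u\|_{X^{0,b'}}\|v\|_{X^{0,b'}}\le C\|u\|_{X^{0,b,\alpha}}\|v\|_{X^{0,b,\alpha}}$, which is \eqref{V4.2-6.22} with $\epsilon=b-b'$. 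For $\delta\in(\tfrac12,T]$ one allows the constant to depend on $T$, which the statement permits. This corrected route is genuinely different from, and shorter than, the paper's argument: the paper does not invoke Lemma~\ref{V2-L3.6} but instead, following Colliander--Kenig and Faminskii, reduces via Lemma~\ref{V4.2-L6.1} to $u,v$ supported in $\{|t|\le 2\delta\}$ and then carries out a direct dyadic frequency decomposition (regions $A$, $B_{1a}$, $B_{1b}$, $B_{1c}$, $B_2$) using the dispersive bounds of Lemma~\ref{V4.2-L6.2}; the $\delta$-gain there comes from the H\"older-in-time estimate $\|u\|_{L^2_{xt}}\le C\delta^b\|U\|_{L^2}$ for time-localized $u$. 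That approach makes the role of the threshold $b\ge\tfrac{13}{32}$ (needed so that $2-4b\le\tfrac38$ in the $B_{1a}$ estimate) transparent, whereas in the corrected modular argument that threshold is not actually used --- any $b>\tfrac25$ would do.
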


\begin{proof} Our proof follows the ideas of Colliander and Kenig in \cite{CK2002}, and Faminskii in \cite{F2005}.\\

Taking into account \eqref{V4.2-6.4} in Lemma \ref{V4.2-L6.1}, inequality \eqref{V4.2-6.22} follows if we manage to prove, for every $u,v\in X^{0,b,\alpha}$ with support contained in $\{(x,t):|t|\leq 2\delta\}$, that
\begin{align}
\|(uv)_x\|_{Y^{0,-b,\alpha}} \leq C \delta^\theta \|u\|_{X^{0,b,\alpha}} \|v\|_{X^{0,b,\alpha}}, \label{V4.2-6.23}
\end{align}
where $\theta$ is such that $1-2\alpha+\theta>0$.\\

Taking into account that $b<\frac12$, by Cauchy Schwarz inequality, it is possible to see that
$$\|\partial_x(uv)\|_{Y^{0,-b,\alpha}} \leq C \left[ \left(\int_{-\infty}^{+\infty}\int_{-\infty}^{+\infty} \frac{|[\partial_x(uv)]^\wedge(\xi,\tau)|^2}{\langle \tau+\xi^5 \rangle^{2b}} d\xi d\tau \right)^{\frac12} + \left( \int_{-\infty}^{+\infty} \int_{-1}^1 \frac{|[\partial_x(uv)]^\wedge(\xi,\tau)|^2}{\langle \tau \rangle^{2(1-\alpha)}} d\xi d\tau \right)^{\frac12} \right].$$

This way, by a duality argument, to obtain \eqref{V4.2-6.23} it is enough to prove that for every non negative function $w\in L^2(\mathbb R^2)$,
\begin{align}
\int_{\mathbb R^2_{\xi\tau}} \int_{\mathbb R^2_{\xi_1\tau_1}} |\xi| w(\xi,\tau) \gamma(\xi,\tau) \frac{U(\xi_1,\tau_1)}{\beta(\xi_1,\tau_1)} \frac{V(\xi-\xi_1,\tau-\tau_1)}{\beta(\xi-\xi_1,\tau-\tau_1)} d\xi_1 d\tau_1 d\xi d\tau\leq C \delta^\theta \|w\|_{L^2(\mathbb R^2)} \|U\|_{L^2(\mathbb R^2)} \|V\|_{L^2(\mathbb R^2)}, \label{V4.2-6.24}
\end{align}
where
\begin{align*}
\begin{array}{ll}
U(\xi,\tau):=\beta(\xi,\tau) |\widehat u(\xi,\tau)|,& V(\xi,\tau):=\beta(\xi,\tau)|\widehat v(\xi,\tau)|,\\
\beta(\xi,\tau):=\langle \tau+\xi^5\rangle^b + \langle \tau \rangle^\alpha \chi(\xi),& \gamma(\xi,\tau):=\langle \tau + \xi^5 \rangle^{-b} + \langle \tau \rangle^{\alpha-1} \chi(\xi),
\end{array}
\end{align*}
being $\chi$ the characteristic function of the interval $(-1,1)$.\\

By considerations of symmetry we can assume from now on that $|\xi-\xi_1|\geq |\xi_1|$, and consider the following integration regions:
\begin{align*}
A&:=\{(\xi,\tau,\xi_1,\tau_1):|\xi-\xi_1|\geq |\xi_1|,|\xi|<1\},\\
B&:=\{(\xi,\tau,\xi_1,\tau_1):|\xi-\xi_1|\geq |\xi_1|,|\xi|\geq 1\}.
\end{align*}

Besides, we will divide region $B$ into
\begin{align*}
B_1&:=\{(\xi,\tau,\xi_1,\tau_1):|\xi-\xi_1|\geq |\xi_1|,|\xi|\geq 1, |\xi_1|\geq 1\},\\
B_2&:=\{(\xi,\tau,\xi_1,\tau_1):|\xi-\xi_1|\geq |\xi_1|,|\xi|\geq 1, |\xi_1|< 1\}.
\end{align*}

We will further subdivide region $B_1$ into
\begin{align*}
B_{1a}&:=\{(\xi,\tau,\xi_1,\tau_1):|\xi-\xi_1|\geq |\xi_1|,|\xi|\geq 1, |\xi_1|\geq 1,|\tau+\xi^5|=M\},\\
B_{1b}&:=\{(\xi,\tau,\xi_1,\tau_1):|\xi-\xi_1|\geq |\xi_1|,|\xi|\geq 1, |\xi_1|\geq 1,|\tau_1+\xi_1^5|=M\},\\
B_{1c}&:=\{(\xi,\tau,\xi_1,\tau_1):|\xi-\xi_1|\geq |\xi_1|,|\xi|\geq 1, |\xi_1|\geq 1,|(\tau-\tau_1)+(\xi-\xi_1)^5|=M\},
\end{align*}
where $M:=\max\{|\tau+\xi^5|,|\tau_1+\xi_1^5|,|(\tau-\tau_1)+(\xi-\xi_1^5)|\}$.\\

Let us begin by considering region $A$: we define $D$, $G_1$, and $G_2$ through the Fourier transform by
$$\widehat D(\xi,\tau):=w(\xi,\tau),\quad \widehat G_1(\xi,\tau):=\frac{U(\xi,\tau)}{\langle \tau + \xi^5\rangle^b},\quad \text{and}\quad \widehat G_2(\xi,\tau):=\frac{V(\xi,\tau)}{\langle \tau + \xi^5\rangle^b}.$$

From Lemma \ref{V4.2-L6.2} (i), with $\theta=0$, and $\rho:=b>\frac38$, we have that
$$\|G_2\|_{L^4_{xt}}\leq C \|V\|_{L^2_{\xi\tau}}.$$

Therefore
\begin{align}
\notag \int_{A} |\xi| w(\xi,\tau) \gamma(\xi,\tau) \frac{U(\xi_1,\tau_1)}{\beta(\xi_1,\tau_1)} \frac{V(\xi-\xi_1,\tau-\tau_1)}{\beta(\xi-\xi_1,\tau-\tau_1)} d\xi_1 d\tau_1 d\xi d\tau&\leq C \|w\|_{L^2_{\xi\tau}} \|G_1\|_{L^4} \|G_2\|_{L^4}\\
&\leq C \|w\|_{L^2} \|G_1\|_{L^4}\|V\|_{L^2}.\label{V4.2-6.27}
\end{align}

Now we consider region $B$. It can be seen from the definition of $M$ that
\begin{align}
\frac52 |\xi \xi_1 (\xi-\xi_1)| (\xi^2+\xi_1^2+(\xi-\xi_1)^2)\leq 3M.\label{V4.2-6.28}
\end{align}

Since $|\xi|\geq 1$, from \eqref{V4.2-6.28},
\begin{align}
|\xi| |\xi_1| |\xi-\xi_1|\leq \frac65 M. \label{V4.2-6.29}
\end{align}

On the other hand, since $|\xi-\xi_1|\geq \frac{|\xi|}2$,
\begin{align}
|\xi| |\xi_1| |\xi-\xi_1|\geq\frac12 |\xi|^2 |\xi_1|, \label{V4.2-6.30}
\end{align}

which implies, for $|\xi_1|\geq1$, that
\begin{align}
|\xi| |\xi_1| |\xi-\xi_1|\geq\frac12 |\xi|^2. \label{V4.2-6.31}
\end{align}

Let us observe that for $(\xi,\tau,\xi_1,\tau_1)\in B_{1a}$, taking into account \eqref{V4.2-6.29}, and \eqref{V4.2-6.31}, we have that
\begin{align*}
|\xi| \gamma(\xi,\tau) & =\frac{|\xi|}{M^b} \leq \frac{C|\xi|}{|\xi|^b |\xi_1|^b |\xi - \xi_1|^b} = \frac {C|\xi|^{1-b}}{|\xi_1|^b |\xi-\xi_1|^b}\leq \frac{C |\xi_1|^{1-b} |\xi-\xi_1|^{1-b}}{|\xi_1|^b |\xi-\xi_1|^b}\\
&=C|\xi_1|^{1-2b} |\xi-\xi_1|^{1-2b} \leq C |\xi-\xi_1|^{2-4b}.
\end{align*}

This way
\begin{align}
\notag\int_{B_{1a}} |\xi| w(\xi,\tau) \gamma(\xi,\tau) \frac{U(\xi_1,\tau_1)}{\beta(\xi_1,\tau_1)} \frac{V(\xi-\xi_1,\tau-\tau-1)}{\beta(\xi-\xi_1,\tau-\tau_1)} d\xi_1 d\tau_1 d\xi d\tau & \leq C \|D\|_{L_{xt}^2} \|G_1 D_x^{2-4b} G_2\|_{L^2_{xt}}\\
&\leq C \|w\|_{L^2_{\xi\tau}} \|G_1\|_{L^4_{xt}} \|D_x^{2-4b} G_2\|_{L_{xt}^4}. \label{V4.2-6.32}
\end{align}

Taking into account that $2-4b\leq \frac38$ (i.e. $\frac{13}{32}\leq b$), from Lemma \ref{V4.2-L6.2} (i), with $\theta:=2-4b$, and $\rho:=b>\frac38$, it follows that
\begin{align}
\|D_x^{2-4b}G_2\|_{L^2_{xt}}\leq C \|V\|_{L^2_{\xi\tau}}.\label{V4.2-6.33}
\end{align}

From \eqref{V4.2-6.32}, and \eqref{V4.2-6.33}, we conclude that
\begin{align}
\int_{B_{1a}} |\xi| w(\xi,\tau) \gamma(\xi,\tau) \frac{U(\xi_1,\tau_1)}{\beta(\xi_1,\tau_1)} \frac{V(\xi-\xi_1,\tau-\tau-1)}{\beta(\xi-\xi_1,\tau-\tau_1)} d\xi_1 d\tau_1 d\xi d\tau & \leq C \|w\|_{L^2} \|G_1\|_{L^4} \|V\|_{L^2}. \label{V4.2-6.34}
\end{align}

In region $B_{1b}$ we have that
$$\frac{|\xi|}{|\tau_1+\xi_1^5|^b}\leq C\frac{|\xi|}{|\xi|^b |\xi_1|^b |\xi-\xi_1|^b}\leq C |\xi|^{1-2b}.$$

Hence
\begin{align}
\int_{B_{1b}} |\xi| w(\xi,\tau) \gamma(\xi,\tau) \frac{U(\xi_1,\tau_1)}{\beta(\xi_1,\tau_1)} \frac{V(\xi-\xi_1,\tau-\tau-1)}{\beta(\xi-\xi_1,\tau-\tau_1)} d\xi_1 d\tau_1 d\xi d\tau & \leq C \|U\|_{L^2_{\xi\tau}} \left\|D_x^{1-2b} \mathcal F^{-1} \left(\frac{w(\xi,\tau)}{\langle \tau + \xi^5\rangle^b} \right) \right\|_{L^4_{xt}} \|H_2\|_{L^4_{xt}}, \label{V4.2-6.35}
\end{align}

where
$$H_2(x,t):=\mathcal F^{-1} \left(\frac{V({\scriptstyle-}\cdot_\xi, {\scriptstyle-}\cdot_\tau)}{\langle \cdot_\tau + {\cdot_\xi}^5\rangle^b} \right).$$

By using \eqref{V4.2-6.19} from Lemma \ref{V4.2-L6.2} (i), with $\theta=1-2b$, and $b>\frac38$, we have that
\begin{align}
\|D_x^{1-2b} \mathcal F^{-1} \left( \frac{w(\xi,\tau)}{\langle \tau+\xi^5\rangle^b} \right)\|_{L^4_{xt}}\leq C \|w\|_{L^2_{\xi\tau}}.\label{V4.2-6.36}
\end{align}

Since $\|H_2\|_{L^4_{xt}}= \|G_2\|_{L^4_{xt}}$, from \eqref{V4.2-6.35}, and \eqref{V4.2-6.36}, we conclude that
\begin{align}
\int_{B_{1b}} |\xi| w(\xi,\tau) \gamma(\xi,\tau) \frac{U(\xi_1,\tau_1)}{\beta(\xi_1,\tau_1)} \frac{V(\xi-\xi_1,\tau-\tau-1)}{\beta(\xi-\xi_1,\tau-\tau_1)} d\xi_1 d\tau_1 d\xi d\tau & \leq C \|w\|_{L^2} \|U\|_{L^2_{\xi\tau}} \|G_2\|_{L^4}.\label{V4.2-6.37}
\end{align}

Now, from \eqref{V4.2-6.29}, it is clear that in region $B_{1c}$, the following inequality is valid:
$$|\xi| |\xi_1| |\xi-\xi_1| \leq \frac65 |(\tau-\tau_1)+ (\xi-\xi_1)^5|,$$
which implies that
$$\frac{|\xi|}{|(\tau-\tau_1)+(\xi-\xi_1)^5|^b}\leq C\frac{|\xi|}{|\xi|^b |\xi_1|^b |\xi-\xi_1|^b\|}\leq C |\xi|^{1-2b}.$$
Therefore
\begin{align}
\int_{B_{1c}} |\xi| w(\xi,\tau) \gamma(\xi,\tau) \frac{U(\xi_1,\tau_1)}{\beta(\xi_1,\tau_1)} \frac{V(\xi-\xi_1,\tau-\tau-1)}{\beta(\xi-\xi_1,\tau-\tau_1)} d\xi_1 d\tau_1 d\xi d\tau & \leq C \|w\|_{L^2} \|G_1\|_{L^4} \|V\|_{L^2}.\label{V4.2-6.38}
\end{align}

In region $B_2$, we take into account that
$$|\xi-\xi_1|\geq \frac{|\xi|}2\geq \frac12.$$

Hence, by using the Cauchy-Schwarz inequality, it is possible to see that
\begin{align}
\notag &\int_{B_{2}} |\xi| w(\xi,\tau) \gamma(\xi,\tau) \frac{U(\xi_1,\tau_1)}{\beta(\xi_1,\tau_1)} \frac{V(\xi-\xi_1,\tau-\tau_1)}{\beta(\xi-\xi_1,\tau-\tau_1)} d\xi_1 d\tau_1 d\xi d\tau \\
\notag & \leq C \int_{B_2} \frac{|\xi| w(\xi,\tau)}{\langle \tau + \xi^5 \rangle^b} \frac{U(\xi_1,\tau_1)}{\langle \tau_1 \rangle^\alpha} \frac{|\xi-\xi_1| V(\xi-\xi_1,\tau-\tau_1)}{\langle (\tau-\tau_1)+(\xi-\xi_1)^5\rangle^b} d\xi_1 d\tau_1 d\xi d\tau \\
 &\leq  C \|H_\alpha\|_{L^2_x L^\infty_t} \|D_x G\|_{L^4_x L^2_t} \|D_x G_2\|_{L^4_x L^2_t},\label{V4.2-6.40}
\end{align}

where
$$\widehat G(\xi,\tau):=\frac{w(\xi,\tau)}{\langle \tau+\xi^5\rangle^b},\quad \text{and}\quad \widehat H_\alpha (\xi,\tau):=\frac{U(\xi,\tau)}{\langle \tau \rangle^\alpha}.$$

From \eqref{V4.2-6.21} in Lemma \ref{V4.2-L6.2} (iii), with $\rho:=\alpha>\frac12$, it follows that
\begin{align}
\|H_\alpha\|_{L^2_x L^\infty_t} \leq C \|U\|_{L^2}, \label{V4.2-6.41}
\end{align}

and from \eqref{V4.2-6.20} in Lemma \ref{V4.2-L6.2} (ii), with $\theta:=\frac12$, $\rho:=b>\frac\theta2=\frac14$, we have that
\begin{align}
\|D_x G\|_{L^4_x L^2_t}\leq C \|w\|_{L^2}. \label{V4.2-6.42}
\end{align}

This way, from \eqref{V4.2-6.40}, \eqref{V4.2-6.41}, and \eqref{V4.2-6.42}, we obtain
\begin{align}
\int_{B_{2}} |\xi| w(\xi,\tau) \gamma(\xi,\tau) \frac{U(\xi_1,\tau_1)}{\beta(\xi_1,\tau_1)} \frac{V(\xi-\xi_1,\tau-\tau-1)}{\beta(\xi-\xi_1,\tau-\tau_1)} d\xi_1 d\tau_1 d\xi d\tau & \leq C \|w\|_{L^2} \|U\|_{L^2} \|D_x G_2\|_{L^4_x L^2_t}.\label{V4.2-6.43}
\end{align}

From \eqref{V4.2-6.27}, \eqref{V4.2-6.34}, \eqref{V4.2-6.37}, \eqref{V4.2-6.38}, and \eqref{V4.2-6.43}, we conclude that
\begin{align}
\notag \int_{\{(\xi,\tau,\xi_1,\tau_1):|\xi-\xi_1|\geq |\xi_1|\}} |\xi| w(\xi,\tau) \gamma(\xi,\tau) \frac{U(\xi_1,\tau_1)}{\beta(\xi_1,\tau_1)} \frac{V(\xi-\xi_1,\tau-\tau-1)}{\beta(\xi-\xi_1,\tau-\tau_1)} d\xi_1 d\tau_1 d\xi d\tau\\
 \leq C \|w\|_{L^2} (\|G_1\|_{L^4} \|V\|_{L^2} + \|U\|_{L_2} \|G_2\|_{L^4}+\|U\|_{L^2} \|D_x G_2\|_{L^4_x L^2_t}),\label{V4.2-6.44}
\end{align}

and, by symmetry,
\begin{align}
\notag \int_{\{(\xi,\tau,\xi_1,\tau_1): |\xi_1|>|\xi-\xi_1|\}} |\xi| w(\xi,\tau) \gamma(\xi,\tau) \frac{U(\xi_1,\tau_1)}{\beta(\xi_1,\tau_1)} \frac{V(\xi-\xi_1,\tau-\tau-1)}{\beta(\xi-\xi_1,\tau-\tau_1)} d\xi_1 d\tau_1 d\xi d\tau\\
 \leq C \|w\|_{L^2} (\|U\|_{L^2}\|G_2\|_{L^4} + \|G_1\|_{L^4} \|V\|_{L_2}\| + \|D_x G_1\|_{L^4_x L^2_t} \|V\|_{L^2}).\label{V4.2-6.45}
\end{align}

From \eqref{V4.2-6.44}, and \eqref{V4.2-6.45}, for $b\in[\frac{13}{32},\frac12)$, and $\alpha>\frac12$, we have the estimative
\begin{align}
\notag \int_{\mathbb R^4} |\xi| w(\xi,\tau) \gamma(\xi,\tau) \frac{U(\xi_1,\tau_1)}{\beta(\xi_1,\tau_1)} \frac{V(\xi-\xi_1,\tau-\tau-1)}{\beta(\xi-\xi_1,\tau-\tau_1)} d\xi_1 d\tau_1 d\xi d\tau\\
 \leq C \|w\|_{L^2} [\|U\|_{L^2}(\|G_2\|_{L^4} + \|D_x G_2\|_{L_x^4 L^2_t}) + \|V\|_{L_2}\| ( \|G_1\|_{L^4} + \|D_x G_1\|_{L^4_x L^2_t}).\label{V4.2-6.46}
\end{align}

If we manage to prove that
\begin{align}
\|G_1\|_{L^4} + \|D_x G_1\|_{L_x^4 L_t^2} \leq C \delta^\theta \|U\|_{L^2}, \label{V4.2-6.47}
\end{align}

then we could conclude that \eqref{V4.2-6.24} is valid.\\

From Lema \ref{V4.2-L6.2} (ii) with $\theta:=\frac12$, $\rho:=b_1>\frac\theta2=\frac14$, it follows that
$$\|D_x G_1\|_{L_x^4 L_t^2} \leq C \| U(\xi,\tau) \langle \tau + \xi^5 \rangle^{b_1-b}\|_{L^2_{\xi\tau}}.$$

Due to \eqref{V4.2-6.19} in Lemma \eqref{V4.2-L6.2} (i), with $\theta:=0$, $\rho:=b_1>\frac38$, it can be concluded that
$$\|G_1\|_{L^4}\leq C \|U\langle \tau + \xi^5\rangle^{b_1-b}\|_{L^2}.$$

Hence, by Hölder's inequality with exponents $p$, and $\frac p{p-1}$ $(p>1)$,
\begin{align*}
\|G_1\|_{L^4} + \|D_x G_1\|_{L_x^4 L_t^2} \leq & C \left( \int_{\mathbb R^2} |U(\xi,\tau)|^2 \langle \tau + \xi^5\rangle^{2(b_1-b)} d\xi d\tau \right)^{\frac12}\\
\leq & C \|U\|^{\frac 1p}_{L^2_{\xi\tau}} \|\beta(\xi,\tau) |\widehat u| \langle \tau + \xi^5\rangle^{\frac{(b_1-b)p}{p-1}} \|^{\frac{p-1}{p}}_{L^2_{\xi\tau}}\\
\leq & C \|U\|_{L^2_{\xi\tau}}^{\frac1p} \left( \| |\widehat u| \langle \tau + \xi^5 \rangle^{\alpha - \frac{p(b-b_1)}{p-1}}\|_{L^2_{\xi\tau}} \right)^{\frac{p-1}{p}}.
\end{align*}

We choose $p>1$ such that $\alpha-\frac{p(b-b_1)}{p-1}<0$. Therefore
\begin{align}
\|G_1 \|_{L^4} + \|D_x G_1\|_{L^4_x} \leq C \|U\|_{L^2_{\xi\tau}}^{\frac1p} \|\widehat u\|_{L^2_{\xi\tau}}^{\frac{p-1}p}=C \|U\|_{L^2_{\xi\tau}}^{\frac1p} \|u\|_{L^2_{xt}}^{\frac{p-1}p}.\label{V4.2-6.48}
\end{align}

Since $u\equiv 0$ for $|t|>2\delta$, by Hölder's inequality with exponents $\frac1{1-2b}$, and $\frac1{2b}$, $b<\frac12$, and taking into account that $H^b(\mathbb R_t) \hookrightarrow L^{\frac2{1-2b}}(\mathbb R_t)$, we have that
\begin{align}
 \notag\|u\|_{L^2_{xt}} &=\left( \int_{\mathbb R_x} \int_{-2\delta}^{2\delta} |[W_{\mathbb R}(-t) u(\cdot_x,t)](x)|^2 dt dx\right)^{\frac12} \leq C \delta^b \|[W_{\mathbb R} ({\scriptstyle-}\cdot_t) u(\cdot_t)](\cdot_x)\|_{L^2_x L^{\frac2{1-2b}}_t}\\
\notag&\leq C \delta^b \left( \int_{\mathbb R_x} \|[W_{\mathbb R} ({\scriptstyle-}\cdot_t)u(\cdot_t)](x)\|^2_{H^b(\mathbb R_t)} dx \right)^{\frac12}\leq C \delta^b \left(\int_{\mathbb R_\xi} \int_{\mathbb R_\tau} \langle \tau \rangle^{2b} |\widehat u(\xi,-\tau-\xi^5)|^2 d\tau d\xi \right)^{\frac12}\\
&\leq C \delta^b \|U\|_{L^2_{\xi\tau}}.\label{V4.2-6.50}
\end{align}

From \eqref{V4.2-6.48}, and \eqref{V4.2-6.50} we obtain that
\begin{align}
\|G_1\|_{L^4} + \|D_x G_1\|_{L^4_x L^2_t} \leq C \|U\|^{\frac1p}_{L^2_{\xi\tau}} \delta^{\frac{b(p-1)}p} \|U\|^{\frac{p-1}p}_{L^2_{\xi\tau}} \leq C \delta^{\frac{b(p-1)}p} \|U\|_{L^2_{\xi\tau}},\label{V4.2-6.51}
\end{align}
which proves inequality \eqref{V4.2-6.24} with $\theta:=\frac{b(p-1)}p$.\\

Since
$$\lim_{\alpha\to{\frac12}^+} (1-2\alpha+\theta)=\theta>0,$$

there exists $\alpha_0>\frac12$ such that for $\alpha\in(\frac12,\alpha_0)$, $1-2\alpha+\theta>\frac\theta2$. This way, choosing $\epsilon:=\frac\theta2>0$, we can affirm that for $\delta\in(0,T]$, and $\alpha\in(\frac12,\alpha_0)$,
\begin{align*}
\|\psi_\delta^2 (t)(uv)_x\|_{Y^{0,-b,\alpha}}\leq C \delta^{1-2\alpha+\theta} \|u\|_{X^{0,b,\alpha}} \|v\|_{X^{0,b,\alpha}}\leq C\delta^\epsilon \|u\|_{X^{0,b,\alpha}} \|v\|_{X^{0,b,\alpha}}.
\end{align*}
Lemma \ref{V4.2-L6.3} is proved.
\end{proof}

In our aim to prove the uniqueness of the generalized solution for IBVP \eqref{maineq}, it will suffice to demonstrate that the linear and homogeneous IBVP
\begin{align}
\left. \begin{array}{rlr}
u_t+\partial_x^5 u+ \partial_x [h(x,t) u]&\hspace{-2mm}=0,&\quad x>0,\; 0<t<T,\\
u(x,0)&\hspace{-2mm}=0,&\quad x\geq 0\\
u(0,t)=u_x(0,t)=u_{xx}(0,t)&\hspace{-2mm}=0,&\quad t\in[0,T],
\end{array} \right\}\label{V4.2-6.51}
\end{align}
where $h\in X^{0,b,\alpha}$, only has the generalized solution identically zero.\\

In the study of the uniqueness of the generalized solution for IBVP \eqref{V4.2-6.51}, the following IBVP in $\Pi^+_T:=(0,+\infty)\times(0,T)$ plays a special role:

\begin{align}
\left. \begin{array}{rlr}
u_t+\partial_x^5 u&\hspace{-2mm}=f(x,t),&\quad (x,t)\in \Pi^+_T,\\
u(x,0)&\hspace{-2mm}=g(x),&\quad x\geq 0,\\
u(0,t)=h_1(t),\, \partial_x u(0,t)&\hspace{-2mm}=h_2(t),\, \partial_x^2 u(0,t)=h_3(t),&\quad 0\leq t\leq T,
\end{array} \right\}\label{V4.2-6.52}
\end{align}

where $f\in (C^{\infty}([0,T]; S_+))'$, $g\in (S_+)'$, and $h_{j+1}\in L^1(0,T)$, for $j=0,1,2$. Here $S_+:=\{f|_{[0,+\infty)}:f\in S(\mathbb R)\}$ is the set of restrictions to $[0,+\infty)$ of the Schwartz functions in $\mathbb R$.\\

Let us recall the concept of generalized solution for IBVP \eqref{V4.2-6.52}:

A distribution $u\in (C^{\infty}([0,T]; S_+))'$ is called a generalized solution of IBVP \eqref{V4.2-6.52} if, for any $\varphi\in C^{\infty}([0,T];S_+)$ such that
$$\varphi\big|_{t=T}=0,\quad \varphi\big|_{x=0}=0,\quad \varphi_x\big|_{x=0}=0,$$
it satisfies
\begin{align*}
\langle u,\varphi_t + \partial_x^5 \varphi \rangle + \langle f,\varphi \rangle + \langle g,\varphi(x,0) \rangle + \int_0^T h_1(t) \partial_x^4 \varphi(0,t) dt - \int_0^T h_2(t) \partial_x^3 \varphi(0,t) dt + \int_0^T h_3(t) \partial_x^2 \varphi(0,t) dt =0.
\end{align*}

According to Holgrem's theory, the uniqueness of generalized solutions for IBVP \eqref{V4.2-6.52} follows from the solvability of the dual problem, which consists of proving that given $f\in C_0^{\infty}(\Pi^+_T)$, there exists $u\in C^{\infty}([0,T];S_+)$, a classical solution to the following IBVP:

\begin{align}
\left. \begin{array}{rlr}
u_t+\partial_x^5 u&\hspace{-2mm}=f,&\quad (x,t)\in \Pi^+_T,\\
u(x,T)&\hspace{-2mm}=0,&\quad x\geq 0,\\
u(0,t)=0,\quad u_x(0,t)&\hspace{-2mm}=0, &\quad t\in[0, T].
\end{array} \right\}\label{V4.2-6.53}
\end{align}

Equivalently, the dual problem of \eqref{V4.2-6.52}, after a change of variables ($x'=-x$, $t'=T-t$), consists of proving that for a given $f\in C_0^{\infty}(\Pi^-_T)$, where $\Pi_T^-:=(-\infty,0)\times(0,T)$, there exists a classical solution $u\in C^{\infty}([0,T];S_-)$, to the following IBVP:

\begin{align}
\left. \begin{array}{rlr}
u_t+\partial_x^5 u&\hspace{-2mm}=f,&\quad (x,t)\in \Pi^-_T,\\
u(x,0)&\hspace{-2mm}=0,&\quad x\leq 0,\\
u(0,t)=0,\quad u_x(0,t)&\hspace{-2mm}=0, &\quad t\in[0, T].
\end{array} \right\}\label{V4.2-6.54}
\end{align}

Here, $S_-:=\left\{g|_{(-\infty,0]}: g\in S(\mathbb R) \right\}$ is the set of restrictions to $(-\infty,0]$ of the Schwartz functions in $\mathbb R$.\\

Consequently, to obtain the uniqueness result for the generalized solution of IBVP \eqref{V4.2-6.52}, following the procedure outlined by Faminskii in \cite{F1999a}, it is sufficient to prove the following lemma.

\begin{lemma}\label{V4.2-L6.4} Let $f\in C_0^\infty(\Pi_T^-)$. Then there exists $u\in C^\infty([0,T];S_-)$, a classical solution of IBVP \eqref{V4.2-6.54}.
\end{lemma}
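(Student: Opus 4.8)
The plan is to peel off the forcing term with a whole-line Duhamel formula and then correct the resulting boundary values by the Laplace transform construction of Section \ref{ILBVP}, adapted to the half-line $(-\infty,0)$, where exactly two boundary conditions are required. First extend $f$ by zero to $\widetilde f\in C_0^\infty(\mathbb R\times(0,T))$; this is legitimate since $f$ is compactly supported in the open strip $\Pi_T^-$, so $\widetilde f$ also vanishes for $t$ near $0$ and near $T$. Put $v(t):=\int_0^t W_{\mathbb R}(t-t')\widetilde f(t')\,dt'$. Because $\{W_{\mathbb R}(t)\}_{t\in\mathbb R}$ maps $S(\mathbb R)$ continuously into itself, $v\in C^\infty([0,T];S(\mathbb R))$, it solves $v_t+\partial_x^5 v=\widetilde f$ with $v(\cdot,0)=0$, and $v\equiv 0$ for $t$ near $0$. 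Hence $\phi_1(t):=v(0,t)$ and $\phi_2(t):=\partial_x v(0,t)$ lie in $C^\infty([0,T])$ and vanish near $t=0$; a Seeley-type extension followed by multiplication by a cutoff produces $\psi_1,\psi_2\in C_0^\infty(\mathbb R)$ with $\supp\psi_k\subset(0,+\infty)$ and $\psi_k=-\phi_k$ on $[0,T]$.

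Next I would solve the homogeneous IBVP $w_t+\partial_x^5 w=0$ on $(-\infty,0)\times(0,+\infty)$ with $w(\cdot,0)=0$, $w(0,\cdot)=\psi_1$, $\partial_x w(0,\cdot)=\psi_2$, reproducing the analysis of Section \ref{ILBVP}. Applying the Laplace transform in $t$ reduces this to $\lambda\widetilde w+\partial_x^5\widetilde w=0$, whose general solution is $\sum_j c_j e^{r_j x}$ with $\lambda+r_j^5=0$; decay as $x\to-\infty$ forces us to keep only the two roots of $i\beta+r^5=0$ with $\re r_j>0$ (arguments $\tfrac\pi{10}$ and $-\tfrac{3\pi}{10}$ when $\beta<0$, and $-\tfrac\pi{10}$ and $\tfrac{3\pi}{10}$ when $\beta>0$), and the two boundary conditions then fix the coefficients $c_j(i\beta)$ through a $2\times 2$ system with nonvanishing Vandermonde-type determinant. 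Inverting the Laplace transform and letting the contour tend to the imaginary axis yields an explicit oscillatory-integral representation of $w$. As in Section \ref{ILBVP}, differentiation under the integral sign shows $w$ is $C^\infty$ up to $x=0$ and satisfies the equation together with the two boundary conditions; since $\re r_j>0$, each $e^{r_j x}$ decays exponentially for $x<0$, so integrating by parts in $\beta$ gives $w(\cdot,t)\in S_-$ with smooth dependence on $t$, i.e.\ $w\in C^\infty([0,T];S_-)$. In contrast with Section \ref{ILBVP}, no auxiliary cutoff of $\rho$ type is needed here, because every surviving exponential decays for $x<0$.

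The remaining point, which is also the main obstacle, is to verify that $w(x,0)=0$; this is the analogue of identities \eqref{V3-11}--\eqref{V3-25}. One writes $w(x,0)$ as a sum of integrals of the form $\int_0^{+\infty}\beta^k\widetilde\psi_\ell(\mp i\beta^5)e^{\beta e^{i\theta}x}\,d\beta$ with $\re(e^{i\theta})>0$, and closes a sector contour in $\mathbb C$; since $\psi_\ell$ and $\psi_\ell'$ are smooth and compactly supported, their Laplace transforms are analytic in a half-plane $\{\re z>-\epsilon\}$ and satisfy $|\widetilde\psi_\ell(z)|\le K/(\re z+\epsilon)$, which forces the circular-arc contribution to vanish in the limit and collapses the paired boundary integrals against one another, whence $w(x,0)=0$. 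Finally $u:=(v+w)|_{\Pi_T^-}$ belongs to $C^\infty([0,T];S_-)$, satisfies $u_t+\partial_x^5 u=\widetilde f=f$ on $\Pi_T^-$, $u(\cdot,0)=0$, and $u(0,t)=\phi_1(t)+\psi_1(t)=0$, $\partial_x u(0,t)=\phi_2(t)+\psi_2(t)=0$ for $t\in[0,T]$, i.e.\ the required classical solution of \eqref{V4.2-6.54}. Apart from the contour computation, the argument is a routine transcription of Section \ref{ILBVP} to the left half-line with one fewer boundary condition.
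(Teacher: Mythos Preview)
Your proposal is correct and follows essentially the same strategy as the paper: split $u$ into a whole-line Duhamel piece plus a boundary corrector obtained via the Laplace transform construction of Section~\ref{ILBVP}, now with two boundary conditions. The only cosmetic difference is that the paper performs the reflection $x\mapsto -x$ first, reducing the corrector problem to $v_t-\partial_x^5 v=0$ on $(0,+\infty)$ (their IBVP \eqref{V4.2-6.55}), whereas you solve $w_t+\partial_x^5 w=0$ directly on $(-\infty,0)$; these are equivalent, and your root count (exactly two roots with $\re r_j>0$, at arguments $\pm\pi/10$ and $\mp3\pi/10$) is correct and matches the two available boundary data. Your observation that no $\rho$-type cutoff is needed is also right: unlike in Section~\ref{ILBVP}, none of the retained roots lies on the imaginary axis, so all exponentials decay for $x<0$. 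For the Schwartz decay you invoke integration by parts in $\beta$; the paper instead cites Faminskii's inductive weighted-$L^2$ estimate, but after the change of variable $\gamma=|\beta|^{1/5}$ your argument goes through and gives the same conclusion.
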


\begin{proof} Let us define, for $(x,t)\in\mathbb R^2$,
$$u_1(x,t):=\int_0^t [W_{\mathbb R}(t-t')F(\cdot,t')](x) dt',$$
where $F\in C_0^\infty(\mathbb R^2)$ is the extension of $f$ to $\mathbb R^2$ which vanishes outside $\Pi_T^-$.\\

Let us observe that
$$u_1(x,t)=C \int_0^t \left(\int_{\mathbb R_\xi} e^{ix\xi} e^{-i(t-t')\xi^5} [F(\cdot_x,t')]^{\wedge_x}(\xi) d\xi \right) dt',\quad (x,t)\in\mathbb R^2.$$

Taking derivative under the integral sign, it can be seen that
$$\partial_t u_1(x,t) + \partial_x^5 u_1(x,t) = F(x,t)\quad \text{for }(x,t)\in\mathbb R^2.$$

It can be seen also that $u_1\in C^\infty([0,T];H^\infty(\mathbb R_x^-))$. Besides, it is clear that $u_1(x,0)=0$ for each $x\in\mathbb R$.\\

Let us define $h_1(t):=u_1(0,t)$, and $h_2(t):=\partial_x u_1(0,t)$, for $t\in\mathbb R$. It can be proved that $h_j\in H^\infty(\mathbb R_t)$, and that $\supp h_j\subset [0,+\infty)$, $j=1,2$.\\

Let us consider the IBVP
\begin{align}
\left. \begin{array}{rlr}
u_t-\partial_x^5 u&\hspace{-2mm}=0,&\quad x>0,\quad t>0,\\
u(x,0)&\hspace{-2mm}=0,&\quad x>0,\\
u(0,t)=-h_1(t),\quad u_x(0,t)&\hspace{-2mm}=h_2(t), &\quad t>0.
\end{array} \right\}\label{V4.2-6.55}
\end{align}

By using the Laplace transform method of Section \ref{ILBVP} we can see that there exists $v:[0,+\infty)\times[0,+\infty)\to\mathbb R$, $(x,t)\mapsto v(x,t)$, such that $v\in C^{\infty}(\overline{\mathbb R_t^+};H^\infty(\overline{\mathbb R_x^+}))$, and such that $v$ is a classical solution of IBVP \eqref{V4.2-6.55}.

Let $u:\overline{\Pi_T^-}\to\mathbb R$ be defined by $u(x,t):=u_1(x,t)+v(-x,t)$, for $(x,t)\in \overline{\Pi_T^-}$. Then, for $(x,t)\in \Pi_T^-$, we have that
\begin{align*}
\partial_t u(x,t) + \partial_x^5 u(x,t) = \partial_t u_1(x,t) + \partial_x^5 u_1(x,t) + \partial_t v(-x,t) - \partial_x^5 v(-x,t) = f(x,t).
\end{align*}

On the other hand,
\begin{align*}
u(x,0)=u_1(x,0) + v(-x,0)=&0\quad x\leq 0,\\
u(0,t)=u_1(0,t) + v(0,t) = &0\quad t\in[0,T],\\
u_x(0,t)=\partial_xu_1(0,t) - \frac{\partial v}{\partial x}(-x,t)\big|_{x=0}=&0\quad t\in[0,T].
\end{align*}

We have proved that $u\in C^{\infty}([0,T];H^\infty(\mathbb R_x^-))$ is a classical solution of IBVP \eqref{V4.2-6.54}. To conclude the proof of Lemma \ref{V4.2-L6.4}, it can be shown as in \cite{F1999a}, proceeding by induction on $m\in\mathbb N \cup \{0\}$, that $|x|^m\partial_x^k u\in L^\infty([0,T];L^2(\mathbb R_x^-))$, and in this manner $u\in C^{\infty}([0,T];S_-)$.
\end{proof}

\begin{remark}\label{V4.2-R6.1} From Lemma \ref{V4.2-L6.4}, and from Holgrem's theory we can conclude that IBVP \eqref{V4.2-6.54} has a unique generalized solution.
\end{remark}

\begin{lemma}\label{V4.2-L6.5} Let $b\in [\tfrac{13}{32},\tfrac12]$, and $\alpha\in(\tfrac12,\alpha_0(b))$, where $\alpha_0(b)\in(\tfrac12,1)$ has properties of Lemma \ref{V4.2-L6.3}, and let us assume that $h\in X^{0,b,\alpha}$. Then $u(x,t)\equiv 0$ is the only generalized solution of IBVP \eqref{V4.2-6.51} in $X^{0,b,\alpha}(\Pi_T^+)$.
\end{lemma}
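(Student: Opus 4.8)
The plan is to follow Faminskii's uniqueness scheme: localize in time, turn \eqref{V4.2-6.51} into an integral equation on a short time interval by invoking the uniqueness of generalized solutions of the linear problem \eqref{V4.2-6.52}, and then close the estimate with the smallness factor $\delta^\epsilon$ furnished by Lemma \ref{V4.2-L6.3}. Since \eqref{V4.2-6.51} is linear in $u$, it suffices to prove that $u\equiv 0$ on $\Pi_\delta^+:=(0,+\infty)\times(0,\delta)$ for some $\delta\in(0,\tfrac12]$ whose size depends only on $\|h\|_{X^{0,b,\alpha}}$; once this is known, one restarts the problem at $t=\delta$ with zero initial datum and zero boundary data (the restriction of $h$ to the new time window having no larger norm) and iterates $\lceil T/\delta\rceil$ times to obtain $u\equiv 0$ on $\Pi_T^+$.

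For the local statement, first I would fix an extension $\bar u\in X^{0,b,\alpha}(\mathbb R^2)$ of $u$ and an extension $\bar h\in X^{0,b,\alpha}(\mathbb R^2)$ of $h$, with norms controlled by those of $u$ and $h$ on $\Pi_T^+$, and observe that on $\Pi_\delta^+$ the generalized solution $u$ of \eqref{V4.2-6.51} is a generalized solution of the linear inhomogeneous problem \eqref{V4.2-6.52} with $f:=-\partial_x(\bar h\,\bar u)$, $g=0$ and $h_1=h_2=h_3=0$ (the boundary integrals in the weak formulation of \eqref{V4.2-6.52} being absent because the data vanish). By Lemma \ref{V4.2-L6.3},
$$\|\psi_\delta^2\,\partial_x(\bar h\,\bar u)\|_{Y^{0,-b,\alpha}}\leq C\delta^\epsilon\,\|\bar h\|_{X^{0,b,\alpha}}\,\|\bar u\|_{X^{0,b,\alpha}},$$
so the Duhamel term
$$\mathcal D(t):=\eta(t)\int_0^t \big[W_{\mathbb R}(t-t')\big(-\psi_\delta^2\,\partial_x(\bar h\,\bar u)(t')\big)\big](\cdot_x)\,dt'$$
is well defined and, by Lemma \ref{V2-L3.3} (ii) (with a suitable $b^*\in(b,\tfrac12)$) and Lemma \ref{V3-L4.6} (i), lies in $X^{0,b,\alpha}\cap C(\mathbb R_t;L^2(\mathbb R_x))$ with $\partial_x^j\mathcal D\in C(\mathbb R_x;H^{\frac{2-j}5}(\mathbb R_t))$, $j=0,1,2$. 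Setting $p_{j+1}(t):=\eta(t)\,\partial_x^j\mathcal D(0,t)$ and $\mathcal B:=\eta(\cdot_t)\,W_0^t(0,-p_1,-p_2,-p_3)$, Lemma \ref{V3-L4} gives $\mathcal B\in X^{0,b,\alpha}$ (the compatibility conditions required there hold trivially since every trace vanishes at $t=0$).

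The decisive step is to identify $u$ with $\mathcal D+\mathcal B$ on $\Pi_\delta^+$. If $\delta\leq\tfrac12$, then $\eta\equiv 1$ and $\psi_\delta\equiv 1$ on $[0,\delta]$, so on $\Pi_\delta^+$ the function $\mathcal D+\mathcal B$ coincides with $\int_0^t[W_{\mathbb R}(t-t')(-\partial_x(hu)(t'))](\cdot_x)\,dt'$ plus the boundary correction built from its own traces; by the integration‑by‑parts computations underlying the proof of Theorem \ref{V4.2-T5.2} (the analogues of \eqref{V4.2-5.21}--\eqref{V4.2-5.23}), $\mathcal D+\mathcal B$ is a generalized solution of \eqref{V4.2-6.52} on $\Pi_\delta^+$ with forcing $f=-\partial_x(hu)$ and zero data. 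Since $u$ is another such generalized solution, the uniqueness of generalized solutions of \eqref{V4.2-6.52} --- which follows from Lemma \ref{V4.2-L6.4}, Remark \ref{V4.2-R6.1}, and the change of variables relating \eqref{V4.2-6.52} to \eqref{V4.2-6.54} through Holmgren's theory --- yields $u=\mathcal D+\mathcal B$ on $\Pi_\delta^+$. Estimating the right-hand side with Lemmas \ref{V3-L1}, \ref{V3-L4}, \ref{V2-L3.3}, \ref{V3-L4.6} for $\mathcal B$ and the traces $p_{j+1}$, and with the bilinear gain of Lemma \ref{V4.2-L6.3} for $\mathcal D$, one obtains
$$\|u\|_{X^{0,b,\alpha}(\Pi_\delta^+)}\leq\|\mathcal D+\mathcal B\|_{X^{0,b,\alpha}}\leq C\delta^\epsilon\,\|h\|_{X^{0,b,\alpha}}\,\|u\|_{X^{0,b,\alpha}(\Pi_\delta^+)}.$$
Choosing $\delta\in(0,\tfrac12]$ with $C\delta^\epsilon\|h\|_{X^{0,b,\alpha}}<1$ forces $\|u\|_{X^{0,b,\alpha}(\Pi_\delta^+)}=0$, hence $u\equiv 0$ on $\Pi_\delta^+$, and the iteration described above yields $u\equiv 0$ on $\Pi_T^+$.

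I expect the main obstacle to be the third step, namely extracting the integral identity $u=\mathcal D+\mathcal B$ from the weak formulation. This requires (a) an extension operator $X^{0,b,\alpha}(\Pi_T^+)\to X^{0,b,\alpha}(\mathbb R^2)$ compatible with truncation in $t$; (b) verifying that $f=-\partial_x(\bar h\,\bar u)$, which a priori lies only in the negative-regularity space $Y^{0,-b,\alpha}$, pairs correctly with the admissible test functions in the weak formulation of \eqref{V4.2-6.52}, and that $\mathcal D+\mathcal B$ is genuinely a generalized solution of \eqref{V4.2-6.52} with that forcing; and (c) keeping track of the several cutoffs ($\psi_\delta$, $\eta$, $\eta(\cdot_t/2T)$) so that they become inert on $[0,\delta]$, together with the bookkeeping of the various admissible ranges of $b^*$ and $\alpha$. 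The remaining ingredients --- the bilinear smallness, the linear estimates, and the uniqueness for the linear IBVP --- are already available from Lemmas \ref{V4.2-L6.3}, \ref{V2-L3.3}, \ref{V3-L4.6}, \ref{V3-L4}, \ref{V3-L1}, and \ref{V4.2-L6.4}.
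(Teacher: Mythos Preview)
Your proposal is correct and follows essentially the same route as the paper: restrict to a short strip $\Pi_\delta^+$, recognize the generalized solution as coinciding (via the linear uniqueness of Remark \ref{V4.2-R6.1}) with the Duhamel-plus-boundary representation built from the forcing $-\psi_\delta^2\partial_x(h\,\tilde u)$, then close with the $\delta^\epsilon$ factor from Lemma \ref{V4.2-L6.3} and iterate. One small point to tighten: your extension $\bar u$ should be of $u|_{\Pi_\delta^+}$ with norm controlled by $\|u\|_{X^{0,b,\alpha}(\Pi_\delta^+)}$ (not $\Pi_T^+$), exactly as the paper does with $\tilde w$ in \eqref{V4.2-6.61}, so that the final inequality has the same norm on both sides.
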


\begin{proof} Let $w\in X^{0,b,\alpha}(\Pi_T^+)$ a generalized solution of IBVP \eqref{V4.2-6.51}. For $\delta\in (0,T]$, we consider the restriction of $w$ to $\Pi^+_\delta$. Then $w\big|_{\Pi_\delta^+}\in X^{0,b,\alpha}(\Pi_\delta^+)$. Let $\tilde w$ be an extension of $w\big|_{\Pi_\delta^+}$ to $\mathbb R^2$ such that
\begin{align}
\|\tilde w\|_{X^{0,b,\alpha}} \leq 2\|w\big|_{\Pi_\delta^+}\|_{X^{0,b,\alpha}(\Pi^+_\delta)}.\label{V4.2-6.61}
\end{align}

Let us consider the linear inhomogeneous IBVP
\begin{align}
\left. \begin{array}{rlr}
u_t+\partial_x^5 u + \Psi_\delta^2(t)(h(\tilde w))_x&\hspace{-2mm}=0,&\quad (x,t)\in \Pi_\delta^+,\\
u(x,0)&\hspace{-2mm}=0,&\quad x\geq0,\\
u(0,t)=0,\quad u_x(0,t)=0,\quad u_{xx}(0,t)&\hspace{-2mm}=0, &\quad t\in[0,\delta].
\end{array} \right\}\label{V4.2-6.62}
\end{align}

Then $w\big|_{\Pi_\delta^+}$ is a generalized solution in $\Pi_\delta^+$ of IBVP \eqref{V4.2-6.62}.\\

On the other hand, let us observe that if we define, for $x\geq0$, and $t\in[0,\delta]$,
\begin{align*}
v(x,t):=\int_0^t \left\{W_{\mathbb R}(t-t') [-\Psi_\delta^2(t')(h(\cdot_x,t')\tilde w(\cdot_x,t'))] \right\}(x) dt' + W_0^t(0,-p_1,-p_2,-p_3)(x,t),
\end{align*}

where
$$p_{j+1}(t)=\partial_x^j \int_0^t \left\{W_{\mathbb R}(t-t')[-\Psi_\delta^2(t')(h(\cdot_x,t')\tilde w(\cdot_x,t'))] \right\}(0) dt'$$
for $j=0,1,2$, then $v$ is also a generalized solution in $\Pi_\delta^+$ of IBVP \eqref{V4.2-6.62}.\\

According with Remark \ref{V4.2-R6.1}, IBVP \eqref{V4.2-6.62} has a unique generalized solution. Hence $w\big|_{\Pi_\delta^+}=v$.\\

From Lemma \ref{V4.2-L6.3}, $\Psi_{\delta}^2(\cdot_t)(h\tilde w)_x\in Y^{0,-b,\alpha}$, and
\begin{align*}
\|\Psi_\delta^2(\cdot_t)(h\tilde w)_x\|_{Y^{0,-b,\alpha}}\leq C \delta^\epsilon\|h\|_{X^{0,b,\alpha}} \|\tilde w\|_{X^{0,b,\alpha}},
\end{align*}

and, taking into account \eqref{V4.2-6.61}, it follows that
\begin{align}
\|\Psi_\delta^2(\cdot_t)(h\tilde w)_x\|_{Y^{0,-b,\alpha}} \leq C \delta^\epsilon\|h\|_{X^{0,b,\alpha}} \|w\big|_{\Pi_\delta^+}\|_{X^{0,b,\alpha}(\Pi_\delta^+)}.\label{V4.2-6.63}
\end{align}

Without loss of generality, let us assume that $0<T\leq \frac12$, and let $\eta(\cdot_t)\in C_0^\infty(\mathbb R_t)$ such that $\supp \eta\subset [-1,1]$, and $\eta\equiv 1$ in $[-\tfrac12,\tfrac12]$.\\

For $(x,t)\in\mathbb R^2$, let us define
\begin{align*}
\tilde v(x,t):=\eta(t)\int_0^t \left\{W_{\mathbb R}(t-t')[-\Psi_\delta^2(t')(h(\cdot_x,t') \tilde w(\cdot_x,t'))] \right\}(x) dt' + \eta(t) W_0^t(0,-\tilde p_1,-\tilde p_2, -\tilde p_3)(x,t) dt',
\end{align*}

where
\begin{align*}
\tilde p_{j+1}(t) = \eta(t) \partial_x^j \int_0^t \left\{ W_{\mathbb R}(t-t')[-\Psi_\delta^2(t')(h(\cdot_x,t')\tilde w(\cdot_x,t'))] \right\}(0) dt',
\end{align*}

for $j=0,1,2$. $\tilde v$ is an extension of $v$, and, by using Lemmas \ref{V3-L4}, \ref{V2-L3.3} (ii), and \ref{V3-L4.6} (i), we have that
\begin{align}
\notag\|w\big|_{\Pi^+_\delta}\|_{X^{0,b,\alpha}(\Pi_\delta^+)} = \|v\|_{X^{0,b,\alpha}(\Pi^+_\delta)}\leq \|\tilde v\|_{X^{0,b,\alpha}} \leq & C \|\Psi_\delta^2(\cdot_t) (h\tilde w)(\cdot_x,\cdot_t)\|_{Y^{0,-b,\alpha}} + C \left(\sum_{j=0}^2 \|\chi_{(0,+\infty)} \tilde p_{j+1}\|_{H^{\frac{2-j}5}(\mathbb R_t)} \right)\\
\notag\leq & C \|\Psi_\delta^2(\cdot_t) (h\tilde w)(\cdot_x,\cdot_t)\|_{Y^{0,-b,\alpha}} + C \|\Psi_\delta^2(\cdot_t)(h\tilde w)_x (\cdot_x,\cdot_t)\|_{X^{0,-b}}\\
\leq &C \|\Psi_\delta^2(\cdot_t) (h\tilde w)(\cdot_x,\cdot_t)\|_{Y^{0,-b,\alpha}}.\label{V4.2-6.64}
\end{align}

From \eqref{V4.2-6.63}, and \eqref{V4.2-6.64}, it follows that
\begin{align*}
\|w\big|_{\Pi_\delta^+}\|_{X^{0,b,\alpha}(\Pi_\delta^+)}\leq C \delta^\epsilon \|h\|_{X^{0,b,\alpha}} \|w\big|_{\Pi_\delta^+}\|_{X^{0,b,\alpha}(\Pi_\delta^+)}.
\end{align*}

Choosing $\delta>0$ such that $C\delta^\epsilon\|h\|_{X^{0,b,\alpha}}\leq \frac12$, we conclude that $w\equiv 0$ in $\Pi_\delta^+$. It can be proven in a finite number of steps that $w\equiv 0$ in $\Pi_{T}^+$. Lemma \ref{V4.2-L6.5} is now established.
\end{proof}

\begin{Theorem}\label{V4.2-T6.1} Let $b\in [\frac{13}{32},\frac12)$, and $\alpha\in(\frac12,\alpha_0(b))$, where $\alpha_0(b)\in(\frac12,1)$ has the same properties than in Lemma \ref{V4.2-L6.3}. Then IBVP \eqref{maineq} has at most one generalized solution in $X^{0,b,\alpha}(\Pi_T^+)$.
\end{Theorem}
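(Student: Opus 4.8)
The plan is to reduce the uniqueness of generalized solutions of the nonlinear IBVP \eqref{maineq} to the uniqueness statement for the linear variable-coefficient problem \eqref{V4.2-6.51} already established in Lemma \ref{V4.2-L6.5}, by the classical device of writing the difference of two solutions as a solution of a linear equation whose zeroth-order coefficient is the average of the two solutions.

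First I would suppose that $U_1,U_2\in X^{0,b,\alpha}(\Pi_T^+)$ are two generalized solutions of \eqref{maineq} associated with the same data $g,h_1,h_2,h_3$, and set $w:=U_1-U_2\in X^{0,b,\alpha}(\Pi_T^+)$. Since $b\geq 0$, elements of $X^{0,b}$ belong to $L^2_{xt}$, so $U_1,U_2,w$ and all the space--time products below lie in $L^1(\Pi_T^+)$ and the pairings are well defined. Writing the identity \eqref{V4.2-5.20} of Definition \ref{V4-D5.1} for $U_1$ and for $U_2$ and subtracting, the contributions of $g$ and of $h_1,h_2,h_3$ cancel, and one is left, for every $\phi$ as in Definition \ref{V4-D5.1}, with
$$\int_0^T\!\!\int_0^{+\infty}\Bigl[w\bigl(\phi_t+\partial_x^5\phi\bigr)+\tfrac12\bigl(U_1^2-U_2^2\bigr)\phi_x\Bigr]\,dx\,dt=0.$$
Because $\tfrac12(U_1^2-U_2^2)=h\,w$ with $h:=\tfrac12(U_1+U_2)$, and because the test--function class of Definition \ref{V4-D5.1} contains all $\varphi\in C^\infty([0,T];S_+)$ with $\varphi|_{t=T}=\varphi|_{x=0}=\varphi_x|_{x=0}=0$, this identity says precisely that $w$ is a generalized solution of the linear IBVP \eqref{V4.2-6.51} with coefficient $h$.

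Next I would extend $h=\tfrac12(U_1+U_2)\in X^{0,b,\alpha}(\Pi_T^+)$ to some $\tilde h\in X^{0,b,\alpha}(\mathbb R^2)$ with $\tilde h|_{\Pi_T^+}=h$ (the restriction space admits such extensions, as already used inside the proof of Lemma \ref{V4.2-L6.5}); on $\Pi_T^+$ the equation with $\tilde h$ coincides with the equation with $h$, so $w$ is a generalized solution in $X^{0,b,\alpha}(\Pi_T^+)$ of \eqref{V4.2-6.51} with coefficient $\tilde h\in X^{0,b,\alpha}$. The hypotheses $b\in[\tfrac{13}{32},\tfrac12)$ and $\alpha\in(\tfrac12,\alpha_0(b))$ are exactly those of Lemma \ref{V4.2-L6.5}, which therefore forces $w\equiv 0$ on $\Pi_T^+$, i.e., $U_1=U_2$, and this is the assertion.

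The argument is short precisely because all the analytic weight has already been placed in Lemma \ref{V4.2-L6.5} and, behind it, in Lemma \ref{V4.2-L6.3} (the bilinear estimate in $Y^{0,-b,\alpha}$ with the $\delta^\epsilon$ gain), Lemma \ref{V4.2-L6.4} (the Holmgren-type solvability of the dual linear problem), and the restriction--space estimates of Sections \ref{ILBVP} and \ref{LNLE}. The only point that needs a little care in the present step is the bookkeeping: checking that the subtraction of the two weak formulations is legitimate under mere $X^{0,b,\alpha}(\Pi_T^+)$ regularity, and that the notion of generalized solution in Definition \ref{V4-D5.1} (with $L^\infty_tH^5_x$ test functions) is compatible with the Faminskii notion used for \eqref{V4.2-6.51} (with $C^\infty([0,T];S_+)$ test functions); both reduce to the inclusion of the corresponding test--function classes and to elementary density remarks.
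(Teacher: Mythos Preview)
Your proposal is correct and follows essentially the same approach as the paper: take the difference of two generalized solutions, observe that the quadratic term factors as $h w$ with $h=\tfrac12(U_1+U_2)$, extend $h$ to an element of $X^{0,b,\alpha}(\mathbb R^2)$, and invoke Lemma \ref{V4.2-L6.5}. The paper's proof is slightly terser, but the logic is identical.
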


\begin{proof} Let $u_1,u_2$ be in $X^{0,b,\alpha}(\Pi_T^+)$, two generalized solutions of IBVP \eqref{maineq}, and let us define $u:=u_1-u_2$. Then $u$ is a generalized solution of the following IBVP in $X^{0,b,\alpha}(\Pi_T^+)$:
\begin{align}
\left. \begin{array}{rlr}
u_t+\partial_x^5 u + (h(x,t)u)_x&\hspace{-2mm}=0,&\quad x>0,\quad 0<t<T,\\
u(x,0)&\hspace{-2mm}=0,&\quad x\geq0,\\
u(0,t)=0,\quad u_x(0,t)=0,\quad u_{xx}(0,t)&\hspace{-2mm}=0, &\quad t\in[0,T],
\end{array} \right\}\label{V4.2-6.65}
\end{align}

where $h\in X^{0,b,\alpha}$ is a certain extension of $\frac{u_1+u_2}2\in X^{0,b,\alpha}(\Pi_T^+)$. But, from Lemma \ref{V4.2-L6.5}, the unique generalized solution of IBVP \eqref{V4.2-6.65} in $X^{0,b,\alpha}$ is $u\equiv 0$. Therefore $u_1=u_2$, and Theorem \ref{V4.2-T6.1} is proven.
\end{proof}

\section{Regularity property of the solution of the IBVP \eqref{maineq}}\label{Regular}

This property tells us that the non-linear part of the solution to the IBVP \eqref{maineq} at a time $t\in(0,T]$, where $[0,T]$ is the interval of existence, is smoother than the initial data.\\

Let us recall that the solution $u$ of the IBVP \eqref{maineq} is such that, for $t\in[0,T]$,
$$u(t)=W_{\mathbb R}(t) g_l + \int_0^t W_{\mathbb R}(t-t')F_T(u(t')) dt' + W_0^t(0,h_1-p_1,h_2-p_2,h_3-p_3),$$

where, for $j=0,1,2$,
$$p_{j+1}(t)=\eta(t) \partial_x^j [W_{\mathbb R}(t) g_l](0) + \eta(t) \partial_x^j \int_0^t [W_{\mathbb R}(t-t')F_T(u(t'))](0) dt'\equiv q_{j+1}(t) + r_{j+1}(t).$$

If we define
$$W_0^t(g,h_1,h_2,h_3)(\cdot_x,t):=W_0^t(0,h_1-q_1,h_2-q_2,h_3-q_3)(t) + W_{\mathbb R}(t) g_l,$$

Then, the non-linear part of the solution $u$ to the IBVP \eqref{maineq} is given by
$$u(\cdot_x,t)-W_0^t(g,h_1,h_2,h_3)(\cdot_x,t).$$

Let us notice that
$$u(x,t)-W_0^t(g,h_1,h_2,h_3)(t)=\int_0^t W_{\mathbb R}(t-t')F_T(u(t')) dt' - W_0^t(0,r_1,r_2,r_3)(t).$$

\begin{Theorem}\label{V4.2-T7.1} Let $s\in[0,\frac{11}4)\setminus\{\frac12,\frac32,\frac52\}$, $g\in H^2(\mathbb R^+)$, and $h_{j+1}\in H^{\frac{s+2-j}5}(\mathbb R^+)$, $j=0,1,2$, such that the compatibility conditions stated in Theorem \ref{V3-T5.1} are satisfied. Let $u$ be the solution to the integral equation \eqref{V2-1.7} guaranteed by the theorem, restricted to $\mathbb R^+\times[0,T]$, where $0<T\leq\frac12$, and $T$ satisfies the properties of Theorem \ref{V3-T5.1}. Then
\begin{itemize}
\item[(i)] If $0\leq s<\frac12$, $\frac9{20}<b<\frac12$, and $0\leq a<\frac12-s$, then
$$\|u-W_0^t(g,h_1,h_2,h_3)\|_{C([0,T];H^{s+a}(\mathbb R_x^+))}\leq C\|u\|^2_{X^{s,b}}<\infty.$$
\item[(ii)] If $s\in(\frac12,\frac{11}4)\setminus\{\frac32,\frac52\}$, and $a,b$ are such that
\begin{align}
\max\left\{\frac{s+a}5-\frac1{20},\frac25 \right\}<b<\frac12, \text{ and } 0\leq a<\min\left\{\frac{11}4-s,10b-4 \right\},\label{V4.2-7.3}
\end{align}
then
$$\|u-W_0^t (g,h_1,h_2,h_3)\|_{C([0,T];H^{s+a}(\mathbb R_x^+))}\leq C \|u\|^2_{X^{s,b}}<\infty.$$

\end{itemize}
\end{Theorem}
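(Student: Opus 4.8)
\textbf{Proof proposal for Theorem \ref{V4.2-T7.1}.}
The plan is to express the nonlinear part of the solution as
$$u(\cdot_x,t)-W_0^t(g,h_1,h_2,h_3)(\cdot_x,t)=\int_0^t W_{\mathbb R}(t-t')F_T(u(t'))\,dt'-W_0^t(0,r_1,r_2,r_3)(t),$$
as already pointed out in the text, and to estimate the two terms separately in the $C([0,T];H^{s+a}(\mathbb R_x^+))$ norm, the gain of $a$ derivatives coming entirely from the bilinear estimate \eqref{V1-3.58} of Lemma \ref{V2-L3.6} (for the range $0\le a\le 10b-4$) together with Lemma \ref{V2-L3.7} (to control the auxiliary $X^{\frac12,\cdot}$ norm that appears through $r_{j+1}$ when $s\ge\frac12$). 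First I would handle the Duhamel term: since $H^{s+a}(\mathbb R_x^+)$ is a space of restrictions, it suffices to bound $\big\|\eta(\cdot_t)\int_0^{\cdot_t}W_{\mathbb R}(\cdot_t-t')F_T(u(t'))\,dt'\big\|$ in a space continuously embedded in $C(\mathbb R_t;H^{s+a}(\mathbb R_x))$. Choosing $b^*\in(b,\frac12)$ and $\alpha\in(\frac12,1-b^*)$ as in Theorem \ref{V3-T5.1}, I would argue exactly as in the proof that $u\in C(\mathbb R_t;H^s(\mathbb R_x))$ there: by \eqref{V4.2-4.5} of Lemma \ref{V2-L3.3}(i) the $X^{s+a,\alpha}$ norm of this term is bounded by $C\|F_T(u)\|_{X^{s+a,-b^*}}$, then by \eqref{V4.2-4.7} of Lemma \ref{V2-L3.4} by $CT^{b^*-b}\|\partial_x u^2\|_{X^{s+a,-b}}$, and finally by \eqref{V1-3.58} of Lemma \ref{V2-L3.6} (used with $s$ in the role of $s$ and with the gain $a$, valid since $0\le a\le 10b-4$ and $\frac25\le b<\frac12$) by $CT^{b^*-b}\|u\|_{X^{s,b}}^2<\infty$. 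Since $\alpha>\frac12$, $X^{s+a,\alpha}\hookrightarrow C_b(\mathbb R_t;H^{s+a}(\mathbb R_x))$, which gives the desired control of the first term.

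Next I would treat the boundary-corrector term $W_0^t(0,r_1,r_2,r_3)(t)$, where $r_{j+1}(t)=\eta(t)\partial_x^j\int_0^t[W_{\mathbb R}(t-t')F_T(u(t'))](0)\,dt'$. By Lemma \ref{V3-L2}(i) (through Remark \ref{V3-R2}) and Lemma \ref{V3-L4}, the $H^{s+a}(\mathbb R_x)$-norm, and in fact the $X^{s+a,b,\alpha}$-norm, of $\eta(\cdot_t)W_0^t(0,r_1,r_2,r_3)$ is bounded by $C\sum_{j=0}^2\|\chi_{(0,+\infty)}r_{j+1}\|_{H^{\frac{s+a+2-j}5}(\mathbb R_t)}$; because $r_{j+1}$ involves only derivatives at $x=0$ of the Duhamel term, the same argument that produced \eqref{V3-5.8}–\eqref{V3-5.9} applies, now with $s$ replaced by $s+a$. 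Concretely, for $0\le s+a<\frac12$ I would invoke Lemma \ref{V3-L4.6}(i), Lemma \ref{V2-L3.4}, and Lemma \ref{V2-L3.6} to get $\|r_{j+1}\|_{H^{\frac{s+a+2-j}5}}\le CT^{b^*-b}\|u\|_{X^{s,b}}^2$; for $\frac12\le s+a<\frac{11}4$ I would instead use Lemma \ref{V3-L4.6}(ii) together with Lemma \ref{V2-L3.7} and Lemma \ref{V2-L3.6}, which is where the constraint $b>\frac{s+a}5-\frac1{20}$ in \eqref{V4.2-7.3} is needed (so that Lemma \ref{V2-L3.7} applies with index $s+a$ and with gain zero, i.e. using it with its parameter $a$ set to $0$ and its index $s$ set to $s+a$), and where $a<\frac{11}4-s$ is needed to keep $s+a<\frac{11}4$. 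Then the compatibility of $h_{j+1}$ and $p_{j+1}$ at $t=0$ plays no role here since $r_{j+1}(0)=0$ automatically, so $\chi_{(0,+\infty)}r_{j+1}\in H_0^{\frac{s+a+2-j}5}(\mathbb R_t)$ by Lemma \ref{V2-L1.1} once $\frac{s+a+2-j}5\ne\frac12$, a condition that holds throughout the stated ranges.

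Combining the two bounds gives $\|u-W_0^t(g,h_1,h_2,h_3)\|_{C([0,T];H^{s+a}(\mathbb R_x^+))}\le C\|u\|_{X^{s,b}}^2<\infty$, and continuity in $t$ with values in $H^{s+a}(\mathbb R_x^+)$ follows because both pieces were shown to lie in spaces ($X^{s+a,\alpha}$ with $\alpha>\frac12$, and $C(\mathbb R_t;H^{s+a}(\mathbb R_x))$ via Lemma \ref{V3-L2}(i)) that embed continuously into $C(\mathbb R_t;H^{s+a}(\mathbb R_x))$; restricting to $[0,T]\times\mathbb R^+$ preserves this. I expect the main obstacle to be bookkeeping rather than a genuinely new estimate: one must verify that the numerology of \eqref{V4.2-7.3} is exactly what is needed to simultaneously (a) keep $s+a$ in the admissible window $[0,\frac{11}4)$ avoiding the exceptional half-integers-over-two values, (b) satisfy the hypothesis $0\le a\le 10b-4$ of Lemma \ref{V2-L3.6} so the bilinear form actually gains $a$ derivatives, and (c) satisfy $\max\{\frac{s+a}5-\frac1{20},\frac25\}<b$ so that the $r_{j+1}$ estimate via Lemmas \ref{V3-L4.6}(ii) and \ref{V2-L3.7} goes through; checking that these three constraints are compatible and that the chosen $b,b^*,\alpha$ from Theorem \ref{V3-T5.1} can be taken to also satisfy them is the delicate point. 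The case split between $0\le s<\frac12$ (part (i), where one simply takes $a<\frac12-s$ and $b>\frac9{20}$ so that $10b-4>\frac12>s+a$) and $s>\frac12$ (part (ii)) mirrors exactly the case split in the proof of Theorem \ref{V3-T5.1}, so no new ideas are required beyond propagating the extra index $a$ through the already-established chain of lemmas.
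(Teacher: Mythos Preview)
Your proposal is correct and follows essentially the same approach as the paper: the same decomposition into the Duhamel term plus the boundary corrector $W_0^t(0,r_1,r_2,r_3)$, the same chain of lemmas (Lemmas \ref{V2-L3.3}, \ref{V2-L3.4}, \ref{V2-L3.6}, \ref{V2-L3.7}, \ref{V3-L4.6}, and \ref{V3-L2} via Remark \ref{V3-R2}), and the same numerology checks on $a$, $b$, and $s+a$. The only cosmetic difference is that the paper writes $\tilde b>\tfrac12$ (with $b<b^*<\tfrac12<\tilde b$, $b^*+\tilde b\le 1$, invoking \eqref{V4.2-4.4}) where you write $\alpha>\tfrac12$ for the embedding $X^{s+a,\cdot}\hookrightarrow C(\mathbb R_t;H^{s+a}(\mathbb R_x))$; these are equivalent uses of Lemma \ref{V2-L3.3}.
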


\begin{remark} It can be readily verified that, if
$$(\frac12<s<\frac32,\, \frac9{20}<b<\frac12,\, 0\leq a\leq \frac12), \quad \text{or, if} \quad (\frac32<s<\frac52,\, \frac{19}{40}<b<\frac12,\, 0\leq a\leq \frac18),$$
$$\text{or, if}\quad (\frac52<s<\frac{11}4,\, \frac{19}{40}<b<\frac12,\, 0\leq a\leq\frac12(\frac{11}{4}-s)),$$
then statement \eqref{V4.2-7.3} is valid.
\end{remark}

\begin{proof} Let us observe that
\begin{align}
\notag \|u-W_0^t(g,h_1,h_2,h_3)\|_{C([0,T];H^{s+a}(\mathbb R_x^+))} \leq & \|\eta(\cdot_t) \int_0^{\cdot_t} [W_{\mathbb R}(\cdot_t{\scriptstyle-}t')F_T(u(t'))](\cdot_x)dt'\|_{C([0,T];H^{s+a}(\mathbb R_x))}\\
&+\|\eta(\cdot_t)W_0^t(0,r_1,r_2,r_3)\|_{C([0,T];H^{s+a}(\mathbb R_x))}.\label{V4.2-7.4}
\end{align}

Let us estimate both terms on the right hand side of \eqref{V4.2-7.4}. From now on, we choose $b^*$, and $\tilde b$, such that $b<b^*<\frac12<\tilde b$, and $b^*+\tilde b\leq 1$. With respect to the first term on the right hand side of \eqref{V4.2-7.4}, using the Sobolev immersion $X^{s+a,\tilde b}\hookrightarrow C(\mathbb R_t;H^{s+a}(\mathbb R_x))$, Lema \ref{V2-L3.3} with $b':=- b^*$, and $b:=\tilde b$, Lema \ref{V2-L3.4} with $b_1:=-b^*$, and $b_2:=-b$, and Lemma 4.6 (i), it follows, for
\begin{align}
\frac25<b<\frac12,\quad \text{and}\quad 0\leq a\leq 10b-4,\label{V4.2-7.5}
\end{align}

that
\begin{align}
\notag \|\eta(\cdot_t) \int_0^{\cdot_t} [W_{\mathbb R}(\cdot_t{\scriptstyle-}t')F_T(u(t'))](\cdot_x) dt'\|_{C([0,T];H^{s+a}(\mathbb R_x))}\leq &  C \|\eta(\cdot_t)\int_0^{\cdot_t} [W_{\mathbb R}(\cdot_t{\scriptstyle-}t')F_T(u(t'))](\cdot_x) dt'\|_{X^{s+a,\tilde b}}\\
\notag\leq & C \|F_T(u(\cdot))\|_{X^{s+a,-b^*}} \leq CT^{-b+b^*} \|\partial_x u^2\|_{X^{s+a,-b}}\\
\leq & C \|u\|^2_{X^{s,b}}<\infty.\label{V4.2-7.6}
\end{align}

With respect to the second term on the right hand side of \eqref{V4.2-7.4}, considering Lemma \ref{V3-L2}, Remark \ref{V3-R2}, the definition of $r_{j+1}$, Lemma \ref{V3-L4.6} (i), Lemma \ref{V2-L3.4}, and Lemma \ref{V2-L3.6} (inequality \eqref{V1-3.58}), it follows that, if $0\leq s+a\leq \frac12$, $\frac25<b<\frac12$, and $0\leq a\leq 10b-4$, then
\begin{align}
\notag \|\eta(\cdot_t) W_0^t(0,r_1,r_2,r_3)\|_{C([0,T];H^{s+a}(\mathbb R_x))} & \leq C \sum_{j=0}^2 \|r_{j+1}\|_{H^{\frac{(s+a)+2-j}5}(\mathbb R_t)}\\
\notag & \leq C \|F_T(u(\cdot))\|_{X^{s+a,-b^*}} \leq C T^{b^*-b} \|\partial_x u^2\|_{X^{s+a,-b}}\\
&\leq C T^{b^*-b} \|u\|^2_{X^{s,b}} < +\infty. \label{V4.2-7.7}
\end{align}

From \eqref{V4.2-7.4}, \eqref{V4.2-7.6}, and \eqref{V4.2-7.7} we obtain statement (i) of the theorem.\\

On the other hand, if $\frac12\leq s+a\leq \frac{11}2$, taking into account Lemma \ref{V3-L2}, Lemma \ref{V3-L4.6} (ii), Lemma \ref{V2-L3.4}, Lemma \ref{V2-L3.6} (inequality \eqref{V1-3.58}), and Lemma \ref{V2-L3.7}, it follows that, if
$$\max\left\{\frac{s+a}5-\frac1{20},\frac25\right\}<b<\frac12,\quad \text{and}\quad 0\leq a<\min\left\{\frac{11}4-s,10b-5 \right\},$$
then
\begin{align}
\notag \|\eta(\cdot_t) W_0^t(0,r_1,r_2,r_3)\|_{C([0,T];H^{s+a}(\mathbb R_x))} & \leq C \sum_{j=0}^2 \|r_{j+1}\|_{H^{\frac{(s+a)+2-j}5}(\mathbb R_t)}\\
\notag & \leq C \|F_T(u(\cdot))\|_{X^{\frac12,\frac{2(s+a)-1-10b^*}{10}}} + C \|F_T(u(\cdot))\|_{X^{s+a,-b^*}}\\
\notag& \leq  T^{b^*-b} \|\partial_x u^2\|_{X^{\frac12,\frac{2(s+a)-1-10b}{10}}}+C T^{b^*-b} \|\partial_x u^2\|_{X^{s+a,-b}}\\
&\leq C T^{b^*-b} \|u\|^2_{X^{s,b}} < +\infty. \label{V4.2-7.8}
\end{align}

From \eqref{V4.2-7.4}, \eqref{V4.2-7.6}, and \eqref{V4.2-7.8}, we obtain statement (ii) of the theorem.
\end{proof}

\textbf{Acknowledgments}\\

This work was partially supported by Universidad Nacional de Colombia, Sede-Medellín– Facultad de Ciencias – Departamento de Matemáticas – Grupo de investigación en Matemáticas de la Universidad Nacional de Colombia Sede Medellín, carrera 65 No. 59A -110, post 50034, Medellín Colombia. Proyecto: Análisis no lineal aplicado a problemas mixtos en ecuaciones diferenciales parciales, código Hermes 60827. Fondo de Investigación de la Facultad de Ciencias empresa 3062.

\end{document}